\let\pa=\partial
\let\f=\frac
\let\p=\partial
\let\om=\omega
\def\s{\sigma}
\def\oo{\infty}
\def\cB{{\mathcal B}}
\def\cL{{\mathcal L}}
\def\cM{{\mathcal M}}
\def\cN{{\mathcal N}}
\def\eqdef{\buildrel\hbox{\footnotesize def}\over =}
\def\Z{\mathop{\mathbb Z\kern 0pt}\nolimits}
\def\N{\mathop{\mathbb N\kern 0pt}\nolimits}
\def\Q{\mathop{\mathbb Q\kern 0pt}\nolimits}
\def\R{{\mathop{\mathbb R\kern 0pt}\nolimits}}
\def\T{{\mathop{\mathbb T\kern 0pt}\nolimits}}
\def\dive{{\mathop{\rm div}\nolimits}\,}
\def\tt{\langle t\rangle}
\def\e{\varepsilon}
\def\eqdefa{\buildrel\hbox{\footnotesize def}\over =}
\newcommand{\Rmnum}[1]{\uppercase\expandafter{\romannumeral #1} }
\newcommand{\beq}{\begin{equation}}
\newcommand{\eeq}{\end{equation}}
\newcommand{\ben}{\begin{eqnarray}}
\newcommand{\een}{\end{eqnarray}}
\newcommand{\beno}{\begin{eqnarray*}}
\newcommand{\eeno}{\end{eqnarray*}}
 \numberwithin{equation}{section}
\newcommand{\andf}{\quad\hbox{and}\quad}
\newcommand{\with}{\quad\hbox{with}\quad}
\newtheorem{defi}{Definition}[section]
\newtheorem{thm}{Theorem}[section]
\newtheorem{lem}{Lemma}[section]
\newtheorem{rmk}{Remark}[section]
\newtheorem{col}{Corollary}[section]
\newtheorem{prop}{Proposition}[section]
\begin{document}

\title[Nonlinear Evolution Toward the Linear Diffusive Profile]{Nonlinear Evolution Toward the Linear Diffusive Profile in the Presence of Couette Flow}

\author[N. Liu]{Ning Liu}
\address[N. Liu]{Academy of Mathematics $\&$ Systems Science, The Chinese Academy of Sciences, Beijing 100190, CHINA. }
\email{liuning16@mails.ucas.ac.cn}

\author[P. Zhang]{Ping Zhang}
\address[P. Zhang]{State Key Laboratory of Mathematical Sciences, Academy of Mathematics $\&$ Systems Science, The Chinese Academy of
	Sciences, Beijing 100190, China, and School of Mathematical Sciences,
	University of Chinese Academy of Sciences, Beijing 100049, China.} \email{zp@amss.ac.cn}

\author[W. Zhao]{Weiren Zhao}
\address[W. Zhao]{Department of Mathematics, New York University Abu Dhabi, Saadiyat Island, P.O. Box 129188, Abu Dhabi, United Arab Emirates.}
\email{zjzjzwr@126.com, wz19@nyu.edu}

\date{\today}

\begin{abstract}
In this paper, we investigate the long-time behavior of solutions to the two-dimensional Navier-Stokes equations with initial data evolving under the influence of the planar Couette flow. We focus on general perturbations, which may be large and of low regularity, including singular configurations such as point vortices, and show that the vorticity asymptotically approaches a constant multiple of the fundamental solution of the corresponding linearized vorticity equation after a long-time evolution determined by the relative Reynolds number.
\end{abstract}

\maketitle

\section{Introduction}
The dynamics of shear flows occupy a central role in the study of fluid mechanics, both for their mathematical elegance and their practical significance in understanding the onset of turbulence. Among these, the planar Couette flow $(y,0)$ stands out as a classical example. While Couette flow is known to be linearly stable for all Reynolds numbers, the nonlinear dynamics around this flow remain a rich source of subtle and complex behaviors, especially regarding the interplay of stability, enhanced dissipation, and the potential emergence of turbulence.

Physically, shear flows are often observed to transition to turbulent states even when linear theory predicts stability — a phenomenon sometimes described as ``subcritical transition." In many cases, this transition is triggered by the amplification of certain perturbations, including streamwise vortices, streaks, and coherent structures, which are sustained by the non-normal nature of the linearized operator and nonlinear feedback mechanisms. Understanding the long-time fate of perturbations near shear flows, particularly in the presence of viscosity, is key to bridge the gap between mathematical stability and physical instability.

In this paper, we consider the two-dimensional Navier-Stokes equations in $\R^2$:
\begin{equation*}
(NS)\quad \left\{\begin{array}{l}
\displaystyle \pa_t v-\nu\Delta v +v\cdot\nabla v+\nabla p=0, \qquad (t,x,y)\in\R^+\times\R^2, \\
\displaystyle \dive v = 0, \\
\displaystyle  v|_{t=0}=v_{0}(x,y),
\end{array}\right.
\end{equation*}
where $v(t,x,y)=(v^1,v^2)$ denotes the velocity field of the viscous fluid, $p$ denotes the scalar pressure function, and $\nu$ designates the viscosity coefficient.
Let $\omega_1\eqdefa \p_x v^2-\p_y v^1$ be the vorticity of the viscous fluid. Then it follows from $(NS)$ that
\begin{equation}\label{eqs:w_1}
\quad \left\{\begin{array}{l}
\displaystyle \pa_t \om_1-\nu\Delta \omega_1 +v\cdot\nabla \omega_1=0, \qquad (t,x,y)\in\R^+\times\R^2, \\
\displaystyle  v = \nabla^\bot \Delta^{-1} \omega_1 =(-\p_y, \p_x)\Delta^{-1} \omega_1, \\
\displaystyle  \omega_1|_{t=0}=\omega_{1,0}(x,y).
\end{array}\right.
\end{equation}

To study the asymptotic stability of $(NS)$ near  Coutte flow $(y,0),$ we denote
$u\eqdefa v-(y,0)$ to be the perturbation of the velocity, which satisfies
\begin{equation}\notag
\quad \left\{\begin{array}{l}
\displaystyle \pa_t u-\nu\Delta u +y\p_x u +(u^2,0) +u\cdot\nabla u+\nabla p=0, \qquad (t,x,y)\in\R^+\times\R^2, \\
\displaystyle \dive u = 0, \\
\displaystyle  u|_{t=0}=u_{0}(x,y).
\end{array}\right.
\end{equation}
Then the associated vorticity $\omega =\p_x u^2-\p_y u^1=\omega_1+1$ satisfies
\begin{equation}\label{eqs:w}
\quad \left\{\begin{array}{l}
\displaystyle \pa_t \om-\nu\Delta \omega +y\p_x \omega +u\cdot\nabla \omega=0, \qquad (t,x,y)\in\R^+\times\R^2, \\
\displaystyle  u = \nabla^\bot \Delta^{-1} \omega =(-\p_y, \p_x)\Delta^{-1} \omega, \\
\displaystyle  \omega|_{t=0}=\omega_{0}(x,y).
\end{array}\right.
\end{equation}

\smallskip

The stability of Couette flow has been a prominent topic in fluid mechanics since the seminal works of Kelvin \cite{Kelvin1887}, Rayleigh \cite{Rayleigh1880}, Orr \cite{Orr1907}, and Sommerfeld \cite{Sommerfeld1908}. While Couette flow is known to be linearly stable at all Reynolds numbers, recent studies in the physics literature \cite{bottin1998experimental,couliou2017growth,dauchot1995streamwise,zametaev2016evolution} have explored potential instability mechanisms, such as streamwise vortices and turbulence spots. Mathematically, nonlinear stability is closely related to the regularity and magnitude of the initial perturbations.

\smallskip

Our main goal is to establish precise asymptotic stability results for the vorticity field, without requiring the initial perturbation to be small or regular. We show that, after a time scale determined by the relative Reynolds number, the vorticity evolves toward a self-similar diffusive structure, characterized by the fundamental solution of the linearized equation. This reveals that, even in the nonlinear regime, viscous diffusion and the shear-induced mixing collaborate to suppress instability mechanisms and ensure convergence to a Gaussian profile determined by the conserved vorticity mass.

\smallskip

\subsection{Main results}
To study the long-time behavior of the vorticity to the 2D Navier-Stokes equations near Couette flow without any size restriction for the initial data, we shall first establish the global well-posedness of the system \eqref{eqs:w} with initial data in $L^1(\R^2)$, which can be proved by classical approaches used to the global well-posedness of \eqref{eqs:w_1} with initial data in $L^1$ (see for instance \cite{ben1994global,brezis1994remarks,kato1994navier}). Before introducing the mild solution of \eqref{eqs:w}, we first consider its linearized equation:
\beq\label{S1eq1}
\p_t \omega -\nu \Delta \omega +y \p_x \omega=0.
\eeq
It's easy to compute by using the Fourier transform that the fundamental solution of \eqref{S1eq1}  is
\begin{equation}\label{Green's function}
G_L(t,x,y)=
\f1{4\pi \nu t{\bigl(1+\f{t^2}{12}\bigr)^{\f12}}}\exp{\Bigl(-\f{x^2}{4\nu t(1+\f{t^2}{3})}-\f{\bigl((1+\f{t^2}3)y-\f{t}2 x\bigr)^2}{4\nu t(1+\f{t^2}3)(1+\f{t^2}{12})}}\Bigr),
\end{equation}
which solves the equation \eqref{S1eq1}  with initial data $\delta$.

\begin{defi}\label{S1def1}
{\sl
We call $\omega(t)$ a mild solution of the system \eqref{eqs:w} on $(0,T)$ if
\begin{equation}
\omega(t)=S(t-t_0)\omega(t_0) -\int_{t_0}^t S(t-s)\dive (u(s)\omega(s))ds, \qquad \forall\ 0<t_0<t<T,
\end{equation}
where $S(t)$ is the semigroup defined as
\begin{equation}\label{def:S(t)}
[S(t) f](x,y)=\int_{\R^2} G_L(t,x-x'.y-y')f(x',y')dx'dy',
\end{equation}
with kernel given by \eqref{Green's function}.
}\end{defi}

\smallskip

Our first result is concerned with the well-posedness of \eqref{eqs:w} in $L^1$:
\begin{thm}\label{Thm1}
{\sl For any $\omega_0\in L^1(\R^2)$,  the equations \eqref{eqs:w}  has a unique global mild  solution
$$
\omega\in C\bigl([0,+\oo);L^1(\R^2)\bigr)\cap C\bigl((0,+\oo);L^\oo(\R^2)\bigr),
$$
so that $\|\omega(t)\|_{L^1}$ is non-increasing, and for all $1\leq p\leq +\oo$,
\begin{equation}\label{eq1.5}
\|\omega(t)\|_{L^p}\leq C (\nu t)^{\f1p-1}\|\omega_0\|_{L^1}.
\end{equation}
}\end{thm}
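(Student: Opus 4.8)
The plan is to adapt the classical fixed-point construction for the two-dimensional Navier--Stokes vorticity equation with $L^1$ data (as in \cite{ben1994global,brezis1994remarks,kato1994navier}) to the sheared setting, the only genuinely new ingredient being smoothing estimates for the semigroup $S(t)$ generated by the kernel $G_L$ in \eqref{Green's function}. Since $[S(t)f]=G_L(t,\cdot)\ast f$ is a genuine convolution with $\|G_L(t)\|_{L^1}=1$ and $G_L(t,\cdot)\ge0$, Young's inequality reduces everything to the $L^r$ norms of $G_L(t)$ and $\nabla G_L(t)$. Writing the exponent in \eqref{Green's function} as a quadratic form $-\f12 z^{\mathrm t}M(t)z$ with $z=(x,y)$, a direct computation gives $\det M(t)=\bigl(4\nu^2 t^2(1+\f{t^2}{12})\bigr)^{-1}$ (consistent with the prefactor), whence $\|G_L(t)\|_{L^r}\le C_r(\nu t)^{-(1-\f1r)}$; and although the shear makes $M(t)$ anisotropic, evaluating $\nabla G_L=-G_L\,M(t)z$ against the Gaussian shows $\|\nabla G_L(t)\|_{L^r}\le C_r(\nu t)^{-\f12-(1-\f1r)}$, i.e. exactly the scaling of the heat kernel. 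Moving the divergence onto the kernel then yields
\beq\notag
\|S(t)f\|_{L^q}\le C(\nu t)^{-(\f1p-\f1q)}\|f\|_{L^p},\qquad \|S(t)\dive F\|_{L^q}\le C(\nu t)^{-\f12-(\f1p-\f1q)}\|F\|_{L^p},\qquad 1\le p\le q\le\oo.
\eeq
I would also record the two-dimensional Biot--Savart bound $\|u\|_{L^q}\le C\|\om\|_{L^p}$ with $\f1q=\f1p-\f12$, $1<p<2$, from Hardy--Littlewood--Sobolev applied to $u=\nabla^\bot\D^{-1}\om$.

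With these in hand, fix $p\in(\f43,2)$ and work in
\beno
X_T\eqdefa\Bigl\{\om:\ \|\om\|_{X_T}\eqdefa\sup_{0<t<T}\|\om(t)\|_{L^1}+\sup_{0<t<T}(\nu t)^{1-\f1p}\|\om(t)\|_{L^p}<\oo\Bigr\}.
\eeno
Estimating the Duhamel term $\int_0^t S(t-s)\dive(u\om)\,ds$ by H\"older ($\|u\om\|_{L^m}\le\|u\|_{L^q}\|\om\|_{L^p}$, $\f1q=\f1p-\f12$, $\f1m=\f1q+\f1p$), followed by the semigroup and Biot--Savart bounds, produces the convergent Beta-function integral $\int_0^t(\nu(t-s))^{-\f1p}(\nu s)^{-2(1-\f1p)}\,ds$, the exponents integrating precisely because $1<p<2$, and leads to a bound of the schematic form $\|\om\|_{X_T}\le\|S(\cdot)\om_0\|_{X_T}+C\nu^{-1}\|\om\|_{X_T}^2$. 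The observation that makes this close \emph{without} any smallness of $\om_0$ is that for every $\om_0\in L^1$ one has $\lim_{t\to0^+}(\nu t)^{1-\f1p}\|S(t)\om_0\|_{L^p}=0$: splitting $\om_0=g+h$ with $g\in L^1\cap L^p$ and $\|h\|_{L^1}<\e$, the contraction $\|S(t)g\|_{L^p}\le\|g\|_{L^p}$ kills the first term and leaves $C\e$. Hence $\|S(\cdot)\om_0\|_{X_T}$ is arbitrarily small for $T$ small, the map is a contraction on a small ball, and the Banach fixed-point theorem gives a unique local solution; uniqueness within the class forced by $L^1$-continuity at $t=0$ (which propagates the same little-$o$ behavior) then follows, the delicate removal of this qualification for general large $L^1$ data being handled as in \cite{ben1994global,brezis1994remarks}.

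To globalize and to obtain \eqref{eq1.5} for arbitrary (large) data, I would rewrite \eqref{eqs:w} as the advection--diffusion equation $\p_t\om-\nu\D\om+b\cdot\nabla\om=0$ with drift $b\eqdefa(y,0)+u$, which is divergence free since $\dive(y,0)=0$ and $\dive u=0$. For $t>0$ the constructed solution is smooth and Gaussian-decaying, so the energy identities are justified and the drift term drops out, $\int b\cdot\nabla|\om|^p=\int(\dive b)|\om|^p=0$; together with Kato's inequality this gives the non-increase of $\|\om(t)\|_{L^1}$, and the identity $\f{d}{dt}\|\om\|_{L^2}^2=-2\nu\|\nabla\om\|_{L^2}^2$ combined with the two-dimensional Nash inequality and the bound $\|\om\|_{L^1}\le\|\om_0\|_{L^1}$ yields $\|\om(t)\|_{L^2}\le C(\nu t)^{-\f12}\|\om_0\|_{L^1}$. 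A standard iteration upgrades this to $\|\om(t)\|_{L^\oo}\le C(\nu t)^{-1}\|\om_0\|_{L^1}$ with a constant depending only on the dimension and \emph{independent of the divergence-free drift}; interpolating this $L^\oo$ bound with the non-increasing $L^1$ norm gives \eqref{eq1.5} for all $1\le p\le\oo$. These a priori bounds exclude blow-up for $t>0$, so the local solution extends globally, with $\om\in C([0,\oo);L^1)\cap C((0,\oo);L^\oo)$ by the strong continuity of $S(t)$ on $L^1$ and the smoothing for positive times.

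I expect the main obstacle to be the rigorous justification of the drift-form energy and Nash estimates: one must propagate enough spatial decay of $\om(t)$ (inherited from $G_L$) to integrate by parts against the \emph{unbounded} shear component $y\,\p_x\om$ and against $u\,\om$ with $u$ only in $L^q$, and to run the $L^2\to L^\oo$ iteration in the presence of the nonlinear drift; this is what delivers the clean universal constant in \eqref{eq1.5} for large data, since the fixed-point bound alone carries the unwanted factor $\nu^{-1}\|\om_0\|_{L^1}$. A secondary delicate point, already present for the unsheared equation, is uniqueness for large $L^1$ data, which here is secured by the little-$o$ behavior forced by $L^1$-continuity at $t=0$.
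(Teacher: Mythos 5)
Your proposal follows essentially the same route as the paper: Young-inequality smoothing bounds for the explicit kernel $G_L$, a Kato-type fixed point in a weighted $L^p$ space whose smallness as $T\to 0^+$ comes from density of $L^1\cap L^p$ in $L^1$, Brezis's compactness argument for uniqueness of large-data $L^1$ mild solutions, $L^1$ monotonicity via the maximum principle, and a Nash-type iteration exploiting the divergence-free drift to obtain \eqref{eq1.5} and globality. The only cosmetic differences are that the paper records the sharper kernel bounds $\|G_L(t)\|_{L^r}\lesssim (\nu t\langle t\rangle)^{-1+\f1r}$ (the extra $\langle t\rangle$ is not needed for \eqref{eq1.5}, only for the small-data remark), and it proves the $L^1$ monotonicity by evolving $\omega_0^{\pm}$ separately under the same drift rather than via Kato's inequality.
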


\begin{rmk}
{\sl We remark that the proof of the decay estimate \eqref{eq1.5}  does not use any influence from Couette flow.
 Yet for small initial data in the sense that
$$
\|\omega_0\|_{L^1}\leq \e_0 \nu
$$
with some small universal constant $\e_0$, the proof of Theorem \ref{Thm1} will imply the better decay
$$
\|\omega(t)\|_{L^p}\leq C \left(\nu t\tt\right)^{\f1p-1} \|\omega_0\|_{L^1}.
$$
This decay rate coincides with the fundamental solution \eqref{Green's function}.
}
\end{rmk}

For the long-time behavior of the solution to the system \eqref{eqs:w}, we aim to demonstrate that the solution of \eqref{eqs:w} approaches the solution of \eqref{S1eq1}. Precisely, noticing that the mass of vorticity is preserved,
$$
M(\omega(t))\eqdef \int_{\R^2} \omega(t,x,y)dxdy=M(\omega_0),
$$
we are going  to show  that $\omega(t)\rightarrow M(\omega_0)G_L$, as $t\rightarrow+\oo$.
Inspired by \eqref{Green's function}, we introduce the new variables:
\begin{equation}\label{coordinate}
X=\f{x}{\sqrt{\nu t(1+\f{t^2}3)}} \andf Y=\f{(1+\f{t^2}3)y-\f{t}2 x}{\sqrt{\nu t (1+\f{t^2}3)(1+\f{t^2}{12})}},
\end{equation}
and write
\begin{equation}\label{def:Om}
\omega(t,x,y)=\f{\Omega(t,X,Y)}{\nu t \sqrt{1+\f{t^2}{12}}} .
\end{equation}
Then it follows from a
direct calculations that
\begin{equation}\label{eq:Om}
t\p_t \Omega = \cL_t \Omega +\cN_t\Omega,
\end{equation}
where we denote
\beq\label{eq:Oma}
\begin{split}
\Delta_t\eqdefa &\bigl(1+\f{t^2}3\bigr)^{-1}\Bigl(\p_X-\f{t}2\bigl({1+\f{t^2}{12}}\bigr)^{-\f12}\p_Y\Bigr)^2
+\bigl({1+\f{t^2}3}\bigr)\bigl({1+\f{t^2}{12}}\bigr)^{-1} \p_Y^2,  \\
\cL_t\eqdefa &\Delta_t+\f12\bigl(1+\f{t^2}3\bigr)^{-1}\Bigl(X-\f{t}2\bigl(1+\f{t^2}{12}\bigr)^{-\f12}Y\Bigr)
\Bigl(\p_X-\f{t}2\bigl(1+\f{t^2}{12}\bigr)^{-\f12}\p_Y\Bigr)\\
&+\f12{\bigl(1+\f{t^2}3\bigr)}{\bigl(1+\f{t^2}{12}\bigr)^{-1}} Y\p_Y+\f{t}4{\bigl(1+\f{t^2}{12}\bigr)^{-\f12}} \f{9+t^2}{3+t^2} (X\p_Y-Y\p_X) \\ &+\f{12+2t^2}{12+t^2},\\
\cN_t\Omega\eqdefa &\nu^{-1} \bigl(1+\f{t^2}{12}\bigr)^{-1} \Bigl( \p_Y \Delta_t^{-1}\Omega
\Bigl(\p_X-\f{t}2\bigl({1+\f{t^2}{12}}\bigr)^{-\f12}\p_Y\Bigr) \Omega\\
&\qquad\qquad\qquad \ \ - \Bigl(\p_X-\f{t}2\bigl({1+\f{t^2}{12}}\bigr)^{-\f12}\p_Y\Bigr)\Delta_t^{-1}\Omega \p_Y \Omega\Bigr)\\
=&\nu^{-1} \bigl(1+\f{t^2}{12}\bigr)^{-1} \bigl( \p_Y \Delta_t^{-1}\Omega\p_X \Omega
- \p_X\Delta_t^{-1}\Omega \p_Y \Omega\bigr).
\end{split}\eeq

The coefficients in equation \eqref{eq:Om} look complicated. Yet the formal limit of \eqref{eq:Om} as $t$ going to infinity
becomes
$$
t\p_t \Omega= 4\p_Y^2\Omega+2Y\p_Y\Omega+2\Omega+\f{\sqrt{3}}2 (X\p_Y-Y\p_X) \Omega.
$$
This is the equation of the forward semigroup generated by the `Fokker-Planck' operator:
\begin{equation}\label{def:Fokker-Planck operator}
\cL_\oo \eqdefa 4\p_Y^2+2Y\p_Y+2+\f{\sqrt{3}}2 (X\p_Y-Y\p_X).
\end{equation}
(It becomes the standard Fokker-Planck operator after commutating with the Gaussian weight.) When one considers the operator on Gaussian weighted space $L^2(G)$ with inner product $\bigl( f \big| g \bigr)_{L^2(G)}=\int_{\R^2} f \bar{g} {G}^{-1}dX dY$,
it is well-known that the spectral for $\cL_\oo$ on $L^2(G)$ is $\sigma(\cL_\oo)=-\f\N2$ and the semigroup satisfies that for any $0<\alpha<\f12$,
\begin{equation}\label{eq1.11}
\|e^{t\cL_\oo} f - M(f)G\|_{L^2(G)}\leq C_\alpha e^{-\alpha t}\|f\|_{L^2(G)},  \qquad \text{for}\quad t\geq0,
\end{equation}
where $M(f)=\int f(X,Y)\, dX\,dY$ denotes the mass, and $G(X,Y)=\f1{4\pi}e^{-\f{X^2+Y^2}4}$ stands for the Gaussian function which is the kernel of $\cL_\oo$.

To study the asymptotic stability of the solution to  \eqref{eqs:w}, we introduce some weighted spaces. Given any $m\geq 0$, we introduce the inner product via
$$
\bigl( f\big|g\bigr)_{L^2(m)}=\int_{\R^2} f(X,Y) g(X,Y) \langle X,Y\rangle^{2m} \,dX\,dY,
$$
where $\langle X,Y\rangle\eqdefa \bigl(1+|X|^2+|Y|^2\bigr)^\f12$. When $m>1$, the space $L^2(m)$ can be embedded into $L^1(\R^2)$.
When $m\geq 3$, we shall prove a similar version of  \eqref{eq1.11} in the framework of $L^2(m)$:
\begin{equation}\label{eq1.11a}
\|e^{t\cL_\oo}f-M(f)G\|_{L^2(m)}\leq C_m e^{-\f{t}2}\|f-M(f)G\|_{L^2(m)}.
\end{equation}

Our second result is concerned with the estimate $\Omega(t)$ in the weighted space $L^2(m)$:

\begin{thm} \label{Thm2}
{\sl Let $1<m\leq m_0,$  $\delta>0$, and $\Omega(1)\in L^2(m_0)$. Then the unique solution $\Omega$ of \eqref{eq:Om} satisfies
\begin{equation}\label{eq1.12}
\|\Omega(t)\|_{L^2(m)}\leq C_{m,m_0,\delta} \bigl( 1 +{\nu}^{-1}{\|\Omega(1)\|_{L^2(m_0)}}\bigr)^{m} \tt^{\f12 +\f{m}{m_0}\left(\f12+\delta\right)}\|\Omega(1)\|_{L^2(m_0)}.
\end{equation}
Moreover, given $0<\sigma<\f12$, for any $|(a,b)|\leq 2$ and $t\geq T_0$ for some universal time $T_0$, there holds
\begin{equation}
\begin{aligned}\label{eq1.14}
\|\p_X^a\p_Y^b\Omega(t)\|_{L^2(m)}
\leq &C_{m,m_0,\delta,\sigma} \bigl( 1 +{\nu}^{-1}{\|\Omega(1)\|_{L^2(m_0)}}\bigr)^{m+\f{2(3a+b)(m+1)}{1+2\sigma}} \\
&\qquad\times\tt^{\f12 +\f{m(1+2\delta)}{m_0}\left(\f12+\f{3a+b}{1+2\sigma}\right)}\|\Omega(1)\|_{L^2(m_0)}.
\end{aligned}
\end{equation}
}\end{thm}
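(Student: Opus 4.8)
The plan is to reduce the whole family of bounds \eqref{eq1.12} to two endpoint cases and interpolate. The elementary observation is that Hölder's inequality applied to the weight $\langle X,Y\rangle^{2m}=\bigl(\langle X,Y\rangle^{2m_0}\bigr)^{m/m_0}$ gives, for $0\leq m\leq m_0$,
\[
\|f\|_{L^2(m)}\leq \|f\|_{L^2}^{1-\f{m}{m_0}}\,\|f\|_{L^2(m_0)}^{\f{m}{m_0}}.
\]
Thus \eqref{eq1.12} follows once I establish the unweighted endpoint $\|\Omega(t)\|_{L^2}\lesssim \tt^{\f12}\|\Omega(1)\|_{L^2(m_0)}$ together with the top-weight endpoint $\|\Omega(t)\|_{L^2(m_0)}\lesssim \bigl(1+\nu^{-1}\|\Omega(1)\|_{L^2(m_0)}\bigr)^{m_0}\tt^{1+\delta}\|\Omega(1)\|_{L^2(m_0)}$; inserting these into the interpolation reproduces \emph{exactly} the exponent $\f12+\f{m}{m_0}(\f12+\delta)$ and the power $\bigl(1+\nu^{-1}\|\Omega(1)\|_{L^2(m_0)}\bigr)^m$. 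The unweighted endpoint is the easy one: translating the $L^2$-decay \eqref{eq1.5} of Theorem \ref{Thm1} back through the change of variables \eqref{coordinate}--\eqref{def:Om}, whose Jacobian is $\nu t(1+\f{t^2}{12})^{\f12}$, turns $\|\omega(t)\|_{L^2}\lesssim (\nu t)^{-\f12}\|\omega_0\|_{L^1}$ into $\|\Omega(t)\|_{L^2}\lesssim \bigl(1+\f{t^2}{12}\bigr)^{\f14}\|\Omega(1)\|_{L^1}\lesssim \tt^{\f12}\|\Omega(1)\|_{L^2(m_0)}$, using the embedding $L^2(m_0)\hookrightarrow L^1$ for $m_0>1$.

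The heart of the matter is the top-weight endpoint, which I would obtain by a weighted energy estimate run in the logarithmic time $\tau=\log t$. Pairing \eqref{eq:Om} with $\Omega\langle X,Y\rangle^{2m_0}$ gives
\[
\f12 t\f{d}{dt}\|\Omega\|_{L^2(m_0)}^2 = \bigl(\cL_t\Omega\big|\Omega\bigr)_{L^2(m_0)}+\bigl(\cN_t\Omega\big|\Omega\bigr)_{L^2(m_0)}.
\]
For the linear term I would exploit that $\cL_t\to\cL_\oo$, so that its quadratic form splits into the (degenerate) dissipation $-\|\nabla_t\Omega\|^2$ from $\Delta_t$, the transport pieces $Y\p_Y$ and $X\p_Y-Y\p_X$, and the bounded zeroth-order coefficient $\f{12+2t^2}{12+t^2}$; integrating by parts and isolating the commutators of these first-order operators with the polynomial weight produces only lower-order terms, absorbed by the dissipation and the norm itself, at the price of the arbitrarily small exponent loss $\delta$ in the borderline contribution. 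For the nonlinear term I would write $\cN_t\Omega=\nu^{-1}(1+\f{t^2}{12})^{-1}u_t\cdot\nabla\Omega$ with $u_t=\nabla^\bot\Delta_t^{-1}\Omega$ divergence free (up to sign), so that integrating by parts against the weight,
\[
\bigl(\cN_t\Omega\big|\Omega\bigr)_{L^2(m_0)}=-\f{\nu^{-1}}2\bigl(1+\tfrac{t^2}{12}\bigr)^{-1}\int_{\R^2}\Omega^2\,u_t\cdot\nabla\langle X,Y\rangle^{2m_0}\,dXdY,
\]
reducing matters to controlling $\int \Omega^2\langle X,Y\rangle^{2m_0-1}|u_t|$ through the (anisotropic) Biot--Savart law for $\Delta_t^{-1}$ together with the already-established $L^2$ bound. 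Here the prefactor $\nu^{-1}(1+\f{t^2}{12})^{-1}\sim \nu^{-1}t^{-2}$ is decisive: in the $\tau$-variable it renders the nonlinear forcing integrable, so that a Grönwall/bootstrap argument converts it into the multiplicative amplification $\bigl(1+\nu^{-1}\|\Omega(1)\|_{L^2(m_0)}\bigr)^{m_0}$.

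Finally, the derivative estimates \eqref{eq1.14} follow by running the same weighted energy scheme on $\p_X^a\p_Y^b\Omega$, controlling the commutators $[\p_X^a\p_Y^b,\cL_t]$ and $[\p_X^a\p_Y^b,\cN_t]$, and interpolating once more between $L^2(m_0)$ and $L^2$. The new feature---and the main obstacle throughout---is that the dissipation $\Delta_t$ degenerates in the $X$-direction as $t\to\oo$ (formally $\Delta_t\to 4\p_Y^2$), so it directly controls only $\p_Y$; recovering $\p_X$-regularity must pass through the hypocoercive commutator of $\p_X$ with the rotation term $X\p_Y-Y\p_X$ of $\cL_t$, which trades one unit of $\p_Y$-control for $\p_X$-control but costs a power of $t$. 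Bookkeeping this trade-off is precisely what forces the anisotropic weighting $3a+b$ on the derivative order and the small loss $\sigma$ in \eqref{eq1.14}, and also dictates the threshold time $T_0$. The genuinely delicate point is to keep the nonlinear term under control while the Biot--Savart operator $\Delta_t^{-1}$ degenerates; this is feasible only because the prefactor $\nu^{-1}(1+\f{t^2}{12})^{-1}$ decays quadratically, and making this quantitative uniformly across all $|(a,b)|\leq 2$ is where the bulk of the technical work lies.
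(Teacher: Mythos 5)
Your plan for \eqref{eq1.12} has the same architecture as the paper's proof (the interpolation $\|f\|_{L^2(m)}\leq\|f\|_{L^2}^{1-\f{m}{m_0}}\|f\|_{L^2(m_0)}^{\f{m}{m_0}}$ is Corollary \ref{col3.1}, the unweighted endpoint is \eqref{eq3.1a}, and the top-weight endpoint is Proposition \ref{prop3.1}), but the step you call decisive is wrong, and it is precisely the step that produces the factor $\bigl(1+\nu^{-1}\|\Omega(1)\|_{L^2(m_0)}\bigr)^{m_0}$. You assert that the prefactor $\nu^{-1}\bigl(1+\f{t^2}{12}\bigr)^{-1}\sim\nu^{-1}t^{-2}$ makes the nonlinear forcing integrable in $\tau=\log t$, so that Gr\"onwall converts it into the polynomial amplification. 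It does not: in the variables $(X,Y)$ the Biot--Savart field grows in time. By Lemma \ref{lem:BS law} combined with \eqref{eq3.1a}--\eqref{eq3.1b} one has $\|\p_X\Delta_t^{-1}\Omega\|_{L^\oo}\lesssim \tt^{2}\,\|\Omega(1)\|_{L^2(m_0)}$ (and $\|\p_Y\Delta_t^{-1}\Omega\|_{L^\oo}\lesssim\tt\,\|\Omega(1)\|_{L^2(m_0)}$), so the worst part of your term $\nu^{-1}\tt^{-2}\int\Omega^2\,u_t\cdot\nabla\langle X,Y\rangle^{2m_0}$ carries the effective coefficient $\nu^{-1}\|\Omega(1)\|_{L^2(m_0)}$, which is \emph{constant} in $\tau$: the $\tt^{-2}$ decay of the prefactor is exactly cancelled by the $\tt^{2}$ growth of the velocity. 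If you nonetheless run Gr\"onwall with this coefficient multiplying the top norm, you get growth $t^{C\nu^{-1}\|\Omega(1)\|_{L^2(m_0)}}$; and even if the coefficient were integrable, Gr\"onwall would only yield an exponential factor $e^{C\nu^{-1}\|\Omega(1)\|_{L^2(m_0)}}$. Neither is compatible with the claimed bound $\tt^{1+\delta}\bigl(1+\nu^{-1}\|\Omega(1)\|_{L^2(m_0)}\bigr)^{m_0}\|\Omega(1)\|_{L^2(m_0)}$, so as written your proof of the top-weight endpoint fails for large data.

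The missing idea is that the large coefficient must never touch the top norm. After the integration by parts, the nonlinear contribution is bounded by $\nu^{-1}\|\Omega(1)\|_{L^2(m_0)}\int a^{2m_0-1}\Omega^2$ (with $a=\sqrt{X^2+Y^2}$), and the paper applies Young's inequality in the \emph{weight exponent}, $\nu^{-1}\|\Omega(1)\|_{L^2(m_0)}\,a^{2m_0-1}\leq \delta\,a^{2m_0}+C_\delta\bigl(\nu^{-1}\|\Omega(1)\|_{L^2(m_0)}\bigr)^{2m_0}$, so that the top norm $\|a^{m_0}\Omega\|_{L^2}^2$ only sees the small coefficient $\delta$ (this is the source of the $\tt^{1+\delta}$ growth), while the Reynolds factor, now raised to the power $2m_0$, multiplies only $\|\Omega\|_{L^2}^2\lesssim\tt\,\|\Omega(1)\|_{L^2(m_0)}^2$, which is controlled independently through Theorem \ref{Thm1}; see \eqref{estimates:NL}. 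This is exactly where the power $m_0$ (hence $m$ after interpolation) originates, and your proposal contains no mechanism generating it. Two lesser points concerning \eqref{eq1.14}: the $\p_X$-control is gained from the commutator of $\p_Y$, not $\p_X$, with the rotation, since $[\p_Y,X\p_Y-Y\p_X]=-\p_X$, fed into the cross term $\bigl(\p_X\Omega\big|\p_Y\Omega\bigr)_{L^2(m)}$; and in the paper the smallness needed to close the nonlinear terms is produced not by negative powers of $t$ but by powers of $\ln({t}/{t_0})$ in an energy functional run on intervals $[t_0,2t_0]$ with a continuity/bootstrap argument (Proposition \ref{S3prop3}), a quantitative device your sketch would need in order to reach the stated exponents $3a+b$.
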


\begin{rmk}
{\sl When we take $m_0$ to be large enough, $\delta$ to be small enough and $\sigma$ close enough to $\f12$, these estimates show that the $H^2(m)$ norm of $\Omega(t)$ is bounded by the nearly $\tt^\f12$ time growth multiplying some power of relative Reynolds number $\nu^{-1}\|\Omega(1)\|_{L^2(m_0)}$.
}\end{rmk}

\begin{rmk}
{\sl In \eqref{eq1.14}, we only present the estimates of the higher regularities for large time $t\geq T_0$, which is enough to study the long-time behavior. It will be much easier to gain regularities for finite time $t\leq T_0$, since the coefficients in $\cL_t$ do not vanish.
}\end{rmk}

With such a time growth estimate for $\Omega(t),$ our third result is to prove that the solution of \eqref{eq:Om} will approach the equilibrium:
\begin{thm}\label{Thm3}
{\sl Let $3\leq m,$  $m_0>\f72m$, and $\Omega(1)\in L^2(m_0)$. For any $\epsilon>0$, the solution $\Omega(t)$ of \eqref{eq:Om} satisfies
\begin{equation}\label{eq1.13}
\|{\Omega}(t)-M(\Omega(1))G\|_{L^2(m)} \leq C_{m,m_0,\epsilon} t^{-\f12} \bigl(  1 +\nu^{-1}\|\Omega(1)\|_{L^2(m_0)}\bigr)^{\f{8(m+1)}{1-\f{7m}{2m_0}}-1+\epsilon}\|\Omega(1)\|_{L^2(m_0)},
\end{equation}
for all $t\geq T_1=C_{m,m_0,\epsilon} \bigl( 1 +\nu^{-1}\|\Omega(1)\|_{L^2(m_0)}\bigr)^{\f{7(m+1)}{1-\f{7m}{2m_0}}+\epsilon}$.
}
\end{thm}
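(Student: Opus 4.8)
The plan is to work in the logarithmic time $\tau=\log t$, so that $t\p_t=\p_\tau$, and to subtract the expected equilibrium. The change of variables \eqref{coordinate}--\eqref{def:Om} has Jacobian $dX\,dY=(\nu t\sqrt{1+t^2/12})^{-1}dx\,dy$, from which $M(\Omega(t))=M(\omega(t))=M(\Omega(1))$ for every $t$; write $M\eqdef M(\Omega(1))$. Since $\cN_t$ is a Jacobian (hence of divergence form) and the first- and second-order pieces of $\cL_t$ integrate to zero, both operators conserve mass, so with $w\eqdef\Omega-MG$ one has $M(w(\tau))=0$ for all $\tau$ (using $M(G)=1$). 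Because $\cL_\oo G=0$, the profile $w$ satisfies
\[
\p_\tau w=\cL_\oo w+R,\qquad R\eqdef(\cL_t-\cL_\oo)\Omega+\cN_t\Omega,\quad t=e^\tau,
\]
and $M(R(\tau))=0$ for all $\tau$.

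Next I would run Duhamel from the starting time $\tau_0=\log T_1$, namely $w(\tau)=e^{(\tau-\tau_0)\cL_\oo}w(\tau_0)+\int_{\tau_0}^\tau e^{(\tau-s)\cL_\oo}R(s)\,ds$. Since $w(\tau_0)$ and each $R(s)$ have zero mass, the spectral-gap estimate \eqref{eq1.11a} applies directly to every factor $e^{(\tau-s)\cL_\oo}$ and produces the weight $e^{-(\tau-s)/2}$, giving
\[
\|w(\tau)\|_{L^2(m)}\leq C_m e^{-\f{\tau-\tau_0}2}\|w(\tau_0)\|_{L^2(m)}+C_m\int_{\tau_0}^\tau e^{-\f{\tau-s}2}\|R(s)\|_{L^2(m)}\,ds.
\]

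The heart of the matter is to show that $\|R(s)\|_{L^2(m)}$ decays strictly faster than the gap rate $e^{-s/2}$. Taylor-expanding the rational and square-root coefficients of $\cL_t$ as $t\to\oo$ shows every coefficient of $\cL_t-\cL_\oo$ is $O(t^{-2})$, the binding contribution being the $\p_X^2$-term, whose coefficient is $(1+t^2/3)^{-1}\sim 3t^{-2}$; the remaining pieces cost at most one extra power of $\langle X,Y\rangle$ or fewer derivatives. Bounding $\|\p_X^2\Omega\|_{L^2(m)}$ by Theorem \ref{Thm2} (taking $(a,b)=(2,0)$, so $3a+b=6$, and sending $\sigma\to\f12$) yields the growth exponent $\f12+\f{7m}{2m_0}(1+2\delta)$, hence $\|(\cL_t-\cL_\oo)\Omega\|_{L^2(m)}\lesssim t^{-3/2+\theta}$ with $\theta\eqdef\f{7m}{2m_0}(1+2\delta)$. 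The bilinear term $\cN_t\Omega$ carries the prefactor $\nu^{-1}(1+t^2/12)^{-1}\sim t^{-2}$ and needs only one derivative of $\Omega$ together with the (nonlocal, order $-1$) velocity $\nabla^\perp\Delta_t^{-1}\Omega$, so a weighted Biot--Savart/product estimate makes it no more restrictive. It is exactly here that the hypothesis $m_0>\f72 m$ enters: it forces $\theta<1$, i.e. $-\f32+\theta<-\f12$, so that $\|R(s)\|_{L^2(m)}\lesssim (1+\nu^{-1}\|\Omega(1)\|_{L^2(m_0)})^{N}e^{(-\f32+\theta)s}\|\Omega(1)\|_{L^2(m_0)}$ for a suitable power $N$.

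Finally I would close the estimate. With $\theta<1$ the convolution obeys $\int_{\tau_0}^\tau e^{-\f{\tau-s}2}e^{(-\f32+\theta)s}ds=e^{-\f\tau2}\int_{\tau_0}^\tau e^{(-1+\theta)s}ds\leq\f{1}{1-\theta}e^{-\f\tau2}e^{(-1+\theta)\tau_0}$, so the integral term inherits the clean rate $e^{-\tau/2}=t^{-1/2}$, the factor $(1-\theta)^{-1}=(1-\f{7m}{2m_0})^{-1}+O(\delta)$ accounting for the denominators of the exponents in \eqref{eq1.13}. The first Duhamel term equals $t^{-1/2}T_1^{1/2}\|w(\tau_0)\|_{L^2(m)}$, which I bound via \eqref{eq1.12} evaluated at time $T_1$; choosing $T_1=C_{m,m_0,\epsilon}(1+\nu^{-1}\|\Omega(1)\|_{L^2(m_0)})^{\f{7(m+1)}{1-7m/(2m_0)}+\epsilon}$ makes the accumulated powers of the relative Reynolds number $\nu^{-1}\|\Omega(1)\|_{L^2(m_0)}$ from \eqref{eq1.12}--\eqref{eq1.14} coalesce into the single factor $(1+\nu^{-1}\|\Omega(1)\|_{L^2(m_0)})^{\f{8(m+1)}{1-7m/(2m_0)}-1+\epsilon}$ of \eqref{eq1.13}, valid for $t\geq T_1$. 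I expect the main obstacle to be the weighted bilinear estimate for $\cN_t\Omega$: the velocity is recovered through $\Delta_t^{-1}$, whose leading part $4\p_Y^2$ degenerates as $t\to\oo$, so uniform elliptic regularity is unavailable and one must exploit the anisotropic mixing structure to control $\p_X\Delta_t^{-1}\Omega$ and $\p_Y\Delta_t^{-1}\Omega$ in $L^2(m)$; a secondary difficulty is the careful bookkeeping of the Reynolds powers through Theorem \ref{Thm2} and the choice of $T_1$ so that the rate is genuinely $t^{-1/2}$ while the prefactor matches the stated exponent.
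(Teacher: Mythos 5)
Your skeleton — passing to $\tau=\ln t$, subtracting $M G$, running Duhamel from $\tau_0=\ln T_1$ with the spectral-gap bound \eqref{eq1.11a}, and controlling the commutator error $(\cL_t-\cL_\oo)\tilde{\Omega}$ by the $O(t^{-2})$ coefficients together with the derivative bounds of Theorem \ref{Thm2} — is exactly the paper's strategy, and your accounting for the linear error (exponent $-\f32+\theta$, $\theta=\f{7m}{2m_0}(1+2\delta)$, with $m_0>\f72 m$ forcing $\theta<1$) matches \eqref{eq4.12}. The fatal gap is your claim that the bilinear term is ``no more restrictive.'' It is the whole difficulty. Since $\Omega(s)\to M G\neq 0$, the quadratic term $\cN_s\Omega$ does not inherit decay from $\Omega$: plugging the growth bounds of Theorem \ref{Thm2} and the Biot--Savart estimates \eqref{eq:lem3.1a} or \eqref{eq3.6} directly gives, for the worst combination,
\begin{equation*}
\nu^{-1}\langle s\rangle^{-2}\,\|\p_X\Delta_s^{-1}\Omega\|_{L^\oo}\,\|\p_Y\Omega\|_{L^2(m)}
\ \lesssim\ \nu^{-1}\langle s\rangle^{-2}\cdot \langle s\rangle^{2}\cdot \langle s\rangle^{\f12+\f{m(1+2\delta)}{m_0}\left(\f12+\f1{1+2\sigma}\right)},
\end{equation*}
which \emph{grows} in $s$; even the cross term $\p_X\Delta_s^{-1}G\,\p_Y\tilde{\Omega}$ gives only $O\bigl(s^{-\f12+\sigma_1+\f{3m}{2m_0}+}\bigr)$ if one uses Theorem \ref{Thm2} alone. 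Since the Duhamel convolution closes at rate $t^{-\f12}$ only if $\|R(s)\|_{L^2(m)}\lesssim s^{-\f12-\gamma}$ for some $\gamma>0$, your scheme as written does not produce \eqref{eq1.13}.

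What the paper does instead, and what your proposal is missing, is twofold. First, it expands $\cN_s(\tilde{\Omega}+\alpha G)$ bilinearly into a $G$--$G$ part ($M_1$, harmless since $G$ is a fixed profile), two cross parts ($M_2,M_3$) and a fully nonlinear part ($M_4$). Second, and crucially, it runs a bootstrap: every occurrence of an undifferentiated $\tilde{\Omega}$ is estimated by $\|\tilde{\Omega}(s)\|_{L^2(m)}\leq s^{-\f12}\sup_{s}\bigl(s^{\f12}\|\tilde{\Omega}(s)\|_{L^2(m)}\bigr)$ (after interpolations such as $\|\p_Y\tilde{\Omega}\|_{L^2(m)}\leq C\|\p_Y^2\tilde{\Omega}\|_{L^2(m)}^{\f12}\|\tilde{\Omega}\|_{L^2(m)}^{\f12}$), so that $M_2,M_3,M_4$ are bounded by $\f18,\f18,\f14$ times the bootstrap norm and absorbed into the left-hand side. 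The decay of $\tilde{\Omega}$ is thus an input as well as the output; a self-improvement argument of this kind (or an equivalent fixed point) cannot be avoided here. This also corrects your account of $T_1$: it is not chosen to make the Reynolds-number powers ``coalesce,'' but is forced by the absorption requirement for the fully nonlinear term $M_4$ — the two smallness conditions below \eqref{eq4.18} determine exactly the exponent $\f{7(m+1)}{1-\f{7m}{2m_0}}+\epsilon$ — while the exponent $\f{8(m+1)}{1-\f{7m}{2m_0}}-1+\epsilon$ in \eqref{eq1.13} then comes from evaluating the initial-data term $\langle t_0\rangle^{1+\f{m}{m_0}(\f12+\delta)}$ at $t_0=T_1$.
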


\begin{rmk}
{\sl Noticing that $L^2(m)\hookrightarrow L^1$ for $m>1$, we deduce from \eqref{coordinate} and \eqref{eq1.13} that
$$
\|\omega(t)-M(\omega(1))G_L(t) \|_{L^1} =O(t^{-\f12}),
$$
which means that the solution of \eqref{eqs:w} returns to the mass multiplying $G_L$ in $L^1$ with the optimal decay rate $t^{-\f12}$.
}\end{rmk}

\begin{rmk}\label{rmk1.4}
{\sl Although we prove the convergence rate in \eqref{eq1.13} after $T_1$, this estimate will only be powerful  after $T_2=C_{m,m_0,\epsilon} \bigl( 1 +\nu^{-1}\|\Omega(1)\|_{L^2(m_0)}\bigr)^{\f{16(m+1)}{1-\f{7m}{2m_0}}-2+2\epsilon}$ when the right hand side of \eqref{eq1.13} becomes smaller than $\|\Omega(1)\|_{L^2(m_0)}$. When we take $m_0$ large enough and $m=3$, this time is nearly $T_2\approx \bigl( 1 +\nu^{-1}\|\Omega(1)\|_{L^2(m_0)}\bigr)^{62}$. We believe that such  a power is far from being sharp.
}
\end{rmk}

\begin{rmk}
{\sl In both Theorem \ref{Thm2} and Theorem \ref{Thm3}, we always assume the  initial data belongs to $L^2(m_0)$ at time $t=1$, because the change of coordinate \eqref{coordinate} is singular near $t=0$. If we assume $\Omega(0)\in L^2(m_0)$, one may expect $\omega_0$ is a Dirac measure supported at the origin from \cite{gallay2005global}, since Couette flow needs time to influence the perturbation. Therefore, as we are focusing on the long-time behavior, putting initial data at a positive time can include more general cases.
}\end{rmk}

\subsection{Previous literature and some comparison}
We now review some related literature and point out the connections with this work.

\begin{itemize}
\item {\it Global well-posedness of the 2-D Navier-Stokes equations:}
\end{itemize}

The global well-posedness of $(NS)$ with initial velocity field in $L^2(\R^2)$ is well-known since Leray's celebrated work \cite{leray1933etude}. In \cite{kato1984strong}, Kato proved that d-dimensional Navier-Stokes equations on $\R^d(\R^d)$ are locally well-posed for arbitrary $L^d$ initial data, and globally well-posed for sufficiently small initial data. These results exploit the scaling invariant of $(NS)$, given by $u(t,x,y)\rightarrow \lambda u(\lambda^2 t,\lambda x,\lambda y)$, which makes $L^\infty(\R^+; L^d(\R^d))$  scaling invariant functional space.

 One speciality of $2D$ Navier-Stokes is that its vorticity, a scalar quantity, satisfies the transport-diffusion equation \eqref{eqs:w_1}. A natural scale-invariant space for the vorticity is $L^\infty(\R^+;L^1(\R^2))$. The Cauchy problem of \eqref{eqs:w_1} with
  initial vorticity in $L^1$ was studied in \cite{ben1994global,brezis1994remarks,kato1994navier}, where results similar to Leray's and Kato's theorems for velocity were obtained. However, $L^1(\R^2)$ is not the largest scaling invariant space for vorticity: Cottet in \cite{cottet1986equations} and Giga, Miyakawa and Osada in \cite{giga1988two} independently established the existence of solutions to \eqref{eqs:w_1} with finite measure initial data in $\cM(\R^2)$. The uniqueness in $\cM(\R^2)$ is more challenging; following partial results in \cite{giga1988two,kato1994navier,gallay2005global,gallagher2005dirac}, Gallagher and Gallay proved the uniqueness in $\cM(\R^2)$ in \cite{gallagher2005uniqueness}.

In this paper, we follow the approach of  \cite{ben1994global,brezis1994remarks,kato1994navier} to establish Theorem \ref{Thm1}. First, by using Kato's method and Brezis's observations, we shall prove local existence and uniqueness of solution to \eqref{eqs:w} with $L^1$ initial data. Then, a standard argument based on the maximum principle and the conservation of mass $M(\omega)$ implies that the $L^1$ norm of the vorticity is non-increasing with respect to time, which leads to a global existence solution to \eqref{eqs:w}. The estimate \eqref{eq1.5} follows from  Nash-Moser iteration, which exploits the divergence-free structure to cancel all the nonlinear terms.

An interesting question is whether \eqref{eqs:w} is well-posed in the space $\cM(\R^2)$ of finite measures. With slight modifications of Theorem \ref{Thm1}, one can easily obtain the existence and uniqueness for initial data with some smallness condition on the atomic part, as in \cite{kato1994navier}. However, the well-posedness (especially, uniqueness) for \eqref{eqs:w} with general initial data in $\cM(\R^2)$ seems more complicated. We conjecture that such a result holds, as Couette flow is unlikely to have a significant impact over very short time scales.

\begin{itemize}
\item {\it Long-time behaviour of the 2-D Navier-Stokes equations:}
\end{itemize}

Many previous studies have focused on proving that the solutions of \eqref{eqs:w_1} in $L^1$ asymptotically behave like the solutions of the linear heat equation with the same initial data. Giga and Kambe \cite{giga1988large} established such a result, and another approach using ideas from dynamical systems can be found in \cite{gallay2002invariant}. In \cite{carpio1994asymptotic}, Carpio observed a deep connection between this asymptotic behavior and the uniqueness of the fundamental solution for \eqref{eqs:w_1}. Finally, Gallay and Wayne \cite{gallay2005global} proved this approximation for arbitrary initial data in $L^1$.

The approach in \cite{gallay2005global} begins by introducing the self-similar variables and transforming \eqref{eqs:w_1} into the following form:
\begin{equation}\label{eq1.15}
t\p_t \Omega_1 +V\cdot \nabla_\xi \Omega_1=\cL \Omega_1, \with \cL=\Delta_\xi+\f{\xi}2\cdot \nabla_\xi +1 \andf V=\nabla^\perp_\xi \Delta_\xi^{-1}\Omega_1.
\end{equation}
Noticing this is an autonomous equation with the Gaussian function $G$ as a steady state, the authors constructed a pair of Lyapunov functions to establish
$$
\lim_{t\rightarrow +\oo}\|\Omega_1(t)-M(\Omega_1) G\|_{L^1}=0.
$$
Moreover, they showed that if $\Omega_1(1)$ belongs to $L^2(m)$ for some $m>2$, then the Oseen vortices $G$ attract all the solutions in $L^2(m)$ at the rate $\|\Omega_1(t)-M(\Omega_1) G\|_{L^2(m)}=O(t^{-\f12})$.

The primary goal of this work is to extend the results of \cite{gallay2005global} to the case of \eqref{eqs:w} with Couette flow. As a first step, we construct a new 'self-similar' variable based on the explicit formula of the fundamental solution $G_L$ of the linear equation \eqref{S1eq1}, which captures the dominant scale of Couette flow at large times. Compared with \eqref{eq1.15}, the equation \eqref{eq:Om} in our setting contains many complicated coefficients depending on time, which makes it very difficult to obtain some results for general $L^1$ initial data with the Lyapunov method.

On the other hand, in \eqref{eq:Om}, the coefficient before the nonlinear parts is of order $\tt^{-2}$, which introduces some smallness when $t$ is sufficiently large. This key observation marks a fundamental difference between our Theorem \ref{Thm3} and the results of \cite{gallay2005global}: in our case, the estimate \eqref{eq1.13} is quantitative, meaning it holds explicitly after a long but finite time $T_2$. In contrast, in \cite{gallay2005global}, the $t^{-\f12}$ decay emerges only after some unspecified time when $\|\Omega_1(t)-M(\Omega_1) G\|_{L^2(m)}$ becomes sufficiently small, without an explicit estimate for when this occurs.

\begin{itemize}
\item {\it Stability near 2-D Couette flow:}
\end{itemize}

During recent years, several stability mechanisms have been established to explain the asymptotic stability of \eqref{eqs:w}, including the enhanced dissipation and the inviscid damping, see \cite{albritton2022enhanced, BM2015, CWZZ2025, IJ2018, IJ2020, MasmoudiZhao2020, zhao2025} and references therein.
Mathematically, Bedrossian, Germain and Masmoudi \cite{BGM2017} formulated the following so-called transition threshold problem for \eqref{eqs:w} with small viscosity $\nu>0$,

{\it
Given a norm $\|\cdot\|_X$, find a $\beta=\beta(X)$ so that
\begin{itemize}
\item[] $\|u_{0}\|_{X}\leq \nu^\beta$ $\Rightarrow$ stability, enhanced dissipation and inviscid damping,
\item[] $\|u_{0}\|_X \gg \nu^\beta$ $\Rightarrow$ instability.
\end{itemize}
We then call $\beta$ the transition threshold.
}

For 2-D Couette flow on the domain $\T_x\times\R_y$, the following important results about the transition threshold of 2-D Couette flow are known:
\begin{itemize}
\item If $X$ is Gevrey class $2_-$, then \cite{BMV2016} shows $\beta\leq 0$, and \cite{DM2023} showed $\beta\geq 0$.
\item If $X$ is Sobolev space $H^{\log}_xL^2_y$, then \cite{BVW2018,MasmoudiZhao2020cpde} showed $\beta\leq \f12$ and \cite{LiMasmoudiZhao2022critical} showed $\beta\geq \f12$.
\item If $X$ is Sobolev space $H^\sigma$ ($\sigma\geq 2$), then \cite{MasmoudiZhao2019,wei2023nonlinear} showed $\beta\leq \f13$.
\item If $X$ is Gevrey class $\f1s$ with $s\in[0,\f12]$, then \cite{LMZ2022G} showed $\beta\leq \f{1-2s}{3(1-s)}$.
\end{itemize}

Very recent works \cite{wang2024transition}, \cite{arbon2024}, and \cite{li2025stability} have extended the transition threshold problem of \eqref{eqs:w} to the whole plane $\R^2$. In particular, Li, the first author and the third author proved in \cite{li2025stability} that under smallness of (where $m>\f12$ and $\epsilon>0$)
\begin{equation}\label{smallness1}
\|\langle D_x\rangle^m \langle \f1{D_x}\rangle^\epsilon \omega_{0} \|_{L^2}\leq \e \nu^\f12,
\end{equation}
then the solution of \eqref{eqs:w} exhibits enhanced dissipation and inviscid damping. Moreover, they established the sharp transition threshold $\f13$ in certain Fourier-weighted Sobolev spaces.

Our Theorem \ref{Thm3} demonstrates that the solution asymptotically behaves like the fundamental solution \eqref{Green's function}, which naturally satisfies enhanced dissipation and inviscid damping. This suggests a deep connection between our results and the transition threshold. For example, if we want the estimate \eqref{eq1.13} of Theorem \ref{Thm3} to cover all the time after a large time (the $T_2$ in Remark \ref{rmk1.4}) independent of $\nu$, we need the initial data to be $\nu$-small, i.e. $\|\Omega(1)\|_{L^2(m_0)}\leq C\nu$. Expressing this in the original variables via \eqref{coordinate}, we obtain
$$
\|\omega(1)\|_{L^2}= \sqrt[4]{12} \nu^{-\f12} \|\Omega(1)\|_{L^2}\leq C \nu^\f12.
$$
Such smallness condition coincides with the transition threshold $\f12$ in nearly $L^2$ spaces, see \cite{BVW2018,MasmoudiZhao2020cpde} and Theorem 1.1 of \cite{li2025stability}.

\begin{itemize}
\item {\it Open problems:}
\end{itemize}

As mentioned earlier, we assume $\Omega\in L^2(m_0)$ in this paper to establish the asymptotic behaviour of the solutions. An important open problem is to study the long-time behaviour of general $L^1$ mild solutions given by Theorem \ref{Thm1}.  Due to the non-autonomous nature of \eqref{eq:Om}, the Lyapunov function approach used in \cite{gallay2005global} cannot be directly applied.

Another interesting question is to improve the quantitative bounds in Theorem \ref{Thm2} and Theorem \ref{Thm3}. In this paper, we provide an upper bound of the growth rate, which is nearly $\tt^\f12$ up to a large time scale $T_1$ and followed by a decay to the equilibrium at the rate of $t^{-\f12}$. Since the $\tt^\f12$ time growth originates from applying the Nash-Moser iteration to estimate the $L^2$ norm, improving this bound appears to be a difficult task.  Understanding whether a sharper estimate can be achieved remains an open and significant question.

Moreover, the long-time convergence results focus on the final equilibrium regime, but many physically relevant flows exhibit rich and complex dynamics at intermediate times. In particular, the presence of localized transient chaotic behavior — characterized by the temporary growth and interaction of coherent vortical structures before eventual decay — is a hallmark of two-dimensional turbulence. Understanding the precise mechanisms that govern this transient phase, especially the formation, stretching, and dissipation of vorticity patches, remains a significant open problem. A more complete mathematical description of these dynamics would bridge the gap between the linear stability theory and the observed turbulent behavior in shear-dominated flows.

Very recently, Donati and Gallay \cite{DG2025} studied the evolution of a point vortex advected by a smooth, divergence-free velocity field in two space dimensions. Although Couette flow does not satisfies the assumption there (since it is not bounded), one can expect that the method therein can describe the evolution of \eqref{eqs:w} with initial data $\omega_{0}$ to be some Dirac measure of strength $\Gamma$, before a timespan slightly smaller than $\nu^{-1}\Gamma$. It is also interesting to study whether we improve our long-time behaviour result to such a time scale for this kind of special initial data.

\subsection{Outline of the paper and Notations}
We now sketch the structure of this paper.

In section \ref{section 2}, we present the proof of Theorem \ref{Thm1} by first proving the local well-posedness and then showing the solution is global.

In section \ref{section 3}, we present the proof of Theorem \ref{Thm2} by the energy method.

In section \ref{section 4}, we first study the semi-group generated by $\cL_\oo$, and then present the proof of Theorem \ref{Thm3}.

Let us end this section with some notations that will be used
throughout this paper.

\noindent{\bf Notations:}   For $a\lesssim b$,
we mean that there is a uniform constant $C,$ which may be different in each occurrence, such that $a\leq Cb$. In the estimates of weighted $L^2(m)$ norms, we shall denote $a(X,Y)=\sqrt{|X|^2+|Y|^2}$ and $b(X,Y)=\langle X,Y\rangle=\sqrt{1+|X|^2+|Y|^2}$.
Given a function $f(X,Y)$ on $\R_X\times\R_Y$, we shall denote $\hat{f}(\xi,\eta)$ the Fourier transform of $f$ with respect to both $X$ and $Y$ variables.
 Finally, we denote $L^r_T(L^p)$ the space $L^r([0,T];
L^p(\R^2)),$ and denote $L^r_{[T_1,T_2]}(L^p)$ the space $L^r([T_1,T_2];
L^p(\R^2))$. We will denote $\int f$ to be the integral $\int_{\R^2} f(x,y)dxdy$ or $\int_{\R^2} f(X,Y)dXdY$, depending on the variables of $f$.

\section{Global well-posedness in $L^1$}\label{section 2}

In this section, we study the well-posedness theory of the system \eqref{eqs:w}.
In subsection \ref{subsection2.1}, we shall use a fixed point argument to build a local solution of \eqref{eqs:w} in a Kato-type space, and then use Brezis's argument to show the uniqueness of such a solution in a large class of $L^1$ mild solutions.
In subsection \ref{subsection 2.2}, we shall use the maximal principle to show the $L^1$ norm of the solution constructed in subsection \ref{subsection2.1} is non-decreasing with respect to the time $t$ and derive \eqref{eq1.5} via classical Nash-Moser iteration.

\subsection{Local wellposedness}\label{subsection2.1}
Given initial data $\omega_0$, we can equivalently reformulate the $\omega$ equation of \eqref{eqs:w} to the following integral equation:
\begin{equation}\label{eq:integral eq:w}
\omega(t)=S(t)\omega_0 -\int_0^t S(t-s)\dive (u(s)\omega(s))ds,
\end{equation}
where $S(t)$ is the linear semigroup defined in \eqref{def:S(t)}.

\begin{lem}\label{S2lem1}
{\sl The family $S(t)$ given by \eqref{def:S(t)} is a strongly continuous semigroup of bounded linear operators in $L^p(\R^2)$ for any $p\in [1,\oo)$. Moreover, if $1\leq p\leq q\leq \oo$, there hold:
\begin{subequations} \label{S2eq1}
\begin{gather}
\|S(t)f\|_{L^q}\leq C \left(\nu t\tt\right)^{-\f1p+\f1q} \|f\|_{L^p},\label{eq2.3}\\
\|S(t)\dive f\|_{L^q}\leq C (\nu t)^{-\f12}\left(\nu t\tt\right)^{-\f1p+\f1q} \|f\|_{L^p}.\label{eq2.4}
\end{gather}
\end{subequations}
}\end{lem}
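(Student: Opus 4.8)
The plan is to read off the estimates directly from the explicit Gaussian kernel $G_L$, since the operator $S(t)$ in \eqref{def:S(t)} is (a translate of) a convolution. By Young's inequality (equivalently Minkowski's integral inequality, which also covers any shear shift in the base point, the kernel's $L^r$ norm being insensitive to translations of Lebesgue measure), for exponents with $\frac1r=1-\frac1p+\frac1q$ one has $\|S(t)f\|_{L^q}\le\|G_L(t)\|_{L^r}\|f\|_{L^p}$, so that \eqref{eq2.3} reduces to the single kernel estimate $\|G_L(t)\|_{L^r}\lesssim(\nu t\tt)^{-(1-\f1r)}$; note $1-\frac1r=\frac1p-\frac1q$ matches the exponent in \eqref{eq2.3}. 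Likewise, integrating by parts to move the divergence onto the kernel gives $S(t)\dive f=(\nabla G_L(t))\ast f$ up to sign, so \eqref{eq2.4} follows from $\|\nabla G_L(t)\|_{L^r}\lesssim(\nu t)^{-\f12}(\nu t\tt)^{-(1-\f1r)}$.

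To compute these norms I would pass to the variables $(X,Y)$ of \eqref{coordinate}, under which $G_L(t,x,y)=(\nu t\sqrt{1+\f{t^2}{12}})^{-1}G(X,Y)$ with $G(X,Y)=\f1{4\pi}e^{-(X^2+Y^2)/4}$, and the Jacobian is $dx\,dy=\nu t\sqrt{1+\f{t^2}{12}}\,dX\,dY$. A one-line change of variables then yields $\|G_L(t)\|_{L^r}=(\nu t\sqrt{1+\f{t^2}{12}})^{-(1-\f1r)}\|G\|_{L^r}$, and since $\f1{\sqrt{12}}\tt\le(1+\f{t^2}{12})^{\f12}\le\tt$ the factor $\nu t\sqrt{1+\f{t^2}{12}}$ is comparable to $\nu t\tt$ uniformly in $t>0$; this gives \eqref{eq2.3}. (Alternatively, one can interpolate between $\|G_L(t)\|_{L^1}=1$ and $\|G_L(t)\|_{L^\infty}\approx(\nu t\tt)^{-1}$.)

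For the gradient the chain rule gives $\p_x=\f{\p X}{\p x}\p_X+\f{\p Y}{\p x}\p_Y$ and $\p_y=\f{\p Y}{\p y}\p_Y$, so each derivative of $G_L$ equals $(\nu t\sqrt{1+\f{t^2}{12}})^{-1}$ times a scale factor times a derivative of $G$, and reducing to $\|\p_{X}G\|_{L^r},\|\p_YG\|_{L^r}$ as above produces $\|\nabla G_L(t)\|_{L^r}\lesssim(\nu t\tt)^{-(1-\f1r)}\big(|\tfrac{\p X}{\p x}|+|\tfrac{\p Y}{\p x}|+|\tfrac{\p Y}{\p y}|\big)$. The crux of the whole argument is the uniform-in-$t$ control of these scale factors: using that $1+\f{t^2}3$ and $1+\f{t^2}{12}$ are each comparable to $\tt^2$ and that $t\le\tt$, I expect to verify $|\tfrac{\p X}{\p x}|=(\nu t(1+\f{t^2}3))^{-\f12}\lesssim(\nu t)^{-\f12}\tt^{-1}$, $|\tfrac{\p Y}{\p x}|=\tfrac{t/2}{\sqrt{\nu t(1+\f{t^2}3)(1+\f{t^2}{12})}}\lesssim(\nu t)^{-\f12}\tt^{-1}$, and $|\tfrac{\p Y}{\p y}|=\sqrt{\tfrac{1+t^2/3}{\nu t(1+t^2/12)}}\lesssim(\nu t)^{-\f12}$. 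The worst (largest) direction is $\p_y$, which costs only $(\nu t)^{-\f12}$ without the extra $\tt^{-1}$; this anisotropy is precisely what yields the uniform prefactor $(\nu t)^{-\f12}$ in \eqref{eq2.4}, and checking it amounts to confirming the mutual comparability of the time-dependent coefficients $1+\f{t^2}3$, $1+\f{t^2}{12}$, $t$ for all $t>0$, which I regard as the main technical obstacle.

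Finally, for the semigroup statement: the semigroup law $S(t+s)=S(t)S(s)$ is inherited from $G_L$ being the fundamental solution of the autonomous linear problem \eqref{S1eq1} together with uniqueness of its Cauchy problem. Strong continuity in $L^p(\R^2)$ for $p\in[1,\oo)$ follows because $G_L(t,\cdot)$ is a mass-one approximate identity as $t\to0^+$ — all its widths tend to $0$ — so $\|S(t)f-f\|_{L^p}\to0$; continuity at each $t>0$ then follows from the semigroup property and the bound \eqref{eq2.3}, or directly from the smooth dependence of $G_L$ on $t$ for $t>0$.
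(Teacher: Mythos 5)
Your proof is correct and takes essentially the same route as the paper: the paper's proof consists precisely of invoking Young's inequality together with the two kernel bounds $\|G_L(t)\|_{L^r}\lesssim(\nu t\langle t\rangle)^{-1+\frac1r}$ and $\|\nabla G_L(t)\|_{L^r}\lesssim(\nu t)^{-\frac12}(\nu t\langle t\rangle)^{-1+\frac1r}$ with $\frac1q+1=\frac1p+\frac1r$, which you verify explicitly via the change of variables \eqref{coordinate} and the anisotropic scale factors. Your additional remarks — that a shear shift in the kernel's base point does not affect the $L^p\to L^q$ bounds, and the approximate-identity argument for the semigroup and strong-continuity assertions — supply details the paper's one-line proof leaves implicit, and are consistent with it.
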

\begin{proof} It is easy to observe that \eqref{S2eq1}
follow from Young's inequality and
$$
\|G_L(t)\|_{L^r}\lesssim \left(\nu t\tt\right)^{-1+\f1r} \andf
\|\nabla G_L(t)\|_{L^r}\lesssim (\nu t)^{-\f12} \left(\nu t\tt\right)^{-1+\f1r},
$$
with $r$ satisfying $\f1q+1=\f1p+\f1r$.
\end{proof}

\begin{prop}\label{S2prop1}
{\sl Let $\omega\in L^1(\R^2)$, there exists a positive time $T=T(\omega_0)$ such that the integral equation \eqref{eq:integral eq:w} has a unique mild solution in the sense of Definition \ref{S1def1} with
$$
\omega\in C([0,T];L^1(\R^2))\cap C((0,T];L^\oo(\R^2)).
$$
Moreover, the solution satisfies
\begin{equation}\label{eq2.5}
\lim_{t\rightarrow 0_+}\Bigl( t^{1-\f1p}\|\omega(t)\|_{L^p}\Bigr)=0, \qquad \text{for} \quad 1<p\leq \oo.
\end{equation}
}\end{prop}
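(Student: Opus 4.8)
The plan is to construct the local solution via a fixed-point argument in a Kato-type space that captures the correct short-time decay, and then to promote the uniqueness to the larger class of $L^1$ mild solutions by Brezis's argument.

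First I would set up the solution space. Motivated by the linear estimate \eqref{eq2.3} and the target behavior \eqref{eq2.5}, I introduce for a small $T>0$ the Banach space
\begin{equation*}
\cX_T\eqdefa \Bigl\{\omega\in C\bigl((0,T];L^p(\R^2)\bigr):\ \sup_{0<t\leq T} t^{1-\f1p}\|\omega(t)\|_{L^p}<\oo,\ \lim_{t\to0_+}t^{1-\f1p}\|\omega(t)\|_{L^p}=0\Bigr\}
\end{equation*}
for a conveniently chosen exponent $p\in(1,2)$ (so that the nonlinearity closes), endowed with the norm $\|\omega\|_{\cX_T}=\sup_{0<t\leq T}t^{1-\f1p}\|\omega(t)\|_{L^p}$. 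I would then study the map $\Phi(\omega)(t)=S(t)\omega_0-\int_0^t S(t-s)\dive(u(s)\omega(s))\,ds$ on $\cX_T$, recalling that $u=\nabla^\bot\Delta^{-1}\omega$ is recovered from $\omega$ by the Biot--Savart law, so that $\|u(s)\|_{L^q}\lesssim\|\omega(s)\|_{L^p}$ with $\f1q=\f1p-\f12$ by Hardy--Littlewood--Sobolev.

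Next I would verify that $\Phi$ is a contraction on a small ball. For the linear part, \eqref{eq2.3} gives $t^{1-\f1p}\|S(t)\omega_0\|_{L^p}\lesssim\|\omega_0\|_{L^1}$, and the key point that $t^{1-\f1p}\|S(t)\omega_0\|_{L^p}\to0$ as $t\to0_+$ follows from the density of $L^1\cap L^p$ in $L^1$ together with the uniform bound (approximate $\omega_0$ and use the strong continuity from Lemma \ref{S2lem1}). For the nonlinear part, I estimate $u\omega$ by Hölder, $\|u(s)\omega(s)\|_{L^{p/2}}\leq\|u(s)\|_{L^q}\|\omega(s)\|_{L^p}$ (with the exponents arranged so $\f2p=\f1q+\f1p$), apply \eqref{eq2.4} with the source in $L^{p/2}$, and reduce the time integral to a Beta-type integral $\int_0^t(\nu(t-s))^{-\f12}(\nu(t-s)\langle t-s\rangle)^{-\gamma}s^{-2(1-\f1p)}\,ds$; choosing $p$ close enough to $1$ keeps every exponent integrable and produces a factor $T^\theta$ with $\theta>0$, which supplies the smallness needed for the contraction and simultaneously yields the vanishing-trace property \eqref{eq2.5} for $1<p\leq\oo$ (the higher $L^q$ and $L^\oo$ bounds come by bootstrapping through \eqref{eq2.3}--\eqref{eq2.4} once the $L^p$ solution is in hand). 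Continuity into $L^1$ at $t=0$, i.e. $\omega\in C([0,T];L^1)$, is obtained separately from the strong continuity of $S(t)$ on $L^1$ and the fact that the Duhamel integral tends to $0$ in $L^1$ as $t\to0$.

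The main obstacle is the \emph{uniqueness} in the full class of $L^1$ mild solutions, not merely in the small ball where the contraction lives; this is exactly where Brezis's observation enters. The difficulty is that a general $L^1$ mild solution need not a priori satisfy the quantitative smallness $t^{1-\f1p}\|\omega(t)\|_{L^p}\to0$, so one cannot directly subtract two solutions and close a Gronwall estimate. The plan is to show that \emph{any} $L^1$ mild solution automatically enjoys \eqref{eq2.5}: writing the Duhamel formula and using that $\|\omega(s)\|_{L^1}$ is controlled, one bootstraps the short-time smallness of $t^{1-\f1p}\|\omega(t)\|_{L^p}$ by absorbing the nonlinear term, exploiting that its contribution carries the extra factor $T^\theta$. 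Once both solutions are known to lie in $\cX_T$ with vanishing trace, the difference $w=\omega_1-\omega_2$ satisfies $\|w\|_{\cX_T}\leq C(\|\omega_1\|_{\cX_T}+\|\omega_2\|_{\cX_T})\|w\|_{\cX_T}$ on a possibly smaller interval, and the smallness of the $\cX_T$ norms forces $w\equiv0$; a continuation argument then propagates uniqueness across $(0,T]$. The delicate bookkeeping lies in choosing $p$ and the auxiliary exponents so that all three estimates — existence, the vanishing trace \eqref{eq2.5}, and the self-improvement used for uniqueness — close simultaneously with integrable time singularities.
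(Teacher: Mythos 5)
Your overall architecture (Kato-type space, bilinear contraction via Biot--Savart, then an appeal to Brezis's idea for uniqueness in the large class) matches the paper's, but two of your key steps contain genuine errors, one of which is fatal as written.

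First, the exponents. With $u=\nabla^\perp\Delta^{-1}\omega$ and $\omega\in L^p$, Hardy--Littlewood--Sobolev gives $u\in L^q$ with $\f1q=\f1p-\f12$, so H\"older places $u\omega$ in $L^r$ with $\f1r=\f2p-\f12$, not in $L^{p/2}$; your stated relation $\f2p=\f1q+\f1p$ forces $q=p$ and is incompatible with HLS (and $L^{p/2}$ with $p<2$ is not even a normed space). More importantly, applying \eqref{eq2.4} requires the source exponent to satisfy $r\geq1$, which reads $p\geq\f43$. So your prescription ``choose $p$ close enough to $1$'' goes in exactly the wrong direction: for $p<\f43$ the product $u\omega$ lands in $L^r$ with $r<1$ and the semigroup smoothing estimate is unavailable. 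The paper takes precisely $p=\f43$ (so $q=4$, $r=1$); any $p\in[\f43,2)$ works, but not $p$ near $1$. This error is repairable.

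Second, the fatal gap: $L^1$ is scaling-critical for the 2D vorticity equation, so the bilinear estimate in the Kato space is scale-invariant and produces \emph{no} factor $T^\theta$ with $\theta>0$. (Check with $p=\f43$: the time integral is $t^{\f14}\int_0^t(t-s)^{-\f34}s^{-\f12}\,ds=B(\f14,\f12)$, a pure constant.) Hence (i) the contraction smallness cannot come from a time factor; it must come from $\lim_{T\to0_+}\|S(\cdot)\omega_0\|_{X_T}=0$, i.e.\ the density argument applied to the linear part, which you do have, so existence survives; but (ii) your uniqueness plan collapses. You propose to show that an arbitrary mild solution automatically satisfies \eqref{eq2.5} by ``absorbing the nonlinear term, exploiting the extra factor $T^\theta$.'' Since that factor does not exist, what you actually obtain is the quadratic inequality $N(t)\leq M(t)+C\,N(t)^2$ for $N(t)=\sup_{0<s\leq t}s^{\f14}\|\omega(s)\|_{L^{4/3}}$, where only $M(t)\to0$ is known. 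This cannot be closed: absorbing $CN^2$ requires knowing beforehand that $N$ is finite and small (or tends to $0$), which is exactly the conclusion sought; for large data there is no parameter breaking this circularity, and a priori $N$ need not even be finite, since the class $C([0,T];L^1)\cap C((0,T];L^\oo)$ puts no rate on the possible blow-up of $\|\omega(t)\|_{L^\oo}$ as $t\to0$. This is precisely the difficulty Brezis's argument is designed to bypass, and the paper implements it differently from what you describe: restart the equation at a positive time $t_0>0$, where $\omega(t_0)\in L^1\cap L^\oo$, so that $(t-t_0)^{\f14}\|\omega(t)\|_{L^{4/3}}\to0$ as $t\to t_0$ holds \emph{trivially} by continuity and interpolation (no absorption needed); fixed-point uniqueness then yields $\omega(t)=\Sigma(t-t_0)\omega(t_0)$ on $[t_0,t_0+\tilde T]$, with $\tilde T$ uniform in $t_0$ thanks to the compactness of the trajectory $\{\omega(t):t\in[0,T]\}$ in $L^1$; finally one lets $t_0\to0$ using the $L^1$-continuity of $\omega$ and of the solution map. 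Replacing your self-improvement step by this restart-and-compactness argument would repair the proof.
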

\begin{proof}
For any  $T>0$, we introduce the following Kato-type function space
$$
X_T \eqdefa \bigl\{\ \omega\in C((0,T];L^\f43(\R^2))\ \big|\  \|\omega\|_{X_T}\eqdef \sup_{0<t\leq T} \left(\nu t\tt\right)^\f14\|\omega(t)\|_{L^\f43}<\oo \ \bigr\}
$$
 Since $L^1(\R^2)\cap L^\f43(\R^2)$ is dense in $L^1(\R^2)$, we deduce from \eqref{eq2.3}
 that for any $T<\infty,$
\beq\label{S2eq2}
\begin{split}
&\|S(t)\omega_0\|_{X_T}=\sup_{0<t\leq T}\Bigl(\left(\nu t\tt\right)^\f14\|S(t)\omega_0\|_{L^\f43}\Bigr) \leq C\|\omega_0\|_{L^1} \andf \lim_{T\to 0_+} \|S(t)\omega_0\|_{X_T}=0.
\end{split}
\eeq

Given $\omega_1, \omega_2\in X_T$ and $p\in[1,2)$, we can define the bilinear map $\cB(\omega_1,\omega_2)$ via
$$
\cB(\omega_1, \omega_2)(t)=-\int_0^t S(t-s)\dive (\nabla^\perp \Delta^{-1}\omega_1(s)\cdot\omega_2(s))\,ds.
$$
Then the equation \eqref{eq:integral eq:w} can reformulated as
\beq\label{S2eq3}
\omega(t)=S(t)\omega_0 +\cB(\omega, \omega)(t).
\eeq

Observing that  $\|\nabla^{\perp}\Delta^{-1}\omega\|_{L^4}\lesssim \|\omega\|_{L^\f43}$, we get,
by using \eqref{eq2.4}, that
\begin{align*}
&\left(\nu t\tt\right)^{1-\f1p}\|\cB(\omega_1, \omega_2)(t)\|_{L^p} \\
&\lesssim \left(\nu t\tt\right)^{1-\f1p}\int_0^t (\nu(t-s))^{-\f12}\left(\nu(t-s)\langle t-s\rangle\right)^{-1+\f1p} \|\nabla^\perp \Delta^{-1}\omega_1(s)\cdot \omega_2(s)\|_{L^1} ds\\
&\lesssim \left(\nu t\tt\right)^{1-\f1p}\int_0^t (\nu(t-s))^{-\f12}\left(\nu(t-s)\langle t-s\rangle\right)^{-1+\f1p} \|\nabla^\perp \Delta^{-1}\omega_1(s)\|_{L^4}\|\omega_2(s)\|_{L^\f43} ds\\
&\lesssim \left(\nu t\tt\right)^{1-\f1p}\int_0^t (\nu(t-s))^{-\f12}\left(\nu(t-s)\langle t-s\rangle\right)^{-1+\f1p} \|\omega_1(s)\|_{L^\f43}
\|\omega_2(s)\|_{L^\f43} ds \\
&\lesssim \left(\nu t\tt\right)^{1-\f1p}\int_0^t (\nu(t-s))^{-\f12}\left(\nu(t-s)\langle t-s\rangle\right)^{-1+\f1p} (\nu s\langle s \rangle)^{-\f12} ds\|\omega_1\|_{X_T}\|\omega_2\|_{X_T}\\
&\lesssim \nu^{-1} t^{1-\f1p}\int_0^t (t-s)^{\f1p-\f32} s^{-\f12} ds\|\omega_1\|_{X_T}\|\omega_2\|_{X_T},
\end{align*}
where we used the fact that $\tt^{1-\f1p}\cdot \langle t-s\rangle^{-1+\f1p}\cdot\langle s \rangle^{-\f12}\lesssim 1$ if $p\leq 2.$ This
 implies
\begin{equation}\label{eq2.6}
\left(\nu t\tt\right)^{1-\f1p}\|\cB(\omega_1, \omega_2)(t)\|_{L^p}\leq C_1\nu^{-1}\|\omega_1\|_{X_T}\|\omega_2\|_{X_T}.
\end{equation}
By taking $p=\f43$ in \eqref{eq2.6}, we obtain
\beno \|\cB(\omega_1, \omega_2)\|_{X_T}\leq C_1\nu^{-1}\|\omega_1\|_{X_T}\|\omega_2\|_{X_T}.
\eeno
While by virtue of \eqref{S2eq2}, we can take $T_0$ to be so small that
\beno
\|S(t)\omega_0\|_{X_{T_0}}\leq \frac{\nu}{4C_1},
\eeno
then we deduce from Lemma 5.5 of \cite{bcd} that  \eqref{eq:integral eq:w} has a unique solution $\omega\in X_{T_0}.$ Furthermore,
for any $T\leq T_0,$ we deduce from \eqref{S2eq3} and \eqref{eq2.6} that
\beno
\|\om\|_{X_T}\leq \|S(t)\om_0\|_{X_T}+C_1\nu^{-1}\|\omega\|_{X_T}^2,
\eeno
which along with \eqref{S2eq2} ensures that there exits $T_1\leq T_0$ so that for $T\leq T_1,$
\beq\label{S2eq4}
\|\om\|_{X_T}\leq 2\|S(t)\om_0\|_{X_T} \andf \lim_{T\to 0_+}\|\om\|_{X_T}=0.
\eeq
This leads to \eqref{eq2.5} for $p=\f43$.
Then, we get, by taking $p=1$ in \eqref{eq2.6}, that
$$ \sup_{t\in]0,T]} \|\cB(\omega,\om)\|_{L^1}\leq C_1\nu^{-1}\|\omega\|_{X_T}^2\rightarrow 0, \qquad \text{as}\quad T\rightarrow 0_+.
$$
 Observing that $S(t)\omega_0\in C([0,T];L^1(\R^2))$, we thus proved that $\omega\in C([0,T];L^1(\R^2))$.

  Finally, to prove that $\omega\in C((0,T];L^p(\R^2))$ for $p\in (1,\oo]$ and \eqref{eq2.5} holds, for $p\in(1,\oo]$, we denote
$$
M_p(T)\eqdefa\sup_{t\in(0,T]} \left(\nu t\tt\right)^{1-\f1p}\|S(t)\omega_0\|_{L^p} \andf
N_p(T)\eqdefa \sup_{t\in(0,T]} \left(\nu t\tt\right)^{1-\f1p}\|\omega(t)\|_{L^p}.
$$
Along the same line to the derivation of \eqref{S2eq2},
we find that $M_p(T)\lesssim \|\omega_0\|_{L^1}$ and $M_p(T)\rightarrow 0$, as $T\rightarrow 0_+$. To control $N_p(T)$, we
 separate  the integral in $\cB(\omega,\om)$ into two parts and use
$$
\|\nabla^{\perp}\Delta^{-1}\omega \omega\|_{L^r}
\lesssim \|\nabla^{\perp}\Delta^{-1}\omega \|_{L^s}\|\omega\|_{L^{q_2}}
\lesssim \|\omega\|_{L^{q_1}} \|\omega\|_{L^{q_2}}, \with s=\f{2q_1}{2-q_1},
$$
where $\f1r=\f1{q_1}+\f1{q_2}-\f12$, $\f43\leq q_1<2$ and $\f43\leq q_2\leq \oo$. So that we deduce from Lemma \ref{S2lem1}
that
\begin{align*}
\|\omega(t)\|_{L^p} \leq &\|S(t)\omega\|_{L^p}+C\int_0^{\f{t}2}(\nu (t-s))^{-\f12} (\nu (t-s)\langle t-s\rangle)^{\f1p-\f2q+\f12} \|\omega(s)\|_{L^q}^2 ds \\
&+C \int_{\f{t}2}^t (\nu (t-s))^{-\f12} (\nu (t-s)\langle t-s\rangle)^{\f1p-\f1{q_1}-\f1{q_2}+\f12} \|\omega(s)\|_{L^{q_1}}\|\omega(s)\|_{L^{q_2}} ds \\
\leq& \left(\nu t\tt\right)^{\f1p-1}M_p(T)+C\int_0^{\f{t}2}(\nu(t-s))^{\f1p-\f2q}\langle t-s\rangle^{\f1p-\f2q+\f12} (\nu s\langle s\rangle)^{\f2q-2}ds N_q^2(T)\\
&+C\int_{\f{t}2}^t(\nu(t-s))^{\f1p-\f1{q_1}-\f1{q_2}}\langle t-s\rangle^{\f1p-\f1{q_1}-\f1{q_2}+\f12} (\nu s\langle s\rangle)^{\f1{q_1}+\f1{q_2}-2}ds N_{q_1}(T)N_{q_2}(T)\\
\leq &(\nu t \tt)^{\f1p-1}\Bigl(M_p(T)+ C\nu^{-1} \tt^{\f32-\f2q}N_q^2(T)+ C\nu^{-1}\tt^{-1-\f1p+\f1{q_1}+\f1{q_2}} N_{p_1}(T)N_{p_2}(T) \Bigr),
\end{align*}
where $p\in [1,\oo]$, $q,q_1\in[\f43,2)$ and $q_2\in[\f43,\oo]$ are assumed to satisfy that
$$
\f12\leq \f2q-\f1p \andf \f12\leq \f1{q_1}+\f1{q_2}-\f1p<1.
$$
In particular, by taking $q=\f43$ and multiplying the inequality by $\left(\nu t\tt\right)^{1-\f1p}$ and then taking the supremum of
the resulting inequality over $t\in [0,T],$ we  obtain
\begin{equation}\label{eq2.7}
N_p(T)\leq M_p(T) +C\nu^{-1} N_{\f43}(T)^2+ C\nu^{-1} N_{q_1}(T) N_{q_2}(T).
\end{equation}
If we choose $q_1=q_2=\f43$, we deduce from \eqref{eq2.7} that for $p\in [1,2),$ $N_p(T)\lesssim \|\omega_0\|_{L^1}$ and $N_p(T)\rightarrow 0$, as $T\rightarrow 0$. Similarly, by taking $q_1=q_2$ to be sufficiently close to $2,$ we obtain a similar result for any $p<\oo$. Finally, by choosing $q_1=\f32$ and $q_2=4$, we deduce from \eqref{eq2.7} that $N_\oo(T)\lesssim \|\omega_0\|_{L^1}$ and $N_\oo(T)\rightarrow 0_+$, as $T\rightarrow 0$.
This completes the proof of \eqref{eq2.5}.

It remains to prove the uniqueness part of Proposition \ref{S2prop1}. It follows from  the fixed point argument in $X_T$ that
 the solution just constructed is unique  in $X_T$ which satisfies $\lim_{t\rightarrow 0_+} \bigl(t^\f14\|\omega(t)\|_{L^\f43}\bigr)=0$.
   Next, we shall use a nice argument of Brezis in \cite{brezis1994remarks} to prove the uniqueness part of Proposition \ref{S2prop1}. Let us
    assume that $\omega\in C([0,T];L^1(R^2))\cap C((0,T];L^\oo(\R^2))$ is a mild solution of \eqref{eqs:w} in the sense
     of Definition \ref{S1def1}. Due to $\omega\in C([0,T];L^1(R^2)),$ the set $K\eqdefa \left\{ \omega(t) | t\in [0,T] \right\}$ is compact in $L^1(\R^2)$. Recalling that $(t,f) \rightarrow \|S(t)f\|_{X_T}$ is a continuous map on $[0,T]\times L^1$, the fixed point argument allows us to construct a local solution of \eqref{eqs:w} in $X_{\tilde{T}}$ for all initial data $\tilde{\omega}_0\in K$ with a common existence time $\tilde{T}>0$. (Without loss of generality, we take $\tilde{T}\leq \f{T}2$) We denote the  solution of \eqref{eqs:w} with initial data $\tilde{\omega}_0$ thus constructed by $\tilde{\omega}(t)\eqdefa \Sigma(t)\tilde{\omega}_0.$  For any $t_0\in (0,\tilde{T}]$, since $\|\omega(t)\|_{L^\f43}$ is bounded around $t_0$, it's easy to deduce from $\lim_{t\rightarrow t_0}\bigl((t-t_0)^\f14\|\omega(t)\|_{L^\f43}\bigr)=0$ that
$$
\omega(t)=\Sigma(t-t_0)\omega(t_0),\qquad t\in [t_0,t_0+\tilde{T}].
$$
Now, for any fixed $t\in (0,\tilde{T}]$, it follows from the $L^1$ boundedness of $\Sigma(t-t_0)$ that
$$
\|\Sigma(t-t_0)\omega(t_0)-\Sigma(t-t_0)\omega(0)\|_{L^1}\leq C\|\omega(t_0)-\omega(0)\|_{L^1}\rightarrow 0, \qquad \text{as}\quad t\rightarrow 0.
$$
Also, from the continuous dependence of time, $\|\Sigma(t-t_0)\omega(0)-\Sigma(t)\omega(0)\|_{L^1}\rightarrow0$, as $t\rightarrow 0$. Therefore, we take $t_0\rightarrow 0$ to obtain $\omega(t)=\Sigma(t)\omega_0$ for all $t\in[0,T]$. This means the mild solution coincides with the unique solution constructed by the fixed point argument.
\end{proof}

\begin{rmk}
{\sl
If the initial data is so small that $\|\omega_0\|_{L^1}\leq \e_0\nu$ for some universal small constant $\e_0$, $\|S(t)\omega_0\|_{X_T}\lesssim \e_0\nu$ will enable us to take $T$ to be arbitrarily large. In such a case, one can deduce from the estimates near \eqref{eq2.7} that
\begin{equation}\label{rmk2.1eq}
\sup_{t>0} \left(\nu t\tt\right)^{1-\f1p}\|\omega(t)\|_{L^p}\lesssim \|\omega_0\|_{L^1}, \qquad p\in[1,\oo]
\end{equation}
which implies in the coordinates \eqref{coordinate}, the function $\Omega$ defined in \eqref{def:Om} is bounded in $L^1\cap L^\oo$.
However, such estimates for large data seem to be too difficult.
}\end{rmk}

\subsection{A priori estimates and global existence}\label{subsection 2.2}
To extend the local well-posedness result obtained in the previous subsection to the global one, we only need to prove
 that the $L^1$ norm of the solution is non-increasing with respect to time $t.$

\begin{lem}\label{S2lem2}
{\sl Any smooth solution of \eqref{eqs:w} on $[0,T]$ satisfies $\|\omega(t)\|_{L^1}\leq \|\omega_0\|_{L^1}$ for all $t\in[0,T]$. Moreover, if $\omega_0$ changes signs, then the map $t\rightarrow \|\omega(t)\|_{L^1}$ is strictly decreasing.
}\end{lem}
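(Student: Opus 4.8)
The plan is to track the evolution of a smooth convex regularization of the absolute value and to exploit the divergence-free structure of both the Couette advection $y\p_x$ and the nonlinear transport $u\cdot\nabla$. Since the solution is smooth for $t>0$ and inherits Gaussian-type spatial decay from the kernel $G_L$ (so that the integrals and integrations by parts below converge), this is legitimate. For $\e>0$ I take a smooth even convex approximation $\phi_\e$ of $|s|$, for instance $\phi_\e(s)=\sqrt{s^2+\e^2}-\e$, so that $\phi_\e''(s)=\e^2(s^2+\e^2)^{-\f32}\geq0$, with $\phi_\e(s)\to|s|$ and $\phi_\e'(s)\to\sgn(s)$ as $\e\to0$. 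Differentiating and substituting \eqref{eqs:w},
\beq\notag
\f{d}{dt}\int\phi_\e(\om)=\nu\int\phi_\e'(\om)\Delta\om-\int\phi_\e'(\om)\,y\p_x\om-\int\phi_\e'(\om)\,u\cdot\nabla\om.
\eeq

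Next I would dispose of the two transport terms using the chain rule. In the Couette term, $\phi_\e'(\om)\,y\p_x\om=y\,\p_x[\phi_\e(\om)]$, which integrates by parts in $x$ to zero since $\p_x y=0$; in the nonlinear term, $\phi_\e'(\om)\,u\cdot\nabla\om=u\cdot\nabla[\phi_\e(\om)]$, which vanishes after integrating by parts because $\dive u=0$. The diffusion term integrates by parts to $-\nu\int\phi_\e''(\om)|\nabla\om|^2\leq0$ by convexity. Hence $\f{d}{dt}\int\phi_\e(\om)\leq0$, and letting $\e\to0$ gives that $t\mapsto\|\om(t)\|_{L^1}$ is non-increasing. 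This is exactly the a priori bound $\|\om(t)\|_{L^1}\leq\|\om_0\|_{L^1}$ needed to globalize the local solution of Proposition \ref{S2prop1}.

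For the strict decay I would identify the limit of the dissipation more precisely. By the coarea formula, $\int\phi_\e''(\om)|\nabla\om|^2=\int_\R\phi_\e''(\lambda)\bigl(\int_{\{\om=\lambda\}}|\nabla\om|\,d\cH^1\bigr)d\lambda$, and since $\int_\R\phi_\e''(\lambda)\,d\lambda=2$ with the mass concentrating at $\lambda=0$, one obtains $\lim_{\e\to0}\f{d}{dt}\int\phi_\e(\om)=-2\nu\int_{\{\om(t)=0\}}|\nabla\om|\,d\cH^1$. When $\om(t)$ changes sign, this boundary integral is strictly positive: the zero set is then a nonempty curve separating $\{\om>0\}$ from $\{\om<0\}$, and by the strong maximum principle for the uniformly parabolic equation \eqref{eqs:w} with smooth divergence-free drift, $\nabla\om$ cannot vanish identically on it. Combined with the conservation of mass $M(\om(t))=M(\om_0)$ — which forces $\|\om(t)\|_{L^1}=|M(\om_0)|$ precisely when $\om(t)$ is single-signed — this yields strict decrease for as long as $\om(t)$ changes sign.

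The main obstacle is the last paragraph: the passage $\e\to0$ in the dissipation term together with the strict positivity of $\int_{\{\om(t)=0\}}|\nabla\om|\,d\cH^1$. One must rule out a fully degenerate zero set on which $\nabla\om\equiv0$, which cannot be read off from the algebraic cancellations that handle the non-increasing part and instead relies on the qualitative regularity of $\om$ for $t>0$ and a strong-maximum-principle / unique-continuation argument. The cancellation of the transport terms and the sign of the diffusion term, by contrast, are routine.
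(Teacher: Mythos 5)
Your first half (the non-increasing property) is correct and takes a genuinely different route from the paper: you run a direct energy estimate on the regularized modulus $\phi_\e(\om)$, killing both transport terms by the divergence-free structure and getting the good sign from convexity. The paper instead decomposes $\om_0=\om_0^+-\om_0^-$, evolves $\om^\pm$ by the \emph{linear} transport-diffusion equation with the same drift $u$ (so that $\om=\om^+-\om^-$ by linearity), and concludes from the maximum principle and conservation of the masses $\int\om^\pm$ that $\|\om(t)\|_{L^1}=\int|\om^+-\om^-|\le\int(\om^++\om^-)=\|\om_0\|_{L^1}$. Both arguments are legitimate for this half; yours is more self-contained, while the paper's decomposition has the advantage that strictness comes built in, as explained below.

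The strict-decrease part of your proposal has a genuine gap, and it sits exactly where you flagged it. Two separate steps fail. First, the identity $\lim_{\e\to0}\int\phi_\e''(\om)|\nabla\om|^2=2\int_{\{\om=0\}}|\nabla\om|\,d\cH^1$ requires the map $\lambda\mapsto h(\lambda)\eqdefa\int_{\{\om=\lambda\}}|\nabla\om|\,d\cH^1$ to be continuous at $\lambda=0$; this is precisely what fails when $0$ is a critical value of $\om(t,\cdot)$, i.e.\ in the degenerate situation you are trying to rule out, so the argument is circular. Second, and more seriously, the positivity of $\int_{\{\om(t)=0\}}|\nabla\om|\,d\cH^1$ for a sign-changing solution is not a consequence of the strong maximum principle: a smooth sign-changing function can have $\nabla\om\equiv0$ on its entire zero set (a cubic-type crossing, $\om\sim x^3$ near the zero set, gives $h(\lambda)\sim\lambda^{2/3}\to0$ as $\lambda\to0$), in which case the limiting dissipation vanishes even though the function changes sign, and even a Fatou version of the time-integrated inequality yields nothing. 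Excluding such degeneracy \emph{at every time} for solutions of \eqref{eqs:w} is a quantitative unique-continuation-type statement that you have not proved, and nothing else in your argument substitutes for it.

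The paper's route for strictness avoids the zero set entirely: with $\om=\om^+-\om^-$ as above and $\om_0^\pm$ both nontrivial, the strong maximum principle gives $\om^\pm(t)>0$ \emph{everywhere} for $t>0$, hence the pointwise strict inequality $|\om^+-\om^-|<\om^++\om^-$ holds everywhere, and integrating it together with conservation of $\int\om^\pm$ yields $\|\om(t)\|_{L^1}<\|\om_0\|_{L^1}$ for every $t>0$; decrease between two later times follows by restarting the argument at $t_1$ (as long as $\om(t_1)$ still changes sign --- the same caveat implicit in your final sentence). If you wish to keep your regularization framework for the monotonicity, the cleanest fix is to graft this decomposition argument onto it for the strictness claim, rather than trying to quantify the dissipation concentrated on the zero level set.
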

\begin{proof}
We first consider the case when $\omega_0\geq 0$ and $\omega_0\neq 0$. Then, we get, by using the strong maximum principle for
 the equation \eqref{eqs:w}, that the solution $\omega(t)$ is strictly positive for $t\in (0,T]$. Recalling the mass  conservation for the solution of \eqref{eqs:w}, we find
$$
\|\omega(t)\|_{L^1}=\int_{\R^2} \omega(t,x,y)=\int_{\R^2} \omega_0(x,y)=\|\omega_0\|_{L^1}.
$$
We thus proved the lemma for non-negative initial data, and the negative case follows along the same line.

In  general, we decompose the initial data as $\omega_0=\omega_0^+-\omega_0^-$ so that $$
\|\omega_0\|_{L^1}=\|\omega_0^+\|_{L^1}+\|\omega_0^-\|_{L^1}=\int_{\R^2} \bigl(\omega^+_0+\omega^-_0\bigr).$$
We define $\omega^\pm$ through the following transport-diffusion equation:
\begin{equation}\notag
\quad \left\{\begin{array}{l}
\displaystyle \pa_t \om^\pm-\nu\Delta \omega^\pm +y\p_x \omega^\pm +u\cdot\nabla \omega^\pm=0, \qquad (t,x,y)\in\R^+\times\R^2, \\
\displaystyle  \omega^\pm|_{t=0}=\omega_{0}^\pm(x,y).
\end{array}\right.
\end{equation}
These two equations are locally well-posed, and both solutions will be non-negative due to the maximum principle.
Then we get,  by using mass conservation of $\omega^\pm$ that,
$$
\|\omega(t)\|_{L^1}=\int_{\R^2} |\omega^+-\omega^-|\leq \int_{\R^2}\left(\omega^++\omega^-\right)=\int_{\R^2}\left(\omega^+_0+\omega^-_0\right)=\|\omega_0\|_{L^1}.
$$
The above inequality can only become an equality if at least one of $\omega_0^\pm$ is trivial.
This completes the proof of Lemma \ref{S2lem2}.
\end{proof}


\begin{lem}\label{S2lem3}
{\sl For any $\omega_0\in L^1$, the solution of \eqref{eqs:w} satisfies that for $1\leq p\leq \oo$,
\begin{equation}\label{eq2.8}
\|\omega(t)\|_{L^p}\lesssim (\nu t)^{\f1p-1}\|\omega_0\|_{L^1}.
\end{equation}
}\end{lem}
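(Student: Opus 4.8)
The plan is to prove \eqref{eq2.8} by a Nash--Moser iteration carried out along the dyadic exponents $p_k=2^k$, exploiting the single structural fact that the total advecting field $\bigl((y,0)+u\bigr)$ is divergence free. This is precisely why, as the remark following Theorem \ref{Thm1} asserts, the estimate sees no influence from the Couette flow: in the $L^p$ energy balance the Couette term $y\p_x\omega$ and the nonlinear term $u\cdot\nabla\omega$ will both integrate to zero, leaving only the diffusion. I would first work with a smooth, rapidly decaying solution — legitimate because $\omega(t)\in L^1\cap L^\oo$ is smooth for $t>0$ by Proposition \ref{S2prop1}, the general $L^1$ case following by a standard regularization — so that all the integrations by parts below are justified.

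The first computation is the energy identity. For $1<p<\oo$ I would differentiate and substitute the equation to get
$$
\f{d}{dt}\|\omega\|_{L^p}^p=p\nu\int |\omega|^{p-2}\omega\,\Delta\omega-p\int|\omega|^{p-2}\omega\,\bigl(y\p_x\omega+u\cdot\nabla\omega\bigr).
$$
The transport contribution reduces, after one integration by parts, to $\int\dive\bigl((y,0)+u\bigr)\,|\omega|^p=0$, since $\p_x y=0$ and $\dive u=0$; the diffusion term becomes $-\tfrac{4(p-1)}{p}\nu\|\nabla|\omega|^{p/2}\|_{L^2}^2$. Feeding in Nash's inequality in $\R^2$, namely $\|f\|_{L^2}^2\lesssim\|f\|_{L^1}\|\nabla f\|_{L^2}$ applied to $f=|\omega|^{p/2}$, and writing $\phi_k(t)=\|\omega(t)\|_{L^{p_k}}^{p_k}$, one has $\||\omega|^{p_k/2}\|_{L^1}=\phi_{k-1}$ and $\||\omega|^{p_k/2}\|_{L^2}^2=\phi_k$, so the identity at level $p=p_k$ becomes the differential inequality $\phi_k'\lesssim-\nu\,\phi_k^2/\phi_{k-1}^2$.

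Next I would run the induction. The base case $k=0$ is $\|\omega(t)\|_{L^1}\leq\|\omega_0\|_{L^1}$ from Lemma \ref{S2lem2}, i.e.\ $\phi_0\leq B_0=\|\omega_0\|_{L^1}$. Assuming $\phi_{k-1}(t)\lesssim B_{k-1}(\nu t)^{1-p_{k-1}}$ and using $2p_{k-1}=p_k$, substitution turns the inequality into $\phi_k'\lesssim-a_k\,t^{p_k-2}\phi_k^2$ with $a_k\sim p_k^{-1}\nu^{p_k-1}B_{k-1}^{-2}$. This is a Bernoulli inequality for $1/\phi_k$; integrating on $(0,t)$ and discarding the nonnegative endpoint term at $0$ — which is exactly how the singular ($L^1$ only) initial trace is absorbed, since $\phi_k(0^+)=+\oo$ — yields $\phi_k(t)\lesssim B_k(\nu t)^{1-p_k}$ with the recursion $B_k\sim p_k\,B_{k-1}^2$.

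Finally I would verify that the iteration closes and pass to the endpoint. Taking logarithms, $\log B_k=2\log B_{k-1}+k\log2+O(1)$, so $2^{-k}\log B_k$ is a telescoping sum whose tail $\sum_k(k\log2+O(1))2^{-k}$ converges; hence $B_k^{1/p_k}\to C_\oo\|\omega_0\|_{L^1}$. Letting $k\to\oo$ in $\|\omega(t)\|_{L^{p_k}}\leq B_k^{1/p_k}(\nu t)^{1/p_k-1}$ gives the endpoint $\|\omega(t)\|_{L^\oo}\lesssim(\nu t)^{-1}\|\omega_0\|_{L^1}$, and interpolating between consecutive dyadic exponents covers every $p\in[1,\oo]$. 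The main obstacle is not any individual inequality but the bookkeeping of the constants $B_k$: one must keep the $p_k$-dependence explicit and confirm that $B_k^{1/p_k}$ stays bounded rather than growing doubly exponentially, which is precisely what legitimizes the limit $p\to\oo$. A secondary, more routine difficulty is rigorously justifying the energy identity and the integrations by parts for data merely in $L^1$, handled by the $t>0$ smoothing of Proposition \ref{S2prop1} together with an approximation argument.
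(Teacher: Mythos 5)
Your proposal is correct and takes essentially the same route as the paper: the divergence-free structure cancels both transport terms in the $L^p$ energy identity, Nash's inequality turns the dissipation into a Bernoulli differential inequality at each dyadic exponent $p_k=2^k$ whose initial term is discarded, and the resulting recursion for the constants has a convergent telescoping logarithm, permitting the limit $p\to\infty$ followed by interpolation. The only organizational difference is the base case: you start the induction directly from the $L^1$ bound of Lemma \ref{S2lem2}, whereas the paper first derives the preliminary estimate \eqref{S2eq7} via the interpolation $\|\om\|_{L^{p/2}}\leq \|\om\|_{L^1}^{1/(p-1)}\|\om\|_{L^p}^{(p-2)/(p-1)}$ and uses it only to seed the recursion at $p=2$ — an immaterial distinction.
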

\begin{proof} We first consider the case when $p\in [2,\infty).$ We get,
by multiplying \eqref{eqs:w} by $|\omega|^{p-2}\omega$   and integrating the resulting equation over $\R^2$, and then
using integration by parts, that
\beq\label{S2eq5}
\begin{split}
\f1p \f{d}{dt}\|\omega(t)\|_{L^p}^p=&\nu\int \Delta\omega |\omega|^{p-2}\omega\\
=&-\nu(p-1)\int |\nabla \omega|^2 |\omega|^{p-2}=-4\nu\f{p-1}{p^2}\|\nabla|\omega|^{\f{p}2}\|_{L^2}^2.
\end{split}
\eeq
It follows  from $\|f\|_{L^2}\lesssim \|f\|_{L^1}^\f12\|\nabla f\|_{L^2}^\f12$ that
\beq\label{S2eq6}
\|\omega\|_{L^p}^p=\||\omega|^\f{p}2\|_{L^2}^2\lesssim \||\omega|^\f{p}2\|_{L^1}\|\nabla |\omega|^\f{p}2\|_{L^2}\lesssim \|\omega\|_{L^{\f{p}2}}^{\f{p}2}\|\nabla |\omega|^\f{p}2\|_{L^2},
\eeq
from which and $\|\om\|_{L^{\f{p}2}}\leq \|\om\|_{L^1}^{\f1{p-1}}\|\om\|_{L^p}^{\f{p-2}{p-1}},$ we infer
\beno
\|w\|_{L^p}^{\f{p^2}{2(p-1)}}\leq \|w\|_{L^1}^{\f{p}{2(p-1)}}\|\nabla|\omega|^{\f{p}2}\|_{L^2}.
\eeno
By inserting the above inequality into \eqref{S2eq5}, we obtain
$$
\f{d}{dt}\|\omega(t)\|_{L^p}^p \leq -C_0\nu\f{p-1}p \|\omega_0\|_{L^1}^{-\f{p}{p-1}} \|\omega\|_{L^p}^{\f{p^2}{p-1}},
$$
from which we deduce that
\beq \label{S2eq7}
\|\omega(t)\|_{L^p}^{-\f{p}{p-1}}\geq \f{C_0\nu t}{p}\|\omega_0\|_{L^1}^{-\f{p}{p-1}}\andf \|\omega(t)\|_{L^p}\leq (C_0\nu t)^{\f1p-1}p^{\f{p-1}p}\|\omega_0\|_{L^1}.
\eeq
In particular, one has
$$
\|\omega(t)\|_{L^\f{p}2}\leq C_\f{p}2 (\nu t)^{\f2p-1}\|\omega_0\|_{L^1}.
$$
Then we get, by inserting \eqref{S2eq6} into \eqref{S2eq5}, that
\begin{align*}
\f{d}{dt}\|\omega(t)\|_{L^p}^p \leq &-C_0\nu\f{p-1}p \|\omega\|_{L^\f{p}2}^{-p} \|\omega\|_{L^p}^{2p}\\
\leq & -C_0\nu\f{p-1}p C_{\f{p}2}^{-p} (\nu t)^{p-2} \|\omega_0\|_{L^1}^{-p} \|\omega\|_{L^p}^{2p}.
\end{align*}
Solving the inequality leads to
$$
\|\omega(t)\|_{L^p}^p\leq \f1{\f1{\|\omega_0\|_{L^p}^p}+C_0\f{1}{p}(\nu t)^{p-1} C_{\f{p}2}^{-p} \|\omega_0\|_{L^1}^{-p}} \leq \f{p}{C_0}C_{\f{p}2}^p (\nu t)^{1-p}\|\omega_0\|_{L^1}^p,
$$
which implies
\beq \label{S2eq8}
\|\omega(t)\|_{L^p} \leq C_p (\nu t)^{\f1p-1}\|\omega_0\|_{L^1}, \with C_p=(\f{p}{C_0})^{\f1p} C_{\f{p}2}.
\eeq
Observing from \eqref{S2eq7} that  $C_2=\sqrt{2/C_0},$  we deduce that
$$
C_{2^k}=\prod_{j=1}^k (\f{2^j}{C_0})^{2^{-j}}\leq 2^{\sum_{j\geq1} j2^{-j}} C_0^{-\sum_{j\geq1}2^{-j}}=C_\oo,
$$
which is bounded by that quantity $C_\oo$. By taking $p=2^k$  in \eqref{S2eq8} with $k$ going to infinity, we  proved \eqref{eq2.8} with $p=\oo$. The other cases with $1<p<\oo$ follow from the interpolation between $L^1$ bounds and $L^\oo$ bounds. We thus finish the proof of Lemma \ref{S2lem3}.
\end{proof}

\begin{rmk}
{\sl By comparing \eqref{eq2.8} with \eqref{rmk2.1eq}, we find that the estimate \eqref{eq2.8}  does not show the influence of Couette flow. In coordinates \eqref{coordinate}, this estimate becomes
$$
\|\Omega(t)\|_{L^p}\lesssim \tt^{1-\f1p}\|\omega_0\|_{L^1},
$$
which means the $L^p$ norm of $\Omega$ may grow in time.
}\end{rmk}

\section{Estimates of $\Omega$ in weighted spaces}\label{section 3}

In this section, we present the proof of Theorem \ref{Thm2}.
 For simplicity, we just present the  {\it a priori} estimates in $L^2(m)$ for smooth enough solutions of \eqref{eq:Om}.
In subsection \ref{subsection 3.1}, we shall derive the estimate \eqref{eq1.12} by using the energy estimate. Due to the difficulties of large data, we shall
appeal to the $L^p$ norms we obtained in \eqref{eq2.8} to deal with part of the weighted estimates of nonlinear terms.

In subsection \ref{subsection 3.2}, we prove the bound \eqref{eq1.14} for higher regularities. When time $t$ is large enough, some of the coefficients in $\cL_t$ vanish. To gain the horizontal regularities, we have to use the hypo-ellipticity property of the operator. The smallness in the nonlinear problem comes from $\ln {t}/{t_0}$ by taking $t$ close enough to the starting point $t_0$ of the estimates.

\subsection{$L^2(m)$ estimates of $\Omega$} \label{subsection 3.1}

We first prove the following energy estimates in $L^2(m)$.

\begin{prop}\label{prop3.1}
{\sl Let $m>1$ and $\delta>0,$ let $\Omega(t)$  be a smooth solution \eqref{eq:Om} with $\Omega(1)\in L^2(m)$. Then, for any $t\geq 1$, there holds
\begin{equation}\label{eq:prop3.1}
\| \Omega(t)\|_{L^2(m)} \leq C_{m,\delta}\tt^{1+\delta}  \bigl( 1+{\nu}^{-1}{\|\Omega(1)\|_{L^2(m)}}\bigr)^{m}\|\Omega(1)\|_{L^2(m)}.
\end{equation}
}\end{prop}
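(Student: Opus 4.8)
The plan is to carry out a weighted $L^2$ energy estimate directly on the transformed equation \eqref{eq:Om}. Writing $b=\langle X,Y\rangle$ and pairing \eqref{eq:Om} with $\Omega\, b^{2m}$, I would start from the energy identity
\begin{equation*}
\frac{t}{2}\frac{d}{dt}\|\Omega(t)\|_{L^2(m)}^2=\bigl(\cL_t\Omega\,\big|\,\Omega\bigr)_{L^2(m)}+\bigl(\cN_t\Omega\,\big|\,\Omega\bigr)_{L^2(m)},
\end{equation*}
and estimate the two pairings separately, the whole difficulty being concentrated in the nonlinear one because the data is large. The two structural facts I would lean on throughout are that the weight $b^{2m}$ is radially symmetric, and that (as recorded after \eqref{eq:Oma}) the nonlinear operator is a divergence-free transport term $\cN_t\Omega=\nu^{-1}(1+\f{t^2}{12})^{-1}U_t\cdot\nabla\Omega$ with $U_t=\nabla^\perp\Delta_t^{-1}\Omega$, carrying the small prefactor $\nu^{-1}\tt^{-2}$.

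For the linear pairing I would integrate by parts term by term in \eqref{eq:Oma}. The rotation field $X\p_Y-Y\p_X$ annihilates the radial weight $b^{2m}$ and therefore contributes nothing. Each second-order piece of $\Delta_t$ yields a favorable dissipation together with commutator terms supported at weight $b^{2m-2}$, while the two transport pieces yield confining terms $-\int\Omega^2 W^2 b^{2m-2}$ and $-\int\Omega^2 Y^2 b^{2m-2}$ (with $W=X-\f{t}2(1+\f{t^2}{12})^{-\f12}Y$) plus a top-order multiple of $\|\Omega\|_{L^2(m)}^2$. A direct computation shows the diffusion contributes nothing at top order and that the transport-and-constant top-order coefficient telescopes to $\f{6+t^2}{12+t^2}<1$. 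The residual positive terms of weight $b^{2m-2}$ I would split into a far-field part, absorbed by the confinement and the dissipation, and a bounded part, controlled by $\|\Omega(t)\|_{L^2}^2\lesssim\tt\,\|\omega_0\|_{L^1}^2$ from \eqref{eq2.8}. Because the confinement acts only in the $Y$ variable, this absorption is not free and costs an arbitrarily small excess $\delta$ in the top-order coefficient; this is exactly what promotes the borderline exponent to $\tt^{1+\delta}$.

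For the nonlinear pairing, divergence-freeness of $U_t$ lets me transfer the derivative onto the weight, giving
\begin{equation*}
\bigl(\cN_t\Omega\,\big|\,\Omega\bigr)_{L^2(m)}=-m\,\nu^{-1}\Bigl(1+\tfrac{t^2}{12}\Bigr)^{-1}\int \Omega^2\, b^{2m-2}\,U_t\cdot(X,Y).
\end{equation*}
Here the key is to estimate the velocity factor through the Biot-Savart law associated with $\Delta_t$ together with the $L^p$ bounds \eqref{eq2.8}: writing $|U_t\cdot(X,Y)|\lesssim\|U_t\|_{L^\oo}\,b$ and using $\int\Omega^2 b^{2m-1}\leq\|\Omega\|_{L^2(m)}\|\Omega\|_{L^2(m-1)}$ by Cauchy--Schwarz, the small prefactor $\nu^{-1}\tt^{-2}$ reduces the nonlinear forcing to a product involving the strictly lower weight $\|\Omega\|_{L^2(m-1)}$ and a factor $\sim\nu^{-1}\|\omega_0\|_{L^1}\sim\nu^{-1}\|\Omega(1)\|_{L^2(m)}$. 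This lower-weight structure is what lets me bootstrap in the weight exponent: seeding with the plain $L^2$ bound from \eqref{eq2.8} and raising the weight, each increment supplies one more power of $(1+\nu^{-1}\|\Omega(1)\|_{L^2(m)})$, which accounts for the $m$-th power in \eqref{eq:prop3.1}.

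Collecting the two estimates gives a differential inequality of the form $\f{t}2\f{d}{dt}E\leq(\tfrac{6+t^2}{12+t^2}+\delta)E+\cF(t)$ for $E=\|\Omega\|_{L^2(m)}^2$, where $\cF$ gathers the bounded and lower-weight forcing; integrating from $t=1$ (using $\int_1^t\f 2s(\f{6+s^2}{12+s^2}+\delta)\,ds=(2+2\delta)\ln t+O(1)$) produces $E(t)\lesssim\tt^{2+2\delta}$ times the square of the asserted prefactor, i.e.\ the claimed $\tt^{1+\delta}$ bound. The main obstacle, and the place where the large-data hypothesis really bites, is the nonlinear pairing: one must control the Biot--Savart velocity $U_t=\nabla^\perp\Delta_t^{-1}\Omega$ in weighted norms despite the degenerate, time-dependent geometry of $\Delta_t$ (whose $X$-ellipticity degenerates like $\tt^{-2}$), and one must organize the weighted nonlinear estimate so that it always closes against a strictly lower weight, so that the induction terminates with a clean polynomial dependence on $\nu^{-1}\|\Omega(1)\|_{L^2(m)}$ rather than an exponential one. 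Relative to the autonomous Gallay--Wayne setting, it is precisely the $\nu^{-1}\tt^{-2}$ smallness of $\cN_t$ that makes this closure possible.
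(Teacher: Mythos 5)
Your proposal is correct in outline and follows the paper's own proof in all essentials: the same weighted energy identity, the same integration by parts for $\cL_t$ (rotation annihilating the radial weight, confinement, top-order coefficient telescoping to $\f{6+t^2}{12+t^2}$, lower-order terms controlled through the $L^2$ bound inherited from \eqref{eq2.8}), the same use of the divergence-free structure to move the derivative onto the weight in the nonlinear pairing, the same $L^\oo$ control of $\nabla^\perp\Delta_t^{-1}\Omega$ via the Biot--Savart law combined with the $L^p$ decay (this is exactly Lemma \ref{lem:BS law} together with \eqref{eq3.1a} and \eqref{eq3.1b}, including the crucial cancellation of the $\tt^{-2}$ prefactor against the $\tt^{2}$ growth of the velocity), and the same Gronwall step. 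The one place you genuinely deviate is how the power $m$ of $1+\nu^{-1}\|\Omega(1)\|_{L^2(m)}$ is produced: the paper closes the weighted nonlinear term with a single pointwise Young inequality, $K\,a^{2m-1}\leq \delta\,a^{2m}+C_{m,\delta}K^{2m}$ with $K\sim\nu^{-1}\|\Omega(1)\|_{L^2(m)}$, so the full power appears in one shot against $\|\Omega\|_{L^2}^2$ (see \eqref{estimates:NL}); you instead drop to the strictly lower weight via Cauchy--Schwarz, $\int\Omega^2 b^{2m-1}\leq\|\Omega\|_{L^2(m)}\|\Omega\|_{L^2(m-1)}$, and run an induction on the weight exponent, each step costing one power of $K$. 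Your scheme does close, but with integer steps it yields the power $\lceil m\rceil$ rather than $m$ when $m$ is not an integer, so as written it proves a slightly weaker bound than \eqref{eq:prop3.1}; this is repaired either by the paper's one-shot Young inequality, or by interpolating $\|\Omega\|_{L^2(m-1)}\leq\|\Omega\|_{L^2(m)}^{1-\f1m}\|\Omega\|_{L^2}^{\f1m}$ before applying Young, which collapses your induction into the paper's single estimate. A further small misattribution: you locate the $\delta$-loss in the linear confinement absorption, whereas the paper's linear estimate \eqref{estimates:L} closes with coefficient exactly $\f{6+t^2}{12+t^2}$ and no loss, the $\delta$ entering only through the nonlinear absorption. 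Neither point undermines the validity of your overall plan.
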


\begin{proof} We first remark that due to
 $m>1$, $L^2(m)$ can be embedded  into $L^1(\R^2)$. In view of \eqref{coordinate} and \eqref{def:Om}, we find
 \beno
 \|\Omega(t)\|_{L^p}=\Bigl(\nu t\sqrt{1+\f{t^2}{12}}\Bigr)^{1-\f1p}\|\omega(t)\|_{L^p}\approx \left(\nu t\tt\right)^{1-\f1p}\|\omega(t)\|_{L^p},
 \eeno
 from which and \eqref{eq2.8}, we deduce that  for all $1\leq p\leq\oo$,
\begin{equation}\label{eq3.1}
\begin{aligned}
\|\Omega(t)\|_{L^p}\leq& C \left(\nu t\tt\right)^{1-\f1p}\bigl(\nu (t-1)\bigr)^{\f1p-1}\|\omega(1)\|_{L^1}\\
\leq &C\tt^{1-\f1p} \Bigl(\f{t}{t-1}\Bigr)^{1-\f1p}\|\Omega(1)\|_{L^1}
\leq C_m \tt^{1-\f1p} \Bigl(\f{t}{t-1}\Bigr)^{1-\f1p}\|\Omega(1)\|_{L^2(m)} .
\end{aligned}\end{equation}
On the other hand, noticing that the $L^p$ norm of $\omega(t)$ is non-increasing with respect to time $t,$ we infer that for all $1\leq p\leq 2$,
\begin{align*}
\|\Omega(t)\|_{L^p}=&\left(\nu t\tt\right)^{1-\f1p}\|\omega(t)\|_{L^p}\leq \left(\nu t\tt\right)^{1-\f1p}\|\omega(1)\|_{L^p}\\
 =&\tt^{1-\f1p} t^{1-\f1p} \|\Omega(1)\|_{L^p}
\leq  C_m \tt^{1-\f1p} t^{1-\f1p} \|\Omega(1)\|_{L^2(m)}.
\end{align*}
Then, by discussing separately whether $t-1$ is small or not, we can improve \eqref{eq3.1} for $1\leq p\leq 2$ to
\begin{equation}\label{eq3.1a}
\|\Omega(t)\|_{L^p}\leq C_m \tt^{1-\f1p}\|\Omega(1)\|_{L^2(m)},
\end{equation}
Similarly, when $2\leq p\leq +\oo$, we get, by using a similar version of \eqref{eq2.8} with initial data in $L^2(\R^2),$ that
\begin{align*}
\|\Omega(t)\|_{L^p}=&\left(\nu t\tt\right)^{1-\f1p}\|\omega(t)\|_{L^p} \lesssim\left(\nu t\tt\right)^{1-\f1p}\bigl(\nu (t-1)\bigr)^{\f1p-\f12}\|\omega(1)\|_{L^2}\\
\lesssim & \left(\nu t\tt\right)^{1-\f1p}\bigl(\nu (t-1)\bigr)^{\f1p-\f12}\nu^{-\f12}\|\Omega(1)\|_{L^2} \lesssim \tt^{1-\f1p} \Bigl(\f{t}{t-1}\Bigr)^{\f12-\f1p} t^\f12\|\Omega(1)\|_{L^2}
\end{align*}
which in particular improves \eqref{eq3.1} to
\begin{equation}\label{eq3.1b}
\|\Omega(t)\|_{L^p}\leq \tt^{1-\f1p} \Bigl(\f{t}{t-1}\Bigr)^{\f12-\f1p} \|\Omega(1)\|_{L^2(m)}\quad\mbox{for}\ p\in [2,\infty].
\end{equation}

Let us now consider the estimates with homogeneous weight $a^m(X,Y)=(X^2+Y^2)^{\f{m}2}$.  Indeed, by using integration by parts, one has
\begin{align*}
\int \Delta_t\Omega \, \Omega \, a^{2m}=&-\bigl(1+\f{t^2}3\bigr)^{-1}\bigl\|a^m\Bigl(\p_X-\f{t}2(1+\f{t^2}{12})^{-\f12}\p_Y\Bigr)\Omega\bigr\|_{L^2}^2-\f{1+\f{t^2}3}{1+\f{t^2}{12}}\|a^m\p_Y\Omega\|_{L^2}^2\\
&-\f{2m}{1+\f{t^2}3}\int a^{2m-2}\Bigl(X-\f{t}2\bigl(1+\f{t^2}{12}\bigr)^{-\f12}Y\Bigr)\Bigl(\p_X-\f{t}2\bigl(1+\f{t^2}{12}\bigr)^{-\f12}\p_Y\Bigr)\Omega \,\Omega\\
&-2m\f{1+\f{t^2}3}{1+\f{t^2}{12}}\int a^{2m-2} Y\p_Y \Omega\,\Omega\\
=&-\bigl(1+\f{t^2}3\bigr)^{-1}\bigl\|\Bigl(\p_X-\f{t}2\bigl(1+\f{t^2}{12}\bigr)^{-\f12}\p_Y\Bigr)(a^m\Omega)\bigr\|_{L^2}^2-\f{1+\f{t^2}3}{1+\f{t^2}{12}}\|\p_Y(a^m\Omega)\|_{L^2}^2\\
&+\f{m^2}{1+\f{t^2}3}\int a^{2m-4} \Bigl(X-\f{t}2\bigl(1+\f{t^2}{12}\bigr)^{-\f12}Y\Bigr)^2 \Omega^2+\f{m^2(1+\f{t^2}3)}{1+\f{t^2}{12}}\int a^{2m-4} Y^2 \Omega^2.
\end{align*}
Then in view of \eqref{eq:Oma}, we find
\begin{align*}
\int \cL_t\Omega \, \Omega \, a^{2m}
=&-\bigl(1+\f{t^2}3\bigr)^{-1}\bigl\|\Bigl(\p_X-\f{t}2\bigl(1+\f{t^2}{12}\bigr)^{-\f12}\p_Y\Bigr)(a^m\Omega)\bigr\|_{L^2}^2-\f{1+\f{t^2}3}{1+\f{t^2}{12}}\|\p_Y(a^m\Omega)\|_{L^2}^2\\
&+{m^2}\bigl(1+\f{t^2}3\bigr)^{-1}\int a^{2m-4} \Bigl(X-\f{t}2\bigl(1+\f{t^2}{12}\bigr)^{-\f12}Y\Bigr)^2 \Omega^2+\f{m^2(1+\f{t^2}3)}{1+\f{t^2}{12}}\int a^{2m-4} Y^2 \Omega^2\\
&+\f{6+t^2}{12+t^2}\|a^m \Omega\|_{L^2}^2-\f{m}2{\bigl(1+\f{t^2}3\bigr)^{-1}}\int a^{2m-2}\Bigl(X-\f{t}2\bigl(1+\f{t^2}{12}\bigr)^{-\f12}Y\Bigr)^2\Omega^2\\
&-\f{m(1+\f{t^2}3)}{2(1+\f{t^2}{12})}\int a^{2m-2}Y^2\Omega^2 .
\end{align*}

 On the other hand, by separating the cases whether  $a(X,Y)$ is larger than $\epsilon^{-\f12}$ or not, we observe that for any $\epsilon>0$,
$$
 a^{2m-4} \Bigl(X-\f{t}2\bigl(1+\f{t^2}{12}\bigr)^{-\f12}Y\Bigr)^2 \leq \epsilon a^{2m-2}\Bigl(X-\f{t}2\bigl(1+\f{t^2}{12}\bigr)^{-\f12}Y\Bigr)^2 +C \epsilon^{1-m},
$$
and
$$
a^{2m-4} Y^2 \leq \epsilon a^{2m-2}Y^2 +C \epsilon^{1-m}.
$$
Then we get, by taking $\epsilon=\f{1}{4m}$ in the above inequalities, that
\begin{equation}\label{estimates:L}
\begin{aligned}
&\int \cL_t\Omega \, \Omega \, a^{2m}
\leq \f{6+t^2}{12+t^2}\|a^m \Omega\|_{L^2}^2+C_m\|\Omega\|_{L^2}^2 .
\end{aligned}
\end{equation}

Next, we consider the weighted estimates on the nonlinear terms. Indeed, by using integration by parts, we write
\begin{align*}
\int\cN_t \Omega\, \Omega a^{2m}= \f{m}\nu \Bigl(1+\f{t^2}{12}\Bigr)^{-1}\Bigl(\int a^{2m-2} Y \Omega^2 \p_X\Delta_t^{-1}\Omega
-\int a^{2m-2}X\Omega^2 \p_Y \Delta_t^{-1}\Omega \Bigr).
\end{align*}
For the terms with Biot-Savart law, we use the following lemma, whose proof will be postponed until the end of this subsection:
\begin{lem}\label{lem:BS law}
{\sl There holds
\begin{equation}\label{eq:BS law}
\|\p_X\Delta_t^{-1}\Omega\|_{L^\oo}+\tt\|\p_Y\Delta_t^{-1}\Omega\|_{L^\oo}
\leq C\tt^\f32\|\Omega\|_{L^\f{2m}{m-1}}^\f12\|\Omega\|_{L^{\f{2m}{m+1}}}^\f12.
\end{equation}
}\end{lem}
\noindent We use this inequality and observe from  \eqref{eq3.1a} and \eqref{eq3.1b} that
\begin{equation}\label{eq3.4}
\begin{aligned}
\|\p_Y\Delta_t^{-1}\Omega\|_{L^\oo}
\leq &C\tt^\f12\|\Omega\|_{L^\f{2m}{m-1}}^\f12\|\Omega\|_{L^{\f{2m}{m+1}}}^\f12
\leq C_m\tt \Bigl(\f{t}{t-1}\Bigr)^\f1{4m} \|\Omega(1)\|_{L^2(m)},
\end{aligned}
\end{equation}
and similarly,
\begin{equation}\label{eq3.5}
\|\p_X\Delta_t^{-1}\Omega\|_{L^\oo}
\leq C_m \tt^2 \Bigl(\f{t}{t-1}\Bigr)^\f1{4m}\|\Omega(1)\|_{L^2(m)},
\end{equation}
from which we infer
$$
\int\cN_t \Omega\, \Omega a^{2m}\leq\f{C_m}\nu \Bigl(\f{t}{t-1}\Bigr)^\f1{4m}\|\Omega(1)\|_{L^2(m)} \Bigl(\int a^{2m-2} |Y|\Omega^2+\tt^{-1}\int a^{2m-2}|X|\Omega^2  \Bigr).
$$
Observing that for any  $\delta>0$, one has
\begin{align*}
\f{C_m}\nu \Bigl(\f{t}{t-1}\Bigr)^\f1{4m}\|\Omega(1)\|_{L^2(m)} a^{2m-2}\bigl(|X| +|Y|\bigr) \leq &\f{C_m}\nu \Bigl(\f{t}{t-1}\Bigr)^\f1{4m}\|\Omega(1)\|_{L^2(m)}a^{2m-1} \\
\leq &\delta a^{2m}+C_{m,\delta}\Bigl(\f{t}{t-1}\Bigr)^\f12\bigl({\nu}^{-1}{\|\Omega(1)\|_{L^2(m)}} \bigr)^{2m}.
\end{align*}
As a consequence, we obtain
\begin{equation}\label{estimates:NL}
\begin{aligned}
&\int\cN_t \Omega\, \Omega a^{2m} \leq \delta \|a^{2m} \Omega\|_{L^2}^2+ C_{m,\delta} \Bigl(\f{t}{t-1}\Bigr)^\f12 \bigl({\nu}^{-1}{\|\Omega(1)\|_{L^2(m)}}\bigr)^{2m} \|\Omega\|_{L^2}^2.
\end{aligned}
\end{equation}

By virtue of \eqref{estimates:L} and \eqref{estimates:NL}, we get, by
 taking $L^2$ inner product of \eqref{eq:Om} with $a^m\Omega$, that
\begin{align*}
\f{t}2\f{d}{dt}\|a^m\Omega(t)\|_{L^2}^2 \leq &\Bigl(\f{6+t^2}{12+t^2}+\delta\Bigr)\|a^m \Omega(t)\|_{L^2}^2\\
&+C_{m,\delta}  \Bigl(1+\Bigl(\f{t}{t-1}\Bigr)^\f12\bigl({\nu}^{-1}{\|\Omega(1)\|_{L^2(m)}}\bigr)^{2m}\Bigr)\|\Omega\|_{L^2}^2.
\end{align*}
Noticing that $\f{6+t^2}{12+t^2}\leq 1$, we get, by using \eqref{eq3.1a} with $p=2$, that
\begin{align*}
t\f{d}{dt}\|a^m\Omega(t)\|_{L^2}^2\leq &2(1+\delta)\|a^m\Omega(t)\|_{L^2}^2 \\
&+C_{m,\delta} \bigl(1+{\nu}^{-1}{\|\Omega(1)\|_{L^2(m)}}\bigr)^{2m}  \|\Omega(1)\|_{L^2(m)}^2 \Bigl(\f{t}{t-1}\Bigr)^\f12\tt .
\end{align*}
By using Gronwall's inequality, we get that for any $t\geq 1$,
\begin{align*}
\|a^m \Omega(t)\|_{L^2}^2\leq &t^{2(1+\delta)}\|a^m\Omega(1)\|_{L^2}^2\\
&+C_{m,\delta}  \bigl(1+{\nu}^{-1}{\|\Omega(1)\|_{L^2(m)}}\bigr)^{2m} \|\Omega(1)\|_{L^2(m)}^2 t^{2(1+\delta)}\int_1^t s^{-1-2\delta}\Bigl(\f{s}{s-1}\Bigr)^\f12\,ds \\
\leq &C_{m,\delta} \bigl(1+\nu^{-1}{\|\Omega(1)\|_{L^2(m)}}\bigr)^{2m}\|\Omega(1)\|_{L^2(m)}^2 \tt^{2(1+\delta)},
\end{align*}
 By taking a square root, we arrive at
$$
\|a^m \Omega(t)\|_{L^2} \leq C_{m,\delta}\tt^{1+\delta}\bigl(1+{\nu}^{-1}{\|\Omega(1)\|_{L^2(m)}}\bigr)^{m}\|\Omega(1)\|_{L^2(m)},
$$
which together with \eqref{eq3.1a} ensures \eqref{eq:prop3.1}. This finishes the proof of Proposition \ref{prop3.1}.
\end{proof}

Noticing that the $L^2(m)$ estimate \eqref{eq:prop3.1} of $\Omega$ grows as  $\tt^{1+\delta}$ with respect to the time $t,$ which is faster than the $L^2$ norm  growth estimate $\tt^\f12$. Below, we are going to use an interpolation argument to improve the time growth:

\begin{col}\label{col3.1}
{\sl Let $m_0>1$ and $\delta>0$. Suppose \eqref{eq:Om} has a solution $\Omega(t)\in L^2(m_0)$. Then, for any $0\leq m\leq m_0$, there holds
\begin{equation}\label{eq:col3.1}
\|\Omega(t)\|_{L^2(m)}\leq C_{m,m_0,\delta} \bigl( 1 +{\nu}^{-1}{\|\Omega(1)\|_{L^2(m_0)}}\bigr)^{m} \tt^{\f12 +\f{m}{m_0}(\f12+\delta)}\|\Omega(1)\|_{L^2(m_0)}.
\end{equation}
}\end{col}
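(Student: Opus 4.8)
The plan is to obtain \eqref{eq:col3.1} by interpolating, in the weight parameter, between two estimates that are already in hand: the plain $L^2$ bound, which grows only like $\tt^{\f12}$, and the full weighted bound of Proposition \ref{prop3.1} at the top weight $m_0$, which grows like $\tt^{1+\delta}$. The point is that the unweighted norm grows strictly more slowly; consequently the interpolated estimate at an intermediate weight $m$ will inherit a growth rate strictly between $\tt^{\f12}$ and $\tt^{1+\delta}$, with the faster weighted growth entering only through the fraction $m/m_0$. This is precisely what the exponent $\f12+\f{m}{m_0}(\f12+\delta)$ records.

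First I would collect the two endpoint bounds. Since $m_0>1$ gives $L^2(m_0)\hookrightarrow L^1$, estimate \eqref{eq3.1a} with $p=2$ yields the unweighted bound $\|\Omega(t)\|_{L^2}\leq C_{m_0}\tt^{\f12}\|\Omega(1)\|_{L^2(m_0)}$, while Proposition \ref{prop3.1} applied with weight $m_0$ gives $\|\Omega(t)\|_{L^2(m_0)}\leq C_{m_0,\delta}\tt^{1+\delta}\bigl(1+\nu^{-1}\|\Omega(1)\|_{L^2(m_0)}\bigr)^{m_0}\|\Omega(1)\|_{L^2(m_0)}$. Next comes the interpolation inequality for the weighted norms. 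Setting $\theta=m/m_0\in[0,1]$ and writing $b^{2m}=\bigl(b^{2m_0}\bigr)^{\theta}$ with $b=\langle X,Y\rangle$, Hölder's inequality with conjugate exponents $\f1{1-\theta}$ and $\f1\theta$ gives
\begin{equation*}
\int |\Omega|^2 b^{2m}=\int \bigl(|\Omega|^2\bigr)^{1-\theta}\bigl(|\Omega|^2 b^{2m_0}\bigr)^{\theta}\leq \Bigl(\int |\Omega|^2\Bigr)^{1-\theta}\Bigl(\int |\Omega|^2 b^{2m_0}\Bigr)^{\theta},
\end{equation*}
that is, $\|\Omega(t)\|_{L^2(m)}\leq \|\Omega(t)\|_{L^2}^{1-\theta}\|\Omega(t)\|_{L^2(m_0)}^{\theta}$.

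Finally I would substitute the two endpoint bounds into this inequality and check that all exponents reproduce the claimed powers. The time factor becomes $\tt^{\f12(1-\theta)+(1+\delta)\theta}=\tt^{\f12+\theta(\f12+\delta)}$, which equals $\tt^{\f12+\f{m}{m_0}(\f12+\delta)}$; the relative-Reynolds-number factor contributes $\bigl(1+\nu^{-1}\|\Omega(1)\|_{L^2(m_0)}\bigr)^{m_0\theta}=\bigl(1+\nu^{-1}\|\Omega(1)\|_{L^2(m_0)}\bigr)^{m}$; the data norm recombines as $\|\Omega(1)\|_{L^2(m_0)}^{(1-\theta)+\theta}=\|\Omega(1)\|_{L^2(m_0)}$; and the two constants multiply to a single $C_{m,m_0,\delta}$, yielding \eqref{eq:col3.1}. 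There is no serious obstacle in this argument: the substance lies entirely in the earlier observation that the unweighted norm grows only at rate $\tt^{\f12}$, so the sole thing genuinely requiring verification is the bookkeeping that the interpolation exponents reproduce the stated powers of $\tt$ and of $1+\nu^{-1}\|\Omega(1)\|_{L^2(m_0)}$.
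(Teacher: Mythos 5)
Your proposal is correct and follows essentially the same route as the paper's own proof: both interpolate $\|\Omega(t)\|_{L^2(m)}\leq \|\Omega(t)\|_{L^2}^{1-m/m_0}\|\Omega(t)\|_{L^2(m_0)}^{m/m_0}$ between the unweighted bound \eqref{eq3.1a} with $p=2$ and Proposition \ref{prop3.1} at weight $m_0$, then track the exponents. The only difference is cosmetic: you spell out the H\"older derivation of the interpolation inequality, which the paper uses without comment.
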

\begin{proof} We deduce from  \eqref{eq3.1a} with $p=2$ and \eqref{eq:prop3.1} with $m=m_0$ that for any $0<m<m_0$,
\begin{align*}
&\|\Omega(t)\|_{L^2(m)}\leq \|\Omega(t)\|_{L^2}^{1-\f{m}{m_0}}\|\Omega(t)\|_{L^2(m_0)}^{\f{m}{m_0}} \\
&\leq C_{m,m_0,\delta} \tt^{\f12-\f{m}{2m_0}} \|\Omega(1)\|_{L^2(m_0)}^{1-\f{m}{m_0}}
\Bigl( 1 +\bigl({\nu}^{-1}{\|\Omega(1)\|_{L^2(m_0)}}\bigr)^{m_0}\Bigr)^{\f{m}{m_0}} \tt^{\f{m}{m_0}(1+\delta)} \|\Omega(1)\|_{L^2(m_0)}^{\f{m}{m_0}}\\
&\leq C_{m,m_0,\delta} \Bigl( 1 +\bigl(\nu^{-1}{\|\Omega(1)\|_{L^2(m_0)}}\bigr)^{m}\Bigr) \tt^{\f12 +\f{m}{m_0}(\f12+\delta)}\|\Omega(1)\|_{L^2(m_0)},
\end{align*}
which finishes the proof of \eqref{eq:col3.1}.
\end{proof}

Now, we present the proof of Lemma \ref{lem:BS law}:
\begin{proof}[Proof of Lemma \ref{lem:BS law}]
The basic idea is to change the variables from $(X,Y)$ back to $(x,y)$. By using \eqref{coordinate} and \eqref{def:Om}, one can easily compute
$$
\p_y\Delta^{-1}\omega(t,x,y)=\f{\sqrt{1+\f{t^2}3}}{\sqrt{\nu t}(1+\f{t^2}{12})} \p_Y\Delta_t^{-1} \Omega(t,X,Y)
$$
and
$$
\p_x\Delta^{-1}\omega(t,x,y)=\f1{\sqrt{\nu t(1+\f{t^2}3)(1+\f{t^2}{12})}}(\p_X-\f{t}{2\sqrt{1+\f{t^2}{12}}}\p_Y)\Delta_t^{-1}\Omega(t,X,Y).
$$
Therefore, we apply the classical Biot-Savart law $\|\nabla\Delta^{-1}\omega\|_{L^\oo}\lesssim \|\omega\|_{L^{\f{2m}{m-1}}}^\f12\|\omega\|_{L^{\f{2m}{m+1}}}^\f12$ and change variables with \eqref{coordinate} to conclude
\begin{align*}
&\tt\|\p_Y\Delta_t^{-1}\Omega\|_{L^\oo}+\|\p_X\Delta_t^{-1}\Omega\|_{L^\oo}
\lesssim (\nu t)^\f12 \tt^2\|\nabla\Delta^{-1}\omega\|_{L^\oo}\\
\lesssim &(\nu t)^\f12 \tt^2 \bigl( (\nu t\tt)^{-\f{m+1}{2m}} \|\Omega\|_{L^{\f{2m}{m+1}}} \bigr)^\f12 \bigl( (\nu t\tt)^{-\f{m-1}{2m}} \|\Omega\|_{L^{\f{2m}{m-1}}} \bigr)^\f12 \\
\lesssim &\tt^\f32 \|\Omega\|_{L^{\f{2m}{m+1}}}^\f12 \|\Omega\|_{L^{\f{2m}{m-1}}}^\f12,
\end{align*}
which finishes the proof of \eqref{eq:BS law}.
\end{proof}

\subsection{Higher regularity estimates}\label{subsection 3.2}
 The goal of this subsection is to handle the derivative estimate of $\Omega$ in $L^2(m)$. When $t$ is not large, the linear operator $\Delta_t$ can gain the regularity in both $X$ and $Y$ variables. However, as $t\rightarrow \oo$, the formal limit of $\Delta_t$ is  $\p_Y^2$, which can not gain any horizontal regularity. For such reason, we shall make use of the hypo-ellipticity of the Fokker-Planck operator, see for instance \cite{hormander1967hypoelliptic,villani2009hypocoercivity}.

We first introduce the following energy functional:
\begin{equation}\label{def:E}
\begin{aligned}
E(t)\eqdefa &\|\Omega(t)\|_{L^2(m)}^2+c_1\bigl(\ln{t}/{t_0}\bigr)\|\p_Y \Omega(t)\|_{L^2(m)}^2+c_2\bigl(\ln{t}/{t_0}\bigr)^2 \bigl(\p_X \Omega (t)\big|\p_Y \Omega(t) \bigr)_{L^2(m)} \\
&+c_3\bigl(\ln{t}/{t_0}\bigr)^3\|\p_X \Omega(t)\|_{L^2(m)}^2+c_4\bigl(\ln{t}/{t_0}\bigr)^2\|\p_Y^2\Omega(t)\|_{L^2(m)}^2\\
&+c_5\bigl(\ln{t}/{t_0}\bigr)^4\|\p_X\p_Y\Omega(t)\|_{L^2(m)}^2+c_6\bigl(\ln{t}/{t_0}\bigr)^5 \bigl(\p_X^2 \Omega(t) \big|\p_X\p_Y \Omega(t) \bigr)_{L^2(m)}\\
& +c_7\bigl(\ln{t}/{t_0}\bigr)^6\|\p_X^2\Omega(t)\|_{L^2(m)}^2 ,
\end{aligned}
\end{equation}
and the associated dissipation energy functional:
\begin{equation}\label{def:D}
\begin{aligned}
D(t)\eqdefa &\tt^{-2}\|\p_X\Omega(t)\|_{L^2(m)}^2+\|\p_Y\Omega(t)\|_{L^2(m)}^2+c_1\bigl(\ln{t}/{t_0}\bigr)\Bigl(\tt^{-2}\|\p_X\p_Y\Omega(t)\|_{L^2(m)}^2\\
&+\|\p_Y^2\Omega(t)\|_{L^2(m)}^2\Bigr)
+c_2\bigl(\ln{t}/{t_0}\bigr)^2\|\p_X\Omega(t)\|_{L^2(m)}^2\\
&+c_3\bigl(\ln{t}/{t_0}\bigr)^3\Bigl(\tt^{-2}\|\p_X^2\Omega(t)\|_{L^2(m)}^2+\|\p_X\p_Y\Omega(t)\|_{L^2(m)}^2\Bigr)\\
&+c_4\bigl(\ln{t}/{t_0}\bigr)^2\Bigl(\tt^{-2}\|\p_X\p_Y^2\Omega(t)\|_{L^2(m)}^2+\|\p_Y^3\Omega(t)\|_{L^2(m)}^2\Bigr) \\
&+c_5\bigl(\ln{t}/{t_0}\bigr)^4\Bigl(\tt^{-2}\|\p_X^2\p_Y\Omega(t)\|_{L^2(m)}^2+\|\p_X\p_Y^2\Omega(t)\|_{L^2(m)}^2\Bigr)\\
&
+c_6\bigl(\ln{t}/{t_0}\bigr)^5\|\p_X^2\Omega(t)\|_{L^2(m)}^2
+c_7\bigl(\ln{t}/{t_0}\bigr)^6\Bigl(\tt^{-2}\|\p_X^3\Omega(t)\|_{L^2(m)}^2\\
&+\|\p_X^2\p_Y\Omega(t)\|_{L^2(m)}^2\Bigr).
\end{aligned}
\end{equation}

In the above energy functionals,  $t_0$ will be taken to be an arbitrary large time and $t$ to belong to $[t_0,2t_0]$ so that $\ln {t}/{t_0}$ is small. All the estimates below hold for $t_0\geq T_0$ for some universal $T_0$. The small constants from $c_1$ to $c_7$ are chosen to satisfy
\begin{equation}\label{assumptions on c1-7}
\quad \left\{\begin{array}{l}
\displaystyle 1\gg c_1\gg c_2=c_4 \gg c_3\gg c_5\gg c_6\gg c_7, \\
\displaystyle  c_1^2\ll c_2, \qquad c_2^2\ll_m c_1c_3, \qquad c_5^2\ll c_3c_6,\qquad c_6^2\ll_m c_5c_7,
\end{array}\right.
\end{equation}
where  $f\ll_m g$ means that there is a large constant $C_m$ depending on $m$ so that $f\leq {C_m} g$.
Below we present one example of $c_1$ to $c_7$  which can satisfy \eqref{assumptions on c1-7}. We first take a large enough constant $A_m$ and then take
$$
(c_1,c_2,c_3,c_4,c_5,c_6,c_7)=(A_m^{-3},A_m^{-5},A_m^{-6},A_m^{-5} ,A_m^{-9},A_m^{-11},A_m^{-12}).
$$

Before proceeding, we first introduce some useful anisotropic inequalities.

\begin{lem}\label{S3lem1}
{\sl Let $m>1$ and $t>2$, one has
\begin{equation}\label{eq:lem3.1a}
\|\p_X\Delta_t^{-1}\Omega(t)\|_{L^\oo}+\tt\|\p_Y\Delta_t^{-1}\Omega(t)\|_{L^\oo}\lesssim \tt^2 \|\Omega(1)\|_{L^2(m)},
\end{equation}
and
\begin{equation}\label{eq:lem3.1b}
\|\p_X\p_Y\Delta_t^{-1}\Omega(t)\|_{L^2}+\tt\|\p_Y^2\Delta_t^{-1}\Omega(t)\|_{L^2}\lesssim \tt^{\f32} \|\Omega(1)\|_{L^2(m)},
\end{equation}
}\end{lem}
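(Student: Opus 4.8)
The plan is to reduce both bounds to classical Biot--Savart and Calder\'on--Zygmund estimates for the genuine vorticity $\omega$, precisely along the lines of the proof of Lemma \ref{lem:BS law}. The key preliminary observation is that, under the change of variables \eqref{coordinate}, the operator $\Delta_t$ is merely a rescaled Laplacian. Indeed, carrying out the chain rule exactly as in that proof gives the operator identities $\bigl(\p_X-\f{t}2(1+\f{t^2}{12})^{-\f12}\p_Y\bigr)=\sqrt{\nu t(1+\f{t^2}3)}\,\p_x$ and $\p_Y=\sqrt{\nu t(1+\f{t^2}{12})}\,(1+\f{t^2}3)^{-\f12}\,\p_y$, from which one reads off $\Delta_t=\nu t\,\Delta$ and hence $\Delta_t^{-1}\Omega=\sqrt{1+\f{t^2}{12}}\,\Delta^{-1}\omega$. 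Every object appearing in \eqref{eq:lem3.1a} and \eqref{eq:lem3.1b} is therefore a rescaled derivative of the stream function $\Delta^{-1}\omega$, and the Jacobian $dx\,dy=\nu t\sqrt{1+\f{t^2}{12}}\,dX\,dY$ translates norms between the two coordinate systems.

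For \eqref{eq:lem3.1a} I would simply combine \eqref{eq:BS law} with the $L^p$-decay estimates \eqref{eq3.1a} and \eqref{eq3.1b}. Since $m>1$ forces $1<\f{2m}{m+1}<2<\f{2m}{m-1}$, estimate \eqref{eq3.1a} yields $\|\Omega\|_{L^{2m/(m+1)}}\lesssim\tt^{\f{m-1}{2m}}\|\Omega(1)\|_{L^2(m)}$, while \eqref{eq3.1b} (together with $\f{t}{t-1}\lesssim1$ for $t>2$) yields $\|\Omega\|_{L^{2m/(m-1)}}\lesssim\tt^{\f{m+1}{2m}}\|\Omega(1)\|_{L^2(m)}$. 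The average of these two exponents is exactly $\f12$, so the right-hand side of \eqref{eq:BS law} is bounded by $\tt^{\f32}\cdot\tt^{\f12}\|\Omega(1)\|_{L^2(m)}=\tt^2\|\Omega(1)\|_{L^2(m)}$, which is precisely \eqref{eq:lem3.1a}.

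For \eqref{eq:lem3.1b} I would express the two second-order quantities through the double Riesz transforms $\p_y^2\Delta^{-1}$ and $\p_x\p_y\Delta^{-1}$, which are bounded on $L^2(\R^2)$. The identities above give $\p_Y^2\Delta_t^{-1}\Omega=\f{\nu t(1+\f{t^2}{12})^{\f32}}{1+\f{t^2}3}\,\p_y^2\Delta^{-1}\omega$, and writing $\p_X=\bigl(\p_X-\f{t}2(1+\f{t^2}{12})^{-\f12}\p_Y\bigr)+\f{t}2(1+\f{t^2}{12})^{-\f12}\p_Y$ expresses $\p_X\p_Y\Delta_t^{-1}\Omega$ as a $\p_x\p_y\Delta^{-1}\omega$ term plus an additional multiple of $\p_Y^2\Delta_t^{-1}\Omega$ that carries one extra factor of $t$. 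Taking $L^2$ norms, invoking the Jacobian and the $L^2$-continuity of these Riesz transforms, all the time-dependent prefactors collapse to give $\|\p_Y^2\Delta_t^{-1}\Omega\|_{L^2}\lesssim\|\Omega(t)\|_{L^2}$ and $\|\p_X\p_Y\Delta_t^{-1}\Omega\|_{L^2}\lesssim\tt\,\|\Omega(t)\|_{L^2}$. Finally \eqref{eq3.1a} with $p=2$ gives $\|\Omega(t)\|_{L^2}\lesssim\tt^{\f12}\|\Omega(1)\|_{L^2(m)}$, so both pieces of the left-hand side of \eqref{eq:lem3.1b} are $\lesssim\tt^{\f32}\|\Omega(1)\|_{L^2(m)}$, as required.

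The only point requiring care is the bookkeeping of the time-dependent constants: one must keep precise track of the prefactors produced by the coordinate change, of the Jacobian relating $\|\cdot\|_{L^2_{XY}}$ to $\|\cdot\|_{L^2_{xy}}$, and of the normalization $(\nu t\sqrt{1+\f{t^2}{12}})^{-1}$ in \eqref{def:Om}. Once the identity $\Delta_t=\nu t\,\Delta$ is recorded, there is no genuine analytic obstacle remaining: the two estimates reduce to the $L^2\to L^\oo$ Biot--Savart inequality already isolated in \eqref{eq:BS law} and to the $L^2$-boundedness of the double Riesz transforms, and the stated powers of $\tt$ emerge automatically from the exponent arithmetic.
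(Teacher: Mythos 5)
Your proposal is correct and follows essentially the same route as the paper. For \eqref{eq:lem3.1a} the paper simply cites its estimates \eqref{eq3.4}--\eqref{eq3.5}, which are exactly your combination of \eqref{eq:BS law} with \eqref{eq3.1a}--\eqref{eq3.1b} (the hypothesis $t>2$ absorbing the factor $\bigl(\f{t}{t-1}\bigr)^{\f1{4m}}$), and your exponent arithmetic $\f12\bigl(\f{m-1}{2m}+\f{m+1}{2m}\bigr)=\f12$ matches. For \eqref{eq:lem3.1b} the paper asserts the intermediate bound $\|\p_X\p_Y\Delta_t^{-1}\Omega\|_{L^2}+\tt\|\p_Y^2\Delta_t^{-1}\Omega\|_{L^2}\lesssim\tt\|\Omega(t)\|_{L^2}$ directly ``in view of \eqref{eq:Oma}'', i.e.\ as a Fourier-symbol estimate for the multiplier $\Delta_t^{-1}$, whereas you obtain the same bound by undoing the coordinate change via the identity $\Delta_t=\nu t\,\Delta$ and invoking the $L^2$-boundedness of the double Riesz transforms; these are equivalent packagings of the same fact (the change of variables is precisely what diagonalizes the quadratic form defining $\Delta_t$), and both proofs then conclude with \eqref{eq3.1a} at $p=2$.
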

\begin{proof}
 \eqref{eq:lem3.1a} follows from \eqref{eq3.4} and \eqref{eq3.5}. While in view of \eqref{eq:Oma}, we have
 \begin{align*}
 \|\p_X\p_Y\Delta_t^{-1}\Omega(t)\|_{L^2}+\tt\|\p_Y^2\Delta_t^{-1}\Omega(t)\|_{L^2}\lesssim \tt\|\Omega(t)\|_{L^2},
 \end{align*}
 which together with \eqref{eq3.1a} for $p=2$ ensures
 \eqref{eq:lem3.1b}.
\end{proof}

\begin{lem}\label{S3lem2}
{\sl For any $0<\sigma<\f12$, one has
\begin{equation}\label{eq3.6}
\|\p_X\Delta_t^{-1} f\|_{L^\oo}+\tt\|\p_Y\Delta_t^{-1}f\|_{L^\oo}
\leq C_\sigma \tt^{1+\sigma} \|f\|_{L^2(1)}^{\f12+\sigma}\|\p_X f\|_{L^2(1)}^{\f12-\sigma}.
\end{equation}
}\end{lem}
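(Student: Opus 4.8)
The plan is to pass to the Fourier side, reduce the two $L^\oo$ quantities to explicit multiplier integrals, and then exploit the anisotropic geometry of the symbol of $\Delta_t$ together with the two norms at our disposal. Writing $\hat f(\xi,\eta)$ for the Fourier transform (with $\xi$ dual to $X$ and $\eta$ dual to $Y$), the symbol of $-\Delta_t$ is
\[
P_t(\xi,\eta)=\bigl(1+\tfrac{t^2}3\bigr)^{-1}(\xi-c_t\eta)^2+\bigl(1+\tfrac{t^2}3\bigr)\bigl(1+\tfrac{t^2}{12}\bigr)^{-1}\eta^2,\qquad c_t=\tfrac t2\bigl(1+\tfrac{t^2}{12}\bigr)^{-\f12},
\]
so that $c_t$ stays bounded and $P_t(\xi,\eta)\approx\tt^{-2}(\xi-c_t\eta)^2+\eta^2$. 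Since $\|g\|_{L^\oo}\lesssim\|\hat g\|_{L^1}$, this gives $\|\p_X\Delta_t^{-1}f\|_{L^\oo}\lesssim\int\f{|\xi|}{P_t}|\hat f|\,d\xi\,d\eta$ and $\tt\|\p_Y\Delta_t^{-1}f\|_{L^\oo}\lesssim\tt\int\f{|\eta|}{P_t}|\hat f|\,d\xi\,d\eta$, so the whole lemma reduces to estimating these weighted integrals. (Equivalently, one may first change variables via \eqref{coordinate} to reduce to the classical $\nabla\Delta^{-1}$, exactly as in the proof of Lemma \ref{lem:BS law}, and then appeal to an anisotropic Biot--Savart inequality.) I would record at the outset the two norm identities $\|f\|_{L^2(1)}\approx\|\hat f\|_{H^1_{\xi,\eta}}$ and $\|\p_X f\|_{L^2(1)}\approx\|\xi\hat f\|_{H^1_{\xi,\eta}}$, which are the only places the hypotheses enter.

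I would then estimate the integrals by a near/far split adapted to $P_t$. Introducing the anisotropic coordinates $\tilde u=\tt^{-1}(\xi-c_t\eta)$, $v=\eta$, in which $P_t\approx\tilde u^2+v^2=:r^2$, I split the integration into $\{r<\rho\}$ and $\{r>\rho\}$ for a cutoff $\rho$ to be optimized. On the near region the multiplier $\tfrac{|\xi|}{P_t}$ (resp.\ $\tfrac{\tt|\eta|}{P_t}$) is locally in $L^p$ for some $p<2$, the factors $|\xi|,|\eta|$ taming the singularity of $1/P_t$, so pairing with $\hat f\in H^1\hookrightarrow L^{p'}$ (two-dimensional Sobolev embedding) controls this piece by $\|f\|_{L^2(1)}$. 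On the far region the multiplier decays but $\hat f$ need not be integrable there; this is where I would borrow the extra $\xi$-decay carried by $\|\p_X f\|_{L^2(1)}=\|\xi\hat f\|_{H^1}$. Optimizing $\rho$ balances the two contributions and produces the product $\|f\|_{L^2(1)}^{\f12+\sigma}\|\p_X f\|_{L^2(1)}^{\f12-\sigma}$ together with the time factor $\tt^{1+\sigma}$, the parameter $\sigma\in(0,\f12)$ being precisely the optimization exponent. An efficient packaging is to prove the two a priori bounds $\text{LHS}\lesssim\tt^{3/2}\|f\|_{L^2(1)}$ and $\text{LHS}\lesssim\tt\,\|f\|_{L^2(1)}^{1/2}\|\p_X f\|_{L^2(1)}^{1/2}$ separately, and then use $\min\{A,B\}\le A^{2\sigma}B^{1-2\sigma}$ to interpolate.

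The hard part will be the degeneracy of $P_t$ as $t\to\oo$: it flattens toward $\eta^2$, its (approximate) zero set thickening into a thin slanted strip $|\xi-c_t\eta|\lesssim\tt|\eta|$, and it is exactly this degeneration that makes the naive bound stop at $\tt^2$ (cf.\ \eqref{eq:lem3.1a}) and cannot be improved without horizontal regularity. The delicate point is that the worst part of the multiplier concentrates where $\hat f$ can only be controlled through $\p_X f$, so the anisotropic split and the balancing of $\rho$ must be carried out with the exact weights; this is also what forces the strict range $0<\sigma<\f12$, the constant $C_\sigma$ degenerating at both endpoints (in accordance with the failure $L^2(1)\not\hookrightarrow L^1$ at the borderline weight).
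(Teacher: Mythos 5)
Your reduction to the Fourier--side integrals, the identification of the anisotropic symbol $P_t\approx\tt^{-2}(\xi-c_t\eta)^2+\eta^2$, and the idea of interpolating between the two norms all match the paper's starting point, but the two a priori bounds you propose to interpolate between are not available, so the core of the argument has a genuine gap. The first bound, $\mathrm{LHS}\lesssim\tt^{3/2}\|f\|_{L^2(1)}$, is false: for fixed $t$, after a linear change of variables $\p_X\Delta_t^{-1}$ is just a directional operator $\p_e\Delta^{-1}$, and $\|\p_e\Delta^{-1}f\|_{L^\oo}$ cannot be controlled by $\|f\|_{L^2(1)}$ alone. Indeed, $\|f\|_{L^2(1)}=\|\hat f\|_{H^1_{\xi,\eta}}$, and $H^1(\R^2)$ gives neither boundedness nor decay of $\hat f$: taking $\hat f\geq 0$ supported in a cone about the $\xi$-axis with $\hat f\approx |k|^{-1}(\log|k|)^{-1}$ for $|k|\geq 2$, one has $\hat f\in H^1$ while $\int |\xi|\,|k|^{-2}\hat f\,dk=\oo$, so truncations $\hat f\,1_{|k|\leq N}$ have uniformly bounded $L^2(1)$ norm but $\p_e\Delta^{-1}f_N(0)\to\oo$. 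This reflects the classical fact that the Biot--Savart $L^\oo$ bound needs an $L^q$, $q>2$, ingredient, whereas $L^2(1)$ embeds only into $L^p$, $1<p\leq 2$; in this lemma the missing ingredient is exactly $\p_X f$, so no estimate by $\|f\|_{L^2(1)}$ alone can hold. Your second bound is precisely the excluded endpoint $\sigma=0$, where, as you yourself observe, the constant degenerates (at $\sigma=0$ the $\eta$-integral $\int|\eta|^{-1}|\hat f|\,d\eta$ diverges logarithmically at $\eta=0$ whenever $\int f\,dY\not\equiv 0$), so it cannot serve as a proven input either. The $\min\{A,B\}\leq A^{2\sigma}B^{1-2\sigma}$ packaging is arithmetically consistent with \eqref{eq3.6}, but it rests on one false and one unproven endpoint. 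The near/far split has a further structural mismatch: the far region $\{P_t>\rho^2\}$ contains frequencies with $|\xi|$ small and $|\eta|$ large, where $\|\xi\hat f\|$ gives no gain and $\hat f\in H^1$ gives no decay, so the piece $\int_{\mathrm{far}}\tt|\eta|P_t^{-1}|\hat f|$ escapes both of your ingredients (any H\"older pairing needs the multiplier in $L^{p'}$ with $p'<2$ on a two-dimensional region where $\int_{r>\rho}r^{-p'}\,dr\,d\theta$ diverges).

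What the paper actually does, and what is missing from your scheme, is: (i) a product-type pointwise symbol bound $\f{|\xi|}{P_t}+\f{\tt|\eta|}{P_t}\lesssim \tt^{1+\sigma}|\xi|^{-\sigma}|\eta|^{\sigma-1}$ (this is where $\sigma$ enters, not as an optimization exponent of a radial cutoff); (ii) two \emph{one-dimensional} interpolation inequalities, $\||\xi|^{-\sigma}g\|_{L^1_\xi}\lesssim\|g\|_{L^2_\xi}^{\f12+\sigma}\|\xi g\|_{L^2_\xi}^{\f12-\sigma}$ and $\||\eta|^{\sigma-1}g\|_{L^1_\eta}\lesssim\|g\|_{L^\oo_\eta}^{1-2\sigma}\|g\|_{L^2_\eta}^{2\sigma}$; and (iii) the mixed-norm embedding $L^2(1)\hookrightarrow L^1_Y(L^2_X)$, i.e.\ $\|\hat f\|_{L^2_\xi(L^\oo_\eta)}\lesssim\|f\|_{L^2(1)}$. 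Point (iii) is how the weight is exploited --- through an $L^\oo_\eta$ bound on $\hat f$ for a.e.\ $\xi$, not through $H^1\hookrightarrow L^{p'}$ --- and it is what tames $|\eta|^{\sigma-1}$ at $\eta=0$; point (ii) is why there is no divergence at $|\eta|=\oo$, since $|\eta|^{\sigma-1}\in L^2(|\eta|>1)$ in one dimension for $\sigma<\f12$, a convergence that your two-dimensional radial splitting destroys. Any repair of your argument would have to reintroduce this variable-by-variable structure, at which point you have reproduced the paper's proof.
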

\begin{proof}
Let us denote $\hat{f}(\xi,\eta)$ as the Fourier transform of $f(X,Y).$  It is easy to observe that
\begin{align*}
\|\p_X\Delta_t^{-1} f\|_{L^\oo}+\tt\|\p_Y\Delta_t^{-1} f\|_{L^\oo}
\leq & \bigl\|{\xi}\Bigl({\f{1}{1+\f{t^2}3}\bigl(\xi-\f{t}2
(1+\f{t^2}{12})^{-\f12}\eta\bigr)^2+\f{1+\f{t^2}3}{1+\f{t^2}{12}}\eta^2}\Bigr)^{-1}\hat{f}\bigr\|_{L^1}\\
&+\tt\bigl\|{\eta}\Bigl({\f{1}{1+\f{t^2}3}\bigl(\xi-\f{t}2(1+\f{t^2}{12})^{-\f12}\eta\bigr)^2+\f{1+\f{t^2}3}{1+\f{t^2}{12}}\eta^2}\Bigr)^{-1}
\hat{f}\bigr\|_{L^1} \\
&\lesssim \tt^{1+\sigma}\bigl\|{\hat{f}}{|\xi|^{-\sigma}|\eta|^{\sigma-1}}\bigr\|_{L^1}.
\end{align*}

Notice that due to $\s\in (0,\f12),$ for any $R>0,$ we have
\begin{align*}
\||\xi|^{-\sigma}g\|_{L^1_{\xi}}\leq &\Bigl(\int_{|\xi|\leq R}|\xi|^{-2\s}\Bigr)^{\f12}\|g\|_{L^2_\xi}+
\Bigl(\int_{|\xi|\geq R}|\xi|^{-2(1+\s)}\Bigr)^{\f12}\|\xi g\|_{L^2_\xi}
\\
\lesssim &R^{\f12-\s}\|g\|_{L^2_\xi}+R^{-\f12-\s}\|\xi g\|_{L^2_\xi}.
\end{align*}
Taking $R=\f{\|\xi g\|_{L^2_\xi}}{\|g\|_{L^2_\xi}}$ in the above inequality leads to
\beq \label{S3eq1} \||\xi|^{-\sigma}g\|_{L^1_{\xi}}\lesssim \|g\|_{L^2_\xi}^{\f12+\sigma}\|\xi g\|_{L^2_\xi}^{\f12-\sigma},.
\eeq
Similarly, one can show that
\beq \label{S3eq2}
\||\eta|^{\sigma-1}g\|_{L^1_{\eta}} \lesssim \|g\|_{L^\oo_{\eta}}^{1-2\sigma}\|g\|_{L^2_\eta}^{2\sigma}.
\eeq

Thanks to \eqref{S3eq1} and \eqref{S3eq2}, we get, by using
 the fact that $L^2(1)$ can be continuous embedded into $L^1_Y(L^2_X),$ that
\begin{align*}
\bigl\|{\hat{f}}{|\xi|^{-\sigma}|\eta|^{\sigma-1}}\bigr\|_{L^1}
\lesssim &\bigl\| \|\hat{f}\|_{L^\oo_{\eta}}^{1-2\sigma}\|\hat{f}\|_{L^2_\eta}^{2\sigma}\bigr\|_{L^2_\xi}^{\f12+\sigma}
\bigl\| \xi \|\hat{f}\|_{L^\oo_{\eta}}^{1-2\sigma}\|\xi\hat{f}\|_{L^2_\eta}^{2\sigma}\bigr\|_{L^2_\xi}^{\f12-\sigma} \\
\lesssim &\|\hat{f}\|_{L^2_\xi(L^\oo_{\eta})}^{(1-2\sigma)(\f12+\sigma)}\|\hat{f}\|_{L^2}^{2\sigma(\f12+\sigma)}
\|\xi\hat{f}\|_{L^2_\xi(L^\oo_{\eta})}^{(1-2\sigma)(\f12-\sigma)}\|\xi\hat{f}\|_{L^2}^{2\sigma(\f12-\sigma)}\\
\lesssim &\|f\|_{L^1_Y(L^2_{X})}^{(1-2\sigma)(\f12+\sigma)}\|f\|_{L^2}^{2\sigma(\f12+\sigma)}
\|\p_X f\|_{L^1_Y(L^2_{X})}^{(1-2\sigma)(\f12-\sigma)}\|\p_X f\|_{L^2}^{2\sigma(\f12-\sigma)}\\
\lesssim &\|f\|_{L^2(1)}^{\f12+\sigma}\|\p_X f\|_{L^2(1)}^{\f12-\sigma}.
\end{align*}
This finishes the proof of \eqref{eq3.6}.
\end{proof}

Let us turn to the estimates of the terms appearing in \eqref{def:E}.

\begin{lem}\label{S3lem3}
{\sl Let $m>1$ and $t_0$ be a large enough positive constant. Then for $t\geq t_0$, one has
\begin{equation}\label{eq:lem1}
t\f{d}{dt}\|\Omega(t) \|_{L^2(m)}^2 +2\tt^{-2}\|\p_X\Omega\|_{L^2(m)}^2+4\|\p_Y\Omega\|_{L^2(m)}^2
\leq C_{m} \bigl(1+{\nu}^{-1}{\|\Omega(1)\|_{L^2(m)}}\bigr) E(t).
\end{equation}
}\end{lem}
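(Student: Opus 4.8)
The plan is to test the non-autonomous equation \eqref{eq:Om} against $b^{2m}\Omega$ in $L^2$, where $b=\langle X,Y\rangle$ is the weight generating the $L^2(m)$ norm, and --- in contrast to the proof of Proposition \ref{prop3.1} --- to \emph{retain} rather than discard the dissipation produced by the elliptic part $\Delta_t$ of $\cL_t$. Pairing \eqref{eq:Om} with $b^{2m}\Omega$ and multiplying by $2$ yields
\begin{equation*}
t\f{d}{dt}\|\Omega(t)\|_{L^2(m)}^2 = 2\int\cL_t\Omega\,\Omega\,b^{2m} + 2\int\cN_t\Omega\,\Omega\,b^{2m},
\end{equation*}
so the task is to produce the dissipation $2\tt^{-2}\|\p_X\Omega\|_{L^2(m)}^2+4\|\p_Y\Omega\|_{L^2(m)}^2$ out of the linear term, and to bound the nonlinear term together with all remainders by $C_m\bigl(1+\nu^{-1}\|\Omega(1)\|_{L^2(m)}\bigr)E(t)$.

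For the linear term I would rerun the integration-by-parts computation already performed for the homogeneous weight in Proposition \ref{prop3.1}, now with $b^{2m}$. This produces the two negative-definite forms
\begin{equation*}
-\bigl(1+\tfrac{t^2}3\bigr)^{-1}\bigl\|\bigl(\p_X-\tfrac t2(1+\tfrac{t^2}{12})^{-\f12}\p_Y\bigr)(b^m\Omega)\bigr\|_{L^2}^2 \ -\ \f{1+t^2/3}{1+t^2/12}\|\p_Y(b^m\Omega)\|_{L^2}^2,
\end{equation*}
plus a collection of commutator and drift remainders. Each remainder carries a weight at most $b^{2m}$ and a bounded $t$-coefficient: the rotational drift $X\p_Y-Y\p_X$ annihilates the radial weight $b$ and hence contributes nothing; the tilted drift comes with the vanishing factor $(1+\tfrac{t^2}3)^{-1}$; and the constant $\tfrac{12+2t^2}{12+t^2}$ and the factor $\tfrac{1+t^2/3}{1+t^2/12}$ stay bounded. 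Consequently every remainder is controlled by $C_m\|\Omega\|_{L^2(m)}^2\leq C_m E(t)$.

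It then remains to convert the two dissipation forms into the stated left-hand side, which I expect to be the most delicate step. Writing $\p_Y(b^m\Omega)=b^m\p_Y\Omega+(\p_Y b^m)\Omega$ with $|\p_Y b^m|\lesssim b^m$ and using Cauchy--Schwarz gives $\|\p_Y(b^m\Omega)\|_{L^2}^2\geq(1-\e)\|\p_Y\Omega\|_{L^2(m)}^2-C_\e\|\Omega\|_{L^2(m)}^2$, and likewise for the tilted derivative. For $t\geq t_0$ with $t_0$ large, the coefficient $\tfrac{1+t^2/3}{1+t^2/12}$ is arbitrarily close to $4$ while $(1+\tfrac{t^2}3)^{-1}\approx 3\tt^{-2}$; after the multiplication by $2$ the vertical dissipation has coefficient near $8$, leaving slack to retain $4\|\p_Y\Omega\|_{L^2(m)}^2$. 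For the horizontal part I would use $\|(\p_X-\sqrt3\p_Y)(b^m\Omega)\|_{L^2}^2\geq\tfrac12\|\p_X(b^m\Omega)\|_{L^2}^2-3\|\p_Y(b^m\Omega)\|_{L^2}^2$; the subtracted $\p_Y$ term appears with a tiny $O(\tt^{-2})$ factor and is absorbed by the surplus in the vertical dissipation, so that $2\tt^{-2}\|\p_X\Omega\|_{L^2(m)}^2$ survives. The anisotropy here --- the horizontal dissipation being only $O(\tt^{-2})$ --- is exactly what makes this bookkeeping subtle and forces $t_0$ to be large.

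Finally, for the nonlinear term I would use that the velocity $(\p_Y\Delta_t^{-1}\Omega,-\p_X\Delta_t^{-1}\Omega)$ is divergence free, since $\Delta_t^{-1}$ commutes with $\p_X$ and $\p_Y$; integrating by parts moves the derivative onto the weight,
\begin{equation*}
2\int\cN_t\Omega\,\Omega\,b^{2m}=-\nu^{-1}\bigl(1+\tfrac{t^2}{12}\bigr)^{-1}\int\Omega^2\bigl(\p_Y\Delta_t^{-1}\Omega\,\p_X b^{2m}-\p_X\Delta_t^{-1}\Omega\,\p_Y b^{2m}\bigr).
\end{equation*}
Using $|\p_X b^{2m}|+|\p_Y b^{2m}|\leq C_m b^{2m}$ together with the $L^\oo$ Biot--Savart bounds \eqref{eq:lem3.1a}, the prefactor $(1+\tfrac{t^2}{12})^{-1}\approx \tt^{-2}$ cancels the worst $\tt^2$ growth of $\|\p_X\Delta_t^{-1}\Omega\|_{L^\oo}$, leaving $\bigl|\int\cN_t\Omega\,\Omega\,b^{2m}\bigr|\lesssim\nu^{-1}\|\Omega(1)\|_{L^2(m)}\|\Omega\|_{L^2(m)}^2\leq\nu^{-1}\|\Omega(1)\|_{L^2(m)}E(t)$. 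Collecting the linear and nonlinear estimates and invoking $\|\Omega\|_{L^2(m)}^2\leq E(t)$ yields \eqref{eq:lem1}.
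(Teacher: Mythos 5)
Your proposal is correct and follows essentially the same route as the paper's proof: test against $b^{2m}\Omega$, extract the anisotropic dissipation from $\Delta_t$ by integration by parts with absorption of the cross and drift terms for $t_0$ large, and bound the nonlinearity via its divergence-free structure together with the $L^\infty$ Biot--Savart bounds \eqref{eq:lem3.1a}, so that the $(1+\tfrac{t^2}{12})^{-1}$ prefactor cancels the $\tt^2$ growth. The only cosmetic difference is that you complete the square with the weight inside the derivatives, $(b^m\Omega)$, as in Proposition \ref{prop3.1}, and then convert back by the product rule, whereas the paper keeps the weight outside (working with $\|\p_X\Omega\|_{L^2(m)}$, $\|\p_Y\Omega\|_{L^2(m)}$ directly) and absorbs the resulting cross terms in place.
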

\begin{proof}
The proof of this lemma will follow along the same line as that of   Proposition \ref{prop3.1}. For simplicity, we shall denote $\langle X,Y\rangle$ by $b(X,Y).$ We first get, by a similar derivation of \eqref{estimates:L}, that
\begin{align*}
\int \Delta_t\Omega \, \Omega \, b^{2m}=&-\bigl(1+\f{t^2}3\bigr)^{-1}\bigl\|\Bigl(\p_X-\f{t}2\bigl(1+\f{t^2}{12}\bigr)^{-\f12}\p_Y\Bigr)\Omega\bigr\|_{L^2(m)}^2-\f{1+\f{t^2}3}{1+\f{t^2}{12}}\|\p_Y\Omega\|_{L^2(m)}^2\\
&-\f{2m}{1+\f{t^2}3}\int b^{2m-2}\Bigl(X-\f{t}2\bigl(1+\f{t^2}{12}\bigr)^{-\f12}Y\Bigr)\Bigl(\p_X-\f{t}2\bigl(1+\f{t^2}{12}\bigr)^{-\f12}\p_Y\Bigr)\Omega \,\Omega\\
&-2m\f{1+\f{t^2}3}{1+\f{t^2}{12}}\int b^{2m-2} Y\p_Y \Omega\,\Omega\\
\leq&-\tt^{-2}\|\p_X\Omega\|_{L^2(m)}^2-2\|\p_Y\Omega\|_{L^2(m)}^2
+C_m\|\Omega\|_{L^2(m-1)}^2,
\end{align*}
where we used that for $t\geq t_0$ with $t_0$ being large enough,
\begin{align*}
&-\bigl(1+\f{t^2}3\bigr)^{-1}\bigl\|\Bigl(\p_X-\f{t}2\bigl(1+\f{t^2}{12}\bigr)^{-\f12}\p_Y\Bigr)\Omega\bigr\|_{L^2(m)}^2-\f{1+\f{t^2}3}{1+\f{t^2}{12}}\|\p_Y\Omega\|_{L^2(m)}^2 \\
\leq &-2\tt^{-2}\bigl( \|\p_X \Omega\|_{L^2(m)}^2 -4\|\p_X\Omega\|_{L^2(m)}\|\p_Y\Omega\|_{L^2(m)} \bigr) -3\|\p_Y \Omega\|_{L^2(m)}^2 \\
\leq &-\tt^{-2}\|\p_X\Omega\|_{L^2(m)}^2-2\|\p_Y\Omega\|_{L^2(m)}^2.
\end{align*}
Then in view of \eqref{eq:Oma}, we find
\begin{equation}\label{eq3.8}
\int \cL_t\Omega \, \Omega \, b^{2m}\leq-\tt^{-2}\|\p_X\Omega\|_{L^2(m)}^2-2\|\p_Y\Omega\|_{L^2(m)}^2+C_m\|\Omega\|_{L^2(m)}^2.
\end{equation}

While we get, by using  integration by parts, that
\begin{align*}
\int\cN_t \Omega\, \Omega b^{2m}= \f{m}\nu (1+\f{t^2}{12})^{-1}\Bigl(\int b^{2m-2} Y \Omega^2\p_X\Delta_t^{-1}\Omega -\int b^{2m-2}X\Omega^2 \p_Y \Delta_t^{-1}\Omega \Bigr)\\
\leq \f{C_m}\nu \tt^{-2}\|\Omega\|_{L^2(m)}^2 \bigl( \|\p_X\Delta_t^{-1}\Omega \|_{L^\oo}+\|\p_Y\Delta_t^{-1}\Omega \|_{L^\oo} \bigr),
\end{align*}
from which and \eqref{eq:lem3.1a}, we infer
\begin{equation}\label{eq3.9}
\int\cN_t \Omega\, \Omega b^{2m} \leq  \f{C_{m}}{\nu}{\|\Omega(1)\|_{L^2(m)}} \|\Omega\|_{L^2(m)}^2.
\end{equation}

Thanks to \eqref{eq3.8} and \eqref{eq3.9}, by taking $L^2(m)$ inner product of \eqref{eq:Om} with $\Omega,$
we conclude the proof of \eqref{eq:lem1}.
\end{proof}

\begin{lem}\label{S3lem4}
{\sl Under the assumptions of Lemma \ref{S3lem3}, for $t_0\leq t\leq 2t_0 $, we have
\begin{equation}\label{eq:lem2}
\begin{aligned}
&t \f{d}{dt}\Bigl(\bigl(\ln{t}/{t_0}\bigr)\|\p_Y\Omega\|_{L^2(m)}^2\Bigr) +\bigl(\ln{t}/{t_0}\bigr)\bigl(\tt^{-2}\|\p_X\p_Y\Omega\|_{L^2(m)}^2+\|\p_Y^2\Omega\|_{L^2(m)}^2\bigr)\\
&\leq \|\p_Y\Omega\|_{L^2(m)}^2+4\bigl(\ln{t}/{t_0}\bigr)\|\p_X\Omega\|_{L^2(m)}\|\p_Y\Omega\|_{L^2(m)}\\
&\quad+C_m \bigl(1+\nu^{-1}{\|\Omega(1)\|_{L^2(m)}}\bigr)\bigl(\ln{t}/{t_0}\bigr)^{\f12}  D(t).
\end{aligned}\end{equation}
}\end{lem}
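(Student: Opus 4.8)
The plan is to differentiate \eqref{eq:Om} in $Y$, test against $\p_Y\Omega$ in the weight $b^{2m}$, and only afterwards attach the time–dependent prefactor $\bigl(\ln{t}/{t_0}\bigr)$. Writing $t\p_t(\p_Y\Omega)=\cL_t\p_Y\Omega+[\p_Y,\cL_t]\Omega+\p_Y\cN_t\Omega$ and taking the $L^2(m)$ inner product with $\p_Y\Omega$ gives
\[
\f{t}2\f{d}{dt}\|\p_Y\Omega\|_{L^2(m)}^2=\int\cL_t\p_Y\Omega\,\p_Y\Omega\,b^{2m}+\int[\p_Y,\cL_t]\Omega\,\p_Y\Omega\,b^{2m}+\int\p_Y\cN_t\Omega\,\p_Y\Omega\,b^{2m}.
\]
Since $\f{d}{dt}\bigl(\ln{t}/{t_0}\bigr)=1/t$, the product rule yields $t\f{d}{dt}\bigl(\bigl(\ln{t}/{t_0}\bigr)\|\p_Y\Omega\|_{L^2(m)}^2\bigr)=\|\p_Y\Omega\|_{L^2(m)}^2+\bigl(\ln{t}/{t_0}\bigr)\,t\f{d}{dt}\|\p_Y\Omega\|_{L^2(m)}^2$, which already produces the first term $\|\p_Y\Omega\|_{L^2(m)}^2$ on the right-hand side of \eqref{eq:lem2}. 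For the linear part the operator $\cL_t$ is applied to $\p_Y\Omega$, so the computation leading to \eqref{eq3.8} applies verbatim with $\Omega$ replaced by $\p_Y\Omega$, giving $\int\cL_t\p_Y\Omega\,\p_Y\Omega\,b^{2m}\le-\tt^{-2}\|\p_X\p_Y\Omega\|_{L^2(m)}^2-2\|\p_Y^2\Omega\|_{L^2(m)}^2+C_m\|\p_Y\Omega\|_{L^2(m)}^2$. After multiplication by $\bigl(\ln{t}/{t_0}\bigr)$ the two negative terms supply the dissipation carried to the left-hand side of \eqref{eq:lem2} (with room to spare), while $C_m\bigl(\ln{t}/{t_0}\bigr)\|\p_Y\Omega\|_{L^2(m)}^2\le C_m\bigl(\ln{t}/{t_0}\bigr)^{\f12}D(t)$ because $D(t)\ge\|\p_Y\Omega\|_{L^2(m)}^2$ and $\bigl(\ln{t}/{t_0}\bigr)\le\bigl(\ln{t}/{t_0}\bigr)^{\f12}$ for $t\le 2t_0$.

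Next I would compute the commutator from \eqref{eq:Oma}. Every second-order piece and every $X$-independent first-order piece of $\cL_t$ commutes with $\p_Y$, so the only surviving contributions come from the $Y\p_Y$ drift and the $-Y\p_X$ part of the rotation, yielding $[\p_Y,\cL_t]\Omega=c(t)\p_Y\Omega+d(t)\p_X\Omega$ with $c(t)\to2$ and $d(t)\to-\tfrac{\sqrt3}2$ as $t\to\oo$ (in accordance with $[\p_Y,\cL_\oo]=2\p_Y-\tfrac{\sqrt3}2\p_X$), the remaining pieces being $O(\tt^{-2})$. Hence, by Cauchy--Schwarz, $\int[\p_Y,\cL_t]\Omega\,\p_Y\Omega\,b^{2m}\le\bigl(2+O(\tt^{-2})\bigr)\|\p_Y\Omega\|_{L^2(m)}^2+\bigl(\tfrac{\sqrt3}2+O(\tt^{-2})\bigr)\|\p_X\Omega\|_{L^2(m)}\|\p_Y\Omega\|_{L^2(m)}$. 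After the factor $2$ from the energy identity and the factor $\bigl(\ln{t}/{t_0}\bigr)$, the cross term is dominated by $4\bigl(\ln{t}/{t_0}\bigr)\|\p_X\Omega\|_{L^2(m)}\|\p_Y\Omega\|_{L^2(m)}$ for $t_0$ large (since $\sqrt3<4$), which is precisely the second term of \eqref{eq:lem2}, while the diagonal term is absorbed into $C_m\bigl(\ln{t}/{t_0}\bigr)^{\f12}D(t)$ as in the previous paragraph.

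The nonlinear term is the crux. I would integrate by parts in $Y$ to move the derivative onto the test function,
\[
\int\p_Y\cN_t\Omega\,\p_Y\Omega\,b^{2m}=-\int\cN_t\Omega\,\bigl(\p_Y^2\Omega\,b^{2m}+\p_Y\Omega\,\p_Y b^{2m}\bigr),
\]
the point being that this keeps only \emph{first} derivatives of the stream function $\Delta_t^{-1}\Omega$, so that \eqref{eq:lem3.1a} alone suffices (and Lemma \ref{S3lem2} is not needed at this level) and gives the pointwise bound $|\cN_t\Omega|\lesssim\nu^{-1}\|\Omega(1)\|_{L^2(m)}\bigl(\tt^{-1}|\p_X\Omega|+|\p_Y\Omega|\bigr)$. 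Using also $|\p_Y b^{2m}|\lesssim b^{2m-1}$, the whole contribution is controlled by $\nu^{-1}\|\Omega(1)\|_{L^2(m)}\bigl(\tt^{-1}\|\p_X\Omega\|_{L^2(m)}+\|\p_Y\Omega\|_{L^2(m)}\bigr)\bigl(\|\p_Y^2\Omega\|_{L^2(m)}+\|\p_Y\Omega\|_{L^2(m)}\bigr)$.

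The delicate point, and the main obstacle, is that a naive application of Young's inequality would either square the factor $\nu^{-1}\|\Omega(1)\|_{L^2(m)}$ (whereas \eqref{eq:lem2} allows it only to the first power) or generate the wrong power of $\bigl(\ln{t}/{t_0}\bigr)$. The remedy is to split each product $\|\p_Y\Omega\|_{L^2(m)}\|\p_Y^2\Omega\|_{L^2(m)}$ (and likewise with $\tt^{-1}\|\p_X\Omega\|_{L^2(m)}$) by Young's inequality with the logarithmic weight $\lambda=\bigl(\ln{t}/{t_0}\bigr)^{\f12}$. Then, after multiplication by $2\bigl(\ln{t}/{t_0}\bigr)$, the high-order part becomes $\bigl(\ln{t}/{t_0}\bigr)^{\f32}\|\p_Y^2\Omega\|_{L^2(m)}^2\le c_1^{-1}\bigl(\ln{t}/{t_0}\bigr)^{\f12}D(t)$ (recall from \eqref{def:D} that $D(t)\ge c_1\bigl(\ln{t}/{t_0}\bigr)\|\p_Y^2\Omega\|_{L^2(m)}^2$), while the low-order parts land on $\|\p_Y\Omega\|_{L^2(m)}^2$ and $\tt^{-2}\|\p_X\Omega\|_{L^2(m)}^2$, both dominated by $\bigl(\ln{t}/{t_0}\bigr)^{\f12}D(t)$, with $\nu^{-1}\|\Omega(1)\|_{L^2(m)}$ surviving to the first power only. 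Collecting the linear, commutator, and nonlinear bounds, absorbing the spare dissipation, then yields \eqref{eq:lem2}.
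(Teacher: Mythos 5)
Your proof is correct, and while its skeleton (differentiate in $Y$, test against $\bigl(\ln t/t_0\bigr)\p_Y\Omega\,b^{2m}$, reuse \eqref{eq3.8} for $\cL_t\p_Y\Omega$, and bound the commutator coefficients of \eqref{[py;L]}) coincides with the paper's, your treatment of the nonlinear term takes a genuinely different and more elementary route. The paper expands $\p_Y\cN_t\Omega$ by Leibniz into $I_1$ (derivatives falling on $\Omega$, killed by the divergence-free cancellation as in \eqref{eq3.9}) plus $I_2$ (derivatives falling on the stream function, estimated via the $L^2$ bounds \eqref{eq:lem3.1b} combined with the anisotropic inequalities \eqref{Sobolev in x}--\eqref{Sobolev in y}); you instead integrate by parts once so that $\Delta_t^{-1}\Omega$ is never differentiated more than once, use only the $L^\oo$ bound \eqref{eq:lem3.1a} to get $|\cN_t\Omega|\lesssim \nu^{-1}\|\Omega(1)\|_{L^2(m)}\bigl(\tt^{-1}|\p_X\Omega|+|\p_Y\Omega|\bigr)$, and pay with a factor $\|\p_Y^2\Omega\|_{L^2(m)}$ that you correctly recycle through the dissipation: Young's inequality with weight $\lambda=\bigl(\ln t/t_0\bigr)^{\f12}$ converts $2\bigl(\ln t/t_0\bigr)\|\p_Y\Omega\|_{L^2(m)}\|\p_Y^2\Omega\|_{L^2(m)}$ into $\bigl(\ln t/t_0\bigr)^{\f12}\|\p_Y\Omega\|_{L^2(m)}^2+\bigl(\ln t/t_0\bigr)^{\f12}\cdot\bigl(\ln t/t_0\bigr)\|\p_Y^2\Omega\|_{L^2(m)}^2\le \bigl(1+c_1^{-1}\bigr)\bigl(\ln t/t_0\bigr)^{\f12}D(t)$, keeping $\nu^{-1}\|\Omega(1)\|_{L^2(m)}$ to the first power, exactly as \eqref{eq:lem2} demands. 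Two remarks on points you might worry about: (i) your constant carries $c_1^{-1}$, but this is harmless and not circular — the paper's own absorption of $I_2$ into $D(t)$ also hides a power of $c_1^{-1}$, the $c_i$ depend only on $m$, the constraints \eqref{assumptions on c1-7} involve only the cross-term constants, and the constant in front of $\bigl(\ln t/t_0\bigr)^{\f12}D(t)$ is later neutralized by shrinking the time window in Proposition \ref{S3prop3}, not by the choice of the $c_i$; (ii) your commutator bookkeeping is slightly loose, since the term $\f12\bigl(1+\f{t^2}3\bigr)^{-1}\bigl(X-\f t2(1+\f{t^2}{12})^{-\f12}Y\bigr)\bigl(\p_X-\f t2(1+\f{t^2}{12})^{-\f12}\p_Y\bigr)$ also fails to commute with $\p_Y$, but its contribution is indeed $O(\tt^{-2})$, so your asymptotics $c(t)\to 2$, $d(t)\to -\f{\sqrt3}2$ agree with \eqref{[py;L]} and the cross term is dominated by $4\bigl(\ln t/t_0\bigr)\|\p_X\Omega\|_{L^2(m)}\|\p_Y\Omega\|_{L^2(m)}$ for $t_0$ large. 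What each approach buys: yours dispenses with \eqref{eq:lem3.1b} and the anisotropic Sobolev machinery at this level; the paper's decomposition, however, is the template that carries over to the higher-order Lemmas \ref{S3lem7} and \ref{S3lem8}, where your trick alone would fail — a single integration by parts there still leaves second derivatives of the stream function, while a double one produces $\p_Y^4\Omega$, which is not controlled by $D(t)$.
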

\begin{proof}
By applying $\p_Y$ to \eqref{eq:Om} and then taking  $L^2(m)$ inner product of the resulting equation with $\ln(\f{t}{t_0})\p_Y\Omega$, we compute
\begin{equation}\label{eq3.15}
\f{t}2 \f{d}{dt}\Bigl(\bigl(\ln{t}/{t_0}\bigr)\|\p_Y\Omega(t)\|_{L^2(m)}^2\Bigr) =\f12 \|\p_Y\Omega\|_{L^2(m)}^2+\bigl(\ln{t}/{t_0}\bigr)\int \p_Y\bigl(\cL_t\Omega+\cN_t \Omega\bigr) \p_Y\Omega b^{2m}.
\end{equation}

It is easy to observe from \eqref{eq:Oma} that
\begin{equation}\label{[py;L]}
\p_Y\cL_t=\cL_t\p_Y -\f{t\sqrt{1+\f{t^2}{12}}}{{1+\f{t^2}3}}\p_X+\Bigl( \f{t^2}{8(1+\f{t^2}3)(1+\f{t^2}{12})}+\f{1+\f{t^2}3}{2(1+\f{t^2}{12})}\Bigr)\p_Y,
\end{equation}
from which and \eqref{eq3.8}, we deduce that for $t\geq t_0$ with $t_0$ being large enough,
\begin{equation}\label{eq3.17}
\begin{aligned}
\int \p_Y\cL_t\Omega \p_Y\Omega b^{2m}
&\leq \int \cL_t\p_Y\Omega \p_Y\Omega b^{2m} dXdY +2\int \bigl(|\p_X\Omega|+|\p_Y\Omega| \bigr)|\p_Y\Omega| b^{2m}\\
&\leq -\tt^{-2}\|\p_X\p_Y\Omega\|_{L^2(m)}^2-2\|\p_Y^2\Omega\|_{L^2(m)}^2\\
&\quad +C_m \|\p_Y\Omega\|_{L^2(m)}^2+2\|\p_X\Omega\|_{L^2(m)}\|\p_Y\Omega\|_{L^2(m)}.
\end{aligned}
\end{equation}

To deal with  the nonlinear part in \eqref{eq3.15}, we first decompose it to be
\begin{align*}
\int \p_Y \cN_t\Omega \p_Y\Omega b^{2m}=&\f{1}{\nu (1+\f{t^2}{12})} \Bigl(\int \bigl(\p_Y \Delta_t^{-1}\Omega\p_X \p_Y\Omega
- \p_X\Delta_t^{-1}\Omega \p_Y^2\Omega\bigr)\p_Y\Omega b^{2m}\\
&+\int \bigl(\p_Y^2 \Delta_t^{-1}\Omega\p_X \Omega
- \p_X\p_Y\Delta_t^{-1}\Omega \p_Y\Omega\bigr)\p_Y\Omega b^{2m}\Bigr)
\eqdef I_1+I_2.
\end{align*}
 It follows from a similar derivation of \eqref{eq3.9} that
$$
I_1\leq \f{C_{m}}{\nu} {\|\Omega(1)\|_{L^2(m)}} \|\p_Y\Omega\|_{L^2(m)}^2.
$$
For the second integral $I_2$, we write
\begin{align*}
I_2 \leq \f1\nu \tt^{-2}\Bigl(&\|\p_Y^2 \Delta_t^{-1}\Omega\|_{L^2}\|b^m\p_X\Omega\|_{L^2_X(L^\oo_Y)}\|b^m\p_Y\Omega\|_{L^2_Y(L^\oo_X)}\\
&+\|\p_X\p_Y\Delta_t^{-1}\Omega\|_{L^2}\|b^m\p_Y\Omega\|_{L^2_X(L^\oo_Y)}\|b^m\p_Y\Omega\|_{L^2_Y(L^\oo_X)}\Bigr).
\end{align*}
Whereas  we get, by using one-dimensional interpolation inequality, that
\begin{equation}\label{Sobolev in x}
\begin{aligned}
\|b^m f\|_{L^2_Y(L^\oo_X)}&\lesssim \|b^m f\|_{L^2}^\f12
\|\p_X(b^m f)\|_{L^2}^\f12\\
&\lesssim \|f\|_{L^2(m)}^\f12\Bigl(\|f\|_{L^2(m-1)}+\|\p_X f\|_{L^2(m)} \Bigr)^\f12\\
&\lesssim \|f\|_{L^2(m)}^\f12\|\p_X f\|_{L^2(m)}^\f12
\end{aligned}
\end{equation}
and similarly
\begin{equation}\label{Sobolev in y}
\| b^m f\|_{L^2_X(L^\oo_Y)}\lesssim \|f\|_{L^2_m}^\f12\|\p_Y f\|_{L^2_m}^\f12
\end{equation}
which together with \eqref{eq:lem3.1b} ensures that
\begin{align*}
I_2  \lesssim &\f1\nu \tt^{-2} \Bigl( \tt^\f12 \|\Omega(1)\|_{L^2(m)}\|\p_X\Omega\|_{L^2(m)}^\f12\|\p_Y\Omega\|_{L^2(m)}^\f12 \|\p_X\p_Y\Omega\|_{L^2(m)}\\
&\qquad\quad + \tt^\f32 \|\Omega(1)\|_{L^2(m)}\|\p_Y\Omega\|_{L^2(m)}\|\p_X\p_Y\Omega\|_{L^2(m)}^\f12\|\p_Y^2\Omega\|_{L^2(m)}^\f12\Bigr)\\
\lesssim  & \nu^{-1}\|\Omega(1)\|_{L^2(m)} \bigl(\ln{t}/{t_0}\bigr)^{-\f12}  \Bigl( \bigl(\tt^{-1}\|\p_X\Omega\|_{L^2(m)}\bigr)^\f12\|\p_Y\Omega\|_{L^2(m)}^\f12 \\
&\qquad\qquad\qquad\qquad\qquad\qquad\times\bigl( (\ln{t}/{t_0})^\f12\tt^{-1}\|\p_X\p_Y\Omega\|_{L^2(m)}\bigr)\\
&\qquad\quad + \|\p_Y\Omega\|_{L^2(m)}\bigl( (\ln{t}/{t_0})^\f12\tt^{-1}\|\p_X\p_Y\Omega\|_{L^2(m)}\bigr)^\f12\bigl((\ln{t}/{t_0})^\f12\|\p_Y^2\Omega\|_{L^2(m)}\bigr)^\f12\Bigr)\\
\lesssim & \nu^{-1}\|\Omega(1)\|_{L^2(m)} \bigl(\ln{t}/{t_0}\bigr)^{-\f12}  D(t).
\end{align*}
By summarizing the estimates of $I_1$ and $I_2$, we arrive at
\begin{equation}\label{eq3.18}
\begin{aligned}
&\int \p_Y \cN_t\Omega \p_Y\Omega b^{2m} \leq \f{C_m}\nu \|\Omega(1)\|_{L^2(m)} \bigl(\ln{t}/{t_0}\bigr)^{-\f12}  D(t).
\end{aligned}
\end{equation}

By substituting \eqref{eq3.17} and \eqref{eq3.18} into \eqref{eq3.15},  we conclude the proof of \eqref{eq:lem2}.
\end{proof}

\begin{lem}\label{S3lem5}
{\sl Under the assumptions of Lemma \ref{S3lem4}, we have
\begin{equation}\label{eq:lem3}
\begin{aligned}
&t\f{d}{dt}\Bigl( \bigl(\ln{t}/{t_0}\bigr)^2 \bigl(\p_X \Omega \big|\p_Y \Omega \bigr)_{L^2(m)} \Bigr)+\bigl(\ln{t}/{t_0}\bigr)^2\|\p_X\Omega\|_{L^2(m)}^2\\
&\leq 2\bigl(\ln{t}/{t_0}\bigr)\bigl(\p_X \Omega \big|\p_Y \Omega \bigr)_{L^2(m)}+C_m\bigl(\ln{t}/{t_0}\bigr)^2\Bigl(\tt^{-2}\|\p_X^2\Omega\|_{L^2(m)}\|\p_X\p_Y\Omega\|_{L^2(m)}
\\
&\qquad+\|\p_Y^2\Omega\|_{L^2(m)}\|\p_X\p_Y\Omega\|_{L^2(m)}\Bigr) +C_m\bigl(1+\nu^{-1}{\|\Omega(1)\|_{L^2(m)}}\bigr)\bigl(\ln{t}/{t_0}\bigr)^\f12 D(t) .
\end{aligned}\end{equation}
}\end{lem}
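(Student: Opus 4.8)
The plan is to treat $\bigl(\p_X\Omega\big|\p_Y\Omega\bigr)_{L^2(m)}$ as a Villani-type hypocoercive cross term and to follow the same scheme as Lemma \ref{S3lem4}. First I would apply $\p_X$ and $\p_Y$ to \eqref{eq:Om} and expand $t\f{d}{dt}$ of $\bigl(\ln t/t_0\bigr)^2\bigl(\p_X\Omega\big|\p_Y\Omega\bigr)_{L^2(m)}$ by the product rule. Since $t\f{d}{dt}\bigl(\ln t/t_0\bigr)^2=2\bigl(\ln t/t_0\bigr)$, the derivative landing on the time weight produces exactly the term $2\bigl(\ln t/t_0\bigr)\bigl(\p_X\Omega\big|\p_Y\Omega\bigr)_{L^2(m)}$, so the remaining task is to bound $\bigl(\ln t/t_0\bigr)^2$ times $\bigl(\p_X\cL_t\Omega+\p_X\cN_t\Omega\big|\p_Y\Omega\bigr)_{L^2(m)}+\bigl(\p_X\Omega\big|\p_Y\cL_t\Omega+\p_Y\cN_t\Omega\bigr)_{L^2(m)}$, which I split into a linear and a nonlinear contribution.

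For the linear contribution I would commute $\p_X$ and $\p_Y$ through $\cL_t$. The coercive term $\|\p_X\Omega\|_{L^2(m)}^2$ on the left-hand side is manufactured by the commutator: from \eqref{[py;L]} the leading contribution of $\bigl(\p_X\Omega\big|[\p_Y,\cL_t]\Omega\bigr)_{L^2(m)}$ is $-\f{t\sqrt{1+\f{t^2}{12}}}{1+\f{t^2}3}\|\p_X\Omega\|_{L^2(m)}^2$, and this coefficient stays bounded below by a positive constant for $t\geq t_0$; this is precisely the hypoelliptic transfer of the vertical dissipation of $\cL_t$ into the missing horizontal direction. The genuinely symmetric piece $\bigl(\cL_t\p_X\Omega\big|\p_Y\Omega\bigr)_{L^2(m)}+\bigl(\p_X\Omega\big|\cL_t\p_Y\Omega\bigr)_{L^2(m)}$ I would integrate by parts against $b^{2m}$ as in \eqref{eq3.8}: the rotation field $X\p_Y-Y\p_X$ is skew-adjoint in $L^2(m)$ and cancels, the principal part of $\Delta_t$ produces, after Cauchy--Schwarz, the cross-derivative products $\tt^{-2}\|\p_X^2\Omega\|_{L^2(m)}\|\p_X\p_Y\Omega\|_{L^2(m)}$ and $\|\p_Y^2\Omega\|_{L^2(m)}\|\p_X\p_Y\Omega\|_{L^2(m)}$ appearing on the right-hand side, and all remaining first-order commutator and weight terms are of the form $\bigl(\ln t/t_0\bigr)^2$ times quantities already controlled by $D(t)$, hence absorbable into $\bigl(\ln t/t_0\bigr)^{1/2}D(t)$ once $t_0$ is large.

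For the nonlinear contribution I would reproduce the $I_1/I_2$ splitting of Lemma \ref{S3lem4}. Expanding $\p_X\cN_t\Omega$ and $\p_Y\cN_t\Omega$ from \eqref{eq:Oma} gives, in each pairing, bilinear terms classified by whether zero, one, or two derivatives fall on the Biot--Savart factor $\Delta_t^{-1}\Omega$. The terms with one derivative on $\Delta_t^{-1}\Omega$ are handled like $I_1$, bounding $\p_X\Delta_t^{-1}\Omega$ and $\p_Y\Delta_t^{-1}\Omega$ in $L^\oo$ by \eqref{eq:lem3.1a}; the terms with a second derivative on the stream function are handled like $I_2$, using $\|\p_X\p_Y\Delta_t^{-1}\Omega\|_{L^2}$ and $\|\p_Y^2\Delta_t^{-1}\Omega\|_{L^2}$ from \eqref{eq:lem3.1b} together with the anisotropic interpolation inequalities \eqref{Sobolev in x} and \eqref{Sobolev in y}. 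A term such as $\p_X^2\Delta_t^{-1}\Omega$, for which no $L^2$ bound is available, I would avoid by first integrating by parts in $X$ to move one derivative off the stream function, reducing it to $\p_X\Delta_t^{-1}\Omega$ in $L^\oo$ paired with $\p_X\p_Y\Omega$ and $\p_Y\Omega$. Tracking the $\tt$ powers and the $\nu^{-1}\bigl(1+\f{t^2}{12}\bigr)^{-1}$ prefactor, and using that $\bigl(\ln t/t_0\bigr)^{-1/2}$ is available on $[t_0,2t_0]$ to convert products of two dissipative quantities into $D(t)$, these terms close as $C_m\bigl(1+\nu^{-1}\|\Omega(1)\|_{L^2(m)}\bigr)\bigl(\ln t/t_0\bigr)^{1/2}D(t)$.

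The main obstacle is the nonlinear bookkeeping: one must distribute the anisotropic $L^2_X(L^\oo_Y)$ and $L^2_Y(L^\oo_X)$ norms so that each factor carries the right power of $\tt$ and each resulting product of dissipative quantities matches a pair of entries of $D(t)$ with the correct $\tt^{-2}$ horizontal weights, balancing exactly against the $\tt^{-2}$ decay in the prefactor of $\cN_t$. A secondary but essential point is the quantitative lower bound on the commutator coefficient $\f{t\sqrt{1+\f{t^2}{12}}}{1+\f{t^2}3}$ for $t\geq t_0$, which is what guarantees that a genuine multiple of $\|\p_X\Omega\|_{L^2(m)}^2$ survives on the left-hand side after the cross-derivative and lower-order contributions have been peeled off; this is the structural reason the whole hierarchy must be run at large $t_0$.
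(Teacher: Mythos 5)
Your proposal is correct, and its linear half is exactly the paper's argument: the same commutator identities \eqref{[px;L]}--\eqref{[py;L]}, the same extraction of the coercive term $-\f{t\sqrt{1+\f{t^2}{12}}}{1+\f{t^2}3}\|\p_X\Omega\|_{L^2(m)}^2$ from pairing $[\p_Y,\cL_t]\Omega$ with $\p_X\Omega$, and the same integration by parts (with the rotation field annihilating the radial weight $b^{2m}$) producing the two cross-derivative products kept on the right of \eqref{eq:lem3}. Where you genuinely diverge is the nonlinear part. The paper never differentiates $\cN_t\Omega$: it integrates by parts once in each pairing so that all derivatives land on $\Omega$ or on $b^{2m}$, reducing everything to $\|\cN_t\Omega\|_{L^2(m)}\bigl(\|\p_X\p_Y\Omega\|_{L^2(m)}+\|\p_X\Omega\|_{L^2(m-1)}+\|\p_Y\Omega\|_{L^2(m-1)}\bigr)$, and then bounds the undifferentiated $\|\cN_t\Omega\|_{L^2(m)}$ using only the $L^\oo$ Biot--Savart bounds \eqref{eq:lem3.1a}, arriving at \eqref{eq3.24}. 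You instead expand $\p_X\cN_t\Omega$ and $\p_Y\cN_t\Omega$ and redo the $I_1/I_2$ bookkeeping of Lemma \ref{S3lem4}, which obliges you to invoke \eqref{eq:lem3.1b} and the anisotropic inequalities \eqref{Sobolev in x}--\eqref{Sobolev in y}, and to devise a separate integration by parts in $X$ to dodge $\p_X^2\Delta_t^{-1}\Omega$. Your route does close: I checked that each of your products, after distributing the $\tt$-powers against the $\nu^{-1}\tt^{-2}$ prefactor, matches a pair of entries of $D(t)$ with enough $\bigl(\ln t/t_0\bigr)$-weight left over to yield $C_m\bigl(1+\nu^{-1}\|\Omega(1)\|_{L^2(m)}\bigr)\bigl(\ln t/t_0\bigr)^{\f12}D(t)$ on $[t_0,2t_0]$, and your $\p_X^2\Delta_t^{-1}\Omega$ dodge is sound --- indeed it is a local instance of the paper's global move, which avoids the issue wholesale and is markedly shorter for this particular lemma. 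One caveat, which affects both your proof and the paper's \eqref{eq3.23}: the commutator coefficient $\f{t\sqrt{1+\f{t^2}{12}}}{1+\f{t^2}3}$ is strictly below $1$ and tends to $\f{\sqrt3}2$ as $t\to\oo$, so what actually survives on the left-hand side is a fixed multiple strictly less than one of $\bigl(\ln t/t_0\bigr)^2\|\p_X\Omega\|_{L^2(m)}^2$, not the unit coefficient displayed in \eqref{eq:lem3}; this is harmless for Proposition \ref{S3prop2}, and your careful phrasing (``a genuine multiple'') is the accurate statement.
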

\begin{proof} We first get, by a direct computation of the time derivative and using the equation of \eqref{eq:Om}, that
\begin{equation}\label{eq3.22}
\begin{aligned}
&t\f{d}{dt}\Bigl( \bigl(\ln{t}/{t_0}\bigr)^2 \bigl(\p_X \Omega \big|\p_Y \Omega \bigr)_{L^2(m)} \Bigr)=2\bigl(\ln{t}/{t_0}\bigr)\bigl(\p_X \Omega \big|\p_Y \Omega \bigr)_{L^2(m)} \\
&\qquad+\bigl(\ln{t}/{t_0}\bigr)^2 \Bigl(\int\p_X \bigl(\cL_t\Omega+\cN_t\Omega \bigr)\p_Y\Omega b^{2m}
+ \int\p_X \Omega \p_Y\bigl(\cL_t\Omega+\cN_t\Omega \bigr)b^{2m}\Bigr).
\end{aligned}
\end{equation}

It is easy to observe from \eqref{eq:Oma} that
\begin{equation}\label{[px;L]}
\p_X \cL_t=\cL_t\p_X +\f1{2(1+\f{t^2}{3})} \p_X +\f{t(2+\f{t^2}3)}{4(1+\f{t^2}3)\sqrt{1+\f{t^2}{12}}}\p_Y
\end{equation}
from which and \eqref{[py;L]}, we infer
\begin{align*}
\int&\p_X \cL_t\Omega\p_Y\Omega b^{2m}
+ \int\p_X \Omega \p_Y\cL_t\Omega b^{2m}\\
=&\int\cL_t\p_X \Omega\p_Y\Omega b^{2m}
+ \int\p_X \Omega\cL_t \p_Y\Omega b^{2m} -\f{t\sqrt{1+\f{t^2}{12}}}{{1+\f{t^2}3}}\|\p_X\Omega\|_{L^2(m)}^2 \\
&+\f{t(2+\f{t^2}3)}{4(1+\f{t^2}3)\sqrt{1+\f{t^2}{12}}}\|\p_Y\Omega\|_{L^2(m)}^2 +\f{2+\f{t^2}3}{2(1+\f{t^2}{12})}\int \p_X\Omega \p_Y\Omega b^{2m}.
\end{align*}
While we get, by using integration by parts, that
\begin{align*}
\int\cL_t\p_X \Omega\p_Y\Omega b^{2m}
&+ \int\p_X \Omega\cL_t \p_Y\Omega b^{2m}
\lesssim_m \tt^{-2}\|\p_X^2\Omega\|_{L^2(m)}\|\p_X\p_Y\Omega\|_{L^2(m)}\\
&+\tt^{-2}\|\p_X\p_Y\Omega\|_{L^2(m)}^2+\|\p_Y^2\Omega\|_{L^2(m)}\|\p_X\p_Y\Omega\|_{L^2(m)}\\
&+\|\p_X\Omega\|_{L^2(m)}\|\p_Y\Omega\|_{L^2(m)} +\tt^{-2}\|\p_X\Omega\|_{L^2(m)}^2.
\end{align*}
As a result, for $t\geq t_0$ with $t_0$ being large enough, we obtain
\begin{equation}\label{eq3.23}
\begin{aligned}
\int&\p_X \cL_t\Omega\p_Y\Omega b^{2m}+ \int\p_X \Omega \p_Y\cL_t\Omega b^{2m}\\
\leq & -\|\p_X\Omega\|_{L^2(m)}^2+C_m\bigl(\tt^{-2}\|\p_X\Omega\|_{L^2(m)}^2+\|\p_Y\Omega\|_{L^2(m)}^2+\tt^{-2}\|\p_X\p_Y\Omega\|_{L^2(m)}^2\bigr)\\
&+C_m\bigl(\tt^{-2}\|\p_X^2\Omega\|_{L^2(m)}\|\p_X\p_Y\Omega\|_{L^2(m)}+\|\p_Y^2\Omega\|_{L^2(m)}\|\p_X\p_Y\Omega\|_{L^2(m)}\bigr).
\end{aligned}
\end{equation}

For the nonlinear terms in \eqref{eq3.22}, we get, by using integration by parts, that
\begin{align*}
\int&\p_X \cN_t\Omega\p_Y\Omega b^{2m}
+ \int\p_X \Omega \p_Y\cN_t\Omega b^{2m} \\
\leq &C_m\|\cN_t\Omega\|_{L^2(m)}\bigl(\|\p_X\p_Y\Omega\|_{L^2(m)}+\|\p_X\Omega\|_{L^2(m-1)}+\|\p_Y\Omega\|_{L^2(m-1)} \bigr)\\
\leq &C_m \nu^{-1}\tt^{-2}\bigl(\|\p_X\Delta_t^{-1}\Omega\|_{L^\oo}\|\p_Y\Omega\|_{L^2(m)}+\|\p_Y\Delta_t^{-1}\Omega\|_{L^\oo}\|\p_X\Omega\|_{L^2(m)} \bigr)\|\p_X\p_Y\Omega\|_{L^2(m)},
\end{align*}
from which and \eqref{eq:lem3.1a}, we deduce that
\begin{equation}\label{eq3.24}
\begin{aligned}
\int&\p_X \cN_t\Omega\p_Y\Omega b^{2m}
+ \int\p_X \Omega \p_Y\cN_t\Omega b^{2m}\\
\leq &  \f{C_m}{\nu}{\|\Omega(1)\|_{L^2(m)}}\bigl(\|\p_Y\Omega\|_{L^2(m)}+\tt^{-1}\|\p_X\Omega\|_{L^2(m)} \bigr)\|\p_X\p_Y\Omega\|_{L^2(m)}.
\end{aligned}
\end{equation}

By substituting \eqref{eq3.23} and \eqref{eq3.24} into \eqref{eq3.22}, we achieve
\begin{align*}
t\f{d}{dt}&\Bigl( \bigl(\ln{t}/{t_0}\bigr)^2 \bigl(\p_X \Omega \big|\p_Y \Omega \bigr)_{L^2(m)} \Bigr)+\bigl(\ln{t}/{t_0}\bigr)^2\|\p_X\Omega\|_{L^2(m)}^2\leq 2\bigl(\ln{t}/{t_0}\bigr)\bigl(\p_X \Omega \big|\p_Y \Omega \bigr)_{L^2(m)}\\
&+C_m\bigl(\ln{t}/{t_0}\bigr)^2\tt^{-2}\bigl(\|\p_X^2\Omega\|_{L^2(m)}\|\p_X\p_Y\Omega\|_{L^2(m)}+\|\p_Y^2\Omega\|_{L^2(m)}\|\p_X\p_Y\Omega\|_{L^2(m)}
\bigr)\\
&+ C_m \bigl(\ln{t}/{t_0}\bigr)^\f12(1+{\nu}^{-1}\|\Omega(1)\|_{L^2(m)})\Bigl(\|\p_Y\Omega\|_{L^2(m)}^2+\tt^{-1}\|\p_X\Omega\|_{L^2(m)}^2 \\
&\qquad +\bigl(\ln{t}/{t_0}\bigr)\tt^{-2}\|\p_X\p_Y\Omega\|_{L^2(m)}^2+\bigl(\ln{t}/{t_0}\bigr)^3\|\p_X\p_Y\Omega\|_{L^2(m)}^2\Bigr),
\end{align*}
which leads to \eqref{eq:lem3}. This finishes the proof of Lemma \ref{S3lem5}.
\end{proof}

\begin{lem}\label{S3lem6}
{\sl Under the assumptions of Lemma \ref{S3lem4}, for  $0<\sigma<\f12,$ we have
\begin{equation}\label{eq:lem4}
\begin{aligned}
t \f{d}{dt}&\Bigl(\bigl(\ln{t}/{t_0}\bigr)^3\|\p_X\Omega\|_{L^2(m)}^2\Bigr) +\bigl(\ln{t}/{t_0}\bigr)^3\tt^{-2}\|\p_X^2\Omega\|_{L^2(m)}^2+\bigl(\ln{t}/{t_0}\bigr)^3\|\p_X\p_Y\Omega\|_{L^2(m)}^2\\
\leq &3\bigl(\ln{t}/{t_0}\bigr)^2\|\p_X\Omega\|_{L^2(m)}^2+C_m\bigl(1+\nu^{-1}{\|\Omega(1)\|_{L^2(m)}}\bigr)E(t)\\
&+C_{m,\sigma}\nu^{-1} \tt^{-\f12} \bigl(\ln{t}/{t_0}\bigr)^{\f\sigma2+\f{1}4} E(t)^\f12 D(t).
\end{aligned}\end{equation}
}\end{lem}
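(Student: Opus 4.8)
The plan is to mimic the derivations of Lemma \ref{S3lem4} and Lemma \ref{S3lem5}, now working one order higher in the $X$--direction. First I would apply $\p_X$ to \eqref{eq:Om}, take the $L^2(m)$ inner product of the result with $\bigl(\ln{t}/{t_0}\bigr)^3\p_X\Omega$ (i.e.\ integrate against $\bigl(\ln{t}/{t_0}\bigr)^3\p_X\Omega\,b^{2m}$), and apply the product rule to the time derivative. Since $t\f{d}{dt}\bigl(\ln{t}/{t_0}\bigr)^3=3\bigl(\ln{t}/{t_0}\bigr)^2$, this produces the explicit term $3\bigl(\ln{t}/{t_0}\bigr)^2\|\p_X\Omega\|_{L^2(m)}^2$ on the right-hand side, together with $2\bigl(\ln{t}/{t_0}\bigr)^3\int\p_X\bigl(\cL_t\Omega+\cN_t\Omega\bigr)\p_X\Omega\,b^{2m}$, which is what remains to estimate.

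For the linear part I would commute $\p_X$ past $\cL_t$ using \eqref{[px;L]}. The principal contribution $\int\cL_t\p_X\Omega\,\p_X\Omega\,b^{2m}$ is controlled by applying \eqref{eq3.8} with $\p_X\Omega$ in place of $\Omega$, which for $t\geq t_0$ large gives the good dissipation $-\tt^{-2}\|\p_X^2\Omega\|_{L^2(m)}^2-2\|\p_X\p_Y\Omega\|_{L^2(m)}^2$ plus a remainder $C_m\|\p_X\Omega\|_{L^2(m)}^2$; moving the dissipation to the left produces the two dissipative terms in \eqref{eq:lem4} while needing only coefficient one on the $\p_X\p_Y\Omega$ term. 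The commutator corrections from \eqref{[px;L]}, whose coefficients are bounded uniformly in $\tt$, are of the form $\tt^{-2}\|\p_X\Omega\|_{L^2(m)}^2$ and $\|\p_X\Omega\|_{L^2(m)}\|\p_Y\Omega\|_{L^2(m)}$; after the weight $\bigl(\ln{t}/{t_0}\bigr)^3$ and using that $\ln{t}/{t_0}\leq\ln 2$ on $[t_0,2t_0]$, all of these are dominated by $C_m E(t)$ upon comparison with the $c_1$-- and $c_3$--entries of \eqref{def:E}.

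The nonlinear part is the crux. Writing $\cN_t\Omega=\nu^{-1}\bigl(1+\f{t^2}{12}\bigr)^{-1}u_t\cdot\nabla\Omega$ with the divergence-free field $u_t=\bigl(\p_Y\Delta_t^{-1}\Omega,-\p_X\Delta_t^{-1}\Omega\bigr)$, I would expand $\p_X(u_t\cdot\nabla\Omega)=\p_X u_t\cdot\nabla\Omega+u_t\cdot\nabla\p_X\Omega$. The transport piece $\int u_t\cdot\nabla\p_X\Omega\,\p_X\Omega\,b^{2m}$ integrates by parts into $-\f12\int|\p_X\Omega|^2\,u_t\cdot\nabla b^{2m}$, so the crude Biot--Savart bound \eqref{eq:lem3.1a} yields a contribution $\lesssim\nu^{-1}\|\Omega(1)\|_{L^2(m)}E(t)$. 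The genuinely dangerous term is $\int\p_X u_t\cdot\nabla\Omega\,\p_X\Omega\,b^{2m}$, whose worst piece is $-\int\p_X^2\Delta_t^{-1}\Omega\,\p_Y\Omega\,\p_X\Omega\,b^{2m}$: here $\p_X^2\Delta_t^{-1}\Omega$ is of the same differential order as $\Omega$ and cannot be treated by the crude $L^\infty$ bound without losing a power $\tt^2$. Instead I would invoke the refined anisotropic estimate \eqref{eq3.6}: applied with $f=\p_X\Omega$ it gives $\|\p_X^2\Delta_t^{-1}\Omega\|_{L^\infty}\lesssim\tt^{1+\sigma}\|\p_X\Omega\|_{L^2(1)}^{\f12+\sigma}\|\p_X^2\Omega\|_{L^2(1)}^{\f12-\sigma}$. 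Combined with the prefactor $\nu^{-1}\tt^{-2}$ this leaves an explicit factor $\tt^{-1+\sigma}$; then, placing $\p_Y\Omega$ into $E^{1/2}$ (via the $c_1$--entry of \eqref{def:E}) and interpolating the remaining $\p_X\Omega,\p_X^2\Omega$ factors into $D$ (the $c_2$-- and $c_3$--entries of \eqref{def:D}, the latter carrying the $\tt^{-2}$ weight that contributes a compensating $\tt^{1/2-\sigma}$), and using $\|\cdot\|_{L^2(1)}\leq\|\cdot\|_{L^2(m)}$ for $m\geq1$, the net $\tt$--power becomes $\tt^{-1/2}$ and the accumulated logarithmic powers combine to $\bigl(\ln{t}/{t_0}\bigr)^{\f\sigma2+\f14}$, giving exactly $C_{m,\sigma}\nu^{-1}\tt^{-1/2}\bigl(\ln{t}/{t_0}\bigr)^{\f\sigma2+\f14}E(t)^{1/2}D(t)$. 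The partner term $\int\p_X\p_Y\Delta_t^{-1}\Omega\,|\p_X\Omega|^2 b^{2m}$ is handled in the same spirit.

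The main obstacle is precisely this top-order velocity-gradient term. The crude Biot--Savart bound \eqref{eq:lem3.1a} is too lossy in $\tt$ to close the estimate, so everything hinges on the sharp estimate \eqref{eq3.6}, which trades a small amount of $X$--regularity (encoded by $\sigma$) for the decisive $\tt^{-1/2}$ gain. The delicate point is the exponent bookkeeping: one must verify that the powers of $\tt$ cancel to give exactly $\tt^{-1/2}$ and that the several powers of $\ln{t}/{t_0}$ — which are small on $[t_0,2t_0]$ — combine to the stated \emph{positive} exponent $\f\sigma2+\f14$, using the hierarchy \eqref{assumptions on c1-7} of the constants $c_1,\dots,c_7$ so that each interpolated norm is genuinely bounded by $E$ or $D$ rather than a larger quantity.
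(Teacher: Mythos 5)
Your proposal is correct and follows essentially the same route as the paper's proof: the identical energy identity with the $3\bigl(\ln t/t_0\bigr)^2\|\p_X\Omega\|_{L^2(m)}^2$ term, the commutator identity \eqref{[px;L]} combined with \eqref{eq3.8} for the linear part, and the same splitting of the nonlinearity into the transport piece (handled by the divergence-free structure and the crude bound \eqref{eq:lem3.1a}, giving $\nu^{-1}\|\Omega(1)\|_{L^2(m)}E(t)$) plus the velocity-gradient piece (handled by the anisotropic estimate \eqref{eq3.6}), which is exactly the paper's decomposition into $A_1$ and $A_2$. Your exponent bookkeeping — the compensating $\tt^{\f12-\sigma}$ from the $\tt^{-2}$-weighted $\|\p_X^2\Omega\|_{L^2(m)}$ entry of $D$, the net $\tt^{-\f12}$, and the accumulated $\bigl(\ln t/t_0\bigr)^{\f\sigma2+\f14}$ after the $(\ln t/t_0)^3$ prefactor — also matches the paper's computation.
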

\begin{proof} We get,
by first applying $\p_X$ to \eqref{eq:Om} and then taking  $L^2(m)$ inner product of the resulting equation with $\bigl(\ln{t}/{t_0}\bigr)^3\p_X\Omega$, that
\begin{equation}\label{eq3.27}\begin{aligned}
\f{t}2 \f{d}{dt}\Bigl(\bigl(\ln{t}/{t_0}\bigr)^3\|\p_X\Omega(t)\|_{L^2(m)}^2\Bigr) =&\f32 \bigl(\ln{t}/{t_0}\bigr)^2\|\p_X\Omega\|_{L^2(m)}^2\\
&+\bigl(\ln{t}/{t_0}\bigr)^3\int \p_X\bigl(\cL_t\Omega+\cN_t \Omega\bigr) \p_X\Omega b^{2m}.
\end{aligned}
\end{equation}
It follows from \eqref{[px;L]} and \eqref{eq3.8}  that for $t\geq t_0$ with $t_0$ being large enough,
\begin{equation}\label{eq3.28}
\begin{aligned}
\int \p_X\cL_t\Omega \p_X\Omega b^{2m}
\leq &\int \cL_t\p_X\Omega \p_X\Omega b^{2m} +2\int \bigl(|\p_X\Omega|+|\p_Y\Omega| \bigr)|\p_X\Omega| b^{2m} \\
\leq & -\tt^{-2}\|\p_X^2\Omega\|_{L^2(m)}^2-2\|\p_X\p_Y\Omega\|_{L^2(m)}^2\\
&+C_m \bigl(\|\p_X\Omega\|_{L^2(m)}^2+\|\p_Y\Omega\|_{L^2(m)}^2\bigr).
\end{aligned}
\end{equation}

For the nonlinear part in \eqref{eq3.27}, we first decompose it as
\begin{align*}
\int \p_X \cN_t\Omega \p_X\Omega b^{2m}=&\f{1}{\nu (1+\f{t^2}{12})} \Bigl(\int \bigl(\p_Y \Delta_t^{-1}\Omega\p_X^2\Omega
- \p_X\Delta_t^{-1}\Omega \p_Y\p_X\Omega\bigr)\p_X\Omega b^{2m}\\
&+\int \bigl(\p_X\p_Y \Delta_t^{-1}\Omega\p_X \Omega
- \p_X^2\Delta_t^{-1}\Omega \p_Y\Omega\bigr)\p_X\Omega b^{2m} \Bigr)
\eqdef A_1+A_2.
\end{align*}
We deduce from a similar derivation of \eqref{eq3.9} that
$$
A_1\leq \f{C_{m}}{\nu} \|\Omega(1)\|_{L^2(m)} \|\p_X\Omega\|_{L^2(m)}^2.
$$
For $A_2$, we write
\begin{align*}
A_2 \leq \nu^{-1} \tt^{-2}\bigl(\|\p_X\p_Y \Delta_t^{-1}\Omega\|_{L^\oo}\|\p_X\Omega\|_{L^2(m)}^2+\|\p_X^2\Delta_t^{-1}\Omega\|_{L^\oo}\|\p_Y\Omega\|_{L^2(m)}\|\p_X\Omega\|_{L^2(m)}\bigr),
\end{align*}
from which and   \eqref{eq3.6}, we infer
\begin{align*}
A_2  \lesssim &\nu^{-1} \tt^{\sigma-1} \Bigl(  \|\p_Y\Omega\|_{L^2(m)}^{\f12+\sigma}\|\p_X\p_Y\Omega\|_{L^2(m)}^{\f12-\sigma}\|\p_X\Omega\|_{L^2(m)}^2\\
& + \|\p_X\Omega\|_{L^2(m)}^{\f12+\sigma}\|\p_X^2\Omega\|_{L^2(m)}^{\f12-\sigma}\|\p_Y\Omega\|_{L^2(m)}\|\p_X\Omega\|_{L^2(m)}\Bigr)\\
\lesssim & \nu^{-1} \tt^{-\f12}\bigl(\ln{t}/{t_0}\bigr)^{\f\sigma2-\f{11}4} \Bigl(  \bigl(\bigl(\ln{t}/{t_0}\bigr)^\f12\|\p_Y\Omega\|_{L^2(m)}\bigr)^{\f12+\sigma}\bigl(\bigl(\ln{t}/{t_0}\bigr)^\f12\tt^{-1}\|\p_X\p_Y\Omega\|_{L^2(m)}\bigr)^{\f12-\sigma}\\
&\quad\times\bigl(\bigl(\ln{t}/{t_0}\bigr)^\f32\|\p_X\Omega\|_{L^2(m)}\bigr)^{\f12-\sigma}\bigl(\bigl(\ln{t}/{t_0}\bigr)\|\p_X\Omega\|_{L^2(m)}\bigr)^{\f32+\sigma}\\
&+\bigl(\bigl(\ln{t}/{t_0}\bigr) \|\p_X\Omega\|_{L^2(m)}\bigr)^{\f32+\sigma}\bigl(\bigl(\ln{t}/{t_0}\bigr)^\f32\tt^{-1}\|\p_X^2\Omega\|_{L^2(m)}\bigr)^{\f12-\sigma}\bigl(\ln{t}/{t_0}\bigr)^\f12\|\p_Y\Omega\|_{L^2(m)}\Bigr)\\
\lesssim & \nu^{-1} \tt^{-\f12} \bigl(\ln{t}/{t_0}\bigr)^{\f\sigma2-\f{11}4} \Bigl( \bigl(\ln{t}/{t_0}\bigr)\|\p_Y\Omega\|_{L^2(m)}^2+ \bigl(\ln{t}/{t_0}\bigr)^3\|\p_X\Omega\|_{L^2(m)}^2 \Bigr)^\f12 \\
&\quad \times \Bigl(\bigl(\ln{t}/{t_0}\bigr)\tt^{-2}\|\p_X\p_Y\Omega\|_{L^2(m)}^2+\bigl(\ln{t}/{t_0}\bigr)^2 \|\p_X\Omega\|_{L^2(m)}^2+ \bigl(\ln{t}/{t_0}\bigr)^3\tt^{-2}\|\p_X^2\Omega\|_{L^2(m)}^2\Bigr)
\end{align*}
By combining the estimates of $A_1$ and $A_2$, we arrive at
\begin{equation}\label{eq3.31}
\begin{aligned}
\int \p_X \cN_t\Omega \p_X\Omega b^{2m} \leq &\f{C_m}{\nu}{\|\Omega(1)\|_{L^2(m)}}\|\p_X\Omega\|_{L^2(m)}^2\\
&+\f{C_{m,\sigma}}{\nu}\tt^{-\f12} \bigl(\ln{t}/{t_0}\bigr)^{\f\sigma2-\f{11}4} E(t)^\f12 D(t).
\end{aligned}
\end{equation}

By substituting  \eqref{eq3.28} and \eqref{eq3.31} into \eqref{eq3.27}, we conclude
\begin{align*}
t &\f{d}{dt}\Bigl(\bigl(\ln{t}/{t_0}\bigr)^3\|\p_X\Omega\|_{L^2(m)}^2\Bigr) +\bigl(\ln{t}/{t_0}\bigr)^3\tt^{-2}\|\p_X^2\Omega\|_{L^2(m)}^2+\bigl(\ln{t}/{t_0}\bigr)^3\|\p_X\p_Y\Omega\|_{L^2(m)}^2\\
\leq &3\bigl(\ln{t}/{t_0}\bigr)^2\|\p_X\Omega\|_{L^2(m)}^2+C_m\bigl(1+\nu^{-1}{\|\Omega(1)\|_{L^2(m)}}\bigr)\bigl(\ln{t}/{t_0}\bigr)^3
\bigl(\|\p_X\Omega\|_{L^2(m)}^2+\|\p_Y\Omega\|_{L^2(m)}^2\bigr)\\
&+C_{m,\sigma} \nu^{-1} \tt^{-\f12} \bigl(\ln{t}/{t_0}\bigr)^{\f\sigma2+\f{1}4} E(t)^\f12 D(t),
\end{align*}
which gives rise to \eqref{eq:lem4}. This finishes the proof of Lemma \ref{S3lem6}.
\end{proof}

\begin{lem}\label{S3lem7}
{\sl  Under the assumptions of Lemma \ref{S3lem6}, we have
\begin{equation}\label{eq:lem5}
\begin{aligned}
t &\f{d}{dt}\Bigl(\bigl(\ln{t}/{t_0}\bigr)^2\|\p_Y^2\Omega\|_{L^2(m)}^2\Bigr) +\bigl(\ln{t}/{t_0}\bigr)^2\tt^{-2}\|\p_X\p_Y^2\Omega\|_{L^2(m)}^2\\
&+\bigl(\ln{t}/{t_0}\bigr)^2\|\p_Y^3\Omega\|_{L^2(m)}^2
\leq8\bigl(\ln{t}/{t_0}\bigr)^2\|\p_X\p_Y\Omega\|_{L^2(m)}\|\p_Y^2\Omega\|_{L^2(m)}\\
&+ 2\bigl(\ln{t}/{t_0}\bigr)\|\p_Y^2\Omega\|_{L^2(m)}^2+C_m\bigl(1+{\nu}^{-1}{\|\Omega(1)\|_{L^2(m)}}\bigr) \bigl(\ln {t}/{t_0}\bigr)^\f12 D(t) \\
&+C_{m,\sigma}{\nu}^{-1}\tt^{-\f12} \bigl(\ln {t}/{t_0}\bigr)^{\f\sigma2+\f14} E(t)^\f12D(t).
\end{aligned}\end{equation}
}\end{lem}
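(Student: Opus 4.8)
The plan is to mirror the scheme of Lemmas \ref{S3lem4} and \ref{S3lem6}, now at the level of $\p_Y^2$. First I would apply $\p_Y^2$ to \eqref{eq:Om}, take the $L^2(m)$ inner product of the result with $\bigl(\ln{t}/{t_0}\bigr)^2\p_Y^2\Omega$, and use the equation $t\p_t\Omega=\cL_t\Omega+\cN_t\Omega$ together with $\f{d}{dt}\bigl(\ln{t}/{t_0}\bigr)^2=\f2t\bigl(\ln{t}/{t_0}\bigr)$ to obtain the identity
\beno
\f{t}2\f{d}{dt}\Bigl(\bigl(\ln{t}/{t_0}\bigr)^2\|\p_Y^2\Omega\|_{L^2(m)}^2\Bigr)
=&&\bigl(\ln{t}/{t_0}\bigr)\|\p_Y^2\Omega\|_{L^2(m)}^2\\
&&+\bigl(\ln{t}/{t_0}\bigr)^2\int \p_Y^2\bigl(\cL_t\Omega+\cN_t\Omega\bigr)\,\p_Y^2\Omega\, b^{2m}.
\eeno
Multiplying by $2$ already produces the time-weight term $2\bigl(\ln{t}/{t_0}\bigr)\|\p_Y^2\Omega\|_{L^2(m)}^2$ in \eqref{eq:lem5}, so it remains to estimate the linear and nonlinear integrals.

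For the linear part I would commute $\p_Y^2$ through $\cL_t$ using \eqref{[py;L]} twice. Since the correction coefficients in \eqref{[py;L]} depend only on $t$, they commute with $\p_Y$, so $\p_Y^2\cL_t=\cL_t\p_Y^2+2[\p_Y,\cL_t]\p_Y$. The leading term $\int\cL_t\p_Y^2\Omega\,\p_Y^2\Omega\,b^{2m}$ is then controlled by \eqref{eq3.8} applied to $\p_Y^2\Omega$ in place of $\Omega$, yielding the good dissipation $-\tt^{-2}\|\p_X\p_Y^2\Omega\|_{L^2(m)}^2-2\|\p_Y^3\Omega\|_{L^2(m)}^2$ up to $C_m\|\p_Y^2\Omega\|_{L^2(m)}^2$. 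The commutator $2[\p_Y,\cL_t]\p_Y$ contributes a cross term $-2\f{t\sqrt{1+\f{t^2}{12}}}{1+\f{t^2}3}\int\p_X\p_Y\Omega\,\p_Y^2\Omega\,b^{2m}$, whose coefficient stays $O(1)$ for $t\geq t_0$; after Cauchy--Schwarz this is exactly the source of the $8\bigl(\ln{t}/{t_0}\bigr)^2\|\p_X\p_Y\Omega\|_{L^2(m)}\|\p_Y^2\Omega\|_{L^2(m)}$ term, while the $\p_Y^2$ piece of the commutator merges into the $\|\p_Y^2\Omega\|_{L^2(m)}^2$ contribution. I would not try to absorb this cross term here; it is handled afterwards in the combined energy $E(t)$ by playing it against the dissipations of Lemmas \ref{S3lem4} and \ref{S3lem6} under the smallness hierarchy \eqref{assumptions on c1-7}.

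The genuine work, and the main obstacle, is the nonlinear integral $\int\p_Y^2\cN_t\Omega\,\p_Y^2\Omega\,b^{2m}$. Expanding $\cN_t\Omega$ and applying the Leibniz rule, and using that $\p_X,\p_Y$ commute with $\Delta_t^{-1}$, I would split it into a \emph{transport} part (all $Y$-derivatives landing on the vorticity factors) and a \emph{stretching} part (at least one $\p_Y$ landing on the stream function $\Delta_t^{-1}\Omega$). The transport part is handled as $I_1$ in Lemma \ref{S3lem4}: integration by parts cancels the top-order contribution and the $L^\infty$ bounds \eqref{eq:lem3.1a} on $\p_X\Delta_t^{-1}\Omega,\,\p_Y\Delta_t^{-1}\Omega$ give the clean bound $\f{C_m}\nu\|\Omega(1)\|_{L^2(m)}\bigl(\ln{t}/{t_0}\bigr)^{\f12}D(t)$.

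The stretching part is the hard piece, paralleling $I_2$ of Lemma \ref{S3lem4} and $A_2$ of Lemma \ref{S3lem6}. Here the worst terms pair a second- or third-order derivative of $\Delta_t^{-1}\Omega$ against derivatives of $\Omega$; to recover the right power of $\tt$ I would use the $L^2$ bounds \eqref{eq:lem3.1b} together with the one-dimensional embeddings \eqref{Sobolev in x}--\eqref{Sobolev in y} for the terms that tolerate an $L^2$ estimate on the velocity, and the anisotropic $L^\infty$ estimate \eqref{eq3.6}, applied with $f=\p_Y\Omega$ or $f=\p_X\Omega$, for the remaining terms, trading a factor $\tt^{\sigma}$ for a fractional share of $\p_X$-regularity. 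Distributing the resulting factors with the correct powers of $\ln{t}/{t_0}$ so that each is one of the weighted quantities comprising $E(t)$ or $D(t)$ then yields the final contribution $C_{m,\sigma}\nu^{-1}\tt^{-\f12}\bigl(\ln{t}/{t_0}\bigr)^{\f\sigma2+\f14}E(t)^{\f12}D(t)$. Collecting the linear estimate, the transport estimate, and this stretching estimate into the identity above gives \eqref{eq:lem5}; the delicate bookkeeping of exponents in the stretching term is where I expect the real effort to lie.
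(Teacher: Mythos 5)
Your proposal follows essentially the same route as the paper's proof: the same weighted energy identity, the same commutator computation $\p_Y^2\cL_t=\cL_t\p_Y^2+2[\p_Y,\cL_t]\p_Y$ controlled via \eqref{eq3.8} (with the cross term deferred to the combined energy under \eqref{assumptions on c1-7}), and the same splitting of the nonlinearity into a transport piece bounded by \eqref{eq:lem3.1a} and stretching pieces bounded by \eqref{eq:lem3.1b} with \eqref{Sobolev in x}--\eqref{Sobolev in y} and by \eqref{eq3.6} respectively — exactly the paper's $B_1$, $B_2$, $B_3$ decomposition. The plan is correct; the only part left implicit, the distribution of $\ln t/t_0$ powers in the stretching terms, is carried out in the paper just as you describe.
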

\begin{proof}
By applying $\p_Y^2$ to \eqref{eq:Om} and then taking $L^2(m)$ inner product of the resulting equation with $\bigl(\ln{t}/{t_0}\bigr)^2\p_Y^2\Omega$, we compute
\begin{equation}\label{eq3.33}
\begin{split}
\f{t}2 \f{d}{dt}\Bigl(\bigl(\ln{t}/{t_0}\bigr)^2\|\p_Y\Omega\|_{L^2(m)}^2\Bigr) =&\bigl(\ln{t}/{t_0}\bigr)\|\p_Y^2\Omega\|_{L^2(m)}^2\\
&+\bigl(\ln{t}/{t_0}\bigr)^2\int \p_Y^2\Bigl(\cL_t\Omega+\cN_t \Omega\Bigr) \p_Y^2\Omega b^{2m}.
\end{split}
\end{equation}

By using \eqref{[py;L]} twice, we find
$$
\p_Y^2\cL_t=\cL_t\p_Y^2 -\f{2t\sqrt{1+\f{t^2}{12}}}{{1+\f{t^2}3}}\p_X\p_Y+\Bigl( \f{t^2}{4(1+\f{t^2}3)(1+\f{t^2}{12})}+\f{1+\f{t^2}3}{(1+\f{t^2}{12})}\Bigr)\p_Y^2,
$$
from which and \eqref{eq3.8}, we deduce that for $t\geq t_0$ with $t_0$ being large enough,
\begin{equation}\label{eq3.34}
\begin{aligned}
\int \p_Y^2\cL_t\Omega \p_Y^2\Omega b^{2m}
\leq &\int \cL_t\p_Y^2\Omega \p_Y^2\Omega b^{2m}+4\int \bigl(|\p_X\p_Y\Omega|+|\p_Y^2\Omega| \bigr)|\p_Y^2\Omega| b^{2m}\\
\leq &-\tt^{-2}\|\p_X\p_Y^2\Omega\|_{L^2(m)}^2-2\|\p_Y^3\Omega\|_{L^2(m)}^2\\
&+C_m \|\p_Y^2\Omega\|_{L^2(m)}^2+4\|\p_X\p_Y\Omega\|_{L^2(m)}\|\p_Y^2\Omega\|_{L^2(m)}.
\end{aligned}
\end{equation}

To deal with the nonlinear part in \eqref{eq3.33}, we  decompose it as
\begin{align*}
\int \p_Y^2 \cN_t\Omega \p_Y^2\Omega b^{2m}=&\f{1}{\nu (1+\f{t^2}{12})} \Bigl(\int \bigl(\p_Y \Delta_t^{-1}\Omega\p_X\p_Y^2\Omega
- \p_X\Delta_t^{-1}\Omega \p_Y^3\Omega\bigr)\p_Y^2\Omega b^{2m}\\
&+2\int \bigl(\p_Y^2 \Delta_t^{-1}\Omega\p_X\p_Y \Omega
- \p_X\p_Y\Delta_t^{-1}\Omega \p_Y^2\Omega\bigr)\p_Y^2\Omega b^{2m}\\
&+\int \bigl(\p_Y^3 \Delta_t^{-1}\Omega\p_X \Omega
- \p_X\p_Y^2\Delta_t^{-1}\Omega \p_Y\Omega\bigr)\p_Y^2\Omega b^{2m} \Bigr)
\eqdef B_1+B_2+B_3.
\end{align*}
Along the same line as the derivation of \eqref{eq3.9}, we find
$$
B_1\leq C_{m}{\nu}^{-1} {\|\Omega(1)\|_{L^2(m)}} \|\p_Y^2\Omega\|_{L^2(m)}^2.
$$
For  $B_2$, we write
\begin{align*}
B_2 \leq \f2\nu \tt^{-2}\|b^m\p_Y^2\Omega\|_{L^2_Y(L^\oo_X)}\bigl(&\|\p_Y^2 \Delta_t^{-1}\Omega\|_{L^2}\|b^m\p_X\p_Y\Omega\|_{L^2_X(L^\oo_Y)}\\
&+\|\p_X\p_Y\Delta_t^{-1}\Omega\|_{L^2}\|b^m\p_Y^2\Omega\|_{L^2_X(L^\oo_Y)}\bigr),
\end{align*}
from which,  \eqref{eq:lem3.1b}, \eqref{Sobolev in x} and \eqref{Sobolev in y}, we infer
\begin{align*}
B_2  \leq  &\f{C}\nu \tt^{-2} \|\p_Y^2\Omega\|_{L^2(m)}^\f12\|\p_X\p_Y^2\Omega\|_{L^2(m)}^\f12
\Bigl( \tt^\f12\|\Omega(1)\|_{L^2(m)}\|\p_X\p_Y\Omega\|_{L^2(m)}^{\f12}\|\p_X\p_Y^2\Omega\|_{L^2(m)}^{\f12}\\
&\qquad+  \tt^\f32\|\Omega(1)\|_{L^2(m)} \|\p_Y^2\Omega\|_{L^2(m)}^\f12\|\p_Y^3\Omega\|_{L^2(m)}^\f12\Bigr)\\
\leq  &\f{C}\nu{\|\Omega(1)\|_{L^2(m)}} \bigl(\ln{t}/{t_0}\bigr)^{-\f32}\Bigl( \bigl(\ln{t}/{t_0}\bigr)^\f12 \|\p_Y^2\Omega\|_{L^2(m)}\Bigr)^\f12\Bigl(\bigl(\ln{t}/{t_0}\bigr)\tt^{-1}\|\p_X\p_Y^2\Omega\|_{L^2(m)}\Bigr)^\f12\\
&\quad\times\Bigl( \bigl((\ln{t}/{t_0})^\f12\tt^{-1}\|\p_X\p_Y\Omega\|_{L^2(m)}\bigr)^{\f12}\bigl( (\ln{t}/{t_0})\tt^{-1}\|\p_X\p_Y^2\Omega\|_{L^2(m)}\bigr)^{\f12}\\
&\qquad+  \bigl( (\ln{t}/{t_0})^\f12 \|\p_Y^2\Omega\|_{L^2(m)}\bigr)^\f12\bigl( (\ln{t}/{t_0})\|\p_Y^3\Omega\|_{L^2(m)}\bigr)^\f12\Bigr)\\
\leq &\f{ C_m}\nu  {\|\Omega(1)\|_{L^2(m)}}  \bigl(\ln{t}/{t_0}\bigr)^{-\f32} D(t).
\end{align*}
Finally, we get, by using   \eqref{eq3.6} that
\begin{align*}
B_3  \leq & \f1\nu \tt^{-2}\bigl(\|\p_Y^3 \Delta_t^{-1}\Omega\|_{L^\oo}\|\p_X\Omega\|_{L^2(m)}+\|\p_X\p_Y^2\Delta_t^{-1}\Omega\|_{L^\oo}\|\p_Y\Omega\|_{L^2(m)}\bigr)\|\p_Y^2\Omega\|_{L^2(m)}\\
\leq &\f{C_\sigma}\nu \tt^{\sigma-1} \|\p_Y^2\Omega\|_{L^2(m)}\|\p_Y^2\Omega\|_{L^2(m)}^{\f12+\sigma}\|\p_X\p_Y^2\Omega\|_{L^2(m)}^{\f12-\sigma}\bigl(  \tt^{-1}\|\p_X\Omega\|_{L^2(m)}+\|\p_Y\Omega\|_{L^2(m)}\bigr)\\
\leq &\f{C_\sigma}\nu \tt^{-\f12}\bigl(\ln{t}/{t_0}\bigr)^{\f\sigma2-\f{7}4} \Bigl(\bigl(\ln{t}/{t_0}\bigr)\|\p_Y^2\Omega\|_{L^2(m)}\Bigr)
 \Bigl(\bigl(\ln{t}/{t_0}\bigr)^\f12\|\p_Y^2\Omega\|_{L^2(m)}\Bigr)^{\f12+\sigma}\\
&\quad\times\Bigl(\bigl(\ln{t}/{t_0}\bigr)\tt^{-1}\|\p_X\p_Y^2\Omega\|_{L^2(m)}\Bigr)^{\f12-\sigma}\bigl(  \tt^{-1}\|\p_X\Omega\|_{L^2(m)}+\|\p_Y\Omega\|_{L^2(m)} \bigr)\\
\leq &\f{C_{m,\sigma}}\nu \tt^{-\f12} \bigl(\ln{t}/{t_0}\bigr)^{\f\sigma2-\f{7}4} E(t)^\f12 D(t).
\end{align*}
As a consequence, we obtain
\begin{equation}\label{eq3.35}
\begin{aligned}
\int \p_Y^2 \cN_t\Omega \p_Y^2\Omega b^{2m} \leq & \f{C_m}{\nu} {\|\Omega(1)\|_{L^2(m)}}\bigl(\ln {t}/{t_0}\bigr)^{-\f32} D(t) \\
&+\f{C_{m,\sigma}}{\nu}\tt^{-\f12} \bigl(\ln {t}/{t_0}\bigr)^{\f\sigma2-\f74} E(t)^\f12D(t) .
\end{aligned}
\end{equation}

By substituting \eqref{eq3.34} and \eqref{eq3.35} into \eqref{eq3.33}, we conclude the proof of \eqref{eq:lem5}.
\end{proof}

\begin{lem}\label{S3lem8}
{\sl Let $m>1$. For $t_0\leq t\leq 2t_0$, one has
\begin{equation}\label{eq:lem6}
\begin{aligned}
t &\f{d}{dt}\Bigl(\bigl(\ln{t}/{t_0}\bigr)^4\|\p_X\p_Y\Omega\|_{L^2(m)}^2\Bigr) +\bigl(\ln{t}/{t_0}\bigr)^4\tt^{-2}\|\p_X^2\p_Y\Omega\|_{L^2(m)}^2\\
&+\bigl(\ln{t}/{t_0}\bigr)^4\|\p_X\p_Y^2\Omega\|_{L^2(m)}^2
\leq 4\bigl(\ln{t}/{t_0}\bigr)^3\|\p_X\p_Y\Omega\|_{L^2(m)}^2\\
&+4\bigl(\ln{t}/{t_0}\bigr)^4\|\p_X^2\Omega\|_{L^2(m)}\|\p_X\p_Y\Omega\|_{L^2(m)}+C_m \bigl(1+\nu^{-1}{\|\Omega(1)\|_{L^2(m)}}\bigr) \bigl(\ln{t}/{t_0}\bigr)^{\f12} D(t).
\end{aligned}\end{equation}
}\end{lem}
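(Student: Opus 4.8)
The plan is to follow the proof of Lemma~\ref{S3lem7} with $\p_X\p_Y$ in place of $\p_Y^2$. I apply $\p_X\p_Y$ to \eqref{eq:Om} and take the $L^2(m)$ inner product of the result with $\bigl(\ln{t}/{t_0}\bigr)^4\p_X\p_Y\Omega$. Since $t\f{d}{dt}\bigl(\ln{t}/{t_0}\bigr)^4=4\bigl(\ln{t}/{t_0}\bigr)^3$, this gives
\begin{align*}
\f{t}2\f{d}{dt}\Bigl(\bigl(\ln{t}/{t_0}\bigr)^4\|\p_X\p_Y\Omega\|_{L^2(m)}^2\Bigr)={}&2\bigl(\ln{t}/{t_0}\bigr)^3\|\p_X\p_Y\Omega\|_{L^2(m)}^2\\
&+\bigl(\ln{t}/{t_0}\bigr)^4\int\p_X\p_Y\bigl(\cL_t\Omega+\cN_t\Omega\bigr)\,\p_X\p_Y\Omega\,b^{2m},
\end{align*}
so after multiplying by $2$ the first term on the right of \eqref{eq:lem6} is accounted for, and it remains to estimate the linear and nonlinear integrals.

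For the linear integral I compose \eqref{[px;L]} and \eqref{[py;L]} to obtain
\begin{align*}
\p_X\p_Y\cL_t={}&\cL_t\p_X\p_Y-\f{t\sqrt{1+\f{t^2}{12}}}{1+\f{t^2}3}\p_X^2\\
&+\f{t(2+\f{t^2}3)}{4(1+\f{t^2}3)\sqrt{1+\f{t^2}{12}}}\p_Y^2+R(t)\,\p_X\p_Y,
\end{align*}
where $R(t)$ stays bounded for $t\geq t_0$. Applying the basic weighted estimate \eqref{eq3.8} to $\p_X\p_Y\Omega$ bounds $\int\cL_t\p_X\p_Y\Omega\,\p_X\p_Y\Omega\,b^{2m}$ by $-\tt^{-2}\|\p_X^2\p_Y\Omega\|_{L^2(m)}^2-2\|\p_X\p_Y^2\Omega\|_{L^2(m)}^2+C_m\|\p_X\p_Y\Omega\|_{L^2(m)}^2$, supplying the dissipation on the left of \eqref{eq:lem6}. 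Because $\bigl|\f{t\sqrt{1+t^2/12}}{1+t^2/3}\bigr|\leq 2$ for $t_0$ large, the $\p_X^2$ commutator term contributes exactly $4\bigl(\ln{t}/{t_0}\bigr)^4\|\p_X^2\Omega\|_{L^2(m)}\|\p_X\p_Y\Omega\|_{L^2(m)}$ after the factor $2$; the $\p_Y^2$ and $\p_X\p_Y$ commutator terms together with $C_m\|\p_X\p_Y\Omega\|_{L^2(m)}^2$ are absorbed into $C_m\bigl(\ln{t}/{t_0}\bigr)^{\f12}D(t)$ via Young's inequality, using $\ln{t}/{t_0}\leq\ln2<1$ and the $c_1$- and $c_3$-entries of $D$ (the $\p_X^2$ term, by contrast, sits at the top of the energy hierarchy and cannot be absorbed, which is why it is kept explicitly).

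The nonlinear integral is the core of the argument. Expanding $\p_X\p_Y\cN_t\Omega$ by the Leibniz rule produces eight terms: the two copies of $\p_X\p_Y\Delta_t^{-1}\Omega\,\p_X\p_Y\Omega$ cancel, and the transport piece $\bigl(\p_Y\Delta_t^{-1}\Omega\,\p_X-\p_X\Delta_t^{-1}\Omega\,\p_Y\bigr)\p_X\p_Y\Omega$ is handled as in the derivation of \eqref{eq3.9}: integration by parts and the divergence-free structure leave a commutator with $b^{2m}$ that \eqref{eq:lem3.1a} dominates by $\f{C_m}{\nu}\|\Omega(1)\|_{L^2(m)}\|\p_X\p_Y\Omega\|_{L^2(m)}^2$. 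For the term $\p_Y^2\Delta_t^{-1}\Omega\,\p_X^2\Omega$ I place $\p_Y^2\Delta_t^{-1}\Omega$ in $L^2$ by \eqref{eq:lem3.1b} and split the two factors of $\Omega$ through the anisotropic inequalities \eqref{Sobolev in x}--\eqref{Sobolev in y}. The terms whose stream-function factor is not covered by \eqref{eq:lem3.1a}--\eqref{eq:lem3.1b} (namely the third-order ones $\p_X\p_Y^2\Delta_t^{-1}\Omega\,\p_X\Omega$, $\p_X^2\p_Y\Delta_t^{-1}\Omega\,\p_Y\Omega$ and the term $\p_X^2\Delta_t^{-1}\Omega\,\p_Y^2\Omega$) I first integrate by parts in $X$, lowering the order on $\Delta_t^{-1}\Omega$ back to one or two derivatives, so that every stream-function factor is again controlled by $\|\Omega(1)\|_{L^2(m)}$ through \eqref{eq:lem3.1a}--\eqref{eq:lem3.1b}; this is precisely what keeps the nonlinear bound proportional to $\|\Omega(1)\|_{L^2(m)}$ and free of the interpolation \eqref{eq3.6}, hence free of an $E^{\f12}D$ contribution.

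The step I expect to be the main obstacle is the power bookkeeping in these nonlinear estimates. Every application of a Biot-Savart bound produces growth in $\tt$, and to recover the claimed $\bigl(\ln{t}/{t_0}\bigr)^{\f12}D(t)$ one must route the horizontal-derivative factors through the $\tt^{-2}$-weighted entries of $D$ (the terms $\tt^{-2}\|\p_X\p_Y\Omega\|_{L^2(m)}^2$, $\tt^{-2}\|\p_X^2\Omega\|_{L^2(m)}^2$, $\tt^{-2}\|\p_X^2\p_Y\Omega\|_{L^2(m)}^2$ and $\tt^{-2}\|\p_X^3\Omega\|_{L^2(m)}^2$) so that all powers of $\tt$ cancel, while steering just enough factors through the full-weight entries that the surviving power of $\ln{t}/{t_0}$ is $-\f72$; after the prefactor $\bigl(\ln{t}/{t_0}\bigr)^4$ this leaves the net $\bigl(\ln{t}/{t_0}\bigr)^{\f12}$. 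This is the same cancellation used for $B_2$ in Lemma~\ref{S3lem7}, but with more horizontal derivatives to balance, and making the exponents close simultaneously for all of the Leibniz terms is the delicate point. Collecting the linear and nonlinear bounds and multiplying through by $2$ yields \eqref{eq:lem6}.
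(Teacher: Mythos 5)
Your proposal is correct and takes essentially the same approach as the paper: the same energy identity, the same commutator formula (composing \eqref{[px;L]} and \eqref{[py;L]}) together with \eqref{eq3.8} for the linear part, and the same key device of integrating by parts in $X$ so that every stream-function factor is controlled by \eqref{eq:lem3.1a}--\eqref{eq:lem3.1b}, which is exactly what keeps the nonlinear bound proportional to $\nu^{-1}\|\Omega(1)\|_{L^2(m)}$ and free of any $E^{\f12}D$ contribution. The only difference is organizational --- the paper performs the integration by parts once on the whole nonlinear integral, reducing it to $C_m\|\p_Y\cN_t\Omega\|_{L^2(m)}\|\p_X^2\p_Y\Omega\|_{L^2(m)}$ and then doing the $\tt$/$\bigl(\ln t/t_0\bigr)$ bookkeeping for that single product, whereas you expand by Leibniz and integrate by parts only the problematic terms --- and the exponent count you flag as the delicate point does close to $\bigl(\ln t/t_0\bigr)^{-\f72}D(t)$ in both versions, by routing the right fractional powers of the horizontal derivatives through the $\tt^{-2}$-weighted entries of $D$.
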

\begin{proof} We first get,
by applying $\p_X\p_Y$ to \eqref{eq:Om} and then taking  $L^2(m)$ inner product of the resulting equation with $\bigl(\ln{t}/{t_0}\bigr)^4\p_X\p_Y\Omega$, that
\begin{equation}\label{eq3.37}\begin{aligned}
\f{t}2 \f{d}{dt}\Bigl(\bigl(\ln{t}/{t_0}\bigr)^4\|\p_X\p_Y\Omega(t)\|_{L^2(m)}^2\Bigr) =&2 \bigl(\ln{t}/{t_0}\bigr)^3\|\p_X\p_Y\Omega\|_{L^2(m)}^2\\
&+\bigl(\ln{t}/{t_0}\bigr)^4\int \p_X\p_Y\bigl(\cL_t\Omega+\cN_t \Omega\bigr) \p_X\p_Y\Omega b^{2m}.
\end{aligned}
\end{equation}

It follows from \eqref{[px;L]} and \eqref{[py;L]} that
\begin{equation}\label{[pypx;L]}
\begin{aligned}
\p_X\p_Y\cL_t=\cL_t\p_X\p_Y -\f{t\sqrt{1+\f{t^2}{12}}}{{1+\f{t^2}3}}\p_X^2+\f{2+\f{t^2}3}{2(1+\f{t^2}{12})}\p_X\p_Y+\f{t(2+\f{t^2}3)}{4(1+\f{t^2}3)\sqrt{1+\f{t^2}{12}}}\p_Y^2,
\end{aligned}
\end{equation}
from which and \eqref{eq3.8}, we deduce that that for $t\geq t_0$ with $t_0$ being large enough,
\begin{equation}\label{eq3.38}
\begin{aligned}
&\int \p_X\p_Y\cL_t\Omega \p_X\p_Y\Omega b^{2m} \\
&\leq \int \cL_t\p_X\p_Y\Omega \p_X\p_Y\Omega b^{2m} +2\int \bigl(|\p_X^2\Omega|+|\p_X\p_Y\Omega|+|\p_Y^2\Omega| \bigr)|\p_X\p_Y\Omega| b^{2m} \\
&\leq -\tt^{-2}\|\p_X^2\p_Y\Omega\|_{L^2(m)}^2-2\|\p_X\p_Y^2\Omega\|_{L^2(m)}^2+C_m \bigl(\|\p_X\p_Y\Omega\|_{L^2(m)}^2+\|\p_Y^2\Omega\|_{L^2(m)}^2\bigr)\\
&\quad+2\|\p_X^2\Omega\|_{L^2(m)}\|\p_X\p_Y\Omega\|_{L^2(m)}.
\end{aligned}
\end{equation}

To deal with the nonlinear part in \eqref{eq3.37}, we get, by using integration by parts, that
\begin{align*}
\int \p_X \p_Y\cN_t\Omega \p_X\p_Y\Omega b^{2m}
&=-\int \p_Y\cN_t\Omega \bigl(\p_X^2\p_Y\Omega b^{2m}+2m\p_X\p_Y \Omega b^{2m-2}X\bigr)\\
&\leq C_m\|\p_Y\cN_t\Omega \|_{L^2(m)} \bigl( \|\p_X^2\p_Y\Omega\|_{L^2(m)}+\|\p_X\p_Y\Omega\|_{L^2(m-1)} \bigr)\\
&\leq C_m \|\p_Y\cN_t\Omega \|_{L^2(m)}  \|\p_X^2\p_Y\Omega\|_{L^2(m)}.
\end{align*}
 Whereas we deduce from \eqref{eq:lem3.1a}, \eqref{eq:lem3.1b}, \eqref{Sobolev in x} and \eqref{Sobolev in y} that
\begin{align*}
\|\p_Y\cN_t\Omega\|_{L^2(m)}\lesssim &\nu^{-1}\tt^{-2} \Bigl( \|\p_X\Delta_t^{-1}\Omega\|_{L^\oo}\|\p_Y^2\Omega\|_{L^2(m)}+\|\p_Y\Delta_t^{-1}\Omega\|_{L^\oo}\|\p_X\p_Y\Omega\|_{L^2(m)} \\
&+\|\p_X\p_Y\Delta_t^{-1}\Omega\|_{L^2}\|b^m\p_Y\Omega\|_{L^\oo}+\|\p_Y^2\Delta_t^{-1}\Omega\|_{L^2}\|b^m\p_X\Omega\|_{L^\oo}\Bigr)\\
\lesssim & \nu^{-1}{\|\Omega(1)\|_{L^2(m)}} \Bigl( \|\p_Y^2\Omega\|_{L^2(m)} +\tt^{-1}\|\p_X\p_Y\Omega\|_{L^2(m)}\\
&+\tt^{-\f12}\|\p_X\p_Y^2\Omega\|_{L^2(m)}^\f14 \|\p_X\p_Y\Omega\|_{L^2(m)}^\f14 \|\p_Y^2\Omega\|_{L^2(m)}^\f14 \|\p_Y\Omega\|_{L^2(m)}^\f14\\
&+\tt^{-\f32}\|\p_X^2\p_Y\Omega\|_{L^2(m)}^\f14 \|\p_X^2\Omega\|_{L^2(m)}^\f14 \|\p_X\p_Y\Omega\|_{L^2(m)}^\f14 \|\p_X\Omega\|_{L^2(m)}^\f14 \Bigr)\\
\lesssim & \nu^{-1} {\|\Omega(1)\|_{L^2(m)}}\Bigl( \|\p_Y^2\Omega\|_{L^2(m)} +\tt^{-1}\|\p_X\p_Y\Omega\|_{L^2(m)}\\
&+\bigl(\ln{t}/{t_0}\bigr)^\f12\tt^{-1}\|\p_X\p_Y^2\Omega\|_{L^2(m)}+\bigl(\ln{t}/{t_0}\bigr)^{-\f12}\|\p_Y\Omega\|_{L^2(m)}\Bigr)\\
&+\nu^{-1}{\|\Omega(1)\|_{L^2(m)}}\tt^{-1}\Bigl(\tt^{-1}\|\p_X^2\Omega\|_{L^2(m)}+\|\p_X\p_Y\Omega\|_{L^2(m)}\\
&+\bigl(\ln{t}/{t_0}\bigr)^\f12\tt^{-1}\|\p_X^2\p_Y\Omega\|_{L^2(m)}+\bigl(\ln{t}/{t_0}\bigr)^{-\f12}\|\p_X\Omega\|_{L^2(m)} \Bigr).
\end{align*}
As a result, it comes out
\begin{align*}
\int& \p_X \p_Y\cN_t\Omega \p_X\p_Y\Omega b^{2m} \\
\leq &C_m \nu^{-1}\|\Omega(1)\|_{L^2(m)} \bigl(\ln{t}/{t_0}\bigr)^{-\f72} \Bigl(\bigl(\ln{t}/{t_0}\bigr)^6 \|\p_X^2\p_Y\Omega\|_{L^2(m)}^2  \Bigr)^\f12 \Bigl( \|\p_Y\Omega\|_{L^2(m)}^2\\
&+\bigl(\ln{t}/{t_0}\bigr)\bigl( \|\p_Y^2\Omega\|_{L^2(m)}^2 +\tt^{-2}\|\p_X\p_Y\Omega\|_{L^2(m)}^2\bigr)+\bigl(\ln{t}/{t_0}\bigr)^2\tt^{-2}\|\p_X\p_Y^2\Omega\|_{L^2(m)}^2\Bigr)^\f12\\
&+C_m \nu^{-1}{\|\Omega(1)\|_{L^2(m)}} \bigl(\ln{t}/{t_0}\bigr)^{-\f72} \Bigl(\bigl(\ln{t}/{t_0}\bigr)^4 \tt^{-2} \|\p_X^2\p_Y\Omega\|_{L^2(m)}^2  \Bigr)^\f12 \Bigl( \bigl(\ln{t}/{t_0}\bigr)^2 \|\p_X\Omega\|_{L^2(m)}^2\\
 &+\bigl(\ln{t}/{t_0}\bigr)^3\bigl(\tt^{-2}\|\p_X^2\Omega\|_{L^2(m)}^2+\|\p_X\p_Y\Omega\|_{L^2(m)}^2 \bigr)+\bigl(\ln{t}/{t_0}\bigr)^4\tt^{-2}\|\p_X^2\p_Y\Omega\|_{L^2(m)}^2\Bigr)^\f12,
\end{align*}
from which and \eqref{def:D}, we infer
\begin{equation}\label{eq3.39}
\int \p_X \p_Y\cN_t\Omega \p_X\p_Y\Omega b^{2m} \leq C_m \nu^{-1}{\|\Omega(1)\|_{L^2(m)}}\bigl(\ln{t}/{t_0}\bigr)^{-\f72} D(t).
\end{equation}

By substituting \eqref{eq3.38} and \eqref{eq3.39} into \eqref{eq3.37}, we  conclude the proof of \eqref{eq:lem6}.
\end{proof}

\begin{lem}\label{S3lem9}
{\sl Let $m>1$ and $0<\sigma<\f12$. Then for $t_0\leq t\leq 2t_0 $, we have
\begin{equation}\label{eq:lem7}
\begin{aligned}
t\f{d}{dt}&\Bigl( \bigl(\ln{t}/{t_0}\bigr)^5 \bigl(\p_X^2 \Omega \big|\p_X\p_Y \Omega \bigr)_{L^2(m)} \Bigr)+\bigl(\ln{t}/{t_0}\bigr)^5\|\p_X^2\Omega\|_{L^2(m)}^2\\
\leq & 5\bigl(\ln{t}/{t_0}\bigr)^4\bigl(\p_X^2 \Omega \big|\p_X\p_Y \Omega \bigr)_{L^2(m)}+C_m\bigl(\ln{t}/{t_0}\bigr)^5\Bigl(\tt^{-2}\|\p_X^3\Omega\|_{L^2(m)}\|\p_X^2\p_Y\Omega\|_{L^2(m)}\\
&+\|\p_X^2\p_Y\Omega\|_{L^2(m)}\|\p_X\p_Y^2\Omega\|_{L^2(m)}\Bigr)+C_m \bigl(1+{\nu}^{-1}{\|\Omega(1)\|_{L^2(m)}} \bigr)\bigl(\ln {t}/{t_0}\bigr)^{\f12} D(t)\\
&+C_{m,\sigma} \nu^{-1} \tt^{-\f12} \bigl(\ln {t}/{t_0}\bigr)^{\f\sigma2+\f14} E(t)^\f12 D(t).
\end{aligned}\end{equation}
}\end{lem}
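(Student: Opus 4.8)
The plan is to estimate the evolution of the mixed quantity $\bigl(\p_X^2\Omega\big|\p_X\p_Y\Omega\bigr)_{L^2(m)}$ in exactly the same way that the lower-order quantity $\bigl(\p_X\Omega\big|\p_Y\Omega\bigr)_{L^2(m)}$ was treated in Lemma \ref{S3lem5}. First I would differentiate in time: since $t\f{d}{dt}\bigl(\ln{t}/{t_0}\bigr)=1$, the product rule produces the boundary term $5\bigl(\ln{t}/{t_0}\bigr)^4\bigl(\p_X^2\Omega\big|\p_X\p_Y\Omega\bigr)_{L^2(m)}$ that appears on the right of \eqref{eq:lem7}, together with $\bigl(\ln{t}/{t_0}\bigr)^5$ times the two mixed integrals $\int\p_X^2\bigl(\cL_t\Omega+\cN_t\Omega\bigr)\p_X\p_Y\Omega\,b^{2m}$ and $\int\p_X^2\Omega\,\p_X\p_Y\bigl(\cL_t\Omega+\cN_t\Omega\bigr)\,b^{2m}$, obtained after applying $\p_X^2$ and $\p_X\p_Y$ to \eqref{eq:Om}. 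I then split each integral into its linear ($\cL_t$) and nonlinear ($\cN_t$) contributions.

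For the linear part I would commute the derivatives past $\cL_t$. Applying \eqref{[px;L]} twice gives $\p_X^2\cL_t=\cL_t\p_X^2+(\text{lower-order terms})$, while $\p_X\p_Y\cL_t$ is supplied directly by \eqref{[pypx;L]}. The decisive gain is the term $-\f{t\sqrt{1+\f{t^2}{12}}}{1+\f{t^2}3}\p_X^2$ in \eqref{[pypx;L]}: paired against $\p_X^2\Omega$ it yields $-\f{t\sqrt{1+\f{t^2}{12}}}{1+\f{t^2}3}\|\p_X^2\Omega\|_{L^2(m)}^2$, whose coefficient is bounded below by a fixed positive constant once $t\geq t_0$ with $t_0$ large, and which I move to the left-hand side (following the normalization of Lemma \ref{S3lem5}). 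The genuinely symmetric piece $\int\cL_t\p_X^2\Omega\,\p_X\p_Y\Omega\,b^{2m}+\int\p_X^2\Omega\,\cL_t\p_X\p_Y\Omega\,b^{2m}$ is handled by integration by parts against $b^{2m}$ exactly as in \eqref{eq3.8}; shifting one derivative off the $\Delta_t$ part produces precisely the third-order cross terms $\tt^{-2}\|\p_X^3\Omega\|_{L^2(m)}\|\p_X^2\p_Y\Omega\|_{L^2(m)}$ and $\|\p_X^2\p_Y\Omega\|_{L^2(m)}\|\p_X\p_Y^2\Omega\|_{L^2(m)}$ that remain explicit on the right of \eqref{eq:lem7}, while the remaining commutator and weight contributions are lower order and absorbed into $C_m\bigl(\ln{t}/{t_0}\bigr)^\f12 D(t)$ by comparing their $\ln{t}/{t_0}$ powers against those recorded in \eqref{def:D}.

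For the nonlinear part I would expand $\p_X^2\cN_t\Omega$ and $\p_X\p_Y\cN_t\Omega$ by the Leibniz rule, using from \eqref{eq:Oma} that $\cN_t\Omega=\nu^{-1}(1+\f{t^2}{12})^{-1}\bigl(\p_Y\Delta_t^{-1}\Omega\,\p_X\Omega-\p_X\Delta_t^{-1}\Omega\,\p_Y\Omega\bigr)$, and group the resulting products according to how many derivatives land on the stream function $\Delta_t^{-1}\Omega$. The terms in which the stream function carries at most one derivative are controlled through the $L^\oo$ bounds \eqref{eq:lem3.1a}, the $L^2$ bounds \eqref{eq:lem3.1b}, and the anisotropic Sobolev inequalities \eqref{Sobolev in x}, \eqref{Sobolev in y}; after the $\tt$ powers cancel against the $\nu^{-1}\tt^{-2}$ prefactor of $\cN_t$ these contribute the $C_m\bigl(1+\nu^{-1}\|\Omega(1)\|_{L^2(m)}\bigr)\bigl(\ln{t}/{t_0}\bigr)^\f12 D(t)$ term, just as for the terms $B_1,B_2$ in the proof of Lemma \ref{S3lem7}. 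The remaining products, in which two or three $X$-derivatives fall on $\Delta_t^{-1}\Omega$, are the delicate ones: they are treated with the interpolation estimate \eqref{eq3.6} of Lemma \ref{S3lem2}, distributing the resulting norms between $E(t)^\f12$ and $D(t)$ to produce $C_{m,\sigma}\nu^{-1}\tt^{-\f12}\bigl(\ln{t}/{t_0}\bigr)^{\f\sigma2+\f14}E(t)^\f12 D(t)$.

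The main obstacle is exactly this last family of terms. Because $\Delta_t$ degenerates to $\p_Y^2$ as $t\to\oo$, the operator $\Delta_t^{-1}$ provides no smoothing in $X$, so quantities such as $\p_X^2\Delta_t^{-1}\Omega$ and $\p_X^3\Delta_t^{-1}\Omega$ cannot be bounded by the naive Biot--Savart estimate; one must instead invoke \eqref{eq3.6}, trading a small loss $\tt^{\sigma}$ for a half-derivative gain in $X$ and paying with the factor $\bigl(\ln{t}/{t_0}\bigr)^{\f\sigma2+\f14}$. The bookkeeping that makes this close---matching every power of $\tt$ and of $\ln{t}/{t_0}$ so that the output fits the $E(t)^\f12 D(t)$ template with the stated exponents, as it did for the term $A_2$ in Lemma \ref{S3lem6} and the term $B_3$ in Lemma \ref{S3lem7}---is the only genuinely delicate step; the linear estimates and the low-order nonlinear terms are routine repetitions of the previous lemmas.
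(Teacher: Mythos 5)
You handle the linear part exactly as the paper does: the product rule gives the boundary term, the commutator identities \eqref{[px;L]}, \eqref{[pxpx;L]} and \eqref{[pypx;L]} supply the coercive contribution $-\f{t\sqrt{1+\f{t^2}{12}}}{1+\f{t^2}3}\|\p_X^2\Omega\|_{L^2(m)}^2$ (whose coefficient tends to $\f{\sqrt3}2$, hence is bounded below for $t_0$ large), and integration by parts on the symmetrized piece $\int\cL_t\p_X^2\Omega\,\p_X\p_Y\Omega\,b^{2m}+\int\p_X^2\Omega\,\cL_t\p_X\p_Y\Omega\,b^{2m}$ produces the explicit third-order cross terms; this is precisely \eqref{eq3.44}. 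Where you genuinely diverge is the nonlinear part. The paper never expands $\p_X^2\cN_t\Omega$ or $\p_X\p_Y\cN_t\Omega$: it integrates by parts once (in $X$ in the first nonlinear integral, in $Y$ in the second), so that both contributions collapse to $C_m\|\p_X\cN_t\Omega\|_{L^2(m)}\|\p_X^2\p_Y\Omega\|_{L^2(m)}$. As a result the stream function never carries more than two derivatives (at worst $\p_X^2\Delta_t^{-1}\Omega$, handled by \eqref{eq3.6}), only four products need estimating, and no transport term with three derivatives on $\Omega$ ever appears. Your direct Leibniz expansion instead generates transport terms such as $\p_Y\Delta_t^{-1}\Omega\,\p_X^3\Omega\,\p_X\p_Y\Omega$ and stream-heavy terms such as $\p_X^3\Delta_t^{-1}\Omega\,\p_Y\Omega\,\p_X\p_Y\Omega$ or $\p_X\p_Y^2\Delta_t^{-1}\Omega\,\p_X\Omega\,\p_X^2\Omega$, roughly doubling the term count; this is the route the paper itself follows in Lemmas \ref{S3lem8} and \ref{S3lem10}, and it does close, but only if the bookkeeping you flag is done in a specific way: for several of your terms (both transport and mixed ones, e.g. $\nu^{-1}\tt^{-2}\int\p_X\p_Y\Delta_t^{-1}\Omega\,\p_X^2\Omega\,\p_X\p_Y\Omega\,b^{2m}$), bounding the factors by the $(\ln t/t_0)$-heaviest entries of $D(t)$ falls short of the right-hand side of \eqref{eq:lem7} by an unbounded factor as $t\to t_0^+$; one must instead spend the surplus powers of $\tt$ coming from \eqref{eq:lem3.1a}, \eqref{eq:lem3.1b} and \eqref{eq3.6} on the $\tt^{-2}$-weighted entries of $D(t)$ (the $c_1$- and $c_3$-terms in \eqref{def:D}), e.g. writing $\|\p_X^2\Omega\|_{L^2(m)}=\tt\cdot\tt^{-1}\|\p_X^2\Omega\|_{L^2(m)}$ and splitting the interpolated norms between $E(t)^{\f12}$ and $D(t)$, exactly as in the estimate of $A_2$ in Lemma \ref{S3lem6} and of $D_2$, $D_3$ in Lemma \ref{S3lem10}. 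With that understanding your plan is complete and yields the stated exponents $\bigl(\ln t/t_0\bigr)^{\f12}$ and $\bigl(\ln t/t_0\bigr)^{\f\sigma2+\f14}$; the paper's single integration by parts is simply the more economical move, which avoids the borderline terms altogether.
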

\begin{proof} In view of \eqref{eq:Om}, we write
\begin{equation}\label{eq3.42}
\begin{aligned}
t&\f{d}{dt}\Bigl( \bigl(\ln{t}/{t_0}\bigr)^5 \bigl(\p_X^2\Omega \big|\p_X\p_Y \Omega \bigr)_{L^2(m)} \Bigr)=5\bigl(\ln{t}/{t_0}\bigr)^4\bigl(\p_X^2 \Omega \big|\p_X\p_Y \Omega \bigr)_{L^2(m)} \\
&+\bigl(\ln{t}/{t_0}\bigr)^5 \Bigl(\int\p_X^2 \bigl(\cL_t\Omega+\cN_t\Omega \bigr)\p_X\p_Y\Omega b^{2m}
+ \int\p_X^2 \Omega \p_X\p_Y\bigl(\cL_t\Omega+\cN_t\Omega \bigr)b^{2m}\Bigr).
\end{aligned}
\end{equation}

By applying \eqref{[px;L]} twice, we find
\begin{equation}\label{[pxpx;L]}
\p_X^2 \cL_t=\cL_t\p_X^2 +\f1{1+\f{t^2}{3}} \p_X^2 +\f{t(2+\f{t^2}3)}{2(1+\f{t^2}3)\sqrt{1+\f{t^2}{12}}}\p_X\p_Y,
\end{equation}
from which and \eqref{[pypx;L]}, we infer
\begin{align*}
\int&\p_X^2 \cL_t\Omega\p_X\p_Y\Omega b^{2m}
+ \int\p_X^2 \Omega \p_X\p_Y\cL_t\Omega b^{2m}\\
=&\int\cL_t\p_X^2 \Omega\p_X\p_Y\Omega b^{2m}
+ \int\p_X^2 \Omega\cL_t \p_X\p_Y\Omega b^{2m}-\f{t\sqrt{1+\f{t^2}{12}}}{{1+\f{t^2}3}}\|\p_X^2\Omega\|_{L^2(m)}^2 \\
&+\f{t(2+\f{t^2}3)}{2(1+\f{t^2}3)\sqrt{1+\f{t^2}{12}}}\|\p_X\p_Y\Omega\|_{L^2(m)}^2 +\Bigl(\f1{1+\f{t^2}{3}}+\f{2+\f{t^2}3}{2(1+\f{t^2}{12})}\Bigr)\int \p_X^2\Omega \p_X\p_Y\Omega b^{2m}\\
&+\f{t(2+\f{t^2}3)}{4(1+\f{t^2}3)\sqrt{1+\f{t^2}{12}}} \int \p_X^2\Omega \p_Y^2\Omega b^{2m}.
\end{align*}
By using integration by parts, we have
\begin{align*}
\int\cL_t\p_X^2 \Omega\p_X\p_Y\Omega b^{2m}
&+ \int\p_X^2 \Omega\cL_t \p_X\p_Y\Omega b^{2m}
\lesssim_m \tt^{-2}\|\p_X^3\Omega\|_{L^2(m)}\|\p_X^2\p_Y\Omega\|_{L^2(m)}\\
&+\tt^{-2}\|\p_X^2\p_Y\Omega\|_{L^2(m)}^2+\|\p_X^2\p_Y\Omega\|_{L^2(m)}\|\p_X\p_Y^2\Omega\|_{L^2(m)}\\
&+\|\p_X^2\Omega\|_{L^2(m)}\|\p_X\p_Y\Omega\|_{L^2(m)}+\tt^{-2}\|\p_X^2\Omega\|_{L^2(m)}^2.
\end{align*}
Therefore, for $t\geq t_0$ with $t_0$ being large enough, we find
\begin{equation}\label{eq3.44}
\begin{aligned}
&\int\p_X^2 \cL_t\Omega\p_X\p_Y\Omega b^{2m} + \int\p_X^2 \Omega \p_X\p_Y\cL_t\Omega b^{2m}\leq-\|\p_X^2\Omega\|_{L^2(m)}^2\\
 &+C_m\Bigl(\tt^{-2}\|\p_X^2\Omega\|_{L^2(m)}^2+\|\p_X\p_Y\Omega\|_{L^2(m)}^2+\|\p_Y^2\Omega\|_{L^2(m)}^2+\tt^{-2}\|\p_X^2\p_Y\Omega\|_{L^2(m)}^2\Bigr)\\
&+C_m\Bigl(\tt^{-2}\|\p_X^3\Omega\|_{L^2(m)}\|\p_X^2\p_Y\Omega\|_{L^2(m)}+\|\p_X^2\p_Y\Omega\|_{L^2(m)}\|\p_X\p_Y^2\Omega\|_{L^2(m)}\Bigr).
\end{aligned}
\end{equation}

To deal with the nonlinear terms in \eqref{eq3.42}, we use integration by parts to write
\begin{align*}
\int&\p_X^2 \cN_t\Omega\p_X\p_Y\Omega b^{2m}
+ \int\p_X^2 \Omega \p_X\p_Y\cN_t\Omega b^{2m} \\
\leq &C_m\|\p_X\cN_t\Omega\|_{L^2(m)}\bigl(\|\p_X^2\p_Y\Omega\|_{L^2(m)}+\|\p_X\p_Y\Omega\|_{L^2(m-1)}+\|\p_X^2\Omega\|_{L^2(m-1)} \bigr)\\
\leq &C_m\|\p_X\cN_t\Omega\|_{L^2(m)}\|\p_X^2\p_Y\Omega\|_{L^2(m)}.
\end{align*}
Yet it is easy to observe from \eqref{eq:Oma} that
\begin{align*}
\|\p_X\cN_t\Omega\|_{L^2(m)}\lesssim &\nu^{-1}\tt^{-2} \Bigl( \|\p_X\Delta_t^{-1}\Omega\|_{L^\oo}\|\p_X\p_Y\Omega\|_{L^2(m)}+\|\p_Y\Delta_t^{-1}\Omega\|_{L^\oo}\|\p_X^2\Omega\|_{L^2(m)} \\
&\qquad+\|\p_X\p_Y\Delta_t^{-1}\Omega\|_{L^2}\|b^m\p_X\Omega\|_{L^\oo}+\|\p_X^2\Delta_t^{-1}\Omega\|_{L^\oo}\|\p_Y\Omega\|_{L^2(m)}\Bigr).
\end{align*}
It follows from  \eqref{eq:lem3.1a} that
\begin{align*}
\nu^{-1}&\tt^{-2} \bigl( \|\p_X\Delta_t^{-1}\Omega\|_{L^\oo}\|\p_X\p_Y\Omega\|_{L^2(m)}+\|\p_Y\Delta_t^{-1}\Omega\|_{L^\oo}\|\p_X^2\Omega\|_{L^2(m)}\bigr)\\
\leq & {C}\nu^{-1}{\|\Omega(1)\|_{L^2(m)}}\bigl( \|\p_X\p_Y\Omega\|_{L^2(m)}+\tt^{-1}\|\p_X^2\Omega\|_{L^2(m)}\bigr)\\
\leq &{C_m}\nu^{-1} {\|\Omega(1)\|_{L^2(m)}}  \bigl(\ln {t}/{t_0}\bigr)^{-\f32} D(t)^\f12
\end{align*}
Along the same line, we deduce from \eqref{eq:lem3.1b} that
\begin{align*}
\nu^{-1}&\tt^{-2} \|\p_X\p_Y\Delta_t^{-1}\Omega\|_{L^2}\|b^m\p_X\Omega\|_{L^\oo}\\
\leq & C\nu^{-1}{\|\Omega(1)\|_{L^2(m)}} \tt^{-\f12} \|\p_X\Omega\|_{L^2(m)}^\f14\|\p_X\p_Y\Omega\|_{L^2(m)}^\f14\|\p_X^2\Omega\|_{L^2(m)}^\f14  \|\p_X^2\p_Y\Omega\|_{L^2(m)}^\f14\\
\leq & C\nu^{-1}{\|\Omega(1)\|_{L^2(m)}} \bigl(\ln{t}/{t_0}\bigr)^{-\f32} \Bigl( \bigl(\ln{t}/{t_0}\bigr)\|\p_X\Omega\|_{L^2(m)}\Bigr)^\f14
\Bigl( \bigl(\ln{t}/{t_0}\bigr)^\f32\|\p_X\p_Y\Omega\|_{L^2(m)}\Bigr)^\f14 \\
&\qquad\times\Bigl( \bigl(\ln{t}/{t_0}\bigr)^\f32\tt^{-1}\|\p_X^2\Omega\|_{L^2(m)}\Bigr)^\f14
\Bigl( \bigl(\ln{t}/{t_0}\bigr)^2\tt^{-1}\|\p_X^2\p_Y\Omega\|_{L^2(m)}\Bigr)^\f14\\
\leq &C_m\nu^{-1}  {\|\Omega(1)\|_{L^2(m)}}\bigl(\ln {t}/{t_0}\bigr)^{-\f32} D(t)^\f12.
\end{align*}
Whereas we deduce from \eqref{eq3.6}  that for any $0<\sigma<\f12$,
\begin{align*}
\nu^{-1}&\tt^{-2} \|\p_X^2\Delta_t^{-1}\Omega\|_{L^\oo}\|\p_Y\Omega\|_{L^2(m)}\\
\leq & C_\sigma\nu^{-1} \tt^{\sigma-1} \|\p_X\Omega\|_{L^2(m)}^{\f12+\sigma}\|\p_X^2\Omega\|_{L^2(m)}^{\f12-\sigma}\|\p_Y\Omega\|_{L^2(m)}\\
\leq & C_\sigma \nu^{-1} \tt^{-\f12} \bigl(\ln {t}/{t_0}\bigr)^{\f\sigma2-\f74} \Bigl( \bigl(\ln {t}/{t_0}\bigr)\|\p_X\Omega\|_{L^2(m)} \Bigr)^{\f12+\sigma} \Bigl( \bigl(\ln {t}/{t_0}\bigr)^{\f32}\tt^{-1}\|\p_X^2\Omega\|_{L^2(m)} \Bigr)^{\f12-\sigma} \\
&\qquad\times\Bigl( \bigl(\ln {t}/{t_0}\bigr)^\f12\|\p_Y\Omega\|_{L^2(m)}\Bigr)\\
\leq & C_{m,\sigma} \nu^{-1} \tt^{-\f12} \bigl(\ln {t}/{t_0}\bigr)^{\f\sigma2-\f74} E(t)^\f12 D(t)^\f12
\end{align*}
Noticing that
$$
\|\p_X^2\p_Y\Omega\|_{L^2(m)} \leq C_m \bigl(\ln {t}/{t_0}\bigr)^{-3} D(t)^\f12,
$$
we get, by summarizing the above estimates, that
\begin{equation}\label{eq3.45}
\begin{aligned}
&\int\p_X^2 \cN_t\Omega\p_X\p_Y\Omega b^{2m}
+ \int\p_X^2 \Omega \p_X\p_Y\cN_t\Omega b^{2m}\\
&\leq C_m {\nu}^{-1}{\|\Omega(1)\|_{L^2(m)}} \bigl(\ln {t}/{t_0}\bigr)^{-\f92} D(t)+C_{m,\sigma} \nu^{-1} \tt^{-\f12} \bigl(\ln {t}/{t_0}\bigr)^{\f\sigma2-\f{19}4} E(t)^\f12 D(t).
\end{aligned}
\end{equation}

By substituting the estimates \eqref{eq3.44} and \eqref{eq3.45} into \eqref{eq3.42}, we arrive at \eqref{eq:lem7}. This
finishes the proof of Lemma \ref{S3lem9}.
\end{proof}

\begin{lem}\label{S3lem10}
{\sl Under the assumptions of Lemma \ref{S3lem9}, we have
\begin{equation}\label{eq:lem8}
\begin{aligned}
t &\f{d}{dt}\Bigl(\bigl(\ln{t}/{t_0}\bigr)^6\|\p_X^2\Omega(t)\|_{L^2(m)}^2\Bigr) +\bigl(\ln{t}/{t_0}\bigr)^6\tt^{-2}\|\p_X^3\Omega\|_{L^2(m)}^2\\
&+\bigl(\ln{t}/{t_0}\bigr)^6\|\p_X^2\p_Y\Omega\|_{L^2(m)}^2
\leq 6\bigl(\ln{t}/{t_0}\bigr)^5\|\p_X^2\Omega\|_{L^2(m)}^2\\
&+C_m\bigl(1+\nu^{-1}{\|\Omega(1)\|_{L^2(m)}}\bigr)E(t)+C_{m,\sigma} \nu^{-1} \tt^{-\f12} \bigl(\ln{t}/{t_0}\bigr)^{\f\sigma2+\f{1}4} E(t)^\f12 D(t).
\end{aligned}\end{equation}
}\end{lem}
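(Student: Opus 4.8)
The plan is to run the same differentiate--and--test scheme already used for Lemmas \ref{S3lem6}, \ref{S3lem8} and \ref{S3lem9}, now applied to the top-order horizontal derivative $\p_X^2\Omega$. First I apply $\p_X^2$ to \eqref{eq:Om} and take the $L^2(m)$ inner product of the result with $\bigl(\ln{t}/{t_0}\bigr)^6\p_X^2\Omega$. Since $t\f{d}{dt}\bigl(\ln{t}/{t_0}\bigr)^6=6\bigl(\ln{t}/{t_0}\bigr)^5$, the time derivative splits as
\[
\f{t}2\f{d}{dt}\Bigl(\bigl(\ln{t}/{t_0}\bigr)^6\|\p_X^2\Omega\|_{L^2(m)}^2\Bigr)=3\bigl(\ln{t}/{t_0}\bigr)^5\|\p_X^2\Omega\|_{L^2(m)}^2+\bigl(\ln{t}/{t_0}\bigr)^6\int\p_X^2\bigl(\cL_t\Omega+\cN_t\Omega\bigr)\p_X^2\Omega\,b^{2m},
\]
the first term (after multiplying by $2$) being exactly the $6\bigl(\ln{t}/{t_0}\bigr)^5\|\p_X^2\Omega\|_{L^2(m)}^2$ on the right of \eqref{eq:lem8}. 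For the linear integral I use the commutator identity \eqref{[pxpx;L]} to replace $\p_X^2\cL_t$ by $\cL_t\p_X^2$ up to the lower-order terms $\f1{1+\f{t^2}3}\p_X^2$ (of size $O(\tt^{-2})$) and $\f{t(2+\f{t^2}3)}{2(1+\f{t^2}3)\sqrt{1+\f{t^2}{12}}}\p_X\p_Y$ (with an $O(1)$ coefficient). Applying the weighted coercivity estimate \eqref{eq3.8} to $\p_X^2\Omega$ yields, for $t\geq t_0$ large, the good dissipation $-\tt^{-2}\|\p_X^3\Omega\|_{L^2(m)}^2-2\|\p_X^2\p_Y\Omega\|_{L^2(m)}^2$ plus an error $C_m\bigl(\|\p_X^2\Omega\|_{L^2(m)}^2+\|\p_X\p_Y\Omega\|_{L^2(m)}^2\bigr)$; once multiplied by $\bigl(\ln{t}/{t_0}\bigr)^6$ the two negative terms become the dissipation on the left of \eqref{eq:lem8}, while the error is absorbed into $C_mE(t)$ because $E(t)$ contains $c_7\bigl(\ln{t}/{t_0}\bigr)^6\|\p_X^2\Omega\|_{L^2(m)}^2$ and $c_5\bigl(\ln{t}/{t_0}\bigr)^4\|\p_X\p_Y\Omega\|_{L^2(m)}^2$ and $\ln{t}/{t_0}$ is bounded on $[t_0,2t_0]$.

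The heart of the matter is the nonlinear integral $\int\p_X^2\cN_t\Omega\,\p_X^2\Omega\,b^{2m}$. Writing $\cN_t\Omega$ as in \eqref{eq:Oma} and expanding by the Leibniz rule, I isolate the genuinely top-order transport piece $\nu^{-1}\bigl(1+\f{t^2}{12}\bigr)^{-1}\bigl(\p_Y\Delta_t^{-1}\Omega\,\p_X^3\Omega-\p_X\Delta_t^{-1}\Omega\,\p_X^2\p_Y\Omega\bigr)$ from the remaining stretching terms, in which at least two derivatives fall on the stream function $\Delta_t^{-1}\Omega$. The transport piece is handled by integration by parts in $X$: since the advecting field $(\p_Y\Delta_t^{-1}\Omega,-\p_X\Delta_t^{-1}\Omega)$ is divergence free, testing against $\p_X^2\Omega\,b^{2m}$ turns $\p_X^3\Omega$ into $\f12\p_X\bigl((\p_X^2\Omega)^2\bigr)$ and lands the derivative only on the weight, so by \eqref{eq:lem3.1a} this contribution is bounded by $C_m\nu^{-1}\|\Omega(1)\|_{L^2(m)}\|\p_X^2\Omega\|_{L^2(m)}^2$, hence after the $\bigl(\ln{t}/{t_0}\bigr)^6$ prefactor by $C_m\nu^{-1}\|\Omega(1)\|_{L^2(m)}E(t)$, which together with the linear error gives the $C_m\bigl(1+\nu^{-1}\|\Omega(1)\|_{L^2(m)}\bigr)E(t)$ term of \eqref{eq:lem8}. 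The stretching terms are treated exactly as $A_1,A_2$ in Lemma \ref{S3lem6} and the nonlinear part of Lemma \ref{S3lem9}: the mild factors $\|\p_X\Delta_t^{-1}\Omega\|_{L^\oo}$, $\|\p_Y\Delta_t^{-1}\Omega\|_{L^\oo}$ and $\|\p_X\p_Y\Delta_t^{-1}\Omega\|_{L^2}$ are supplied by \eqref{eq:lem3.1a}--\eqref{eq:lem3.1b}, the dangerous factor $\|\p_X^3\Delta_t^{-1}\Omega\|_{L^\oo}$ by the anisotropic Biot--Savart bound \eqref{eq3.6}, and the products of $\Omega$-derivatives are split by the one-dimensional inequalities \eqref{Sobolev in x}--\eqref{Sobolev in y}. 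Distributing the available powers of $\bigl(\ln{t}/{t_0}\bigr)^{1/2}$ and $\tt$ according to \eqref{def:E}, \eqref{def:D} and the hierarchy \eqref{assumptions on c1-7} then produces the last right-hand term $C_{m,\sigma}\nu^{-1}\tt^{-\f12}\bigl(\ln{t}/{t_0}\bigr)^{\f\sigma2+\f14}E(t)^{\f12}D(t)$.

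The main obstacle is precisely this top-order bookkeeping. In contrast with Lemma \ref{S3lem8}, where the highest derivative that appears, $\p_X^2\p_Y\Omega$, is dissipated in $D(t)$ without a $\tt^{-2}$ weight, here the natural top-order object is $\p_X^3\Omega$, whose dissipation enters $D(t)$ only as $\tt^{-2}\|\p_X^3\Omega\|_{L^2(m)}^2$, so that $\|\p_X^3\Omega\|_{L^2(m)}\lesssim\tt\bigl(\ln{t}/{t_0}\bigr)^{-3}D(t)^{1/2}$ carries an extra power of $\tt$. A naive single integration by parts reducing the nonlinear integral to $\|\p_X\cN_t\Omega\|_{L^2(m)}\|\p_X^3\Omega\|_{L^2(m)}$ would therefore gain a positive power of $\tt$ and destroy the decay; the point is never to let the full power of $\|\p_X^3\Omega\|_{L^2(m)}$ occur. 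The divergence-free cancellation disposes of it in the transport part, and for the worst stretching term $\int\p_X^3\Delta_t^{-1}\Omega\,\p_Y\Omega\,\p_X^2\Omega\,b^{2m}$ the bound $\|\p_X^3\Delta_t^{-1}\Omega\|_{L^\oo}\leq C_\sigma\tt^{1+\sigma}\|\p_X^2\Omega\|_{L^2(m)}^{\f12+\sigma}\|\p_X^3\Omega\|_{L^2(m)}^{\f12-\sigma}$ from \eqref{eq3.6} makes only the fractional power $\bigl(\f12-\sigma\bigr)$ of $\|\p_X^3\Omega\|_{L^2(m)}$ appear: the surplus $\tt^{\f12-\sigma}$ then combines with the prefactor $\tt^{\sigma-1}$ to give exactly the favorable $\tt^{-\f12}$, and accounting for the $\bigl(\ln{t}/{t_0}\bigr)^{1/2}$ powers via \eqref{def:E}--\eqref{def:D} gives $C_{m,\sigma}\nu^{-1}\tt^{-\f12}\bigl(\ln{t}/{t_0}\bigr)^{\f\sigma2-\f{23}4}E(t)^{\f12}D(t)$, which is precisely the last term of \eqref{eq:lem8} after the $\bigl(\ln{t}/{t_0}\bigr)^6$ prefactor. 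Collecting the linear and nonlinear estimates in the time-derivative identity then yields \eqref{eq:lem8}.
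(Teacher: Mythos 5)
Your proposal follows essentially the same route as the paper's proof: the same weighted energy identity, the commutator \eqref{[pxpx;L]} combined with the coercivity \eqref{eq3.8} for the linear part, the same Leibniz decomposition of $\int\p_X^2\cN_t\Omega\,\p_X^2\Omega\,b^{2m}$ with the divergence-free cancellation disposing of the transport piece, and the anisotropic bound \eqref{eq3.6} so that only the fractional power $\|\p_X^3\Omega\|_{L^2(m)}^{\f12-\sigma}$ of the top derivative appears, with the correct bookkeeping $\tt^{\sigma-1}\cdot\tt^{\f12-\sigma}=\tt^{-\f12}$ and $\bigl(\ln t/t_0\bigr)^{\f\sigma2-\f{23}4}$. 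One small correction: the intermediate stretching terms (exactly two derivatives on $\Delta_t^{-1}\Omega$) must also be estimated via \eqref{eq3.6}, as the paper does, rather than via \eqref{eq:lem3.1b} with \eqref{Sobolev in x}--\eqref{Sobolev in y}, since that splitting reintroduces a half power of $\|\p_X^3\Omega\|_{L^2(m)}$ whose conversion to dissipation costs $\tt^{\f12}$ and forfeits the $\tt^{-\f12}$ gain needed for the stated bound.
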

\begin{proof} We first get,
by applying $\p_X^2$ to \eqref{eq:Om} and then taking  $L^2(m)$ inner product of the resulting equation with $\bigl(\ln{t}/{t_0}\bigr)^6\p_X^2\Omega$, that
\begin{equation}\label{eq3.47}\begin{aligned}
\f{t}2 \f{d}{dt}\Bigl(\bigl(\ln{t}/{t_0}\bigr)^6\|\p_X^2\Omega(t)\|_{L^2(m)}^2\Bigr) =&3\bigl(\ln{t}/{t_0}\bigr)^5\|\p_X^2\Omega\|_{L^2(m)}^2\\
&+\bigl(\ln{t}/{t_0}\bigr)^6\int \p_X^2\Bigl(\cL_t\Omega+\cN_t \Omega\Bigr) \p_X^2\Omega b^{2m}.
\end{aligned}
\end{equation}

By applying \eqref{[pxpx;L]} and \eqref{eq3.8}, we deduce  that for $t\geq t_0$ with $t_0$ being large enough,
\begin{equation}\label{eq3.48}
\begin{aligned}
\int \p_X^2\cL_t\Omega \p_X^2\Omega b^{2m}
\leq &\int \cL_t\p_X^2\Omega \p_X^2\Omega b^{2m} +2\int \bigl(|\p_X^2\Omega|+|\p_X\p_Y\Omega| \bigr)|\p_X^2\Omega| b^{2m}\\
\leq &-\tt^{-2}\|\p_X^3\Omega\|_{L^2(m)}^2-2\|\p_X^2\p_Y\Omega\|_{L^2(m)}^2\\
&+C_m \bigl(\|\p_X^2\Omega\|_{L^2(m)}^2+\|\p_X\p_Y\Omega\|_{L^2(m)}^2\bigr).
\end{aligned}
\end{equation}

For the nonlinear part in \eqref{eq3.47}, we first decompose it as
\begin{align*}
\int \p_X^2 \cN_t\Omega \p_X^2\Omega b^{2m}=&\f{1}{\nu (1+\f{t^2}{12})} \Bigl(\int \bigl(\p_Y \Delta_t^{-1}\Omega\p_X^3\Omega
- \p_X\Delta_t^{-1}\Omega \p_Y\p_X^2\Omega\bigr)\p_X^2\Omega b^{2m}\\
&+2\int \bigl(\p_X\p_Y \Delta_t^{-1}\Omega\p_X^2 \Omega
- \p_X^2\Delta_t^{-1}\Omega \p_X\p_Y\Omega\bigr)\p_X^2\Omega b^{2m}\\
&+\int \bigl(\p_X^2\p_Y \Delta_t^{-1}\Omega\p_X \Omega
- \p_X^3\Delta_t^{-1}\Omega \p_Y\Omega\bigr)\p_X\Omega b^{2m}\Bigr)
\eqdef D_1+D_2+D_3.
\end{align*}
It follows from a similar derivation of \eqref{eq3.9} that
$$
D_1\leq C_{m} {\nu}^{-1}{\|\Omega(1)\|_{L^2(m)}} \|\p_X^2\Omega\|_{L^2(m)}^2.
$$
For  $D_2$, we write
\begin{align*}
D_2 \leq 2\nu^{-1} \tt^{-2}\Bigl(\|\p_X\p_Y \Delta_t^{-1}\Omega\|_{L^\oo}\|\p_X^2\Omega\|_{L^2(m)}^2
+\|\p_X^2\Delta_t^{-1}\Omega\|_{L^\oo}\|\p_X\p_Y\Omega\|_{L^2(m)}\|\p_X^2\Omega\|_{L^2(m)}\Bigr),
\end{align*}
from which and  \eqref{eq3.6}, we infer
\begin{align*}
D_2  \leq &C_\sigma\nu^{-1} \tt^{\sigma-1} \Bigl(  \|\p_Y\Omega\|_{L^2(m)}^{\f12+\sigma}\|\p_X\p_Y\Omega\|_{L^2(m)}^{\f12-\sigma}\|\p_X^2\Omega\|_{L^2(m)}^2\\
& + \|\p_X\Omega\|_{L^2(m)}^{\f12+\sigma}\|\p_X^2\Omega\|_{L^2(m)}^{\f12-\sigma}\|\p_X\p_Y\Omega\|_{L^2(m)}\|\p_X^2\Omega\|_{L^2(m)}\Bigr)\\
\leq &C_\sigma \nu^{-1} \tt^{-\f12}\bigl(\ln{t}/{t_0}\bigr)^{\f\sigma2-\f{23}4} \Bigl(  \bigl((\ln{t}/{t_0})^\f12\|\p_Y\Omega\|_{L^2(m)}\bigr)^{\f12+\sigma}\bigl((\ln{t}/{t_0})^\f12\tt^{-1}\|\p_X\p_Y\Omega\|_{L^2(m)}\bigr)^{\f12-\sigma}\\
&\times\bigl((\ln{t}/{t_0})^3\|\p_X^2\Omega\|_{L^2(m)}\bigr)^{\f12-\sigma}\bigl((\ln{t}/{t_0})^\f52\|\p_X^2\Omega\|_{L^2(m)}\bigr)^{\f32+\sigma}+\bigl((\ln{t}/{t_0}) \|\p_X\Omega\|_{L^2(m)}\bigr)^{\f12+\sigma}\\
&\times\bigl((\ln{t}/{t_0})^\f32\tt^{-1}\|\p_X^2\Omega\|_{L^2(m)}\bigr)^{\f12-\sigma}\bigl((\ln{t}/{t_0})^\f32\|\p_X\p_Y\Omega\|_{L^2(m)}\bigr)\bigl((\ln{t}/{t_0})^3\|\p_X^2\Omega\|_{L^2(m)}\bigr)\Bigr)\\
\leq &C_\sigma \nu^{-1} \tt^{-\f12} \bigl(\ln{t}/{t_0}\bigr)^{\f\sigma2-\f{23}4} \Bigl( \bigl(\ln{t}/{t_0}\bigr)\|\p_Y\Omega\|_{L^2(m)}^2+ \bigl(\ln{t}/{t_0}\bigr)^6\|\p_X^2\Omega\|_{L^2(m)}^2 \Bigr)^\f12 \\
& \times \Bigl(\bigl(\ln{t}/{t_0}\bigr)\tt^{-2}\|\p_X\p_Y\Omega\|_{L^2(m)}^2+\bigl(\ln{t}/{t_0}\bigr)^5 \|\p_X^2\Omega\|_{L^2(m)}^2+\bigl(\ln{t}/{t_0}\bigr)^2 \|\p_X\Omega\|_{L^2(m)}^2\\
&+ \bigl(\ln{t}/{t_0}\bigr)^3\tt^{-2}\|\p_X^2\Omega\|_{L^2(m)}^2+\bigl(\ln{t}/{t_0}\bigr)^3\|\p_X\p_Y\Omega\|_{L^2(m)}^2\Bigr)\\
\leq &C_{m,\sigma} \nu^{-1} \tt^{-\f12} \bigl(\ln{t}/{t_0}\bigr)^{\f\sigma2-\f{23}4} E(t)^\f12 D(t).
\end{align*}
Along the same line, we get, by using
 \eqref{eq3.6}, that
\begin{align*}
D_3  \leq & \nu^{-1} \tt^{-2}\Bigl(\|\p_X^2\p_Y \Delta_t^{-1}\Omega\|_{L^\oo}\|\p_X\Omega\|_{L^2(m)}+\|\p_X^3\Delta_t^{-1}\Omega\|_{L^\oo}\|\p_Y\Omega\|_{L^2(m)}\Bigr)\|\p_X^2\Omega\|_{L^2(m)}\\
\leq &C_\sigma\nu^{-1} \tt^{\sigma-1} \|\p_X^2\Omega\|_{L^2(m)}\Bigl(  \|\p_X\p_Y\Omega\|_{L^2(m)}^{\f12+\sigma}\|\p_X^2\p_Y\Omega\|_{L^2(m)}^{\f12-\sigma}\|\p_X\Omega\|_{L^2(m)}\\
& + \|\p_X^2\Omega\|_{L^2(m)}^{\f12+\sigma}\|\p_X^3\Omega\|_{L^2(m)}^{\f12-\sigma}\|\p_Y\Omega\|_{L^2(m)}\Bigr)\\
\leq &C_\sigma \nu^{-1} \tt^{-\f12}\bigl(\ln{t}/{t_0}\bigr)^{\f\sigma2-\f{23}4} \Bigl(\bigl(\ln{t}/{t_0}\bigr)^3\|\p_X^2\Omega\|_{L^2(m)}\Bigr) \Bigl(  \bigl((\ln{t}/{t_0})^\f32\|\p_X\p_Y\Omega\|_{L^2(m)}\bigr)^{\f12+\sigma}\\
&\times\bigl((\ln{t}/{t_0})^2\tt^{-1}\|\p_X^2\p_Y\Omega\|_{L^2(m)}\bigr)^{\f12-\sigma}\bigl((\ln{t}/{t_0})\|\p_X\Omega\|_{L^2(m)}\bigr)\\
&+\bigl((\ln{t}/{t_0})^\f52 \|\p_X^2\Omega\|_{L^2(m)}\bigr)^{\f12+\sigma}\bigl((\ln{t}/{t_0})^3\tt^{-1}\|\p_X^3\Omega\|_{L^2(m)}\bigr)^{\f12-\sigma}\|\p_Y\Omega\|_{L^2(m)}\Bigr)\\
\leq &C_\sigma \nu^{-1} \tt^{-\f12} \bigl(\ln{t}/{t_0}\bigr)^{\f\sigma2-\f{23}4} \Bigl( \bigl(\ln{t}/{t_0}\bigr)^6\|\p_X^2\Omega\|_{L^2(m)}^2 \Bigr)^\f12  \Bigl( \bigl(\ln{t}/{t_0}\bigr)^3\|\p_X\p_Y\Omega\|_{L^2(m)}^2\\
&+\bigl(\ln{t}/{t_0}\bigr)^4\tt^{-2}\|\p_X^2\p_Y\Omega\|_{L^2(m)}^2+\bigl(\ln{t}/{t_0}\bigr)^2 \|\p_X\Omega\|_{L^2(m)}^2+\bigl(\ln{t}/{t_0}\bigr)^5 \|\p_X^2\Omega\|_{L^2(m)}^2\\
&+\bigl(\ln{t}/{t_0}\bigr)^6\tt^{-2}\|\p_X^3\Omega\|_{L^2(m)}^2+ \|\p_Y\Omega\|_{L^2(m)}^2\Bigr)\\
\leq &C_{m,\sigma} \nu^{-1} \tt^{-\f12} \bigl(\ln{t}/{t_0}\bigr)^{\f\sigma2-\f{23}4} E(t)^\f12 D(t).
\end{align*}
By summarizing
the above estimates, we arrive at
\begin{equation}\label{eq3.49}
\begin{aligned}
\int \p_X^2 \cN_t\Omega \p_X^2\Omega b^{2m} \leq &C_m\nu^{-1}{\|\Omega(1)\|_{L^2(m)}}\|\p_X^2\Omega\|_{L^2(m)}^2 \\
&+C_{m,\sigma} \nu^{-1} \tt^{-\f12} \bigl(\ln{t}/{t_0}\bigr)^{\f\sigma2-\f{23}4} E(t)^\f12 D(t).
\end{aligned}
\end{equation}

By inserting the estimates \eqref{eq3.48} and \eqref{eq3.49} into \eqref{eq3.47}, we  conclude the proof of \eqref{eq:lem8}.
\end{proof}

Now we are in a position to complete the estimate for the energy functionals defined by \eqref{def:E} and \eqref{def:D}.

\begin{prop}\label{S3prop2}
{\sl Let $m>1$ and $0<\sigma<\f12$. Let $(c_1,c_2,c_3,c_4,c_5,c_6,c_7)$ satisfy \eqref{assumptions on c1-7}.
Then for $t_0\leq t\leq 2t_0,$ the energy functionals: $E(t)$ and $D(t),$ defined by \eqref{def:E} and \eqref{def:D} satisfy
\begin{equation}\label{eq3.51}
\begin{aligned}
t\f{d}{dt}E(t)+\f12D(t)\leq C_m &\bigl(1+\nu^{-1}{\|\Omega(1)\|_{L^2(m)}}\bigr)\bigl( E(t)+\bigl(\ln{t}/{t_0}\bigr)^\f12D(t)\bigr)\\
&+C_{m,\sigma} \nu^{-1} \tt^{-\f12} \bigl(\ln{t}/{t_0}\bigr)^{\f\sigma2+\f{1}4} E(t)^\f12 D(t).
\end{aligned}
\end{equation}
}\end{prop}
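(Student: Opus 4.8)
The plan is to derive \eqref{eq3.51} by forming the exact linear combination of Lemmas \ref{S3lem3}--\ref{S3lem10} prescribed by the definition \eqref{def:E} of $E(t)$: namely, multiply \eqref{eq:lem1}, \eqref{eq:lem2}, \eqref{eq:lem3}, \eqref{eq:lem4}, \eqref{eq:lem5}, \eqref{eq:lem6}, \eqref{eq:lem7}, \eqref{eq:lem8} by $1,c_1,c_2,c_3,c_4,c_5,c_6,c_7$ respectively and add them. Writing $L=\ln(t/t_0)$ for brevity, the left-hand side collapses to $t\f{d}{dt}E(t)+\mathcal D(t)$, where $\mathcal D(t)$ is the sum of the produced dissipations. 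By inspection each dissipation term appears exactly once and reproduces the corresponding term of $D(t)$ in \eqref{def:D} with coefficient at least $1$; in particular $\mathcal D(t)\geq D(t)$, with extra room in the $\|\p_Y\Omega\|_{L^2(m)}^2$ and $\tt^{-2}\|\p_X\Omega\|_{L^2(m)}^2$ pieces coming from \eqref{eq:lem1}.

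Next I would sort the right-hand errors into three families. The genuinely good contributions are: (i) the $C_m\bigl(1+\nu^{-1}\|\Omega(1)\|_{L^2(m)}\bigr)E(t)$ terms from \eqref{eq:lem1}, \eqref{eq:lem4}, \eqref{eq:lem8}; (ii) the $C_m\bigl(1+\nu^{-1}\|\Omega(1)\|_{L^2(m)}\bigr)L^{1/2}D(t)$ terms from \eqref{eq:lem2}, \eqref{eq:lem3}, \eqref{eq:lem5}, \eqref{eq:lem6}, \eqref{eq:lem7}; and (iii) the nonlinear $C_{m,\sigma}\nu^{-1}\tt^{-1/2}L^{\sigma/2+1/4}E(t)^{1/2}D(t)$ terms from \eqref{eq:lem4}, \eqref{eq:lem5}, \eqref{eq:lem7}, \eqref{eq:lem8}. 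Multiplied by their (bounded) coefficients $c_k$, these three families reproduce precisely the three quantities on the right of \eqref{eq3.51}.

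The remaining errors are the linear cross terms, which I would absorb into $\tfrac12\mathcal D(t)$. Since $t_0\leq t\leq 2t_0$ gives $L\in[0,\ln 2]$, lower powers of $L$ dominate, and the key structural observation is that every cross term of the form $L^{(j+k)/2}\|\cdot\|_{L^2(m)}\|\cdot\|_{L^2(m)}$ is the geometric mean of a dissipation term carrying $L^{j}$ and one carrying $L^{k}$, so Young's inequality splits it onto available dissipation with integer $L$-powers. The constant hierarchy \eqref{assumptions on c1-7} is what closes the estimate: the lower-in-$L$ diagonal terms such as $3c_3 L^2\|\p_X\Omega\|_{L^2(m)}^2$ and $6c_7 L^5\|\p_X^2\Omega\|_{L^2(m)}^2$ are absorbed one level down using $c_3\ll c_2$ and $c_7\ll c_6$; while the genuinely off-diagonal products $\|\p_X\Omega\|_{L^2(m)}\|\p_Y\Omega\|_{L^2(m)}$, $\|\p_X^2\Omega\|_{L^2(m)}\|\p_X\p_Y\Omega\|_{L^2(m)}$, and $\|\p_X^3\Omega\|_{L^2(m)}\|\p_X^2\p_Y\Omega\|_{L^2(m)}$ require exactly $c_1^2\ll c_2$, $c_2^2\ll_m c_1c_3$ (invoking also $c_4=c_2$ for the terms out of \eqref{eq:lem5}), $c_5^2\ll c_3c_6$, and $c_6^2\ll_m c_5c_7$. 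Taking each Young parameter to be a fixed small fraction and using these four relations, the total fraction of any single dissipation term consumed is at most $\tfrac12$, so $\tfrac12\mathcal D(t)\geq\tfrac12 D(t)$ survives on the left.

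I expect the main obstacle to be this last bookkeeping step: matching each of the many cross terms to the correct pair of dissipation terms, checking that the $L$-powers are compatible, and confirming that the four ``$\ll$'' conditions in \eqref{assumptions on c1-7} are simultaneously sufficient. The cleanest way to make this rigorous is to verify all relations at once on the explicit choice $(c_1,\dots,c_7)=(A_m^{-3},A_m^{-5},A_m^{-6},A_m^{-5},A_m^{-9},A_m^{-11},A_m^{-12})$ with $A_m$ chosen large depending on $m$, after which every absorption reduces to an elementary inequality. No coercivity of $E(t)$ is needed at this stage, since \eqref{eq3.51} is only the differential inequality for the functionals as defined.
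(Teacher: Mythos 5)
Your proposal is correct and follows essentially the same route as the paper's proof: the same weighted sum of Lemmas \ref{S3lem3}--\ref{S3lem10} with coefficients $1,c_1,\dots,c_7$, the same sorting of the right-hand errors into the $E(t)$, $\bigl(\ln t/t_0\bigr)^{1/2}D(t)$ and nonlinear $E(t)^{1/2}D(t)$ families, and the same Young-inequality absorption of the cross terms under the hierarchy \eqref{assumptions on c1-7} (with exactly the pairings $c_1^2\ll c_2$, $c_2^2\ll_m c_1c_3$, $c_5^2\ll c_3c_6$, $c_6^2\ll_m c_5c_7$ that the paper invokes). The only cosmetic difference is bookkeeping order: the paper first absorbs the diagonal lower-order terms to reach an intermediate inequality with $\f34 D(t)$ on the left and then treats the off-diagonal products, whereas you absorb everything at once into $\f12\mathcal{D}(t)$.
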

\begin{proof} In view of \eqref{def:E} and \eqref{def:D}, we get,
by summarizing the estimates obtained in the previous lemmas in such a way that $\eqref{eq:lem1}+c_1\eqref{eq:lem2}+c_2\eqref{eq:lem3}+c_3\eqref{eq:lem4}+c_4\eqref{eq:lem5}+c_5\eqref{eq:lem6}+c_6\eqref{eq:lem7}+c_7\eqref{eq:lem8}$, that
\beq\label{S3eq6}
\begin{split}
&t\f{d}{dt}E(t)+\f34D(t)
\leq C_m \bigl(1+\nu^{-1}{\|\Omega(1)\|_{L^2(m)}}\bigr)\bigl( E(t)+\bigl(\ln{t}/{t_0}\bigr)^\f12D(t)\bigr)\\
&+C_\sigma \nu^{-1} \tt^{-\f12} \bigl(\ln{t}/{t_0}\bigr)^{\f\sigma2+\f{1}4} E(t)^\f12 D(t)+4c_1\bigl(\ln{t}/{t_0}\bigr)\|\p_X\Omega\|_{L^2(m)}\|\p_Y\Omega\|_{L^2(m)}\\
&+2c_2\bigl(\ln{t}/{t_0}\bigr)\bigl(\p_X \Omega \big|\p_Y \Omega \bigr)_{L^2(m)}+C_mc_2\bigl(\ln{t}/{t_0}\bigr)^2\bigl(\tt^{-2}\|\p_X^2\Omega\|_{L^2(m)}\|\p_X\p_Y\Omega\|_{L^2(m)}\\
&+\|\p_Y^2\Omega\|_{L^2(m)}\|\p_X\p_Y\Omega\|_{L^2(m)}\bigr)+8c_4\bigl(\ln{t}/{t_0}\bigr)^2\|\p_X\p_Y\Omega\|_{L^2(m)}\|\p_Y^2\Omega\|_{L^2(m)}
\\
&+4c_5\bigl(\ln{t}/{t_0}\bigr)^4\|\p_X^2\Omega\|_{L^2(m)}\|\p_X\p_Y\Omega\|_{L^2(m)}+5c_6\bigl(\ln{t}/{t_0}\bigr)^4\bigl(\p_X^2 \Omega \big|\p_X\p_Y \Omega \bigr)_{L^2(m)}\\&+C_mc_6\bigl(\ln{t}/{t_0}\bigr)^5\bigl(\tt^{-2}\|\p_X^3\Omega\|_{L^2(m)}\|\p_X^2\p_Y\Omega\|_{L^2(m)}
+\|\p_X^2\p_Y\Omega\|_{L^2(m)}\|\p_X\p_Y^2\Omega\|_{L^2(m)}\bigr),
\end{split}\eeq
where we used the facts:
$$
c_1\leq \f14, \quad c_3\leq \f{c_2}{12}, \quad c_4\leq \f{c_1}{8}, \quad c_5\leq \f{c_3}{16}, \quad c_7\leq \f{c_6}{24}.
$$

Next we handle the terms in \eqref{S3eq6} which does not contain  $E(t)$ or $D(t).$  If $c_1$ and $c_2$ satisfy moreover $256c_1^2\leq c_2$ and $64c_2\leq 1$, then one has
\begin{align*}
&4c_1\bigl(\ln{t}/{t_0}\bigr)\|\p_X\Omega\|_{L^2(m)}\|\p_Y\Omega\|_{L^2(m)}+2c_2\bigl(\ln{t}/{t_0}\bigr)\bigl(\p_X \Omega \big|\p_Y \Omega \bigr)_{L^2(m)}\\
\leq &\f{1}4\|\p_Y\Omega\|_{L^2(m)}^2+\f{c_2}4\bigl(\ln{t}/{t_0}\bigr)^2\|\p_X\Omega\|_{L^2(m)}^2.
\end{align*}
In case $64C_m^2c_2^2\leq c_1c_3$, we have
\begin{align*}
&C_mc_2\bigl(\ln{t}/{t_0}\bigr)^2\bigl(\tt^{-2}\|\p_X^2\Omega\|_{L^2(m)}\|\p_X\p_Y\Omega\|_{L^2(m)}
+\|\p_Y^2\Omega\|_{L^2(m)}\|\p_X\p_Y\Omega\|_{L^2(m)}\bigr)\\
&\leq  \f{c_3}{16}\bigl(\ln{t}/{t_0}\bigr)^3\bigl( \tt^{-2}\|\p_X^2\Omega\|_{L^2(m)}^2+\|\p_X\p_Y\Omega\|_{L^2(m)}^2\bigr)\\
&\quad+\f{c_1}{16}\bigl(\ln{t}/{t_0}\bigr)\bigl(\tt^{-2}\|\p_X\p_Y\Omega\|_{L^2(m)}^2+\|\p_Y^2\Omega\|_{L^2(m)}^2 \bigr).
\end{align*}
Whereas in case $1024c_4^2\leq c_1c_3 $, one has
\begin{align*}
8c_4\bigl(\ln{t}/{t_0}\bigr)^2\|\p_X\p_Y\Omega\|_{L^2(m)}\|\p_Y^2\Omega\|_{L^2(m)} \leq \f{c_3}{8}\bigl(\ln{t}/{t_0}\bigr)^3\|\p_X\p_Y\Omega\|_{L^2(m)}^2+\f{c_1}{8}\bigl(\ln{t}/{t_0}\bigr)\|\p_Y^2\Omega\|_{L^2(m)}^2.
\end{align*}
If $256c_5^2\leq c_3c_6$ and $400c_6\leq c_3$, there holds
\begin{align*}
&4c_5\bigl(\ln{t}/{t_0}\bigr)^4\|\p_X^2\Omega\|_{L^2(m)}\|\p_X\p_Y\Omega\|_{L^2(m)}
+5c_6\bigl(\ln{t}/{t_0}\bigr)^4\bigl(\p_X^2 \Omega \big|\p_X\p_Y \Omega \bigr)_{L^2(m)}\\
&\leq \f{c_6}4 \bigl(\ln{t}/{t_0}\bigr)^5\|\p_X^2\Omega\|_{L^2(m)}^2+\f{c_3}{16}\bigl(\ln{t}/{t_0}\bigr)^3\|\p_X\p_Y\Omega\|_{L^2(m)}^2.
\end{align*}
If $4C_m^2c_6^2\leq c_5c_7$, one has
\begin{align*}
&C_mc_6\bigl(\ln{t}/{t_0}\bigr)^5\bigl(\tt^{-2}\|\p_X^3\Omega\|_{L^2(m)}\|\p_X^2\p_Y\Omega\|_{L^2(m)}
+\|\p_X^2\p_Y\Omega\|_{L^2(m)}\|\p_X\p_Y^2\Omega\|_{L^2(m)}\bigr)\\
&\leq  \f{c_7}{4}\bigl(\ln{t}/{t_0}\bigr)^6\bigl( \tt^{-2}\|\p_X^3\Omega\|_{L^2(m)}^2+\|\p_X^2\p_Y\Omega\|_{L^2(m)}^2\bigr)\\
&\quad+\f{c_5}{4}\bigl(\ln{t}/{t_0}\bigr)^4\bigl(\tt^{-2}\|\p_X^2\p_Y\Omega\|_{L^2(m)}^2+\|\p_X\p_Y^2\Omega\|_{L^2(m)}^2 \bigr).
\end{align*}

Noticing that under the assumption \eqref{assumptions on c1-7}, all the previous assumptions on $
 c_1,\cdots,c_6,$ are satisfied. Therefore, by substituting the above estimates into \eqref{S3eq6}, we conclude the
 proof of \eqref{eq3.51}.
\end{proof}

Let us turn to the proof of \eqref{eq1.14}.

\begin{prop}\label{S3prop3}
{\sl Let $1<m\leq m_0$, $\delta>0$ and $0<\sigma<\f12$. Suppose \eqref{eq:Om} has a solution $\Omega(t)\in L^2(m_0)$. Then, for any $|(a,b)|\leq 2$ and $t\geq T_0$ for some universal time $T_0$, there holds
\begin{equation}\label{eq:prop3.3}
\begin{aligned}
\|\p_X^a\p_Y^b\Omega(t)\|_{L^2(m)}
\leq &C_{m,m_0,\delta,\sigma} \bigl( 1 +{\nu}^{-1}{\|\Omega(1)\|_{L^2(m_0)}}\bigr)^{m+\f{2(3a+b)(m+1)}{1+2\sigma}} \\
&\qquad\times\tt^{\f12 +\f{m(1+2\delta)}{m_0}\left(\f12+\f{3a+b}{1+2\sigma}\right)}\|\Omega(1)\|_{L^2(m_0)}.
\end{aligned}
\end{equation}
}\end{prop}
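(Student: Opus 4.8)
The plan is to deduce the higher-order bounds from the closed energy inequality \eqref{eq3.51} by a short-time bootstrap argument on a logarithmic interval, and then to optimize the length of that interval. Fix a large target time $t$ and choose a starting time $t_0$ together with a small parameter $h>0$ so that $t=t_0e^{h}$; since the log-weights in \eqref{def:E} all vanish at $s=t_0$, the energy functional satisfies $E(t_0)=\|\Omega(t_0)\|_{L^2(m)}^2$, which is already controlled by Corollary \ref{col3.1}. The whole game is to propagate this single scalar bound across $[t_0,t_0e^{h}]$ and then read off each derivative norm from $E(t_0e^{h})$, noting that in \eqref{def:E} the term $\|\p_X^a\p_Y^b\Omega\|_{L^2(m)}^2$ carries the weight $c_{k}\bigl(\ln{t}/{t_0}\bigr)^{3a+b}$.

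I would run a continuity/bootstrap argument with hypothesis $E(s)\le 2E(t_0)$ on $[t_0,\tau]$. Under this hypothesis the two dangerous terms on the right of \eqref{eq3.51} are absorbed into $\tfrac12 D(s)$: the linear term $C_m(1+\nu^{-1}\|\Omega(1)\|_{L^2(m)})\bigl(\ln{s}/{t_0}\bigr)^{\f12}D(s)$ is controlled since $\bigl(\ln{s}/{t_0}\bigr)^{\f12}\le h^{\f12}$ is small once $h\lesssim (1+\nu^{-1}\|\Omega(1)\|_{L^2(m)})^{-2}$, while the nonlinear term $C_{m,\sigma}\nu^{-1}\tt^{-\f12}\bigl(\ln{s}/{t_0}\bigr)^{\f\sigma2+\f14}E(s)^{\f12}D(s)$ is controlled because $E(s)^{\f12}\lesssim\|\Omega(t_0)\|_{L^2(m)}$ and $\tt^{-\f12}$ supply smallness. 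What remains is $t\,\f{d}{dt}E\le C_m(1+\nu^{-1}\|\Omega(1)\|_{L^2(m)})E$, whose integration over $[t_0,s]$ gives $E(s)\le (s/t_0)^{C_m(1+\nu^{-1}\|\Omega(1)\|_{L^2(m)})}E(t_0)\le 2E(t_0)$ provided $(1+\nu^{-1}\|\Omega(1)\|_{L^2(m)})\,h$ is bounded; this strictly improves the hypothesis and closes the bootstrap up to $\tau=t_0e^{h}$.

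The exponents are then fixed by taking $h$ as large as the nonlinear absorption permits. Bounding $\nu^{-1}\|\Omega(1)\|_{L^2(m)}\le\nu^{-1}\|\Omega(1)\|_{L^2(m_0)}=:K_0$ and inserting the Corollary \ref{col3.1} bound for $\|\Omega(t_0)\|_{L^2(m)}$, the requirement $\nu^{-1}\tt^{-\f12}h^{\f{1+2\sigma}4}\|\Omega(t_0)\|_{L^2(m)}\lesssim 1$ forces
\[
h\sim (1+K_0)^{-\f{4(m+1)}{1+2\sigma}}\,\tt^{-\f{4m(\f12+\delta)}{m_0(1+2\sigma)}},
\]
which a fortiori satisfies $h\lesssim(1+K_0)^{-2}$ since $\f{4(m+1)}{1+2\sigma}>2$. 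Because \eqref{assumptions on c1-7} makes $E$ coercive (the two cross terms are dominated by geometric means of the squared terms), each derivative obeys $\|\p_X^a\p_Y^b\Omega(t)\|_{L^2(m)}^2\lesssim h^{-(3a+b)}E(t)\lesssim h^{-(3a+b)}\|\Omega(t_0)\|_{L^2(m)}^2$. Taking square roots, substituting the chosen $h$ and the Corollary \ref{col3.1} bound, and using $\tt\approx\langle t_0\rangle$ (as $h$ is small), the factor $h^{-(3a+b)/2}$ produces exactly the extra power $(1+K_0)^{\f{2(3a+b)(m+1)}{1+2\sigma}}$ and the additional growth $\f{m(1+2\delta)}{m_0}\cdot\f{3a+b}{1+2\sigma}$ in $\tt$, which combined with the $(1+K_0)^m$ and $\tt^{\f12+\f{m}{m_0}(\f12+\delta)}$ from $\|\Omega(t_0)\|_{L^2(m)}$ yields precisely \eqref{eq:prop3.3}; the universal threshold $T_0$ only serves to guarantee $t_0$ is large enough for all the preceding lemmas and that $h\le\ln 2$.

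The main obstacle is the simultaneous balancing in the choice of $h$: it must be small enough to absorb both the logarithmic linear term and the quadratic nonlinear term (the latter being the binding constraint, and the origin of the $(m+1)/(1+2\sigma)$ and $m(1+2\delta)/m_0$ factors), yet large enough that the extraction weights $h^{-(3a+b)/2}$ do not overshoot the claimed powers. A secondary technical point is verifying the coercivity of $E$ under \eqref{assumptions on c1-7}, so that individual derivatives can be isolated from the mixed functional, and checking that $\langle t_0\rangle\approx\tt$ throughout the short interval.
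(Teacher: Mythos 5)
Your proposal is correct and follows essentially the same route as the paper's proof: a continuity/bootstrap argument for the functional $E$ of \eqref{def:E} on a short logarithmic interval $[t_0,t_0e^{h}]$ based on \eqref{eq3.51}, with $E(t_0)=\|\Omega(t_0)\|_{L^2(m)}^2$ controlled by Corollary \ref{col3.1}, the length $h$ dictated by absorbing the linear log-term and the quadratic nonlinear term into $\f12 D$, and the derivative bounds then read off from the weights $(\ln t/t_0)^{3a+b}$ in $E$. Your explicit choice $h\sim \bigl(1+\nu^{-1}\|\Omega(1)\|_{L^2(m_0)}\bigr)^{-\f{4(m+1)}{1+2\sigma}}\langle t_0\rangle^{-\f{2m(1+2\delta)}{m_0(1+2\sigma)}}$ is precisely the balance that reproduces the exponents in \eqref{eq:prop3.3}, matching the paper's selection \eqref{eq3.55} of $t_0$.
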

\begin{proof}
We first fix $t_0$ to be a large enough time and prove that \eqref{eq:prop3.3} holds near $t_0$. Let us define
\begin{equation}\label{def:T^*}
T^*\eqdefa \inf \bigl\{\ T>t_0 |\ \sup_{t\in [t_0,T]}E(t)<2E(t_0)  \ \bigr\}.
\end{equation}
Observing that $T^*>t_0$ is well-defined.
We define $T_1^*$ to be determined by
\begin{equation}\label{def:T_1^*}
C_m \bigl(1+\nu^{-1}{\|\Omega(1)\|_{L^2(m)}}\bigr)\bigl(\ln{T_1^*}/{t_0}\bigr)^\f12+C_{m,\sigma} \nu^{-1} \langle t_0\rangle^{-\f12} \bigl(\ln{T_1^*}/{t_0}\bigr)^{\f\sigma2+\f{1}4} \bigl(2E(t_0)\bigr)^\f12=\f12.
\end{equation}
Then, we get that for all $t_0\leq t\leq \min(T^*,T^*_1)$,
$$
C_m \bigl(1+\nu^{-1}{\|\Omega(1)\|_{L^2(m)}}\bigr)\bigl(\ln{t}/{t_0}\bigr)^\f12D(t)+C_{m,\sigma} \nu^{-1} \langle t\rangle^{-\f12} \bigl(\ln{t}/{t_0}\bigr)^{\f\sigma2+\f{1}4} \bigl(E(t)\bigr)^\f12 D(t)\leq \f12 D(t),
$$
from which and \eqref{eq3.51}, we infer
$$
t\f{d}{dt} E(t)\leq C_m \bigl(1+\nu^{-1}{\|\Omega(1)\|_{L^2(m)}}\bigr)E(t), \qquad\forall \quad t\in [t_0,\min(T^*,T^*_1)].
$$
Applying Gronwall's inequality gives rise to
$$
E(t)\leq E(t_0)e^{C_m \bigl(1+\nu^{-1}{\|\Omega(1)\|_{L^2(m)}}\bigr)\bigl(\ln {t}/{t_0}\bigr)}, \qquad\forall \ t\in [t_0,\min(T^*,T^*_1)].
$$
We denote $T^*_2$ to be determined by
\begin{equation}\label{def:T_2^*}
C_m \bigl(1+\nu^{-1}{\|\Omega(1)\|_{L^2(m)}}\bigr)\bigl(\ln {T_2^*}/{t_0})=\ln \f32.
\end{equation}
As a consequence, we obtain
$$
E(t)\leq \f32 E(t_0), \qquad\forall \quad t\in [t_0,\min(T^*,T^*_1,T^*_2)].
$$
Comparing the above inequality with \eqref{def:T^*},  we get, by using a standard continuous argument, that $T^*>\min(T^*_1,T^*_2)$.

On the other hand, we deduce from  Corollary \ref{col3.1} that,
$$
E(t_0)= \|\Omega(t_0)\|_{L^2(2m)}\leq C_{m,m_0,\delta} \bigl( 1 +\bigl({\nu}^{-1}{\|\Omega(1)\|_{L^2(m_0)}}\bigr)^{2m}\bigr) \tt^{1 +\f{m}{m_0}(1+2\delta)}\|\Omega(1)\|_{L^2(m_0)}^2,
$$
from which we infer
$$
E(t)\leq C_{m,m_0,\delta} \bigl( 1 +\bigl({\nu}^{-1}{\|\Omega(1)\|_{L^2(m_0)}}\bigr)^{2m}\bigr) \langle t_0\rangle^{1 +\f{m}{m_0}(1+2\delta)}\|\Omega(1)\|_{L^2(m_0)}^2, \qquad \forall t\in [t_0,f_0(t_0)],
$$
where
\begin{align*}
f_0(t_0)\eqdefa t_0 \exp \Bigl( \f1{C_{m,m_0,\delta,\sigma}} \langle t_0\rangle^{-\f{m}{m_0}\f{2(1+2\delta)}{1+2\sigma}} \bigl( 1 +\bigl({\nu}^{-1}{\|\Omega(1)\|_{L^2(m_0)}}\bigr)^{2m}\bigr)^{-\f{4(m+1)}{1+2\sigma}} \Bigr),
\end{align*}
so that for any $T\leq f(t_0),$ there holds
\begin{align*}
&C_m \bigl(1+\nu^{-1}{\|\Omega(1)\|_{L^2(m)}}\bigr)\bigl(\ln{T}/{t_0}\bigr)^\f12+C_{m,\sigma} \nu^{-1} \langle t_0\rangle^{-\f12} \bigl(\ln{T}/{t_0}\bigr)^{\f\sigma2+\f{1}4} \bigl(2E(t_0)\bigr)^\f12\leq \f12,\\
& C_m \bigl(1+\nu^{-1}{\|\Omega(1)\|_{L^2(m)}}\bigr)\bigl(\ln {T}/{t_0})\leq \ln \f32,
\end{align*}
which implies $f_0(t_0)\leq \min (T_1^*,T_2^*)$.

Now for any $t$ being large enough, we  choose some $t_0<t$ so that
\begin{equation}\label{eq3.55}
\ln{t}/{t_0}={C_{m,m_0,\delta,\sigma}}^{-1} \langle t_0\rangle^{-\f{m}{m_0}\f{2(1+2\delta)}{1+2\sigma}}  \bigl( 1 +\bigl({\nu}^{-1}{\|\Omega(1)\|_{L^2(m_0)}}\bigr)^{2m}\bigr)^{-\f{4(m+1)}{1+2\sigma}}.
\end{equation}
Due to the fact: $t_0<t\leq f_0(t_0)$,  we deduce that
$$
E(t)\leq C_{m,m_0,\delta} \bigl( 1 +\bigl({\nu}^{-1}{\|\Omega(1)\|_{L^2(m_0)}}\bigr)^{2m}\bigr) \langle t_0\rangle^{1 +\f{m}{m_0}(1+2\delta)}\|\Omega(1)\|_{L^2(m_0)}^2,
$$
from which and \eqref{def:E},  we conclude the proof of \eqref{eq:prop3.3}.
\end{proof}

\section{Long time behaviour}\label{section 4}

The goal of this section is to present the proof of Theorem \ref{Thm3}.
In subsection \ref{subsection 4.1}, we recall some basic properties of the operator $\cL_\oo,$ which is defined by \eqref{def:Fokker-Planck operator},
 on $L^2(G)$ and prove \eqref{eq1.11a} on $L^2(m)$. With an explicit solution formula of the semigroup generated by $\cL_\oo,$  we can derive the optimal decay rate of the semigroup without using spectral theory.
In subsection \ref{subsection 4.2}, by using  \eqref{eq1.11a} and the estimates in Theorem \ref{Thm2}, we present the proof of Theorem \ref{Thm3}.

\subsection{Semigroup properties of $\cL_\oo$}\label{subsection 4.1}

For simplicity, we first recall some basic properties of the operator  $\cL_\oo$ on $L^2(G)$, which can be found in \cite{helffer2013spectral,nier2005hypoelliptic} and the references therein.

When considering this operator in  $L^2(G),$ with  the inner product of which is defined by
$$
\bigl( f \big| g \bigr)_{L^2(G)}=\int_{\R^2} f \bar{g} {G}^{-1},
$$
where $G=\f1{4\pi}e^{-\f{X^2+Y^2}4}$ is the Gaussian function, one find that $\cL_\oo$ satisfies
$$
e^{\f{X^2+Y^2}8}\cL_\oo f=\cL_{FK}(e^{\f{X^2+Y^2}8}f)
$$
where
$
\cL_{FK}\eqdefa 4\p_Y^2-\f{Y^2}{4}+1-\f{\sqrt{3}}2(X\p_Y-Y\p_X)
$ stands for the standard Fokker-Planck operator
on $L^2$ space. We refer to Chapter 15 of \cite{helffer2013spectral} and the references therein
 for the properties of $\cL_{FK}.$

Noticing that
$$
[\p_X-\sqrt{3}\p_Y;\cL_\oo]=\f32(\p_X-\sqrt{3}\p_Y)\andf [\sqrt{3}\p_X-\p_Y;\cL_\oo]=\f12(\p_X-\sqrt{3}\p_Y),
$$
we observe that $\cL_\oo$ has a sequence of eigenfunctions:
$$
\cL_\oo \bigl( (\p_X-\sqrt{3}\p_Y)^a (\sqrt{3}\p_X-\p_Y)^b G\bigr)=-\f{3a+b}2 (\p_X-\sqrt{3}\p_Y)^a (\sqrt{3}\p_X-\p_Y)^b G.
$$
 These eigenvalues are all the spectrum of $\cL_\oo$ in $L^2(G).$ Although these eigenfunctions are not orthogonal to each other (otherwise the operator will be self-adjoint), they are all orthogonal to the kernel $G$. Such orthogonality property enables one to prove that (see, for example, Theorem 15.4 in \cite{helffer2013spectral})
$$
\|e^{\tau\cL_\oo}f-M(f)G\|_{L^2(G)}\leq C_\epsilon e^{-(\f12-\epsilon)\tau}\|f\|_{L^2(G)}.
$$

In this paper, we consider the similar properties of the semigroup $e^{\tau\cL_\oo}$ on $L^2(m)$. Let us denote $u(\tau)\eqdefa e^{\tau\cL_\oo}u_0,$ which satisfies
$$
\p_\tau u=4\p_Y^2 u+2Y\p_Yu+2u+\f{\sqrt{3}}2(X\p_Y-Y\p_X)u,
$$
with initial data $u|_{t=0}=u_0$. By applying the Fourier transform to the above equation, one has
\beq\label{S4eq1}
\p_\tau \hat{u}+\f{\sqrt{3}}2 \eta\p_\xi \hat{u}+\bigl(2\eta-\f{\sqrt{3}}2 \xi\bigr)\p_\eta \hat{u} =-4\eta^2\hat{u}.
\eeq
 We are going to solve the above equation by the characteristic method. To do so, we define
$$
\left\{\begin{array}{l}
\displaystyle \f{d}{d\tau}{\Xi}(\tau,\xi,\eta)=\f{\sqrt{3}}2{\Upsilon}(\tau,\xi,\eta), \\
\displaystyle  \f{d}{d\tau}{\Upsilon}(\tau,\xi,\eta)=-\f{\sqrt{3}}2{\Xi}(\tau,\xi,\eta)+2{\Upsilon}(\tau,\xi,\eta), \\
\displaystyle  {\Xi}(0,\xi,\eta)=\xi,\qquad {\Upsilon}(0,\xi,\eta)=\eta,
\end{array}\right.
$$
which can be solved explicitly as
\beq\label{S4eq2}
\quad \left\{\begin{array}{l}
\displaystyle {\Xi}(\tau,\xi,\eta)=\bigl(\f32\xi-\f{\sqrt{3}}2\eta\bigr)e^{\f\tau2}+\bigl(-\f12\xi+\f{\sqrt{3}}2\eta\bigr)e^{\f32\tau}, \\
\displaystyle  {\Upsilon}(\tau,\xi,\eta)=\bigl(\f{\sqrt{3}}2\xi-\f12\eta\bigr)e^{\f\tau2}+\bigl(-\f{\sqrt{3}}2\xi+\f32\eta\bigr)e^{\f32\tau}.
\end{array}\right.
\eeq
Let $g(\tau,\xi,\eta)\eqdefa \hat{u}(\tau,{\Xi}(\tau,\xi,\eta),{\Upsilon}(\tau,\xi,\eta)).$ We deduce from \eqref{S4eq1} that
$$
\f{d}{d\tau} g=-4{\Upsilon}^2 g.
$$
Observing that
\begin{align*}
-4&\int_0^\tau {\Upsilon}^2(s,\xi,\eta)\,ds\\
=&-\bigl(\sqrt{3}\xi-\eta\bigr)^2(e^\tau-1)-\sqrt{3}(\sqrt{3}\xi-\eta)(\sqrt{3}\eta-\xi)(e^{2\tau}-1)-(\sqrt{3}\eta-\xi)^2(e^{3\tau}-1)\\
=&-(\sqrt{3}{\Xi}-{\Upsilon})^2(1-e^{-\tau})-\sqrt{3}(\sqrt{3}{\Xi}-{\Upsilon})(\sqrt{3}{\Upsilon}-{\Xi})(1-e^{-2\tau})-(\sqrt{3}{\Upsilon}-{\Xi})^2(1-e^{-3\tau})\\
=&-(1-e^{-\tau})^3{\Xi}^2-2\sqrt{3}e^{-\tau}(1-e^{-\tau})^2{\Xi}{\Upsilon}-(1-e^{-\tau})(1+3e^{-2\tau}){\Upsilon}^2,
\end{align*}
we obtain
\begin{align*}
\hat{u}(\tau,{\Xi}(\tau,\xi,\eta),{\Upsilon}(\tau,\xi,\eta)) =&g(\tau,\xi,\eta)\\
=&\hat{u_0}(\xi,\eta)e^{-(1-e^{-\tau})^3{\Xi}^2-2\sqrt{3}e^{-\tau}(1-e^{-\tau})^2{\Xi}{\Upsilon}-(1-e^{-\tau})(1+3e^{-2\tau}){\Upsilon}^2},
\end{align*}
from which and \eqref{S4eq2}, we infer
\begin{equation}\label{Fourier formula}
\hat{u}(t,\xi,\eta)=e^{\Phi(\tau,\xi,\eta)} \hat{u}_0(\xi_0(\tau,\xi,\eta),\eta_0(\tau,\xi,\eta)),
\end{equation}
where the backward characteristic flow $\xi_0$ and $\eta_0$ are determined by
\begin{equation}\label{xi0,eta0}
\quad \left\{\begin{array}{l}
\displaystyle \xi_0=\bigl(\f32\xi-\f{\sqrt{3}}2\eta\bigr)e^{-\f\tau2}+\bigl(-\f12\xi+\f{\sqrt{3}}2\eta\bigr)e^{-\f32\tau}, \\
\displaystyle \eta_0=\bigl(\f{\sqrt{3}}2\xi-\f12\eta\bigr)e^{-\f\tau2}+\bigl(-\f{\sqrt{3}}2\xi+\f32\eta\bigr)e^{-\f32\tau},
\end{array}\right.
\end{equation}
and  $\Phi(\tau,\xi,\eta)$  by
\begin{equation}
\Phi(t,\xi,\eta)=-(1-e^{-\tau})^3{\xi}^2-2\sqrt{3}e^{-\tau}(1-e^{-\tau})^2{\xi}{\eta}-(1-e^{-\tau})(1+3e^{-2\tau}){\eta}^2.
\end{equation}

With \eqref{Fourier formula}, we can prove the following semigroup estimate:

\begin{prop}\label{S4prop1}
{\sl For $0\leq m\leq 2$, we have
\begin{equation}\label{eq4.4}
\|e^{\tau\cL_\oo}u_0\|_{L^2(m)}\leq C e^{\left(1-\f{m}2\right)\tau}\|u_0\|_{L^2(m)}.
\end{equation}
For $m\geq 2$, $e^{\tau\cL_\oo}$ is a bounded operator on $L^2(m)$ and satisfies
\begin{equation}\label{eq4.5}
\|e^{\tau\cL_\oo}u_0-M(u_0)G\|_{L^2(m)}\leq C e^{-\f{m-2}2\tau}\|u_0-M(u_0)G\|_{L^2(m)},
\end{equation}
for $2\leq m\leq 3$, and when $m\geq 3$,
\begin{equation}\label{eq4.6}
\|e^{\tau\cL_\oo}u_0-M(u_0)G\|_{L^2(m)}\leq C_m e^{-\f\tau2}\|u_0-M(u_0)G\|_{L^2(m)},
\end{equation}
where $M(u_0)=\int u_0 $ denotes the mass of $u_0$.
}\end{prop}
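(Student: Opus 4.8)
The plan is to carry out everything on the Fourier side, where \eqref{Fourier formula} makes the semigroup explicit. First I would reduce the weighted norm to a frequency--Sobolev norm: by Plancherel, multiplication by $\langle X,Y\rangle^{m}$ corresponds to the operator $(1-\Delta_{\xi,\eta})^{m/2}$, so that $\|f\|_{L^2(m)}$ is comparable to $\|(1-\Delta_{\xi,\eta})^{m/2}\hat f\|_{L^2}$, and for integer $m$ one has $\|f\|_{L^2(m)}^2\simeq\sum_{|\alpha|\le m}\|\p^\alpha\hat f\|_{L^2}^2$. I would then extract from \eqref{xi0,eta0}, \eqref{S4eq2} and the formula for $\Phi$ the two quantitative ingredients that drive the estimates: the backward map $\chi_\tau\colon(\xi,\eta)\mapsto(\xi_0,\eta_0)$ is linear with eigenvalues $e^{-\tau/2}$ and $e^{-3\tau/2}$ (slow and fast contraction) and Jacobian $e^{-2\tau}$; and, for $\tau$ bounded below, $\Phi$ is a negative definite quadratic form with $\Phi\to-(\xi^2+\eta^2)$ and all corrections $O(e^{-\tau})$ in any Schwartz seminorm, so that $e^{\Phi}$ is a Gaussian bump which, after the substitution $\zeta=\chi_\tau(\xi,\eta)$, localizes the integration to an anisotropic box of sidelengths $e^{-\tau/2}\times e^{-3\tau/2}$ about the slow eigendirection. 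Since the semigroup is strongly continuous, bounded times are harmless and I may assume $\tau\ge\tau_0$ for a fixed large $\tau_0$.

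For the growth bound \eqref{eq4.4} I would estimate $\p^\alpha(e^{\Phi}\,\hat u_0\circ\chi_\tau)$, $|\alpha|\le m$, and integrate. A derivative falling on $\hat u_0\circ\chi_\tau$ produces an entry of $D\chi_\tau$ of size $\lesssim e^{-\tau/2}$, so these terms, after the change of variables contributing the Jacobian $e^{2\tau}$ to the squared norm, are bounded by $e^{(1-m/2)\tau}\|u_0\|_{L^2(m)}$; the delicate terms are those in which derivatives fall on $e^{\Phi}$, producing factors $\p\Phi=O(|\xi|+|\eta|)$ that do not contract. The point is that these factors are reabsorbed by the anisotropic Gaussian: after rescaling the localization box to unit size, $\p\Phi\,e^{\Phi}$ becomes a fixed Schwartz function, and the gain must be read off by genuinely using the $H^m$ regularity of $\hat u_0$ (not merely its $L^2$ norm), since a crude bound only yields the weaker rate $e^{\tau}$. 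This balancing of the Jacobian $e^{2\tau}$ against the two contraction scales and the scale-dependent Gaussian localization, so as to produce exactly $e^{(1-m/2)\tau}$, is the step I expect to be the main obstacle; non-integer $m$ is then obtained by interpolation between consecutive integers (or directly with the fractional weight $(1-\Delta_{\xi,\eta})^{m/2}$).

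For the decay estimates I would use $\hat u_0(0,0)=M(u_0)$ and $\widehat{M(u_0)G}=M(u_0)e^{-(\xi^2+\eta^2)}$ to split
\[
\hat u(\tau)-M(u_0)\hat G=e^{\Phi}\bigl(\hat u_0\circ\chi_\tau-\hat u_0(0,0)\bigr)+M(u_0)\bigl(e^{\Phi}-\hat G\bigr),
\]
the second term being $O(e^{-\tau})$ in every frequency--Sobolev norm and hence never the bottleneck. For \eqref{eq4.5} with $2\le m\le3$ I set $v_0=u_0-M(u_0)G$, so that $\hat v_0(0,0)=0$ (meaningful because $m>1$ gives $L^2(m)\hookrightarrow L^1$), and rerun the growth computation: the vanishing at the origin removes precisely the non-decaying $G$-component that blocks \eqref{eq4.4} beyond $m=2$, upgrading the bound to $e^{(1-m/2)\tau}=e^{-(m-2)\tau/2}$. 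For \eqref{eq4.6} with $m\ge3$ a single vanishing moment saturates at $e^{-\tau/2}$, so instead I would Taylor expand $\hat u_0$ to first order at the origin: the constant part rebuilds $M(u_0)\hat G$ up to the $O(e^{-\tau})$ term above, the linear part $\nabla\hat u_0(0,0)\cdot(\xi_0,\eta_0)$ carries the eigenmode of eigenvalue $-\tfrac12$ as its slowest component and decays like $e^{-\tau/2}$, and the quadratic remainder, controlled since $\hat u_0\in H^m$ with $m\ge2$, decays like $e^{-\tau}$. Because $\nabla\hat G(0,0)=0$, the coefficient $\nabla\hat u_0(0,0)=\nabla\widehat{(u_0-M(u_0)G)}(0,0)$ is controlled by $\|u_0-M(u_0)G\|_{L^2(m)}$ through the embedding $H^m\hookrightarrow C^1$ (valid for $m>2$, so $m\ge3$ is comfortable), which yields the right-hand sides as stated.

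Beyond the sharp-rate bookkeeping of the second paragraph, the remaining points are routine: checking uniformity of all constants in $\tau$, the interpolation for fractional $m$, and the observation that the commutator relations $[\p_X-\sqrt3\p_Y;\cL_\oo]=\tfrac32(\p_X-\sqrt3\p_Y)$ and $[\sqrt3\p_X-\p_Y;\cL_\oo]=\tfrac12(\p_X-\sqrt3\p_Y)$ recorded above are exactly the physical-space shadow of the two contraction rates $e^{-3\tau/2}$ and $e^{-\tau/2}$. These can be used as a cross-check that the eigenmode of eigenvalue $-\tfrac12$ is the slowest surviving mode, and hence that it fixes the cutoff rate $e^{-\tau/2}$ for all $m\ge3$.
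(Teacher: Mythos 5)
Your outline follows the paper's strategy in its broad lines --- work on the Fourier side with the explicit representation \eqref{Fourier formula}, exploit the two contraction rates $e^{-\tau/2}$, $e^{-3\tau/2}$ of the backward characteristics and the Jacobian $e^{-2\tau}$, restrict to large $\tau$, and interpolate to reach fractional $m$ --- but the two steps you flag or assert as the core of the argument do not go through as written.

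First, for \eqref{eq4.4} (and \eqref{eq4.5}) you propose to prove the sharp rate $e^{(1-\f{m}2)\tau}$ directly at each integer $m$ and then interpolate between consecutive integers, and you concede that the terms in which derivatives fall on $e^{\Phi}$ are an unresolved ``main obstacle.'' This is a genuine gap, and also a self-inflicted one: the paper never proves sharp intermediate rates term by term. It establishes only two endpoints --- $m=0$, where the pure change of variables gives $\|e^{\tau\cL_\oo}u_0\|_{L^2}\leq e^{\tau}\|u_0\|_{L^2}$, and $m\geq 2$, where a Moser-type product estimate splits $\|\hat u(\tau)\|_{H^m}$ into $\|e^{\Phi}\|_{L^\oo}\|\hat u_0(\xi_0,\eta_0)\|_{\dot H^m}$, controlled by $e^{(1-\f{m}2)\tau}\leq 1$, plus $\|\hat u_0(\xi_0,\eta_0)\|_{L^\oo}\|e^{\Phi}\|_{H^m}$, handled by the Sobolev embedding $\|\hat u_0\|_{L^\oo}\lesssim \|\hat u_0\|_{H^m}$ (valid since $m\geq 2>1$) --- and then interpolates the operator norm between $L^2$ and $L^2(2)$ to get \eqref{eq4.4}, and between $L^2(2)$ and $L^2(3)$ for the operator $u_0\mapsto e^{\tau\cL_\oo}u_0-M(u_0)G$ (using $e^{\tau\cL_\oo}G=G$ and conservation of mass) to get \eqref{eq4.5}. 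In particular the problematic case $m=1$, where no $L^\oo$ embedding is available, is never confronted, whereas your plan of interpolating between consecutive integers requires it.

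Second, for \eqref{eq4.6} your second-order Taylor expansion of $\hat u_0$ at the origin needs a pointwise bound on the quadratic remainder, i.e.\ $\nabla^2\hat u_0\in L^\oo$; the claim that $\hat u_0\in H^m$ with $m\geq 2$ suffices is false in two dimensions, since $H^m(\R^2)\hookrightarrow C^2$ requires $m>3$, so the argument breaks exactly at the endpoint $m=3$, on which both the statement and the interpolation producing fractional $m\geq 3$ rest. The paper avoids any second-order expansion: in the Leibniz expansion of $\p^\alpha\bigl(e^{\Phi}\,\hat u_0(\xi_0,\eta_0)\bigr)$, the vanishing $\hat u_0(0,0)=0$ is used only in the terms where \emph{all} derivatives land on $e^{\Phi}$, via the first-order mean value bound $|\hat u_0(\xi_0,\eta_0)|\lesssim e^{-\f\tau2}(|\xi|+|\eta|)\|\nabla\hat u_0\|_{L^\oo}$, which requires only $\|\nabla\hat u_0\|_{L^\oo}\lesssim\|\hat u_0\|_{H^3}$; every term with at least one derivative on $\hat u_0(\xi_0,\eta_0)$ already gains $e^{-\f\tau2}$ per derivative from the chain rule, the intermediate orders being closed by $L^4$ H\"older together with the $L^4$ change of variables, and the top order by the $\dot H^k$ change-of-variables bound $e^{(1-\f{k}2)\tau}\leq e^{-\f\tau2}$ for $k\geq3$. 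Reorganizing your decay argument along these lines --- first-order information only, with a case analysis according to how many derivatives hit $\hat u_0(\xi_0,\eta_0)$ --- is what closes this gap.
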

\begin{proof}
Noticing that $L^2(m)$ norm in the physical spaces is equivalent to the Sobolev  $H^m$ norm in  the Fourier variables $(\xi,\eta)$,
 below we shall perform $H^m$ estimate of \eqref{Fourier formula}.

First, we begin with the $L^2$ estimate. It's easy to observe that $\Phi\leq 0$ and
$$
\det\Bigl(\f{\p(\xi_0,\eta_0)}{\p(\xi,\eta)}\Bigr)=e^{-2\tau}.
$$
So that we get, by using changes of variables, that
\begin{equation}\label{eq4.7}
\|e^{\tau\cL_\oo}u_0\|_{L^2}=\|\hat{u}(\tau)\|_{L^2}\leq\|\hat{u}_0(\xi_0,\eta_0)\|_{L^2}=e^\tau\|\hat{u}_0\|_{L^2}=e^\tau\|{u}_0\|_{L^2}.
\end{equation}
One can also prove the above inequality by a trivial energy estimate. The reason why we present such a proof here is to point out that on the Fourier side, the $e^\tau$ growth comes from the change of variables in $L^2$ norms.

Next, we consider the $L^2(m)$ estimate of the semigroup $e^{\tau\cL_\oo}$ with $m\geq 2$, and prove that such weight (regularity in Fourier side) is enough to overcome the $e^{\tau}$ growth. To do so, we only need to consider $\tau\geq 100$, because when $\tau\leq 100$, one has the trivial estimate:
$$
\|e^{\tau\cL_\oo}u_0\|_{L^2(m)}=\|\hat{u}(\tau)\|_{H^m}\leq C\|e^{-\Phi}\|_{H^m}\|\hat{u}_0(\xi_0,\eta_0)\|_{H^m}\leq Ce^{C\tau}\|\hat{u}_0\|_{H^m}\leq C\|u_0\|_{L^2(m)}.
$$
Yet for such large enough $\tau\geq 100$, we have $$
\Phi\leq -\f{\xi^2+\eta^2}4,$$
and obviously
$$
\|e^{\Phi}\|_{H^m}\leq C_m.
$$
Then we get, by using Moser type inequality, that for $\tau\geq 100$,
\beq \label{S4eq3}
\begin{split}
\|\hat{u}(\tau)\|_{H^m}&\lesssim \|e^{-\Phi}\|_{L^\oo}\|\hat{u}_0(\xi_0,\eta_0)\|_{\dot{H}^m}+\|\hat{u}_0(\xi_0,\eta_0)\|_{L^\oo}\|e^{-\Phi}\|_{H^m} \\
&\lesssim_m \|\hat{u}_0(\xi_0,\eta_0)\|_{\dot{H}^m}+\|\hat{u}_0(\xi_0,\eta_0)\|_{L^\oo}.
\end{split}\eeq
While for any integer $k\in\N$, by using a change of variables, we find
$$
\|\hat{u}_0(\xi_0,\eta_0)\|_{\dot{H}^k}\leq Ce^{\left(1-\f{k}2\right)\tau}\|\hat{u}_0\|_{\dot{H}^k},
$$
which, together with the interpolation argument, implies that
$$
\|\hat{u}_0(\xi_0,\eta_0)\|_{\dot{H}^m} \leq Ce^{\left(1-\f{m}2\right)\tau}\|\hat{u}_0\|_{\dot{H}^m} \leq Ce^{\left(1-\f{m}2\right)\tau}\|{u}_0\|_{L^2(m)}.
$$
Whereas it follows from the Sobolev embedding theorem that
$$
\|\hat{u}_0(\xi_0,\eta_0)\|_{L^\oo} =\|\hat{u}_0\|_{L^\oo}\leq C \|\hat{u}_0\|_{H^m}\leq C\|u_0\|_{L^2(m)}.
$$
By substituting the above estimates into \eqref{S4eq3}, we obtain
\begin{equation}\label{eq4.8}
\|e^{\tau\cL_\oo}u_0\|_{L^2(m)}\leq C_m\|u_0\|_{L^2(m)}.
\end{equation}

Next, let us investigate the long-time behavior of the semigroup $e^{\tau\cL_\oo}.$
Once again, we only consider the large time case for $\tau\geq 100$. Since the Gaussian function $G$ is the only steady solution of the linear equation, we may assume (by subtracting the mass multiple of $G$) that the initial data $u_0$ satisfies
$$M(u_0)=\int u_0=0,$$
which also implies $\hat{u}_0(0,0)=0$. Inspired by the above proof of \eqref{eq4.8}, we only need to improve the estimate of the low frequencies part of $\hat{u}_0(\xi_0,\eta_0)$ by using the fact that  $(\xi_0,\eta_0)\rightarrow (0,0)$ as $\tau\rightarrow+\oo$. Precisely,
  for some integer $k\geq 3$, we write
\begin{align*}
\|u\|_{L^2(k)}=&\|\hat{u}\|_{H^k}\leq \sum_{|\alpha|\leq k}\|\p^\alpha u\|_{L^2} \leq C_k \sum_{|\alpha|+|\beta|\leq k}\|\p^\alpha e^{-\Phi} \p^\beta \hat{u}_0(\xi_0,\eta_0)\|_{L^2} \\
\leq & C_k\Bigl( \sum_{|\alpha|\leq k}\|\p^\alpha e^{-\Phi} \hat{u}_0(\xi_0,\eta_0)\|_{L^2}+\sum_{|\alpha|\leq k-1, |\beta|=1}\|\p^\alpha e^{-\Phi} \p^\beta \hat{u}_0(\xi_0,\eta_0)\|_{L^2}\\
&+\sum_{2\leq|\beta|\leq k-1,|\alpha|\leq k-|\beta|}\|\p^\alpha e^{-\Phi} \p^\beta \hat{u}_0(\xi_0,\eta_0)\|_{L^2}+\sum_{|\beta|= k}\| e^{-\Phi} \p^\beta \hat{u}_0(\xi_0,\eta_0)\|_{L^2}\Bigr)\\
\eqdefa & L_1+L_2+L_3+L_4.
\end{align*}
Due to $\hat{u}_0(0,0)=0,$  we write
\begin{align*}
|\hat{u}_0(\xi_0,\eta_0)|&=|\int_0^1 \xi_0 (\p_\xi\hat{u}_0)(s\xi_0,s\eta_0)+\eta_0(\p_\eta\hat{u}_0)(s\xi_0,s\eta_0) ds|\\
&\leq C(|\xi_0|+|\eta_0|)\|\nabla \hat{u}_0\|_{L^\oo} \leq C e^{-\f\tau2}(|\xi|+|\eta|)\|\nabla \hat{u}_0\|_{L^\oo},
\end{align*}
then we get, by using the Sobolev embedding inequality, that
$$
L_1\leq C_k e^{-\f\tau2} \|\nabla \hat{u}_0\|_{H^2} \sum_{|\alpha|\leq k}\|\p^\alpha e^{-\Phi} (|\xi|+|\eta|)\|_{L^2} \leq C_k e^{-\f\tau2} \| \hat{u}_0\|_{H^3}.
$$
For $L_2$, we observe that the derivative on $\hat{u}_0(\xi_0,\eta_0)$ leads to such decay as
$$
\|\nabla \hat{u}_0(\xi_0,\eta_0)\|_{L^\oo}\leq \|(\nabla\xi_0,\nabla\eta_0)\|_{L^\oo}\|\nabla \hat{u}_0\|_{L^\oo}\leq e^{-\f\tau2}\|\nabla \hat{u}_0\|_{L^\oo},
$$
which implies
$$
L_2\leq C_k e^{-\f\tau2} \|\nabla \hat{u}_0\|_{H^2} \sum_{|\alpha|\leq k-1}\|\p^\alpha e^{-\Phi} \|_{L^2} \leq C_k e^{-\f\tau2} \| \hat{u}_0\|_{H^3}.
$$
Along the same line, we deduce that
\begin{align*}
L_3\leq & C_k\sum_{|\alpha|\leq k}\|\p^\alpha e^{-\Phi} \|_{L^4} \sum_{2\leq|\beta|\leq k-1} e^{-\f{|\beta|}2\tau}\| (\p^\beta \hat{u}_0)(\xi_0,\eta_0)\|_{L^4}\\
\leq & C_k\sum_{2\leq|\beta|\leq k-1} e^{\f{1-|\beta|}2\tau}\| \p^\beta \hat{u}_0\|_{L^4}\\
\leq & C_k e^{-\f\tau2}\|  \hat{u}_0\|_{H^k}.
\end{align*}
Whereas due to $k\geq 3,$ we infer
$$
L_4\leq C_k\|e^{-\Phi}\|_{L^\oo}\|\hat{u}_0(\xi_0,\eta_)\|_{\dot{H}^k}\leq C_k e^{(1-\f{k}2)\tau}\|\hat{u}_0\|_{\dot{H}^k} \leq C_k e^{-\f\tau2}\|\hat{u}_0\|_{H^k}.
$$
By summarizing the above estimates, we achieve
$$
\|u(\tau)\|_{L^2(k)}\leq C_k e^{-\f\tau2}\|\hat{u}_0\|_{H^k}\leq C_k e^{-\f\tau2}\|u_0\|_{L^2(k)}.
$$
By interpolation between $L^2(k)$ with integers $k\geq 3$, we have proven that for all $m\geq 3$
\begin{equation}\label{eq4.9}
\|e^{\tau\cL_\oo}u_0\|_{L^2(m)}\leq C_m e^{-\f\tau2} \|u_0\|_{L^2(m)},
\end{equation}
which finishes the proof of \eqref{eq4.6}.

 Finally \eqref{eq4.4} follows from  interpolation between \eqref{eq4.7} and \eqref{eq4.8},  and \eqref{eq4.5} from
 interpolation between \eqref{eq4.8} and \eqref{eq4.9}. This finishes the proof of Proposition \ref{S4prop1}.
\end{proof}

\subsection{Proof of Theorem \ref{Thm3}}\label{subsection 4.2}
In this subsection, we present the proof of  Theorem \ref{Thm3}.

\begin{proof}[Proof of Theorem \ref{Thm3}]
Let us denote
$$
\tilde{\Omega}\eqdef \Omega-\alpha G,
$$
where $\alpha=M(\Omega)$ is the mass of $\Omega$. It is easy to observe from  \eqref{eq:Om} that $M(\Omega)$ is a conserved
quantity, so that $\alpha=M(\Omega(1)),$ which is independent of time. Then in view of \eqref{eq:Om}  and $\cL_t G=0,$  we
find
\beq \label{S4eq4}
t\p_t \tilde{\Omega}=\cL_t \tilde{\Omega}+\cN_t (\tilde{\Omega}+\alpha G).
\eeq

Since we are investigating the long-time behavior of $\tilde{\Omega}(t),$ we focus on the timespan $[t_0,+\oo)$ beginning at some large time $t_0$. By virtue of \eqref{S4eq4}, we write
\begin{equation}
\tilde{\Omega}(t)=e^{(\ln {t}/{t_0})\cL_\oo}\tilde{\Omega}(t_0)+\int_{t_0}^t e^{\left(\ln {t}/s\right)\cL_\oo} \bigl( (\cL_s-\cL_\oo) \tilde{\Omega}(s)+\cN_s (\tilde{\Omega}(s)+\alpha G) \bigr)\,\f{ds}s.
\end{equation}

We first get, by using \eqref{eq4.6} and \eqref{eq:col3.1}, that
\begin{equation}\label{eq4.11}
\begin{aligned}
&\bigl\|e^{(\ln {t}/{t_0})\cL_\oo}\tilde{\Omega}(t_0)\bigr\|_{L^2(m)}
\leq C_m e^{-\f12\left(\ln {t}/{t_0}\right)}\|\tilde{\Omega}(t_0)\|_{L^2(m)}\\
&\leq  C_{m,m_0,\delta} \bigl(\f{t}{t_0}\bigr)^{-\f12}\bigl( 1 +\nu^{-1}{\|\Omega(1)\|_{L^2(m_0)}}\bigr)^{m} \langle t_0\rangle^{\f12 +\f{m}{m_0}\left(\f12+\delta\right)}\|\Omega(1)\|_{L^2(m_0)}\\
&\leq  C_{m,m_0,\delta} t^{-\f12}\bigl( 1 +{\nu}^{-1}{\|\Omega(1)\|_{L^2(m_0)}}\bigr)^{m} \langle t_0\rangle^{1 +\f{m}{m_0}\left(\f12+\delta\right)}\|\Omega(1)\|_{L^2(m_0)}.
\end{aligned}\end{equation}

Whereas in view of \eqref{eq:Oma} and \eqref{def:Fokker-Planck operator}, we  compute
\begin{align*}
\cL_t-\cL_\oo=&\f1{1+\f{t^2}3}\Bigl(\p_X-\f{t}2\bigl(1+\f{t^2}{12}\bigr)^{-\f12}\p_Y\Bigr)^2 -\f{3}{1+\f{t^2}{12}} \p_Y^2\\
&+\f1{2(1+\f{t^2}3)}\Bigl(X-\f{t}2\bigl(1+\f{t^2}{12}\bigr)^{-\f12}Y\Bigr)\Bigl(\p_X-\f{t}2\bigl(1+\f{t^2}{12}\bigr)^{-\f12}\p_Y\Bigr)
-\f{3}{2(1+\f{t^2}{12})} Y\p_Y \\
&+\Bigl(\f{3}{\sqrt{1+\f{t^2}{12}}(\sqrt{1+\f{t^2}{12}}+t)}-\f{3t}{\sqrt{4+\f{t^2}3}(3+t^2)} \Bigr)  (X\p_Y-Y\p_X) -\f{12}{12+t^2}
\end{align*}
from which we infer
$$
|(\cL_s-\cL_\oo) \tilde{\Omega}(s)|\leq C \langle s\rangle^{-2} \bigl( |\nabla^2 \tilde{\Omega}|+|(X,Y)||\nabla \tilde{\Omega}|+|\tilde{\Omega}| \bigr).
$$
Then we get, by using \eqref{eq4.6} and \eqref{eq:prop3.3}, that for $m_0\geq m+1$
\begin{equation}\notag
\begin{aligned}
\bigl\|\int_{t_0}^t& e^{\left(\ln {t}/s\right)\cL_\oo} (\cL_s-\cL_\oo) \tilde{\Omega}(s)\,\f{ds}s\bigr\|_{L^2(m)}\\
\leq & C_m\int_{t_0}^t e^{-\f12\left(\ln {t}/s\right)} \langle s\rangle^{-2} \bigl( \|\nabla^2 \tilde{\Omega}(s)\|_{L^2(m)} +\|\nabla \tilde{\Omega}(s)\|_{L^2(m+1)}+\| \tilde{\Omega}(s)\|_{L^2(m)} \bigr)\,\f{ds}{s}\\
\leq & C_{m,m_0,\delta,\sigma}\int_{t_0}^t e^{-\f12\left(\ln {t}/s\right)} \langle s\rangle^{-2}
\Bigl( \bigl( 1 +\nu^{-1}{\|\Omega(1)\|_{L^2(m_0)}}\bigr)^{m+\f{12(m+1)}{1+2\sigma}} \langle s\rangle^{\f12 +\f{m(1+2\delta)}{m_0}\left(\f12+\f{6}{1+2\sigma}\right)}\\
&+\bigl( 1 +\nu^{-1}{\|\Omega(1)\|_{L^2(m_0)}}\bigr)^{m+1+\f{6(m+2)}{1+2\sigma}} \langle s\rangle^{\f12 +\f{(m+1)(1+2\delta)}{m_0}\left(\f12+\f3{1+2\sigma}\right)} \Bigr)\|\Omega(1)\|_{L^2(m_0)}\,\f{ds}{s}\\
\leq & C_{m,m_0,\delta,\sigma}t^{-\f12}\bigl( 1 +\nu^{-1}{\|\Omega(1)\|_{L^2(m_0)}}\bigr)^{m+\f{12(m+1)}{1+2\sigma}}\|\Omega(1)\|_{L^2(m_0)}\int_{t_0}^t   \langle s\rangle^{-2 +\f{m(1+2\delta)}{m_0}\left(\f12+\f{6}{1+2\sigma}\right)}\, ds,
\end{aligned}
\end{equation}
where we used that for all $m\geq 3$, $\delta>0$ and $0<\sigma<\f12$,
\begin{align*}
&m+1+\f{6(m+2)}{1+2\sigma}<m+\f{12(m+1)}{1+2\sigma}
\andf \\
&\f12 +\f{(m+1)(1+2\delta)}{m_0}\left(\f12+\f3{1+2\sigma}\right)<\f12 +\f{m(1+2\delta)}{m_0}\left(\f12+\f{6}{1+2\sigma}\right).
\end{align*}
Noticing that $m_0$ is taken to be so large  that
$$
\f{m(1+2\delta)}{m_0}\bigl(\f12+\f{6}{1+2\sigma}\bigr)<1,
$$
we achieve
\begin{equation}\label{eq4.12}
\begin{aligned}
\|\int_{t_0}^t& e^{\left(\ln {t}/s\right)\cL_\oo} (\cL_s-\cL_\oo) \tilde{\Omega}(s)\f{ds}s\bigr\|_{L^2(m)}
\leq  C_{m,m_0,\delta,\sigma}t^{-\f12}\\
&\times\bigl( 1 +\nu^{-1}{\|\Omega(1)\|_{L^2(m_0)}}\bigr)^{m+\f{12(m+1)}{1+2\sigma}} \langle t_0\rangle^{-1 +\f{m(1+2\delta)}{m_0}\left(\f12+\f{6}{1+2\sigma}\right)} \|\Omega(1)\|_{L^2(m_0)}.
\end{aligned}
\end{equation}

While we get, by using the divergence-free structure and \eqref{eq4.6}, that
\begin{align*}
\bigl\|\int_{t_0}^t& e^{\left(\ln {t}/s\right)\cL_\oo} \cN_s (\tilde{\Omega}(s)+\alpha G)\f{ds}s\bigr\|_{L^2(m)}
\leq C_m\int_{t_0}^t e^{-\f12\left(\ln {t}/s\right)} \|\cN_s (\tilde{\Omega}(s)+\alpha G)\|_{L^2(m)}\f{ds}s\\
\leq & C_m \Bigl( \alpha^2 t^{-\f12}\int_{t_0}^t  \|\cN_s ( G)\|_{L^2(m)}s^{-\f12}ds \\
&+\alpha t^{-\f12}\nu^{-1}\int_{t_0}^t  \|\p_Y \Delta_s^{-1}G\p_X \tilde{\Omega}(s)- \p_X\Delta_s^{-1}G\p_Y \tilde{\Omega}(s)\|_{L^2(m)}\langle s\rangle^{-\f52}ds\\
&+\alpha t^{-\f12}\nu^{-1}\int_{t_0}^t  \|\p_Y \Delta_s^{-1}\tilde{\Omega}(s)\p_X G- \p_X\Delta_s^{-1}\tilde{\Omega}(s)\p_Y G\|_{L^2(m)}\langle s\rangle^{-\f52}ds\\
&+ t^{-\f12}\int_{t_0}^t  \|\cN_s ( \tilde{\Omega}(s))\|_{L^2(m)}s^{-\f12}ds\Bigr) \eqdefa M_1+M_2+M_3+M_4.
\end{align*}
Next, we estimate term by term above.

It follows from \eqref{eq3.6} and $\alpha\leq \|\Omega(1)\|_{L^2(m)}$ that
\begin{equation}\notag
\begin{aligned}
M_1 \leq &C_m \alpha^2 t^{-\f12} \nu^{-1} \int_{t_0}^t  \bigl(\|\p_Y \Delta_s^{-1}G\|_{L^\oo}\|\p_X G\|_{L^2(m)}+ \|\p_X\Delta_s^{-1}G\|_{L^\oo}\|\p_Y G\|_{L^2(m)}\bigr)\langle s\rangle^{-\f52}\,ds\\
\leq & C_{m,\sigma_0}  \|\Omega(1)\|_{L^2(m)}^2 t^{-\f12} \nu^{-1} \int_{t_0}^t  \langle s\rangle^{\sigma_0-\f32}\,ds \\
\leq & C_{m,\sigma_0} t^{-\f12}  {\nu}^{-1}{\|\Omega(1)\|_{L^2(m)}} \langle t_0\rangle^{\sigma_0-\f12} \|\Omega(1)\|_{L^2(m)},
\end{aligned}
\end{equation}
where $0<\sigma_0<\f12$ is a small constant. For simplicity, we just take $\sigma_0=\f14$ to conclude
\begin{equation}\label{eq4.13}
M_1\leq C_m t^{-\f12} {\nu}^{-1}{\|\Omega(1)\|_{L^2(m)}} \|\Omega(1)\|_{L^2(m)}.
\end{equation}

While we get, by using \eqref{eq3.6} that
\begin{align*}
M_2 \leq &C_m \alpha t^{-\f12} \nu^{-1} \int_{t_0}^t  \bigl(\|\p_Y \Delta_s^{-1}G\|_{L^\oo}\|\p_X \tilde{\Omega}(s)\|_{L^2(m)}+ \|\p_X\Delta_s^{-1}G\|_{L^\oo}\|\p_Y \tilde{\Omega}(s)\|_{L^2(m)}\bigr)\langle s\rangle^{-\f52}ds\\
\leq &C_m  t^{-\f12}\nu^{-1}\|\Omega(1)\|_{L^2(m)} \int_{t_0}^t  \Bigl(\|\p_X \tilde{\Omega}(s)\|_{L^2(m)}+ \langle s\rangle\|\p_Y \tilde{\Omega}(s)\|_{L^2(m)}\Bigr)\langle s\rangle^{\sigma_1-\f52}ds\\
\leq &C_m  t^{-\f12}\nu^{-1}\|\Omega(1)\|_{L^2(m)} \int_{t_0}^t  \Bigl(\|\p_X \tilde{\Omega}(s)\|_{L^2(m)}+ \langle s\rangle\|\p_Y^2 \tilde{\Omega}(s)\|_{L^2(m)}^\f12\| \tilde{\Omega}(s)\|_{L^2(m)}^\f12\Bigr)\langle s\rangle^{\sigma_1-\f52}ds,
\end{align*}
where we used the following inequality, which can be derived by using integration by parts,
\begin{equation}\label{eq4.15}
\|\p_Y \tilde{\Omega}\|_{L^2(m)}\leq C_m\|\p_Y^2 \tilde{\Omega}(s)\|_{L^2(m)}^\f12\| \tilde{\Omega}(s)\|_{L^2(m)}^\f12.
\end{equation}
Then by applying \eqref{eq:prop3.3} and
\begin{equation}\label{eq4.14}
\| \tilde{\Omega}(s)\|_{L^2(m)} \leq s^{-\f12} \sup_{s\in[t_0,t]} \bigl(s^\f12\|\tilde{\Omega}(s)\|_{L^2(m)}\bigr),
\end{equation}
 we find
\begin{align*}
M_2 \leq & C_{m,m_0,\delta,\sigma} t^{-\f12}\nu^{-1}\|\Omega(1)\|_{L^2(m)} \int_{t_0}^t  \Bigl( \bigl( 1 +\nu^{-1}\|\Omega(1)\|_{L^2(m_0)}\bigr)^{m+\f{6(m+1)}{1+2\sigma}}\|\Omega(1)\|_{L^2(m_0)}\\
&\times\langle s\rangle^{\sigma_1-2 +\f{m(1+2\delta)}{m_0}(\f12+\f{3}{1+2\sigma})}+ \bigl( 1 +\nu^{-1}\|\Omega(1)\|_{L^2(m_0)}\bigr)^{\f{m}2+\f{2(m+1)}{1+2\sigma}} \|\Omega(1)\|_{L^2(m_0)}^\f12\\
&\times \sup_{s\in[t_0,t]} \bigl(s^\f12\|\tilde{\Omega}(s)\|_{L^2(m)}\bigr)^\f12 \langle s\rangle^{\sigma_1-\f54 +\f{m(1+2\delta)}{2m_0}(\f12+\f{2}{1+2\sigma})} \Bigr)ds.
\end{align*}
Let us choose $0<\sigma_1<\f12$ to be so small that
$$
\sigma_1-2 +\f{m(1+2\delta)}{m_0}\bigl(\f12+\f{3}{1+2\sigma}\bigr)<-1 \andf \sigma_1-\f54 +\f{m(1+2\delta)}{2m_0}\bigl(\f12+\f{2}{1+2\sigma}\bigr)<-1,
$$
we deduce that
\begin{equation}\label{eq4.16}
\begin{aligned}
M_2 \leq & C_{m,m_0,\delta,\sigma,\sigma_1} t^{-\f12} \Bigl( \bigl( 1 +\nu^{-1}\|\Omega(1)\|_{L^2(m_0)}\bigr)^{m+1+\f{6(m+1)}{1+2\sigma}} \langle t_0\rangle^{\sigma_1-1 +\f{m(1+2\delta)}{m_0}\left(\f12+\f{3}{1+2\sigma}\right)}\\
&\times\|\Omega(1)\|_{L^2(m_0)}+ \bigl( 1 +\nu^{-1}\|\Omega(1)\|_{L^2(m_0)}\bigr)^{\f{m}2+1+\f{2(m+1)}{1+2\sigma}} \langle t_0\rangle^{\sigma_1-\f14 +\f{m(1+2\delta)}{2m_0}\left(\f12+\f{2}{1+2\sigma}\right)}\\
&\times\|\Omega(1)\|_{L^2(m_0)}^\f12  \sup_{s\in[t_0,t]} (s^\f12\|\tilde{\Omega}(s)\|_{L^2(m)})^\f12\Bigr) \\
\leq &\f{t^{-\f12}}8\sup_{s\in[t_0,t]} (s^\f12\|\tilde{\Omega}(s)\|_{L^2(m)})+C_{m,m_0,\delta,\sigma,\sigma_1} t^{-\f12} \|\Omega(1)\|_{L^2(m_0)} \\
&\times \Bigl(\bigl( 1 +\nu^{-1}\|\Omega(1)\|_{L^2(m_0)}\bigr)^{m+1+\f{6(m+1)}{1+2\sigma}}\langle t_0\rangle^{\sigma_1-1 +\f{m(1+2\delta)}{m_0}(\f12+\f{3}{1+2\sigma})}\\
 &\qquad+\bigl( 1 +\nu^{-1}\|\Omega(1)\|_{L^2(m_0)}\bigr)^{m+2+\f{4(m+1)}{1+2\sigma}} \langle t_0\rangle^{2\sigma_1-\f12 +\f{m(1+2\delta)}{m_0}\left(\f12+\f{2}{1+2\sigma}\right)}\Bigr).
\end{aligned}
\end{equation}

For  $M_3$, we get, by applying \eqref{eq3.6}, that
\begin{align*}
M_3 \leq &C_m \alpha t^{-\f12} \nu^{-1} \int_{t_0}^t  \bigl(\|\p_Y \Delta_s^{-1}\tilde{\Omega}(s)\|_{L^\oo}\|\p_X G\|_{L^2(m)}+ \|\p_X\Delta_s^{-1}\tilde{\Omega}(s)\|_{L^\oo}\|\p_Y G\|_{L^2(m)}\bigr)\langle s\rangle^{-\f52}ds\\
\leq &C_m  t^{-\f12}\nu^{-1}\|\Omega(1)\|_{L^2(m)} \int_{t_0}^t  \| \tilde{\Omega}(s)\|_{L^2(m)}^{\f12+\sigma_2}\|\p_X \tilde{\Omega}(s)\|_{L^2(m)}^{\f12-\sigma_2}\langle s\rangle^{\sigma_2-\f32}\,ds,
\end{align*}
from which, \eqref{eq4.14} and \eqref{eq:prop3.3}, we infer
\begin{align*}
M_3 \leq  &C_{m,m_0,\delta,\sigma,\sigma_2}  t^{-\f12}  \bigl( 1 +\nu^{-1}\|\Omega(1)\|_{L^2(m_0)}\bigr)^{1+\left(\f12-\sigma_2\right)\left(m+\f{6(m+1)}{1+2\sigma}\right)}\|\Omega(1)\|_{L^2(m_0)}^{\f12-\sigma_2}\\
&\quad\times\sup_{s\in[t_0,t]} \bigl(s^\f12\|\tilde{\Omega}(s)\|_{L^2(m)}\bigr)^{\f12+\sigma_2}\int_{t_0}^t  \langle s\rangle^{-\f32+\left(\f12-\sigma_2\right)\f{m(1+2\delta)}{m_0}\left(\f12+\f{3}{1+2\sigma}\right)}\,ds\\
\leq  &C_{m,m_0,\delta,\sigma,\sigma_2}  t^{-\f12}  \bigl( 1 +\nu^{-1}\|\Omega(1)\|_{L^2(m_0)}\bigr)^{1+(\f12-\sigma_2)(m+\f{6(m+1)}{1+2\sigma})}\|\Omega(1)\|_{L^2(m_0)}^{\f12-\sigma_2}\\
&\quad\times\sup_{s\in[t_0,t]} \bigl(s^\f12\|\tilde{\Omega}(s)\|_{L^2(m)}\bigr)^{\f12+\sigma_2}  \langle t_0\rangle^{-\f12+\left(\f12-\sigma_2\right)\f{m(1+2\delta)}{m_0}\left(\f12+\f{3}{1+2\sigma}\right)},
\end{align*}
where we choose $0<\sigma_2<\f12$ to be so close  to $\f12$  that
$$
\bigl(\f12-\sigma_2\bigr)\f{m(1+2\delta)}{m_0}\left(\f12+\f{3}{1+2\sigma}\right)<\f12.
$$
By applying Young's inequality, we obtain
\begin{equation}\label{eq4.17}
\begin{aligned}
&M_3 \leq \f{t^{-\f12}}8 \sup_{s\in[t_0,t]} (s^\f12\|\tilde{\Omega}(s)\|_{L^2(m)}) +C_{m,m_0,\delta,\sigma,\sigma_2}  t^{-\f12}  \\
&\ \times\bigl( 1 +\nu^{-1}\|\Omega(1)\|_{L^2(m_0)}\bigr)^{\f{2}{1-2\sigma_2}+m+\f{6(m+1)}{1+2\sigma}}\langle t_0\rangle^{-\f1{1-2\sigma_2}+\f{m(1+2\delta)}{m_0}\left(\f12+\f{3}{1+2\sigma}\right)}\|\Omega(1)\|_{L^2(m_0)}.
\end{aligned}\end{equation}

Finally, we consider the fully nonlinear part $M_4$. By applying \eqref{eq3.6} with $0<\sigma_3<\f12$, we find
\begin{align*}
M_4 \leq &C_m  t^{-\f12} \nu^{-1} \int_{t_0}^t  \bigl(\|\p_Y \Delta_s^{-1}\tilde{\Omega}(s)\|_{L^\oo}\|\p_X \tilde{\Omega}(s)\|_{L^2(m)}\\
&\qquad\qquad\qquad+ \|\p_X\Delta_s^{-1}\tilde{\Omega}(s)\|_{L^\oo}\|\p_Y \tilde{\Omega}(s)\|_{L^2(m)}\bigr)\langle s\rangle^{-\f52}\,ds\\
\leq &C_m  t^{-\f12} \nu^{-1} \int_{t_0}^t  \| \tilde{\Omega}(s)\|_{L^2(m)}^{\f12+\sigma_3}\|\p_X \tilde{\Omega}(s)\|_{L^2(m)}^{\f12-\sigma_3} \\
&\qquad\qquad\qquad\times\bigl(\|\p_X \tilde{\Omega}(s)\|_{L^2(m)}+ \langle s\rangle\|\p_Y \tilde{\Omega}(s)\|_{L^2(m)}\bigr)\langle s\rangle^{\sigma_3-\f52}\,ds\\
\leq &C_m  t^{-\f12} \nu^{-1} \int_{t_0}^t  \| \tilde{\Omega}(s)\|_{L^2(m)}^{1+\sigma_3}\|\p_X \tilde{\Omega}(s)\|_{L^2(m)}^{\f12-\sigma_3} \\
&\qquad\qquad\qquad\times\bigl(\|\p_X^2 \tilde{\Omega}(s)\|_{L^2(m)}^\f12+ \langle s\rangle\|\p_Y^2 \tilde{\Omega}(s)\|_{L^2(m)}^\f12\bigr)\langle s\rangle^{\sigma_3-\f52}\,ds,
\end{align*}
where we used \eqref{eq4.15} and
$$
\|\p_X \tilde{\Omega}(s)\|_{L^2(m)} \leq \|\p_X^2 \tilde{\Omega}(s)\|_{L^2(m)}^\f12 \|\tilde{\Omega}(s)\|_{L^2(m)}^\f12.
$$
Then we  get, by applying \eqref{eq4.14} and \eqref{eq:prop3.3}, that
\begin{equation}\begin{aligned}\label{eq4.18}
M_4 \leq &C_{m,m_0,\delta,\sigma,\sigma_3}  t^{-\f12} \sup_{s\in[t_0,t]} \bigl(s^\f12\|\tilde{\Omega}(s)\|_{L^2(m)}\bigr)\\
&\times\Bigl( \bigl( 1 +\nu^{-1}\|\Omega(1)\|_{L^2(m_0)}\bigr)^{m+1+(9-6\sigma_3)\f{m+1}{1+2\sigma}} \int_{t_0}^t \langle s\rangle^{\sigma_3-\f52+\f{m(1+2\delta)}{2m_0}\left(1+\f{9-6\sigma_3}{1+2\sigma}\right)} \,ds\\
&\qquad+\bigl( 1 +\nu^{-1}\|\Omega(1)\|_{L^2(m_0)}\bigr)^{m+1+(5-6\sigma_3)\f{m+1}{1+2\sigma}} \int_{t_0}^t \langle s\rangle^{\sigma_3-\f32+\f{m(1+2\delta)}{2m_0}\left(1+\f{5-6\sigma_3}{1+2\sigma}\right)} \,ds\Bigr)\\
\leq &C_{m,m_0,\delta,\sigma,\sigma_3}  t^{-\f12} \sup_{s\in[t_0,t]} (s^\f12\|\tilde{\Omega}(s)\|_{L^2(m)})\\
&\times\Bigl( \bigl( 1 +\nu^{-1}\|\Omega(1)\|_{L^2(m_0)}\bigr)^{m+1+(9-6\sigma_3)\f{m+1}{1+2\sigma}}  \langle t_0\rangle^{\sigma_3-\f32+\f{m(1+2\delta)}{2m_0}\left(1+\f{9-6\sigma_3}{1+2\sigma}\right)} \\
&\qquad+\bigl( 1 +\nu^{-1}\|\Omega(1)\|_{L^2(m_0)}\bigr)^{m+1+(5-6\sigma_3)\f{m+1}{1+2\sigma}} \langle t_0\rangle^{\sigma_3-\f12+\f{m(1+2\delta)}{2m_0}\left(1+\f{5-6\sigma_3}{1+2\sigma}\right)} \Bigr),
\end{aligned}
\end{equation}
where we choose  $\sigma_3$ to be so small that
\begin{align*}
&\sigma_3-\f32+\f{m(1+2\delta)}{2m_0}\left(1+\f{9-6\sigma_3}{1+2\sigma}\right)<0, \andf \\
&\sigma_3-\f12+\f{m(1+2\delta)}{2m_0}\left(1+\f{5-6\sigma_3}{1+2\sigma}\right)<0.
\end{align*}
To control \eqref{eq4.18}, we need to use some smallness from the negative power of the large time $t_0$. To do so, we take a large time $T_1$ such that,
$$
C_{m,m_0,\delta,\sigma,\sigma_3} \bigl( 1 +\nu^{-1}\|\Omega(1)\|_{L^2(m_0)}\bigr)^{m+1+(9-6\sigma_3)\f{m+1}{1+2\sigma}}  \langle T_1\rangle^{\sigma_3-\f32+\f{m(1+2\delta)}{2m_0}\left(1+\f{9-6\sigma_3}{1+2\sigma}\right)}\leq \f18,
$$
and
$$
C_{m,m_0,\delta,\sigma,\sigma_3} \bigl( 1 +\nu^{-1}\|\Omega(1)\|_{L^2(m_0)}\bigr)^{m+1+(5-6\sigma_3)\f{m+1}{1+2\sigma}}  \langle T_1\rangle^{\sigma_3-\f12+\f{m(1+2\delta)}{2m_0}\left(1+\f{5-6\sigma_3}{1+2\sigma}\right)}\leq \f18,
$$
which results in
\begin{align*}
&T_1\geq C_{m,m_0,\delta,\sigma,\sigma_3} \\
&\ \ \times\bigl( 1 +\nu^{-1}\|\Omega(1)\|_{L^2(m_0)}\bigr)^{(m+1)\max\Bigl(\f{1+\f{9-6\sigma_3}{1+2\sigma}}{\f12-\sigma_3-\f{m(1+2\delta)}{2m_0}(1+\f{9-6\sigma_3}{1+2\sigma})},\f{1+\f{5-6\sigma_3}{1+2\sigma}}{\f12-\sigma_3-\f{m(1+2\delta)}{2m_0}(1+\f{5-6\sigma_3}{1+2\sigma})}\Bigr)}.
\end{align*}
Noticing that as $\delta\rightarrow0$, $\sigma\rightarrow \f12$ and $\sigma_3 \rightarrow0$. As a result, it comes out
$$
(m+1)\max\Bigl(\f{1+\f{9-6\sigma_3}{1+2\sigma}}{\f12-\sigma_3-\f{m(1+2\delta)}{2m_0}(1+\f{9-6\sigma_3}{1+2\sigma})},\f{1+\f{5-6\sigma_3}{1+2\sigma}}{\f12-\sigma_3-\f{m(1+2\delta)}{2m_0}(1+\f{5-6\sigma_3}{1+2\sigma})}\Bigr) \rightarrow \f{7(m+1)}{1-\f{7m}{2m_0}},
$$
so that we can take $\delta,\sigma_3$ small enough and $\sigma_3$ close enough to $\f12$ and choose
\begin{equation}\label{def:T_0}
T_1\eqdefa C_{m,m_0} \bigl( 1 +\nu^{-1}\|\Omega(1)\|_{L^2(m_0)}\bigr)^{\f{7(m+1)}{1-\f{7m}{2m_0}}+\epsilon},
\end{equation}
where $\epsilon$ can be taken arbitrarily small. We conclude that for $t_0\geq T_1$, one has
\begin{equation}\label{eq4.20}
M_4 \leq \f{t^{-\f12}}4 \sup_{s\in[t_0,t]} \bigl(s^\f12\|\tilde{\Omega}(s)\|_{L^2(m)}\bigr).
\end{equation}

Therefore by summarizing the estimates \eqref{eq4.11}, \eqref{eq4.12}, \eqref{eq4.13}, \eqref{eq4.16}, \eqref{eq4.17} and \eqref{eq4.20},
 we deduce that for $t_0\geq T_1$
\begin{align*}
t^{\f12}&\|\tilde{\Omega}(t)\|_{L^2(m)}
\leq \f12\sup_{s\in[t_0,t]} \bigl(s^\f12\|\tilde{\Omega}(s)\|_{L^2(m)}\bigr) \\
&+C_{m,m_0,\delta} \bigl( 1 +\nu^{-1}\|\Omega(1)\|_{L^2(m_0)}\bigr)^{m} \langle t_0\rangle^{1 +\f{m}{m_0}\bigl(\f12+\delta\bigr)}\|\Omega(1)\|_{L^2(m_0)}\\
&+C_{m,m_0,\delta,\sigma}\bigl( 1 +\nu^{-1}\|\Omega(1)\|_{L^2(m_0)}\bigr)^{m+\f{12(m+1)}{1+2\sigma}}\langle t_0\rangle^{-1 +\f{m(1+2\delta)}{m_0}\left(\f12+\f{6}{1+2\sigma}\right)} \|\Omega(1)\|_{L^2(m_0)}\\
&+C_{m,m_0,\delta,\sigma,\sigma_1}  \Bigl(\bigl( 1 +\nu^{-1}\|\Omega(1)\|_{L^2(m_0)}\bigr)^{m+1+\f{6(m+1)}{1+2\sigma}}\langle t_0\rangle^{\sigma_1-1 +\f{m(1+2\delta)}{m_0}\left(\f12+\f{3}{1+2\sigma}\right)}\\
& \quad +\bigl( 1 +\nu^{-1}\|\Omega(1)\|_{L^2(m_0)}\bigr)^{m+2+\f{4(m+1)}{1+2\sigma}} \langle t_0\rangle^{2\sigma_1-\f12 +\f{m(1+2\delta)}{m_0}(\f12+\f{2}{1+2\sigma})}\Bigr)\|\Omega(1)\|_{L^2(m_0)}\\
&+C_{m,m_0,\delta,\sigma,\sigma_2}  t^{-\f12} \bigl( 1 +\nu^{-1}\|\Omega(1)\|_{L^2(m_0)}\bigr)^{\f{2}{1-2\sigma_2}+m+\f{6(m+1)}{1+2\sigma}}\\
&\qquad\times\langle t_0\rangle^{-\f1{1-2\sigma_2}+\f{m(1+2\delta)}{m_0}\left(\f12+\f{3}{1+2\sigma}\right)}\|\Omega(1)\|_{L^2(m_0)}.
\end{align*}
Then we get, by choosing $t_0=T_1$ with $T_1$ being defined in \eqref{def:T_0} and taking supremum over $t\in [T_1, T],$ that
\begin{align*}
&\sup_{t\in[T_1,T]} (t^\f12\|\tilde{\Omega}(t)\|_{L^2(m)}) \leq C_{m,m_0,\delta,\sigma,\sigma_1,\sigma_2} \Bigl( \bigl( 1 +\nu^{-1}\|\Omega(1)\|_{L^2(m_0)}\bigr)^{m+\bigl(1 +\f{m}{m_0}(\f12+\delta)\bigr)\bigl(\f{7(m+1)}{1-\f{7m}{2m_0}}+\epsilon\bigr)} \\
&\quad+\bigl( 1 +\nu^{-1}\|\Omega(1)\|_{L^2(m_0)}\bigr)^{m+\f{12(m+1)}{1+2\sigma}+\bigl(-1 +\f{m(1+2\delta)}{m_0}(\f12+\f{6}{1+2\sigma}) \bigr)\bigl(\f{7(m+1)}{1-\f{7m}{2m_0}}+\epsilon \bigr)} \\
&\quad+\bigl( 1 +\nu^{-1}\|\Omega(1)\|_{L^2(m_0)}\bigr)^{m+1+\f{6(m+1)}{1+2\sigma}+\bigl(\sigma_1-1 +\f{m(1+2\delta)}{m_0}(\f12+\f{3}{1+2\sigma}) \bigr)\bigl(\f{7(m+1)}{1-\f{7m}{2m_0}}+\epsilon \bigr)}\\
&\quad+\bigl( 1 +\nu^{-1}\|\Omega(1)\|_{L^2(m_0)}\bigr)^{m+2+\f{4(m+1)}{1+2\sigma}+\bigl( 2\sigma_1-\f12 +\f{m(1+2\delta)}{m_0}(\f12+\f{2}{1+2\sigma})\bigr)\bigl(\f{7(m+1)}{1-\f{7m}{2m_0}}+\epsilon \bigr)}\\
&\quad+\bigl( 1 +\nu^{-1}\|\Omega(1)\|_{L^2(m_0)}\bigr)^{\f{2}{1-2\sigma_2}+m+\f{6(m+1)}{1+2\sigma}+\bigl(-\f1{1-2\sigma_2}+\f{m(1+2\delta)}{m_0}(\f12+\f{3}{1+2\sigma}) \bigr)\bigl(\f{7(m+1)}{1-\f{7m}{2m_0}}+\epsilon \bigr)}\Bigr)\|\Omega(1)\|_{L^2(m_0)}.
\end{align*}
To simplify these complicated powers, one can focus on the limit case that when $\delta\rightarrow0$, $\sigma\rightarrow\f12$, $\sigma_1\rightarrow0$, $\sigma_2\rightarrow0$, and $\epsilon\rightarrow0$, the largest power is
$$
m+\left(1+\f{m}{2m_0}\right)\f{7(m+1)}{1-\f{7m}{2m_0}}=\f{8(m+1)}{1-\f{7m}{2m_0}}-1,
$$
which comes from the first term, reflecting the evolution of initial data from $T_0$. Indeed, the limits of the other powers are all negative, because they come from nonlinear parts, and we use the most challenging part to determine $T_1$. Therefore, for any small $\e>0$, we can take $(\delta, \sigma_1,\sigma_2)$ to be small enough and $\sigma$ to be close enough to $\f12$ so that
\begin{equation}\label{eq4.21}
\sup_{t\in[T_1,T]} \bigl(t^\f12\|\tilde{\Omega}(t)\|_{L^2(m)}) \leq C_{m,m_0,\epsilon} \bigl(  1 +\nu^{-1}\|\Omega(1)\|_{L^2(m_0)}\bigr)^{\f{8(m+1)}{1-\f{7m}{2m_0}}-1+\e}\|\Omega(1)\|_{L^2(m_0)}.
\end{equation}

We point out that when $m$ and $m_0$ satisfy the relation
$$
m_0>\f72 m \qquad \Leftrightarrow \qquad \f{7m}{2m_0}<1,
$$
one can take $(\delta,\sigma_1,\sigma_2,\sigma_3)$ small enough and $\sigma$ close enough to $\f12$ such that all the assumptions of these parameters used in the proof are satisfied. This completes the proof of Theorem \ref{Thm3}.
\end{proof}

\section*{Acknowledgement}
  P. Zhang is partially supported by the National Key R$\&$D Program of China under grant 2021YFA1000800 and by National Natural Science Foundation of China under Grant 12421001, 12494542, and 12288201.

\section*{Declarations}

\subsection*{Conflict of interest} The authors declare that there are no conflicts of interest.

\subsection*{Data availability}
This article has no associated data.

\end{document}